\newcommand{\cC}{\mathcal{C}}
\newcommand{\cT}{\mathcal{T}}
\newcommand{\cL}{\mathcal{L}}
\newcommand{\cP}{\mathcal{P}}
\newcommand{\bip}{\mathcal{BIP}}
\newcommand{\bC}{\mathbb{C}}
\newcommand{\bR}{\mathbb{R}}
\newcommand{\bN}{\mathbb{N}}
\newcommand{\mb}{\mbox{}}
\newcommand{\Cmr}{C_{\mathrm{MR}}}
\newcommand{\Cdmr}{C_{\mathrm{DMR}}}
\newcommand{\inj}{\hookrightarrow}
\DeclareMathOperator{\ext}{ext}
\newcommand{\relmiddle}[1]{\mathrel{}\middle#1\mathrel{}}
\newcommand{\lmid}{\relmiddle{|}}
\newtheorem{thm}{Theorem}
\spnewtheorem{assum}{Assumption}{\bfseries}{\upshape}
\title{Discrete maximal regularity and the finite element method for parabolic equations}
\titlerunning{Discrete maximal regularity and FEM for parabolic equations}
\author{T. Kemmochi \and N. Saito}
\institute{
T. Kemmochi
\at Graduate School of Mathematical Sciences, The University of Tokyo, Komaba 3-8-1, Meguro-ku, Tokyo 153-8914, Japan \\
Tel.: +81-3-5465-7005,
Fax: +81-3-5465-7015,
\email{kemmochi@ms.u-tokyo.ac.jp}
\and
N. Saito
\at Graduate School of Mathematical Sciences, The University of Tokyo, Komaba 3-8-1, Meguro-ku, Tokyo 153-8914, Japan \\
\email{norikazu@ms.u-tokyo.ac.jp}
}
\begin{document}

\maketitle

\begin{abstract}
Maximal regularity is a fundamental concept in the theory of partial differential equations. In this paper, we establish a
fully discrete version of maximal regularity for a parabolic
equation. We derive various stability results in 
$L^p(0,T;L^q(\Omega))$ norm, $p,q\in (1,\infty)$ for the finite
element approximation with the mass-lumping to the linear heat
equation. Our method of analysis is an operator theoretical one using
pure imaginary powers of operators and might be a discrete version of G.~Dore and A.~Venni (On the closedness of the sum of two closed operators. \emph{Math.\ Z.}, 196(2):189--201, 1987).
As an application,
optimal order error estimates in that norm are proved. Furthermore,
we study the finite element approximation for semilinear heat equations
with locally Lipschitz continuous nonlinearity and offer a new
method for deriving optimal order error estimates. Some interesting auxiliary
results including discrete Gagliardo-Nirenberg and Sobolev inequalities
are also presented.   

\keywords{maximal regularity \and
parabolic equation \and
finite element method}
\subclass{35K91 \and 65M60}
\end{abstract}

\section{Introduction}
\label{sec:intro}

Let $\Omega$ be a bounded domain in $\mathbb{R}^d$, $d=2,3$, with the boundary
$\partial\Omega$. Let $J_T = (0,T)$ be a time interval with
$T\in (0,\infty]$. 
We consider the finite element approximation of linear heat equation for
the function $u=u(x,t)$ of $(x,t)\in\overline{\Omega}\times [0,T)$: 
\begin{equation}
\begin{cases}
\partial_t u = \Delta u + g  &\text{in } \Omega \times  J_T, \\
u = 0                             &\text{on }\partial\Omega\times J_T, \\
u|_{t=0} = u_0                 &\text{on }\Omega,
\end{cases}
\label{eq:heat}
\end{equation}
where $\partial_t u=\partial u/\partial t$, 
$\Delta u=\sum_{j=1}^d \partial^2 u/\partial {x_j}^2$, 
$g=g(x,t)$, and $u_0=u_0(x)$; $g$ and $u_0$ are prescribed functions. All functions and function
spaces considered in
this paper are complex-valued.  

The purpose of this paper is to derive
various stability estimates in the $L^p(J_T;L^q(\Omega))$ norm
\begin{equation*}
  \|v\|_{L^p(J_T;L^q(\Omega))}=\left[ \int_{0}^T
\left(\int_\Omega |v(x,t)|^q~dx\right)^{1/q}~dt\right]^{1/p}
\end{equation*}
and discrete $L^p(J_T;L^q(\Omega))$ norm defined as \eqref{eq:dlplq}
with $X=L^q(\Omega)$,
where $p,q\in (1,\infty)$.
As applications of those estimates, we also
derive optimal order error estimates in those norms for the finite
element approximations of \eqref{eq:heat} and semilinear heat equation
\begin{equation}
\begin{cases}
\partial_t u = \Delta u + f(u) & \text{in } \Omega \times J_T \\
u = 0 & \text{on } \partial \Omega \times J_T, \\
u|_{t=0} = u_0 & \text{in } \Omega,
\end{cases}
\label{eq:semilinear}
\end{equation}
where $f:\mathbb{C}\to\mathbb{C}$ is a prescribed
function. Particularly, we assume only a locally Lipschitz continuity
and offer a new method of error analysis for \eqref{eq:semilinear}.

In other words, we intend to develop
a discrete version of theory of maximal regularity for evolution
equations of parabolic type. 
To recall maximal regularity in a general context, 
let us consider an abstract Cauchy problem on a Banach space $X$ as
\begin{equation}
\begin{cases}
u'(t) = Au(t) + g(t), & t \in J_T, \\
u(0) = 0,
\end{cases}
\label{eq:acp}
\end{equation}
where $A$ is a densely defined closed operator on $X$ with the
domain $D(A)\subset X$, 
$g\colon J_T \to X$ is a given function,
$u \colon J_T \to X$ is an unknown function and $u'(t)=du(t)/dt$. 

\begin{definition}[Maximal regularity, MR, CMR]
Let $p \in (1, \infty)$.
The operator $A$ has \emph{maximal $L^p$-regularity ($L^p$-MR)} on $J_T$, if and
 only if, for every $g \in L^p(J_T; X)$, there exists a unique solution 
$u \in W^{1,p}(J_T; X) \cap L^p(J_T; D(A))$
of \eqref{eq:acp} satisfying
\begin{equation}
\| u \|_{L^p(J_T; X)} + \| u' \|_{L^p(J_T; X)} + \| Au \|_{L^p(J_T; X)}
\le \Cmr \| g \|_{L^p(J_T; X)},
\label{eq:mr}
\end{equation}
where $\Cmr > 0$ denotes a constant that is independent of $g$.
We say that $A$ has \emph{maximal regularity (MR)} if $A$ has maximal
 $L^p$-regularity for some $p \in (1, \infty)$ (see Lemma \ref{lem:LpMR}). 
To distinguish $L^p$-MR and MR from the discrete versions introduced later, we say
 that $A$ has \emph{continuous maximal $L^p$-regularity ($L^p$-CMR)} and \emph{continuous maximal regularity (CMR)} .
\label{def:mr1}
\end{definition}

It is proved that the $L^q(\Omega)$ realization $A$ of
$\Delta$ with $D(A)=W^{2,q}(\Omega)\cap
W^{1,q}_0(\Omega)$ has $L^p$-CMR for any $p,q\in
(1,\infty)$ ({see \cite{DenHP03,KunW04}}). The problem \eqref{eq:heat}
admits a unique solution $u\in W^{1,p}(J_T; L^q(\Omega)) \cap L^p(J_T; D(A))$
satisfying \eqref{eq:mr} with $u_0=0$.
This result implies that $\partial_tu$ and $\Delta u$ are
well defined and have the same regularity as the right-hand side
function $g$. Moreover, $\partial_tu$ and $\Delta u$ cannot be in a
better function space than $g$, since $g=\partial_tu-\Delta u$. This
is not a trivial fact. For comparison, we recall the solution obtained
using the analytical
semigroup theory, which is a powerful method to establish the well-posedness of
\eqref{eq:heat} and \eqref{eq:semilinear}.
For example, assume $g\in
C^\sigma(\overline{J_T};L^q(\Omega))$ for some $\sigma\in (0,1)$, that
is, assume 
\begin{equation*}
\sup_{t,s\in \overline{J_T},~t\ne s} \frac{\|g(t)-g(s)\|_{L^q(\Omega)}}{|t-s|^\sigma}<\infty.
\end{equation*}
Then, by application of the analytical semigroup theory, we can prove that the
problem \eqref{eq:heat} with $u_0=0$ admits a unique solution
$u\in C(\overline{J_T};X)\cap C(J_T;D(A))\cap C^1(J_T;L^q(\Omega))$; see \cite[Theorems 4.3.2, 7.3.5]{Paz83}. 
However, we are able to obtain slightly less regularity
$\partial_tu-\Delta u\in
C(J_T;L^q(\Omega))$ than $g$. To obtain the same regularity
$\partial_tu-\Delta u\in
C^\sigma(\overline{J_T};L^q(\Omega))$, we must further assume
$g(x,0)=0$ for all $x\in\Omega$; see \cite[Theorem 4.3.5]{Paz83}.
Therefore, $W^{1,p}(J_T; L^q(\Omega)) \cap L^p(J_T; D(A))$ is an
appropriate function space to study parabolic
equations such as \eqref{eq:heat}.
Moreover, CMR is a ``stronger'' property than the
generation of analytical
semigroup in the sense that, if $A$ has CMR, then $A$
generates the analytical (bounded) semigroup (cf.~\cite{Dor93}). 
Although CMR is a concept for linear equations, it actually has many
important applications to nonlinear equations, as reported in the literature \cite{Ama05,KunW04,Shi12}.
Moreover, the analytic semigroup theory and its discrete counterparts
play important roles in construction and study of numerical schemes for
parabolic equations
(see e.g.~\cite{EriJL98,FujSS01,GavM05,Sai07,Sai12,ZhoS16}).
Therefore, it is natural to wonder whether a discrete
version of CMR is available.

This study has another motivation. Considering the problem \eqref{eq:semilinear} with $f(u)=u|u|^{\alpha}$ for $\alpha>0$, then
without loss of the generality, we assume $0\in\Omega$. 
Let $\lambda>0$. Then the function 
\[
 u_\lambda(x,t)=\lambda^{\frac{2}{\alpha}}u(\lambda x,\lambda^2t)
\]
also solves \eqref{eq:semilinear} where $\Omega$ and $J_T$ are replaced,
respectively, by
$\Omega_\lambda =\{\lambda^{-1}x\mid x\in\Omega\}$ and $J_{T/\lambda^2}$. Moreover,
if $p,q\in (1,\infty)$ satisfy 
\begin{equation}
\label{eq:si}
 \frac{2}{\alpha}=\frac{d}{p}+\frac{2}{q},
\end{equation}
we have
\[
 \|u_\lambda\|_{L^p(J_{T/\lambda^{2}};L^q(\Omega_\lambda))}=\|u\|_{L^p(J_{T};L^q(\Omega))}
\]
for any $\lambda>0$. 
Those $p,q$ are called the scale invariant exponents. The function 
space $L^p(J_{T};L^q(\Omega))$ with $p,q$ satisfying \eqref{eq:si} plays
a crucially important role in the study of time-local and time-global well-posedness of
\eqref{eq:semilinear}. Furthermore, such a scaling argument is applied
to deduce a novel numerical method for solving \eqref{eq:semilinear}
(see \cite{BerK88}). Therefore, it would be interesting to
derive stability and error estimates in those norms from the dual
perspectives of numerical and theoretical analysis.

Based on those motivations, we studied a time discrete version of maximal
regularity for \eqref{eq:acp} in an earlier study \cite{Kem15}. Let
\begin{equation}
 \label{eq:NT}
       N_T=\begin{cases}
	    \lfloor T/\tau \rfloor & (T<\infty)\\
	    N_\infty=\infty & (T=\infty) .
	    \end{cases}
\end{equation}
We consider the implicit $\theta$ scheme for \eqref{eq:acp} given as 
\begin{equation}
\begin{dcases}
\frac{u^{n+1} - u^n}{\tau} = Au^{n+\theta} + g^{n+\theta}, & n = 0,1,\dots, N_T -1, \\
u^0 = 0,
\end{dcases}
\label{eq:dcp}
\end{equation}
where $\tau > 0$ is the time increment, $\theta \in [0,1]$, $g = (g^n)_{n=0}^{N_T}$ is a given $X^{N_T+1}$-valued function, 
and $u = (u^n)_{n=0}^{N_T}$ is an unknown $X^{N_T+1}$-valued function. Set 
\begin{equation*}
v^{n+\theta} = (1-\theta)v^n + \theta v^{n+1}
\end{equation*}
for a sequence $v = (v^n)_n$. 
We moreover assume that $A$ is bounded when $\theta \ne 1$. 
The function $u^n$ might be an approximation of $u(n\tau)$ for
$n=1,\dots, N_T$.

We introduce the space $l^p(N;X)$ by setting 
\begin{equation*}
l^p(N;X) = \begin{cases}
X^{N+1}, & N \in \bN, \\
l^p(\bN; X), & N = \infty,
\end{cases}
\end{equation*}
and let 
\begin{gather}
 \| v \|_{l^p_\tau(N;X)} = \left(\sum_{n=0}^{N-1} \| v^n \|_X^p \tau\right)^{1/p} , \label{eq:dlplq}\\
D_\tau v = \left( \frac{v^{n+1} - v^n}{\tau} \right)_{n=0}^{N-1}, \qquad
 Av = (Av^n)_{n=0}^N ,\label{eq:d1}\\
 v_\theta = (v^{n+\theta})_{n=0}^{N-1},  \label{eq:d2}
\end{gather}
for $v = (v^n) \in l^p(N;X)$.

Discrete maximal regularity is then introduced as follows (see \cite{Kem15}).

\begin{definition}[Discrete maximal regularity, DMR]
Let $p \in (1, \infty)$.
The operator $A$ has \emph{maximal $l^p$-regularity ($l^p$-DMR)} on $J_T$ if and
 only if, 
for every $g \in l^p(N_T; X)$, there exists a unique solution 
$u \in X^{N_T}$
of \eqref{eq:dcp} satisfying
\begin{equation}
\| u_\theta \|_{l^p_\tau(N_T; X)} + \| D_\tau u \|_{l^p_\tau(N_T; X)} + \| Au_\theta \|_{l^p_\tau(N_T; X)}
\le \Cdmr \| g_\theta \|_{l^p_\tau(N_T; X)},
\label{eq:dmr}
\end{equation}
uniformly with respect to $\tau$, where $\Cdmr > 0$ is independent of $g$.
We say that $A$ has \emph{discrete maximal regularity (DMR)} if $A$ has $l^p$-DMR for some $p \in (1, \infty)$.
\end{definition}

In \cite{Blu01}, Blunck considered the forward Euler method $(\theta = 0)$
and characterized DMR by developing a discrete
version of the operator-valued Fourier multiplier theorem. However, the dependence of $\tau$ on
DMR inequalities is not clear since only the case $\tau=1$ is studied.  
The backward Euler method $(\theta = 1)$ with an arbitrary time
increment $\tau$ is discussed in \cite{AshS94}. Ashyralyev and
Sobolevski\u{\i} provided no reasonable sufficient conditions for DMR. 
Consequently, those results cannot be applied straightforwardly to numerical analysis.
In contrast to those works, we gave sufficient conditions on $\tau$,
$\theta$, $A$ for DMR to hold in \cite{Kem15}. We recall the 
statement below (see 
Lemma \ref{thm:sufficient}).

Spatial discretization must be addressed next. 
We introduce the finite element
approximation $L_h$ of $\Delta$ in $H^1_0(\Omega)$ and prove that $L_h$
has CMR. Herein, $h$ denotes the size
parameter of a triangulation $\mathcal{T}_h$. As a matter of fact,
Geissert studied CMR for the finite element approximation of the second order
parabolic equations in the divergence form 
in \cite{Gei06,Gei07}. He considered a smooth convex domain
$\Omega$ and triangulations defined on a polyhedral approximation $\Omega_h$ of
$\Omega$. (For the Neumann boundary condition case, he considered the
exactly fitted triangulation.)
Therefore, combining those results with our Lemma \ref{thm:sufficient}, we are able to obtain DMR for the smooth domain case.  
In those works, the method of \cite{SchTW98} and \cite{ThoW00} for studying stability and
analyticity in $L^\infty$ norm is applied. He first derived some
estimates for the discrete Green function associated with the finite
element operator in parabolic annuli. Then he obtained some estimates
in the whole $\Omega$ by a dyadic decomposition technique. Consequently, the
proofs are quite intricate. Moreover, he applied several kernel
estimates for the Green function associated with a parabolic
equation. Therefore, the domain and coefficients should
be suitably smooth. 

In the present paper, we take a completely different approach. We
directly establish a discrete version of the method using pure imaginary
powers of operators developed by \cite{DorV87}. To this end, we consider
polyhedral domains and study  
the discrete Laplacian \emph{with mass-lumping} $A_h$ instead of the
standard discrete Laplacian since the positivity-preserving
property of the semigroup generated by $A_h$ (see Lemma
\ref{prop:disc-Laplacian1}) plays an important role in our analysis. Actually, the
standard discrete Laplacian has no such property (see \cite{ThoW08}).    
It must be borne in mind that the $L^q$ theory for the discrete Laplacian \emph{with
mass-lumping} is of great use in study of nonlinear problems, such as the finite element and finite volume approximation of the Keller-Segel system modelling chemotaxis (see
\cite{Sai07,Sai12,ZhoS16}). 

After having established CMR and DMR for $A_h$ (see Theorems 
\ref{cor:mr_disc-Laplacian}, \ref{cor:dmr_disc-Laplacian},
\ref{cor:dmr_disc-Lap_finite} and \ref{cor:dmr_disc-Lap_initial}), we
derive optimal order error estimates for the finite element
approximations combined with the implicit $\theta$
method to \eqref{eq:heat} (see Theorem 
\ref{thm:error_linear}). We address not only unconditionally stable
cases ($\theta\in [1/2,1]$), but also conditionally stable cases
($\theta\in [0,1/2)$). For the latter case, we give a useful sufficient
condition for the scheme to be stable. As a further application, we study the finite
element approximation for \eqref{eq:semilinear} and prove optimal order error
estimates (see Theorem \ref{thm:sl}). 
Since nonlinearity $f$ is assumed
to be only locally Lipschitz continuous, the solution $u$ might blow up in some sense.
Our error estimate is valid as long as $u$
exists in contrast to \cite{Gei07}. To achieve such an objective, we apply
the fractional powers of $-A_h$ and derive a sub-optimal error estimate in
the 
$L^\infty(\Omega\times (0,T))$ norm as an intermediate result (see
Theorem \ref{thm:infty}). Our proposed method is apparently new in the
literature.   
Some auxiliary results including discrete Gagliardo-Nirenberg and
Sobolev inequalities are also presented (see Lemmas \ref{lem:disc-GN} and
\ref{lem:disc-Sobolev}).

We learned about \cite{LeyV15,KavLL15} after completion of the present study.
The paper \cite{LeyV15} specifically examined the time-discrete version of $L^p$-$L^q$-maximal regularity for arbitrary $p, q \in [1,\infty]$, by discontinuous Galerkin time stepping (cf.~\cite{Tho06}) for parabolic problems. 
This result is valid for $p, q=1, \infty$.
However, they did not consider the R-boundedness of sets of operators, which plays an important role in the theory of maximal regularity developed by Weis \cite{Wei01}.
The main tools in \cite{LeyV15} were the smoothing properties of the continuous and discrete Laplace operators.
Consequently, their estimate invariably contained the logarithmic term, so that the optimal error estimate is never obtained.
It was established by a related work \cite{KavLL15} that arbitrary A-stable time-discretization preserves the time-discrete version of maximal $L^p$-regularity for abstract Cauchy problems and for $p \in (1,\infty)$.
These results were obtained via the theory of R-boundedness. 
It is therefore partially the same result of our previous work \cite{Kem15}.
An optimal error estimate was established only for semi-discrete backward Euler scheme for a semilinear parabolic problem.
In contrast to these works, we deal only with the finite difference scheme in time. 
However, our error estimate is optimal for fully discretized problems.

The plan of this paper is as follows.
In Sec.~\ref{section:main}, we introduce the notion of finite element
approximation and state main results (Theorems 
\ref{cor:mr_disc-Laplacian}--\ref{thm:infty}).
We summarize some preliminary results used in the proofs of Theorems in
Sec.~\ref{section:preliminary}. Some auxiliary lemmas related to MR, DMR and $A_h$
are described there. A useful sufficient condition for DMR to hold is also
described there (Lemma \ref{thm:sufficient}). In Sec.~\ref{section:proof1}, we prove Theorems 
\ref{cor:mr_disc-Laplacian}--\ref{cor:dmr_disc-Lap_initial} by a
discrete version of the method of \cite{DorV87} using pure imaginary
powers of operators. Auxiliary results, Lemmas
\ref{lem:positivity_disc-Lap}, \ref{thm:bip_disc-Laplacian} and
\ref{lem:inv_ineq}, themselves are of
interest. The proor of error estimate (Theorem \ref{thm:error_linear}) for
the linear equation \eqref{eq:heat} is described in Sec.~\ref{sec:lin}.   
The semilinear equation \eqref{eq:semilinear} is studied in
Sec.~\ref{section:app_semilinear}.
Therein, we also prove auxiliary results including discrete Gagliardo-Nirenberg, Sobolev
inequalities and provide useful results related to the fractional powers
of $A_h$. Combining those results, we prove the
final error estimate, Theorems \ref{thm:sl} and \ref{thm:infty}.

\section{Main results}
\label{section:main}

Throughout this paper, $\Omega$ is assumed to be 
a bounded polygonal or polyhedral domain in $\bR^d$, $d = 2,3$, with the
boundary $\partial\Omega$.
We follow the notation of \cite{AdaF03}. 
As an abbreviation, we write $L^{q}=L^{q}(\Omega)$,
$W^{s,q}=W^{s,q}(\Omega)$ and $H^{s}=W^{s,2}$ for 
$q\in[1,\infty]$ and $s>0$. We use $W^{1,q}_0=\{v\in W^{1,q}\mid
v|_{\partial\Omega}=0\}$ and $H^{1}_0=W^{1,2}_0$.
Generic positive constants which are independent of discretization
parameters, $h$ and $\tau$, are denoted as $C$.   
Their values might be different in each appearance.

Since the boundary $\partial\Omega$ is not smooth, we make the following {shape assumption on }$\Omega$.

\begin{assum}[Shape assumption on $\Omega$]\label{assum:regularity}
There exists $\mu > d$ satisfying
\begin{equation}
\| v \|_{W^{2,q}} \le C \| \Delta v \|_{L^q}, \quad \forall v \in W^{2,q}\cap W^{1,q}_0, 
\label{eq:regularity}
\end{equation}
for $q \in (1,\mu)$, where $C>0$ depends only on $\Omega$ and $q$.
\end{assum}
For example, if $\Omega$ is a convex polygonal domain in $\bR^2$, then
one can find $\mu > 2$ satisfying Assumption \ref{assum:regularity} ({see} \cite{Gri85}).

Let $\cT_h$ be a triangulation of $\Omega$ with the granularity 
parameter $h$ defined below.  
Hereinafter, a family $\cT$ of triangles or tetrahedra is a
triangulation of $\Omega$ if and only if 
\begin{enumerate}
\item each element of $\cT$ is an open triangle or tetrahedron in $\Omega$ and 
\begin{equation*}
\Omega = \operatorname{Int}\left( \bigcup_{K \in \cT}\overline{K} \right),
\end{equation*}
where $\operatorname{Int}(\cdot)$ is the interior part of a set,
\item any two elements of $\cT$ meet only in entire common faces (when $d=3$), sides or vertices.
\end{enumerate}
We use the following notations:
\begin{itemize}
\item $h = \max_{K\in\cT_h}h_K$; \quad $h_K = $ the diameter of a triangle or tetrahedron $K$;
\item $\overline{N}_h=$ the number of nodes of $\cT_h$;\quad $N_h=$ the number of interior nodes;
\item $\{P_j\}_{j=1}^{\overline{N}_h}=$ the nodes of $\cT_h$;\quad  
$\{P_j\}_{j=1}^{{N}_h}=$ the interior nodes. 
\end{itemize}

We assume the following.

\begin{assum}[Regularity of {$\{ \cT_h \}_h$}]\label{assum:reg}
There exists $\nu > 0$ such that
\begin{equation*}
h_K \le \nu \rho_K, \quad \forall K \in \cT_h, \quad \forall h>0,
\end{equation*}
where $\rho_K$ denotes the radius of the inscribed circle or sphere of $K$.
\end{assum}

Here we consider the $P_1$ finite element.
Let $V_h$ be the space of continuous functions on $\Omega$ 
which are affine in each element $K \in \cT_h$.
For every node $P_j$ $(j=1,\dots,\overline{N}_h)$, $\phi_j$ is the corresponding basis of $V_h$,
which satisfies $\phi_j(P_i) = \delta_{ij}$, where $\delta_{ij}$ is Kronecker's delta.
Namely, $V_h$ is the linear space spanned by $\{ \phi_j \}_{j=1}^{\overline{N}_h}$.
We also set 
\begin{equation*}
S_h = \{ v_h \in V_h \mid v_h|_{\partial\Omega} = 0 \} = \operatorname{span}\{ \phi_j \}_{j=1}^{N_h}.
\end{equation*}

Moreover, we presume that $\{\cT_h\}_h$ satisfies the following conditions if necessary.
\begin{description}
\item[(H1)] (Inverse assumption) There exists $\gamma > 0$ such that 
\begin{equation*}
h \le \gamma h_K , \quad \forall K \in \cT_h, \quad \forall h>0.
\end{equation*}
\item[(H2)] (Acuteness) For each $h>0$ and for each $i,j \in \{ 1,2,\dots, \overline{N}_h \}$ with $i \ne j$, 
\begin{equation}
\int_\Omega \nabla \phi_i \cdot \nabla \phi_j ~dx \le 0.
\label{eq:acute}
\end{equation}
\end{description}

\begin{remark}
In the two-dimensional case, let $\sigma \subset \Omega$ be an edge of the triangulation $\cT_h$
and $K$ and $L$ be the triangles of $\cT_h$ which meet in $\sigma$.
Assume that the nodes $P_i$ and $P_j$ be both endpoints of $\sigma$.
We denote the interior angle of $K$ opposite to the edge $\sigma$ by $\alpha_{i,j}^K$.
Then, the condition \eqref{eq:acute} is equivalent to the equation of
$\alpha_{i,j}^K + \alpha_{i,j}^L \le \pi$. See \cite[Corollary
 3.48]{KnaA03} for the detail.
\end{remark}

\begin{remark}[Discrete maximum principle]
\label{rem:dmp}
The condition (H2) is equivalent to the discrete maximum principle, i.e., the following conditions are equivalent.
\begin{enumerate}
\item The triangulation $\cT_h$ fulfills the acuteness condition.
\item Let $u_h \in V_h$ be the solution of the following problem for $f \in L^2$ and $g_h \in V_h$:
\begin{equation*}
\begin{cases}
(\nabla u_h, \nabla v_h)_{L^2} = (f, v_h)_{L^2}, \quad \forall v_h \in S_h, \\
u_h |_{\partial\Omega} = g_h .
\end{cases}
\end{equation*}
Then, $u_h \ge 0$ in $\Omega$ provided that $f \ge 0$ in $\Omega$ and $g_h \ge 0$ on $\partial\Omega$.
\end{enumerate}
See \cite[Theorem 3.49]{KnaA03} for details. 
\end{remark}

\begin{remark}\label{rem:sa}
When $q=2$, $A_h$ is a self-adjoint operator in $X_{h,2}$. Therefore, (H2) is not required in the following discussion. 
However, the condition (H1) is required for the inverse inequality, which implies $H^1$-stability of the $L^2$-projection (the equation \eqref{eq:Sobolev-stability_L2-projection}) and the discrete Gagliardo-Nirenberg type inequality (Lemma \ref{lem:disc-GN}).
Therefore, this condition is imposed for the consequences of \eqref{eq:Sobolev-stability_L2-projection} and Lemma \ref{lem:disc-GN}, for example, Theorems \ref{thm:error_linear}--\ref{thm:infty}, even if $q=2$.
\end{remark}

\begin{figure}[htb]
\centering
\includegraphics{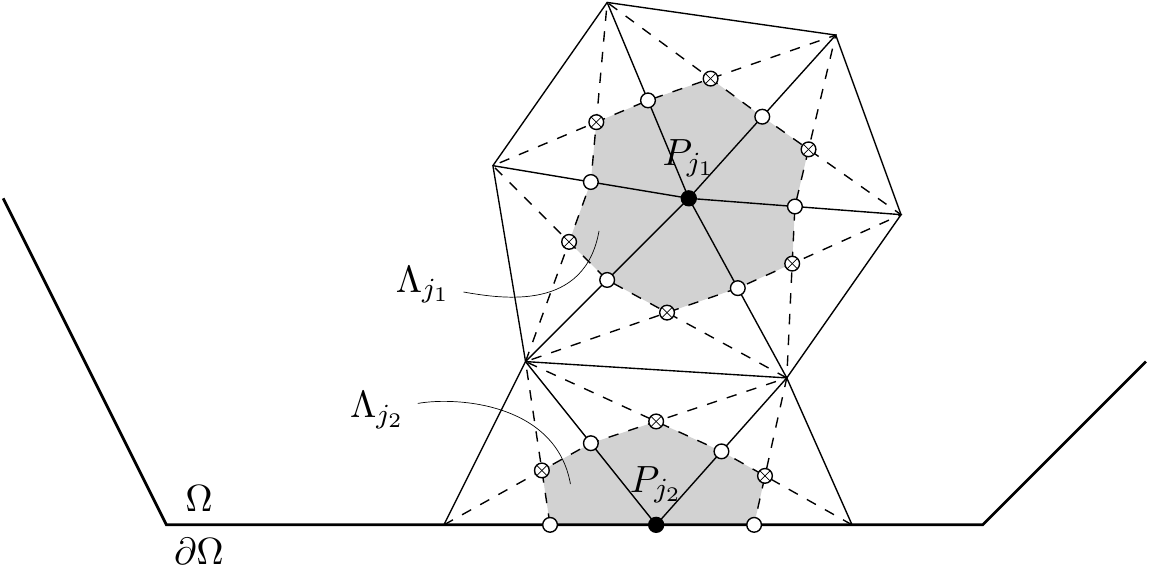}
\caption{Barycentric domains.
$P_{j_1}$: interior node, $P_{j_2}$: boundary node. 
\protect\includegraphics[page=1]{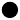}: node, 
\protect\includegraphics[page=2]{points.pdf}: midpoint of an edge, 
\protect\includegraphics[page=3]{points.pdf}: barycenter of a triangle.}
\label{fig:barycentric}
\end{figure}

We describe the method of mass-lumping.
For a node $P_j$, we designate the corresponding barycentric domain as
$\Lambda_j$; see Figure \ref{fig:barycentric} for illustration and see
\cite{FujSS01,Fuj73} for the definition. We denote the characteristic function of $\Lambda_j$ by $\chi_j$ for $j=1,\dots,N_h$.
Then, we set
\begin{equation*}
\overline{S}_h = \operatorname{span}\{ \chi_j \}_{j=1}^{N_h}
\end{equation*}
and define the lumping operator $M_h \colon S_h \to \overline{S}_h$ as
\begin{equation*}
M_h v_h = \sum_{j=1}^{N_h} v_h(P_j) \chi_j.
\end{equation*}
Moreover, we define $K_h = M_h^* M_h$, where $M_h^*$ is the adjoint operator of $M_h$
with respect to the $L^2$-inner product. As one might expect, $M_h$ is invertible and therefore $K_h$ is as well.
We define the mesh-dependent norms and inner product as
\begin{equation}
\| v_h \|_{h,q} = \| M_h v_h \|_{L^q}, \quad 
(u_h, v_h)_h = (M_h u_h, M_h v_h) ,\qquad
u_h, v_h \in S_h
\end{equation}
for $q \in [1,\infty]$. In fact, $\| \cdot \|_{h,q}$ is an equivalent norm to $\| \cdot \|_{L^q}$ in $S_h$
for each $q \in [1,\infty]$ (see Lemma \ref{lem:lumping}).

\medskip

At this stage, we introduce a discrete Laplacian as follows. Define the operator $A_h$ on $S_h$ as
\begin{equation}
(A_h u_h, v_h)_h = - (\nabla u_h, \nabla v_h), \quad \forall v_h \in S_h, \label{eq:dl}
\end{equation}
for $u_h \in S_h$. We designate $A_h$ the discrete Laplacian \emph{with}
mass-lumping. From the Poincar\'e inequality, $A_h$ is injective so
that it is invertible due to $\dim S_h < \infty$.

\medskip

We are now in a position to state the main results of this study. In the
theorems below, we always presume that
Assumptions \ref{assum:regularity} and \ref{assum:reg} are satisfied, unless otherwise stated explicitly.
The first one is about CMR for $A_h$.

\begin{thm}[CMR for $A_h$]\label{cor:mr_disc-Laplacian}
Let $T \in (0,\infty]$, $p \in (1, \infty)$ and $q \in (1, \mu)$.
Assume that (H1) and (H2) are satisfied when $q \ne 2$. 
Then, $A_h$ has $L^p$-CMR on $J_T$ in $X_{h,q}$ uniformly for $h > 0$.
That is, there exists $C>0$ independent of $h>0$ satisfying
\begin{equation}
\| u_h \|_{L^p(J_T; X_{h,q})} + \| u_h' \|_{L^p(J_T; X_{h,q})} + \| A_h u_h \|_{L^p(J_T; X_{h,q})}
\le C \| g_h \|_{L^p(J_T; X_{h,q})},
\label{eq:mr_disc-Lap}
\end{equation}
where $g_h \in L^p(J_T; X_{h,q})$ and $u_h$ is the solution of
\begin{equation}\label{eq:00}
\begin{cases}
u_h'(t) = A_h u_h(t) + g_h(t), & t \in J_T, \\
u_h(0) = 0.
\end{cases}
\end{equation}
\end{thm}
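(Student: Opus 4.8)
The plan is to reduce continuous maximal regularity for $A_h$ to an abstract characterization of CMR in terms of bounded imaginary powers (the Dore--Venni framework referenced in the excerpt). Recall that a sufficient condition for an operator $-A_h$ to have $L^p$-CMR on a UMD space $X_{h,q}$ is that $-A_h$ admits bounded imaginary powers with a power-angle strictly less than $\pi/2$, uniformly in the relevant parameters. Since $L^q(\Omega)$ is a UMD space for $q\in(1,\infty)$, and since $\|\cdot\|_{h,q}$ is equivalent to $\|\cdot\|_{L^q}$ on $S_h$ (by the lumping equivalence, Lemma~\ref{lem:lumping}), the space $X_{h,q}$ inherits the UMD property. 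So the heart of the matter is to establish, \emph{uniformly in $h$}, that $-A_h$ is a sectorial operator of angle $<\pi/2$ whose imaginary powers $(-A_h)^{is}$ are bounded on $X_{h,q}$ with an operator norm growing at most like $e^{\epsilon|s|}$ for $\epsilon<\pi/2$.

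First I would verify sectoriality and resolvent bounds for $-A_h$ on $X_{h,q}$ uniformly in $h$. This is where the positivity-preserving property of the semigroup generated by $A_h$ (Lemma~\ref{prop:disc-Laplacian1}, available precisely because we use mass-lumping together with the acuteness hypothesis (H2)) becomes decisive: a positivity-preserving contraction semigroup on $L^q$-type spaces has a generator that is sectorial of angle $\le \pi/2$, and the Stein interpolation / transference machinery then controls the imaginary powers. Concretely, I would show that the mass-lumped semigroup $e^{tA_h}$ is a positive contraction on $X_{h,q}$ for every $q\in[1,\infty]$ and $h>0$; interpolating the $q=1$ and $q=\infty$ contraction estimates with the self-adjoint $q=2$ case yields uniform analyticity and the sector estimate. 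The condition (H1) enters through the inverse inequality needed for discrete Sobolev/stability estimates, while (H2) is what guarantees positivity when $q\ne 2$.

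Next I would establish the bounded imaginary powers with the right angle. The cleanest route is to use the abstract principle that for a sub-Markovian (positivity-preserving, $L^\infty$-contractive) symmetric semigroup, the negative generator has bounded imaginary powers on $L^q$ with power-angle $\le \pi|1/2-1/q|$, hence strictly less than $\pi/2$ for $q\in(1,\infty)$; this is the discrete analogue of the classical result for the Laplacian. I would transport this estimate to $X_{h,q}$ via the equivalence of norms, being careful that the equivalence constants in Lemma~\ref{lem:lumping} are \emph{uniform in $h$}, so that the power-angle bound does not degrade as $h\to 0$. With uniform-in-$h$ BIP of angle $<\pi/2$ in hand, the Dore--Venni theorem (or the stated continuous analogue feeding into the DMR framework) yields $L^p$-CMR for the sum operator on $L^p(J_T;X_{h,q})$ with a constant depending only on $p$, $q$, the UMD constant of $X_{h,q}$, and the power-angle, all of which are controlled uniformly in $h$. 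This gives exactly the estimate \eqref{eq:mr_disc-Lap} with $C$ independent of $h$.

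The main obstacle I anticipate is the \emph{uniformity in $h$} of the imaginary-power bounds, rather than their existence for each fixed $h$ (which is automatic since $\dim S_h<\infty$). The finite-dimensionality makes every individual $A_h$ trivially nice, so the entire content is in showing the power-angle and the BIP norms do not blow up as the mesh is refined. This is precisely why the positivity-preserving structure is exploited: it lets one bypass spectral-resolution arguments that would carry $h$-dependent constants, and instead obtain the angle bound from the contractivity on $L^1$ and $L^\infty$ uniformly. I would therefore spend most of the effort proving that the mass-lumped discrete heat semigroup is a positive $L^q$-contraction with constants independent of $h$, since the interpolation of the imaginary powers and the final appeal to the Dore--Venni-type theorem are then comparatively routine.
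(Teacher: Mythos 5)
Your overall architecture is the paper's own: establish, uniformly in $h$, that $-A_h \in \cP(X_{h,q};K)\cap\bip(X_{h,q};M,\theta)$ with angle $\theta<\pi/2$, exploiting positivity-preservation (Lemma~\ref{prop:disc-Laplacian1}) and contractivity/analyticity (Lemma~\ref{prop:disc-Laplacian2}) of $e^{tA_h}$, and then conclude by a Dore--Venni-type theorem whose constant depends only on the ambient space (Lemma~\ref{cor:mr-imaginary_power} plays exactly the role of your ``uniform UMD constant'' remark, since $M_h$ realizes $X_{h,q}$ isometrically as a closed subspace of $L^q$). The one stylistic divergence is the BIP step: you invoke Cowling's sub-Markovian theorem to get the power-angle $\pi|1/2-1/q|<\pi/2$ in one stroke, whereas the paper applies Duong's transference-based $H^\infty$ bound (Lemma~\ref{thm:Duong}), which only yields angles $\theta>\pi/2$, and then pushes the angle below $\pi/2$ by Riesz--Thorin interpolation against the self-adjoint case $q=2$, where the spectral theorem gives $\|(-A_h)^{it}\|_{\cL(X_{h,2})}\le 1$. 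These are two packagings of the same transference-plus-interpolation mechanism, and both inherit $h$-uniformity from measure-space independence of the constants; your version is marginally more economical, the paper's is more self-contained. (Also, where you propose to derive uniform analyticity by interpolating the $L^1$/$L^\infty$ contractions with the $L^2$ case, the paper simply cites Crouzeix--Thom\'ee; either works.)

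There is, however, one concrete gap. Membership in $\cP(X_{h,q};K)$ requires $\|(1+\lambda)R(\lambda;A_h)\|_{\cL(X_{h,q})}\le K$ down to $\lambda=0$, i.e.\ an $h$-uniform bound on $\|A_h^{-1}\|_{\cL(X_{h,q})}$. Contractivity and positivity alone give only the homogeneous bound $\lambda\|R(\lambda;A_h)\|\le 1$, so none of the mechanisms you list can produce the $\lambda=0$ estimate; without it, Lemma~\ref{lem:mr-imaginary_power} does not apply as stated, and the term $\|u_h\|_{L^p(J_T;X_{h,q})}$ in \eqref{eq:mr_disc-Lap} has no $h$-independent constant. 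This is precisely where the hypothesis $q\in(1,\mu)$ --- which your write-up never uses --- and Assumption~\ref{assum:regularity} enter: the paper proves $\|v_h\|_{h,q}\le C\|A_h v_h\|_{h,q}$ (Lemma~\ref{lem:inv_disc-Lap}) by reducing to $L_h=K_hA_h$, writing $v_h=R_h\bigl(A^{-1}L_h v_h\bigr)$ via $R_h=L_h^{-1}P_hA$, and combining the $W^{1,q}$-stability of the Ritz projection with the elliptic regularity \eqref{eq:regularity}, valid on a polyhedral domain only for $q<\mu$. Relatedly, your attribution of (H1) to ``the inverse inequality needed for discrete Sobolev/stability estimates'' is off for this theorem: in the CMR proof, (H1) is consumed (for $q\ne2$) through the $K_h$-norm equivalence of Lemma~\ref{lem:lumping} inside Lemma~\ref{lem:inv_disc-Lap}, while the inverse inequality of Lemma~\ref{lem:inv_ineq} needs no mesh condition and is used only later, to verify the condition (NR2) for the discrete result. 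Adding the uniform $A_h^{-1}$ bound closes your argument.
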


\begin{remark}\label{rem:fd}
Since \eqref{eq:00} is a system of (inhomegeneous) linear ordinary
 differential equations, the unique existence of a solution follows
 immediately.   
\end{remark}

Next, we state results about DMR for $A_h$. To state them, we set
\begin{align}
 \theta_q&=\arccos |1-q/2|,\label{eq:tq}\\
\kappa_h &= \min_{K \in \cT_h} \kappa_K, \label{eq:kappa}
 \end{align}
where $\kappa_K $ denotes the minimum length of perpendiculars of $K$.

\begin{thm}[DMR for $A_h$ in $J_\infty$]\label{cor:dmr_disc-Laplacian}
Let $p \in (1, \infty)$, $q \in (1, \mu)$ and $\theta \in [0,1]$.
Assume that (H1) and (H2) are satisfied when $q \ne 2$. 
 We choose $\varepsilon$ and $\tau$ sufficiently small to satisfy
\begin{equation}
\frac{\tau}{\kappa_h^2} \le \frac{2 \sin \theta_q - \varepsilon}{(1 -2 \theta)(d+1)^2},
\label{eq:sufficient-NR}
\end{equation}
when $\theta \in [0, 1/2)$.
Then, $A_h$ has $l^p$-DMR on  $J_\infty$ in $X_{h,q}$ uniformly for $h > 0$.
That is, there exists $C>0$ independent of $h$ and $\tau$ satisfying
\begin{equation*}
\| u_{h,\theta} \|_{l^p_\tau(\bN; X_{h,q})} + \| D_\tau u_h \|_{l^p_\tau(\bN; X_{h,q})} + \| A_h u_{h,\theta} \|_{l^p_\tau(\bN; X_{h,q})}
\le C \| g_{h,\theta} \|_{l^p_\tau(\bN; X_{h,q})},
\end{equation*}
where $g_h = (g_h^n)_n \in l^p(\bN; X_{h,q})$ and $u_h = (u_h^n)_n$ is the solution of
\begin{equation*}
\begin{cases}
(D_\tau u_h)^n = A_h u_h^{n + \theta} + g_h^{n+\theta}, & n \in \bN, \\
u_h^0 = 0.
\end{cases}
\end{equation*}
\end{thm}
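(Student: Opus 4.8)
The plan is to obtain DMR as a consequence of the continuous maximal regularity already in hand (Theorem~\ref{cor:mr_disc-Laplacian}) together with the abstract sufficient condition for DMR recalled as Lemma~\ref{thm:sufficient}. That lemma reduces $l^p$-DMR for the $\theta$-scheme to two geometric facts about $A_h$: (i) $A_h$ has $L^p$-CMR and its numerical range $W(A_h)$ in the space $X_{h,q}$ is contained in a sector about the negative real axis of half-angle $\pi/2-\theta_q$; and (ii) when $\theta\in[0,1/2)$, the scaled numerical range $\tau\,W(A_h)$ lies inside the stability region of the $\theta$-scheme. Thus the work splits into verifying the sector hypothesis with the precise angle $\theta_q$ from \eqref{eq:tq}, and converting condition (ii) into the explicit bound \eqref{eq:sufficient-NR}.

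For the sector hypothesis I would first note that $A_h$ has CMR by Theorem~\ref{cor:mr_disc-Laplacian}. To pin down the half-angle, I would use the positivity-preserving property of $A_h$ (Lemma~\ref{lem:positivity_disc-Lap}, which is where acuteness (H2) enters; cf.\ Remark~\ref{rem:sa}) together with the contractivity of the semigroup $e^{tA_h}$ on $X_{h,1}$ and $X_{h,\infty}$ granted by mass-lumping. In the self-adjoint case $q=2$ the operator $A_h$ is negative in $X_{h,2}$, so $W(A_h)$ sits on the negative real axis and the half-angle is $0=\pi/2-\theta_2$. The general $q$ follows by interpolating the sub-Markovian bounds, and the resulting $L^q$-angle is exactly $\theta_q=\arccos|1-q/2|$, the standard analyticity angle of a sub-Markovian semigroup on $L^q$. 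This establishes (i).

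For $\theta\in[1/2,1]$ the $\theta$-scheme is A-stable, its stability region contains the whole left half-plane, and Lemma~\ref{thm:sufficient} yields DMR with no restriction on $\tau$. For $\theta\in[0,1/2)$ the stability region is the disk tangent to the imaginary axis at the origin, centred at $-(1-2\theta)^{-1}$ with radius $(1-2\theta)^{-1}$; a short geometric computation shows that the sector of half-angle $\pi/2-\theta_q$ and radius $r$ about the negative real axis fits inside this disk precisely when $r\le 2(1-2\theta)^{-1}\sin\theta_q$. Taking $r=\tau$ times the numerical-range radius of $A_h$ and bounding the latter by the inverse inequality (Lemma~\ref{lem:inv_ineq}), which gives $\|A_h\|_{X_{h,q}}\le (d+1)^2\kappa_h^{-2}$ with $\kappa_h$ as in \eqref{eq:kappa}, turns the fit condition into $\tau\kappa_h^{-2}\le 2\sin\theta_q/[(1-2\theta)(d+1)^2]$. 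The parameter $\varepsilon$ is inserted to keep $\tau\,W(A_h)$ strictly inside the disk, which is what Lemma~\ref{thm:sufficient} needs for a uniform (R-)bound, and this produces exactly \eqref{eq:sufficient-NR}.

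The main obstacle is step (i): securing the \emph{exact} angle $\theta_q$, and not merely some positive angle, \emph{uniformly in} $h$ in the mesh-dependent norm $X_{h,q}$. One must check that passing through the lumped inner product and the norm equivalence $\|\cdot\|_{h,q}\simeq\|\cdot\|_{L^q}$ (Lemma~\ref{lem:lumping}) does not spoil the interpolation constants, so that both the angle and the constant $C$ in the DMR estimate are genuinely independent of $h$. Once the $h$-uniform sector is in place, the derivation of \eqref{eq:sufficient-NR} through the inverse inequality is essentially bookkeeping of the geometric constants.
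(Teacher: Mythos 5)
Your overall architecture coincides with the paper's own proof: DMR is deduced from Theorem~\ref{cor:mr_disc-Laplacian} through the abstract criterion of Lemma~\ref{thm:sufficient}, and your reduction of the stability condition to \eqref{eq:sufficient-NR} --- bounding the numerical-range radius by $r(A_h)\le (d+1)^2\kappa_h^{-2}$ via the inverse inequality of Lemma~\ref{lem:inv_ineq}, and checking that the sector of half-angle $\pi/2-\theta_q$ truncated at radius $\tau r(A_h)$ fits inside the disk $\bigl|z+(1-2\theta)^{-1}\bigr|\le(1-2\theta)^{-1}$ precisely when $(1-2\theta)\tau r(A_h)+\varepsilon\le 2\sin\theta_q$ --- is exactly the content of Lemma~\ref{prop:NR-suff_cond}, whose proof estimates $|(A_hv_h,v_h^*)_h|\le\|\nabla v_h\|_{L^q}\|\nabla v_h^*\|_{L^{q'}}$ with the nodal duality element $v_h^*(P)=|v_h(P)|^{q-2}v_h(P)$. (Minor point: what (NR2) needs is this numerical-range radius, not the operator norm $\|A_h\|_{\cL(X_{h,q})}$; your bookkeeping happens to produce the same constant.) Your treatment of $\theta\in[1/2,1]$ is also as in the paper: Lemma~\ref{thm:sufficient} imposes (NR) only for $\theta\in[0,1/2)$.

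The genuine gap is in your step (i), the verification of (NR1) with the \emph{exact} half-angle $\theta_q=\arccos|1-2/q|$ (this is the correct formula, used in Lemma~\ref{prop:NR-suff_cond}; the display \eqref{eq:tq} contains a typo). You claim this "follows by interpolating the sub-Markovian bounds," but Stein-type interpolation between sector contractivity on $X_{h,2}$ (half-angle $\pi/2$) and ray contractivity on $X_{h,1}$, $X_{h,\infty}$ (half-angle $0$) yields only the half-angle $\frac{\pi}{2}\bigl(1-|1-2/q|\bigr)$, strictly smaller than $\theta_q$ for $q\ne 1,2,\infty$ (for $q=4$ it gives $\pi/4$, whereas $\theta_4=\pi/3$). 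With the smaller angle your argument proves the theorem only under a more restrictive step-size condition than \eqref{eq:sufficient-NR}, so the statement as given does not follow. The sharp angle is a Liskevich--Perelmuter-type fact whose proof is a \emph{direct} computation of the real and imaginary parts of $(A_hv_h,v_h^*)_h$ with the duality map, not an interpolation; the paper imports exactly this for the lumped operator through Lemma~\ref{prop:disc-Laplacian2}, i.e.\ \cite[Theorem 4.1]{CroT01}, where the mass-lumped inner product lets the continuous Kato-type inequality be run nodewise, with uniformity in $h$ automatic since no mesh constants enter. Note also that (NR1) in Lemma~\ref{thm:sufficient} concerns the numerical range $S(A_h)$ defined in \eqref{eq:num_ran1}, not an analyticity angle: even granted sector contractivity with the correct angle, you would still need to pass to $S(A_h)\subset\bC\setminus\Sigma_{\theta_q+\pi/2}$, e.g.\ by differentiating $\operatorname{Re}\langle v_h^{*}, e^{te^{i\phi}A_h}v_h\rangle\le 1$ at $t=0$ for $|\phi|<\theta_q$ --- an easy but missing step. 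Replacing your interpolation argument by the direct numerical-range computation (or by the citation to Crouzeix--Thom\'ee) closes the gap; the remainder of your proof then matches the paper's.
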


\begin{thm}[DMR for $A_h$ in $J_T$]
\label{cor:dmr_disc-Lap_finite}
Let $p \in (1, \infty)$, $q \in (1, \mu)$ and $\theta \in [0,1]$.
Assume that (H1) and (H2) are satisfied when $q \ne 2$.
Choose $\varepsilon$ and $\tau$ sufficiently small to satisfy \eqref{eq:sufficient-NR}, when $\theta \in [0, 1/2)$.
Then, for every $T>0$ and for every $g_h \in l^p(N_T-1;X_{h,q})$,
there exists a unique solution $u_h \in l^p(N_T;X_{h,q})$ of 
\begin{equation*}
\begin{cases}
(D_\tau u_h)^n = A_h u_h^{n + \theta} + g_h^{n}, & n =0,1,\dots, N_T-1, \\
u_h^0 = 0,
\end{cases}
\end{equation*}
and it satisfies
\begin{equation}
\| u_{h,\theta} \|_{l^p_\tau(N_T; X_{h,q})} + \| D_\tau u_h \|_{l^p_\tau(N_T; X_{h,q})} + \| Au_{h,\theta} \|_{l^p_\tau(N_T; X_{h,q})}
\le C \| g_h \|_{l^p_\tau(N_T; X_{h,q})},
\label{eq:dmr-var_heat}
\end{equation}
where $C>0$ is independent of $g$, $T$, $h$, and $\tau$.\end{thm}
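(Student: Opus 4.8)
The plan is to reduce the finite-horizon scheme to the infinite-horizon result of Theorem~\ref{cor:dmr_disc-Laplacian} by extending the data by zero and continuing the recursion homogeneously. Unique solvability on $\{0,\dots,N_T\}$ is immediate: rewriting the scheme as $(I-\tau\theta A_h)u_h^{n+1}=(I+\tau(1-\theta)A_h)u_h^n+\tau g_h^n$ and noting from \eqref{eq:dl} that $(A_h v_h,v_h)_h=-\|\nabla v_h\|_{L^2}^2\le 0$, so that $A_h$ is negative definite on the finite-dimensional space $S_h$ and its eigenvalues are negative, the operator $I-\tau\theta A_h$ is invertible for every $\tau,\theta\ge 0$. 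Hence $u_h^{n+1}$ is determined uniquely from $u_h^n$, starting from $u_h^0=0$, and a unique finite solution $u_h$ exists.

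Let $\tilde g_h=(\tilde g_h^n)_{n\in\bN}$ be the zero extension of $g_h$, i.e.\ $\tilde g_h^n=g_h^n$ for $0\le n\le N_T-1$ and $\tilde g_h^n=0$ for $n\ge N_T$. The only genuine subtlety is that Theorem~\ref{cor:dmr_disc-Laplacian} is formulated with the \emph{averaged} forcing $g_h^{n+\theta}$, whereas the present scheme carries the \emph{un-averaged} forcing $g_h^n$. To bridge this I would construct $\phi_h\in l^p(\bN;X_{h,q})$ whose average reproduces $\tilde g_h$, namely $\phi_{h,\theta}=\tilde g_h$, i.e.\ $(1-\theta)\phi_h^n+\theta\phi_h^{n+1}=\tilde g_h^n$ for all $n$. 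Setting $\phi_h^n=0$ for $n\ge N_T$ and solving backwards $\phi_h^n=(1-\theta)^{-1}(\tilde g_h^n-\theta\phi_h^{n+1})$ for $n=N_T-1,\dots,0$ (and, in the degenerate case $\theta=1$, simply $\phi_h^{n+1}=\tilde g_h^n$ with $\phi_h^0=0$) yields a \emph{finitely supported} $\phi_h$, which in particular lies in $l^p(\bN;X_{h,q})$ for every $\theta\in[0,1]$. The point that makes the averaging harmless — and which I expect to be the crux of the argument — is that the right-hand side of the estimate in Theorem~\ref{cor:dmr_disc-Laplacian} involves $\|\phi_{h,\theta}\|$ and \emph{not} $\|\phi_h\|$; thus the inversion of the averaging operator, whose constant degenerates as $\theta\to 1/2$, need never be controlled, and one only needs a preimage lying in $l^p$, which finite support guarantees.

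With this $\phi_h$ as input, Theorem~\ref{cor:dmr_disc-Laplacian} (whose hypotheses, including the smallness condition \eqref{eq:sufficient-NR} for $\theta\in[0,1/2)$, are exactly those assumed here) produces a solution $v_h=(v_h^n)_{n\in\bN}$ of $(D_\tau v_h)^n=A_h v_h^{n+\theta}+\phi_{h,\theta}^n=A_h v_h^{n+\theta}+\tilde g_h^n$ with $v_h^0=0$, satisfying
\begin{equation*}
\| v_{h,\theta} \|_{l^p_\tau(\bN; X_{h,q})} + \| D_\tau v_h \|_{l^p_\tau(\bN; X_{h,q})} + \| A_h v_{h,\theta} \|_{l^p_\tau(\bN; X_{h,q})} \le C \| \phi_{h,\theta} \|_{l^p_\tau(\bN; X_{h,q})}.
\end{equation*}
Since $\tilde g_h^n=g_h^n$ for $0\le n\le N_T-1$, the sequence $v_h$ obeys the same recursion and initial condition as $u_h$ on $\{0,\dots,N_T\}$, whence $v_h^n=u_h^n$ there by the uniqueness above. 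Restricting the $l^p_\tau(\bN)$ sums to $0\le n\le N_T-1$ only decreases them, so the three terms in \eqref{eq:dmr-var_heat} are bounded by the corresponding $v_h$-terms, while $\|\phi_{h,\theta}\|_{l^p_\tau(\bN; X_{h,q})}=\|\tilde g_h\|_{l^p_\tau(\bN; X_{h,q})}=\|g_h\|_{l^p_\tau(N_T; X_{h,q})}$ by construction. This yields \eqref{eq:dmr-var_heat} with a constant $C$ inherited from Theorem~\ref{cor:dmr_disc-Laplacian}, hence independent of $g_h$, $T$, $h$, and $\tau$; the remaining steps are routine bookkeeping of indices.
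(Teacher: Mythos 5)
Your proposal is correct and follows essentially the same route as the paper, which proves this theorem in one line by combining Theorem~\ref{cor:dmr_disc-Laplacian} with the zero-extension argument of Lemma~\ref{lem:dmr_finite}. Your construction of the finitely supported $\phi_h$ with $\phi_{h,\theta}=\tilde g_h$ makes explicit a bridging step (from un-averaged to averaged forcing) that the paper's proof of Lemma~\ref{lem:dmr_finite} leaves implicit, and your invertibility argument for $I-\tau\theta A_h$ is a valid, if more detailed, version of the uniqueness claim the paper dismisses as ``readily apparent.''
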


\begin{thm}[DMR for non-zero initial value]
\label{cor:dmr_disc-Lap_initial}
Let $p \in (1, \infty)$ and $q \in (1, \mu)$.
Assume that (H1) and (H2) are satisfied when $q \ne 2$.
Then, for every $T>0$, $g_h \in l^p(N_T;X_{h,q})$, and $u_{0,h} \in (X_{h,q}, D(A_h))_{1-1/p,p}$,
there exists a unique solution $u_h \in l^p(N_T;X_{h,q})$ of 
\begin{equation*}
\begin{cases}
(D_\tau u_h)^n = A_h u_h^{n+1} + g_h^{n+1}, & n =0,1,\dots, N_T-1, \\
u_h^0 = u_{0,h},
\end{cases}
\end{equation*}
which satisfies
\begin{multline}
\| u_{h,1} \|_{l^p_\tau(N_T; X_{h,q})} + \| D_\tau u_h \|_{l^p_\tau(N_T; X_{h,q})} + \| Au_{h,1} \|_{l^p_\tau(N_T; X_{h,q})}\\
\le C \left[  \| g_{h,1} \|_{l^p_\tau(N_T; X_{h,q})} + \|u_{0,h}\|_{1-1/p,p}
\right],
\label{eq:dmr-initial_heat}
\end{multline}
where $C>0$ is independent of $g$, $u_{0,h}$, $T$, $h$, and $\tau$.
\end{thm}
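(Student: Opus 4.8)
The plan is to reduce to the zero--initial--value result already in hand by splitting off the datum. Writing $R_\tau=(I-\tau A_h)^{-1}$, the backward Euler recursion reads $u_h^{n+1}=R_\tau\bigl(u_h^n+\tau g_h^{n+1}\bigr)$, so by linearity $u_h=v_h+w_h$, where $v_h$ solves the scheme with $v_h^0=0$ and $w_h^n=R_\tau^{\,n}u_{0,h}$ solves the homogeneous scheme with $w_h^0=u_{0,h}$. Applying Theorem \ref{cor:dmr_disc-Lap_finite} with $\theta=1$ to the reindexed source $(g_h^{n+1})_n$, whose $l^p_\tau$-norm equals $\|g_{h,1}\|_{l^p_\tau(N_T;X_{h,q})}$ exactly, bounds the three quantities $\|v_{h,1}\|$, $\|D_\tau v_h\|$ and $\|A_h v_{h,1}\|$ by $C\,\|g_{h,1}\|_{l^p_\tau}$. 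It remains only to estimate the homogeneous part $w_h$ in terms of $\|u_{0,h}\|_{1-1/p,p}$.

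The homogeneous recursion gives $(D_\tau w_h)^n=A_h w_h^{n+1}$, hence $D_\tau w_h=A_h w_{h,1}$ and the two derivative terms coincide; moreover
\[
\|A_h w_{h,1}\|_{l^p_\tau(N_T;X_{h,q})}=\Bigl(\textstyle\sum_{n=1}^{N_T}\|A_h R_\tau^{\,n}u_{0,h}\|_{X_{h,q}}^p\,\tau\Bigr)^{1/p}.
\]
Thus the whole statement reduces to the one-sided \emph{discrete trace estimate}
\[
\Bigl(\textstyle\sum_{n=1}^{N_T}\|A_h R_\tau^{\,n}u_{0,h}\|_{X_{h,q}}^p\,\tau\Bigr)^{1/p}\le C\,\|u_{0,h}\|_{1-1/p,p},
\]
with $C$ independent of $h$, $\tau$ and $T$. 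The zeroth-order term $\|w_{h,1}\|_{l^p_\tau}$ is handled separately: since $A_h$ is invertible with a spectral gap controlled uniformly in $h$ by the continuous Poincar\'e constant, the resolvent powers decay geometrically, $\|R_\tau^{\,n}\|_{X_{h,q}}\le C(1+\omega\tau)^{-n}$ with $\omega>0$ independent of $h$, so that $\|w_{h,1}\|_{l^p_\tau}\le C\|u_{0,h}\|_{X_{h,q}}\le C\|u_{0,h}\|_{1-1/p,p}$, the last step being the uniform embedding of the interpolation space into $X_{h,q}$.

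To prove the trace estimate with uniform constants I would exploit the uniform sectoriality of $A_h$. By Theorem \ref{cor:mr_disc-Laplacian} together with Lemma \ref{thm:bip_disc-Laplacian}, $-A_h$ has bounded imaginary powers on $X_{h,q}$ with angle and bound independent of $h$, so it is sectorial of a fixed angle $<\pi/2$ uniformly in $h$; consequently the real interpolation norm admits the semigroup characterization
\[
\|u_{0,h}\|_{1-1/p,p}^p\approx\|u_{0,h}\|_{X_{h,q}}^p+\int_0^\infty\|A_h e^{tA_h}u_{0,h}\|_{X_{h,q}}^p\,dt
\]
with equivalence constants depending only on the sectorial data, hence uniform in $h$. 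The remaining work is to pass from the analytic semigroup $e^{tA_h}$ to the $A$-stable resolvent powers $R_\tau^{\,n}$: using the uniform bounds $\sup_n\|(\tau A_h)R_\tau^{\,n}\|_{X_{h,q}}\le C$ and $\|A_h R_\tau^{\,n}\|_{X_{h,q}}\le C/(n\tau)$, valid uniformly in $h$ and $\tau$, one compares the Riemann-type sum $\sum_n\|A_h R_\tau^{\,n}u_{0,h}\|^p\tau$ with the integral above through a summation-by-parts argument and a discrete Hardy inequality.

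The main obstacle is precisely this comparison, made uniform in both discretization parameters at once. Uniformity in $h$ rests on the $h$-independent BIP and sectorial constants furnished by Theorem \ref{cor:mr_disc-Laplacian} and Lemma \ref{thm:bip_disc-Laplacian}, so the genuinely delicate point is the $\tau$-uniform transition between $R_\tau^{\,n}$ and $e^{n\tau A_h}$. I expect this quantitative discrete-to-continuous comparison, rather than any single soft interpolation fact, to be the technical crux, with the $A$-stability estimate $\sup_n\|(\tau A_h)R_\tau^{\,n}\|\le C$ as the key input that keeps all error terms summable independently of $\tau$.
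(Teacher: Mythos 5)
Your proposal is correct in substance, but it takes a genuinely different route from the paper. The paper's proof of this theorem is a two-line reduction: it invokes Theorem \ref{cor:dmr_disc-Laplacian} (DMR for $A_h$ on $J_\infty$, uniform in $h$) together with the abstract Lemma \ref{lem:dmr_initial}, whose proof is not given in the paper but outsourced to \cite{AshS94} (for $T<\infty$) and \cite{Kem15} (for $T=\infty$); that lemma already packages the nonzero-initial-value a priori estimate, including the uniformity in the closed subspace $X_0=S_h$. You instead reconstruct the content of that cited lemma from scratch: the splitting $u_h=v_h+w_h$ with $w_h^n=R_\tau^n u_{0,h}$, the exact reindexing observation that the source norm is $\|g_{h,1}\|_{l^p_\tau}$, and the reduction of everything to the one-sided discrete trace estimate $\bigl(\sum_{n\ge1}\|A_hR_\tau^n u_{0,h}\|^p\tau\bigr)^{1/p}\le C\|u_{0,h}\|_{1-1/p,p}$ are all sound, and your identification of the discrete-to-continuous comparison as the technical crux is exactly right --- that comparison is precisely what the literature the paper cites carries out. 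One concrete way to close your crux cleanly is the Gamma-average representation $R_\tau^n=\int_0^\infty \frac{s^{n-1}e^{-s}}{(n-1)!}\,e^{s\tau A_h}\,ds$, valid since $e^{tA_h}$ is a bounded (indeed contraction) semigroup by Lemma \ref{prop:disc-Laplacian2}; applying $A_h$, using the concentration of the Gamma density near $s\approx n$, and invoking the semigroup characterization of $(X_{h,q},D(A_{h,q}))_{1-1/p,p}$ (with constants uniform in $h$ thanks to Lemmas \ref{lem:positivity_disc-Lap} and \ref{prop:disc-Laplacian2}) yields the trace estimate uniformly in $\tau$ and $h$, so your $A$-stability bounds $\|(\tau A_h)R_\tau^n\|\le C$ and $\|A_hR_\tau^n\|\le C/(n\tau)$ suffice as inputs. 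What each approach buys: the paper's route is short and stays at the abstract operator level, but is only as self-contained as its citations; yours is longer but makes the initial-value mechanism explicit and would make Sec.~\ref{sec:dmr} self-contained.

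Two small repairs to your write-up. First, your treatment of the zeroth-order term $\|w_{h,1}\|_{l^p_\tau}$ via geometric decay $\|R_\tau^n\|_{\cL(X_{h,q})}\le C(1+\omega\tau)^{-n}$ with $\omega$ uniform in $h$ is the shakiest sub-step: the Poincar\'e argument gives a uniform spectral gap in $X_{h,2}$, but transferring it to an $h$-uniform \emph{norm} bound in $X_{h,q}$, $q\ne2$, requires an extra argument (e.g.\ uniform analyticity plus uniform invertibility to get $\|e^{t_0A_h}\|\le 1/2$ for a fixed $t_0$, then the integral representation above). It is simpler to bypass decay altogether: Lemma \ref{lem:inv_disc-Lap} gives $\|w_h^{n}\|_{h,q}\le C\|A_hw_h^{n}\|_{h,q}$ uniformly in $h$, so the zeroth-order term is dominated by the second-order one, which you already control. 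Second, in your final splitting estimate note explicitly that $D_\tau w_h=A_hw_{h,1}$ lets you charge both derivative terms to the same trace estimate; you state this, so only the bookkeeping constant changes.
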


Therein, $(X_{h,q}, D(A_h))_{1-1/p,p}$ and $\| \cdot \|_{1-1/p,p}$ respectively denote
the real interpolation space and its norm. 
(see Subsection \ref{subsection:embedding} for related details.)

\medskip

Those theorems are applicable for error analysis of
the fully discretized finite element approximation for heat equations.
First, we consider a linear heat equation \eqref{eq:heat} 
for $T \in (0,\infty)$, $g \in L^p(J_T; L^q)$ and $u_0 \in L^q$.
We further assume $g \in C^0(\overline{J}_T; L^q))$.
We consider the following approximate problem to find $u_h = (u_h^n)_n \in l^p(N_T; S_h)$ satisfying
\begin{equation}\label{eq:disc-heat0}
 \begin{cases}
((D_\tau u_h)^n, v_h)_h +(\nabla u_h^{n+\theta}, \nabla v_h)_{L^2} 
  = (G^{n+\theta}, v_h)_{L^2}, & \forall v_h \in S_h,\\
  & n =0,1\dots,N_T-1,\\
(u^{0}_h, v_h)_{L^2}=(u_0, v_h)_{L^2} , & \forall v_h \in S_h
\end{cases}
\end{equation}
where $\tau \in (0,1)$, $\theta \in [0,1]$, $t_n = n\tau$, and $G^n =
g(\cdot, t_n)$. An alternative scheme is obtained with replacement
$(G^{n+\theta}, v_h)_{L^2}$ by $(G^{n+\theta}, v_h)_h$. However, the
resulting scheme has a shortcoming reported in Appendix \ref{sec:scheme}.

Let $P_h$ be the $L^2$-projection onto $S_h$ defined as 
\begin{equation}
(P_h v, v_h)_{L^2} = (v, v_h)_{L^2}, \quad \forall v_h \in S_h  \label{eq:Ph}
\end{equation}
for $v\in L^1$. 

Then, \eqref{eq:disc-heat0} is equivalently written as 
\begin{equation}
\begin{cases}
(D_\tau u_h )^n = A_h u_h^{n+\theta} + K_h^{-1} P_h G^{n+\theta}, & n =0,1,\dots, N_T - 1, \\
u_h^0 = P_h u_0.
\end{cases} \label{eq:disc-heat}
\end{equation}
Since $A_h$ is invertible, there exists a unique
solution of \eqref{eq:disc-heat}.
We introduce
\begin{equation}
\label{eq:jt}
 j_\theta =
\begin{cases}
 2, & \theta = 1/2,\\
 1, & \mbox{otherwise}
\end{cases}
\end{equation}
and $\mu_d = \max\{ \mu', d/2 \}$.
Since $\mu' < d' \le 2 \le d < \mu$, it might be apparent that
\begin{equation*}
\mu_d = \begin{cases}
\mu' , & d = 2, \\ d/2 = 3/2 , & d = 3.
\end{cases}
\end{equation*}

\begin{thm}[Error estimate for linear equation]
\label{thm:error_linear}
Let $p \in (1, \infty)$ and $q \in (\mu_d, \mu)$.
Let $u_h = (u_h^n)_n \in l^p(N_T;S_h)$ be the solution of \eqref{eq:disc-heat} and $u$ be that of \eqref{eq:heat}.
 Assume $u\in W^{1,p}(J_T;W^{2,q})\cap W^{2,p}(J_T;W^{1,q})\cap
 W^{1+j_\theta,p}(J_T;L^{q})$ and set $U^n = u(\cdot, t_n)$. 
Assume that (H1) and (H2) are satisfied. 
Moreover, we choose $\varepsilon$ and $\tau$ sufficiently small to satisfy \eqref{eq:sufficient-NR}, when $\theta \in [0, 1/2)$.
Then, there exists a positive constant $C$ such that
\begin{equation}
\left( \sum_{n=0}^{N_T-1} \| u_h^{n+\theta} - U^{n+\theta} \|_{L^q}^p \tau \right)^{1/p} \le C( h^2 + \tau^{j_\theta}).
\label{eq:error_estimate}
\end{equation}
The constant $C$ is taken as
\[
 C=C'\left(
 \|u\|_{W^{1,p}(J_T;W^{2,q})}+
 \|\partial_tu\|_{W^{1,p}(J_T;W^{1,q})}+
 \|u\|_{W^{1+j_\theta,p}(J_T;L^{q})}
  \right),
\]
where $C'$ depends only on $\Omega$, $p$, $q$, and $\theta$, but is
 independent of $h$ and $\tau$. 
\end{thm}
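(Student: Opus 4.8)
The plan is to prove the error estimate by combining the discrete maximal regularity results (Theorems \ref{cor:dmr_disc-Lap_finite} and \ref{cor:dmr_disc-Lap_initial}) with careful estimation of the consistency (truncation) error. First I would introduce the elliptic (Ritz-type) projection or, more naturally here given the mass-lumping framework, compare the discrete solution $u_h$ directly against a suitable interpolant of the exact solution. Specifically, let $R_h u(t) \in S_h$ be defined by $-(\nabla R_h u, \nabla v_h)_{L^2} = (\Delta u, M_h^{-1}\cdots$ — but a cleaner route is to set $e_h^n = u_h^n - P_h U^n$ (or $u_h^n$ minus the appropriate projection of $U^n$) and derive a discrete evolution equation for $e_h$. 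The error then splits as $u_h^{n+\theta} - U^{n+\theta} = (u_h^{n+\theta} - \Pi_h U^{n+\theta}) + (\Pi_h U^{n+\theta} - U^{n+\theta})$, where $\Pi_h$ is a projection with optimal approximation properties. The second piece is controlled by standard finite element approximation theory in $L^q$, giving an $O(h^2)$ bound under Assumption \ref{assum:regularity} and the regularity $u \in W^{1,p}(J_T;W^{2,q})$.

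\emph{The core step} is to estimate the first piece via DMR. I would show that $\zeta_h^n := u_h^n - \Pi_h U^n$ solves a discrete scheme of the form $(D_\tau \zeta_h)^n = A_h \zeta_h^{n+\theta} + r_h^n$ with $\zeta_h^0$ determined by the initial data, where $r_h^n$ is a consistency residual. Applying Theorem \ref{cor:dmr_disc-Lap_finite} (zero initial data) together with Theorem \ref{cor:dmr_disc-Lap_initial} (to absorb the nonzero $\zeta_h^0$ coming from $u_h^0 = P_h u_0$ versus $\Pi_h U^0$), one obtains
\begin{equation*}
\Bigl(\sum_{n=0}^{N_T-1} \|\zeta_h^{n+\theta}\|_{L^q}^p \tau\Bigr)^{1/p}
\le C\Bigl[ \bigl(\textstyle\sum_n \|r_h^n\|_{L^q}^p \tau\bigr)^{1/p} + \|\zeta_h^0\|_{1-1/p,p}\Bigr].
\end{equation*}
The residual $r_h^n$ decomposes into two contributions: the \emph{temporal} consistency error of the $\theta$-scheme and the \emph{spatial} consistency error from replacing $\Delta$ by $A_h M_h$-type operators and from the mass-lumping quadrature.

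\emph{The main obstacle}, and where the bulk of the work lies, is bounding $\sum_n \|r_h^n\|_{L^q}^p \tau$ sharply enough to produce exactly $C(h^2 + \tau^{j_\theta})$. For the time direction, the residual is the difference between $D_\tau U^n$ and $\partial_t u(t_{n+\theta})$ plus the defect in evaluating $U^{n+\theta}$; a Taylor expansion shows this is $O(\tau)$ in general but $O(\tau^2)$ at the Crank–Nicolson point $\theta = 1/2$ (hence the appearance of $j_\theta$ in \eqref{eq:jt}), and the $L^p(J_T;L^q)$-summed norm is controlled by $\|u\|_{W^{1+j_\theta,p}(J_T;L^q)}$ via an integral-remainder form of Taylor's theorem rather than pointwise bounds. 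For the space direction, I must exploit \eqref{eq:dl} to rewrite the elliptic defect, use the equivalence of $\|\cdot\|_{h,q}$ and $\|\cdot\|_{L^q}$ (Lemma \ref{lem:lumping}), and invoke $L^q$-approximation estimates for the mass-lumping operator and for $\Pi_h$; this is where the regularity $u \in W^{1,p}(J_T;W^{2,q})$ and $\partial_t u \in W^{1,p}(J_T;W^{1,q})$ are consumed, and where the restriction $q \in (\mu_d,\mu)$ is needed to guarantee both the $W^{2,q}$-elliptic regularity \eqref{eq:regularity} and the Sobolev embeddings underlying the lumping-error bounds. I would finish by estimating $\|\zeta_h^0\|_{1-1/p,p}$ through interpolation between $L^q$ and $D(A_h)$, bounding it by the same data norms. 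Summing the spatial $O(h^2)$ and temporal $O(\tau^{j_\theta})$ contributions and adding the approximation term $\|\Pi_h U - U\|$ yields \eqref{eq:error_estimate} with the stated constant $C$.
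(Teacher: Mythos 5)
Your scaffolding (split the error via a projection, write a discrete evolution equation for the discrete part, apply DMR, Taylor-expand the temporal residual as in Lemma \ref{lem:fdm}) matches the paper's, but you are missing the one maneuver that actually produces the $O(h^2)$ rate: before invoking discrete maximal regularity, the paper applies $A_h^{-1}$ to the error equation. With $e_h^n = u_h^n - P_h U^n$ one has $(D_\tau (A_h^{-1}e_h))^n = A_h (A_h^{-1}e_h^{n+\theta}) + A_h^{-1}r_h^{n,\theta}$, and Theorem \ref{cor:dmr_disc-Lap_finite} applied to this shifted system bounds $\sum_n \|e_h^{n+\theta}\|_{L^q}^p \tau = \sum_n \|A_h(A_h^{-1}e_h^{n+\theta})\|_{L^q}^p\tau$ by $\sum_n \|A_h^{-1}r_h^{n,\theta}\|_{L^q}^p\tau$, i.e.\ the residual is measured in a smoothed (negative) norm. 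This is essential for both spatial pieces of the residual: by \eqref{eq:disc-Lap} the elliptic defect satisfies $A_h^{-1}(K_h^{-1}P_hA - A_hP_h)U^{n+\theta} = (R_h - P_h)U^{n+\theta}$, which is $O(h^2)$ in $L^q$, and the mass-lumping defect is handled by Lemma \ref{lem:Kh}, whose statement is precisely about $A_h^{-1}(I-K_h^{-1})$ --- the paper has no $O(h^2)$ bound for $(I-K_h^{-1})v_h$ in $L^q$ without the $A_h^{-1}$ in front. Your plan bounds $\sum_n\|r_h^n\|_{L^q}^p\tau$ directly, and then the lumping term $(K_h^{-1}-I)P_hV^{n+\theta}$ only yields $O(h)$: this is exactly the degraded estimate \eqref{eq:error_estimate2} that the paper proves (and numerically confirms) for the alternative scheme \eqref{eq:disc-heat2} in Appendix \ref{sec:scheme}, ``since Lemma \ref{lem:Kh} is not available.''

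A second genuine problem is your use of Theorem \ref{cor:dmr_disc-Lap_initial} to absorb a nonzero initial error $\zeta_h^0$. That theorem (via Lemma \ref{lem:dmr_initial}) is available only for the backward Euler scheme, $\theta = 1$, whereas Theorem \ref{thm:error_linear} covers all $\theta \in [0,1]$, so this route fails for $\theta \ne 1$. Moreover, even when it applies, your proposed interpolation bound on $\|\zeta_h^0\|_{1-1/p,p}$ does not close: an $O(h^2)$ bound in $X_{h,q}$ together with an $O(1)$ bound in $D(A_h)$ interpolates only to $O(h^{2/p})$. The paper avoids both issues by comparing against $P_h U^n$: since $u_h^0 = P_h u_0$ in \eqref{eq:disc-heat}, the discrete error satisfies $e_h^0 = 0$ exactly, so the zero-initial-value Theorem \ref{cor:dmr_disc-Lap_finite} suffices; the price --- estimating $(P_h - I)U^{n+\theta}$ instead of a Ritz defect --- is paid with Lemma \ref{lem:projections} and the temporal trace inequality of Lemma \ref{lem:Sobolev}. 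In short, your outline would prove $O(h + \tau^{j_\theta})$ for $\theta = 1$ only; the $A_h^{-1}$-smoothing trick and the choice $\Pi_h = P_h$ are the two ingredients you would need to add.
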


For $q \in (1, \infty)$, let $A_q$ be the realization of the Dirichlet Laplacian:
\begin{equation}
\label{eq:delq}
D(A_q) = W^{2,q} \cap W^{1,q}_0, \quad A_q u = \Delta u.
\end{equation}
We are assuming Assumption \ref{assum:regularity}. 
We consider a semilinear heat equation \eqref{eq:semilinear} under the
following basic assumptions: 
\begin{gather}
u_0 \in (L^q, D(A_q))_{1-1/p, p}, \label{eq:a1}\\
 f \colon \bC \to \bC\text{ is locally Lipschitz continuous
  with }f(0) = 0
. \label{eq:a2}
\end{gather}
Herein, $(L^q, D(A_q))_{1-1/p, p}$ denotes the real
interpolation space \cite{Ama95,Lun09,Tri95}.
Restriction $f(0) = 0$ is set for simplicity.
It is noteworthy that the solution $u$ of \eqref{eq:semilinear} might blow-up:
let $T_\infty \in (0, \infty]$ be the life span of $u$ (the maximal
existence time of $u$).

To avoid unnecessary difficulties, we restrict our consideration to a semi-implicit scheme for \eqref{eq:semilinear} given as
\begin{equation}
\begin{cases}
(D_\tau u_h)^n = A_h u_h^{n+1} + K_h^{-1} P_h f(u_h^n), & n=0,1,\ldots,N_T-1,\\
u_h^0 = P_h u_0,
\end{cases}
\label{eq:approx-semilinear}
\end{equation}
or, equivalently,
\begin{equation*}
\begin{cases}
((D_\tau u_h)^n, v_h)_h +(\nabla u_h^{n+1}, \nabla v_h)_{L^2} 
 = (f(u_h^{n}), v_h)_{L^2},  & \forall v_h \in S_h,\\
 & n=0,1,\ldots,N_T-1,\\
(u_h^0,v_h)_{L^2} = (u_0,v_h)_{L^2}, & \forall v_h \in S_h.
\end{cases}
\end{equation*}

Since $A_h$ is invertible, there exists a unique
solution of \eqref{eq:approx-semilinear}. Our final theorem is the
following error estimate for semilinear equation. Our error estimate
remains valid as long as the solution of \eqref{eq:semilinear} exists
and requires no size condition on $u_0$.

 \begin{thm}[Error estimate for semilinear equation]
 \label{thm:sl}
Let $p\in (1,\infty)$, $q \in (\mu_d, \mu)$ and $p > 2q/(2q-d)$.
Assume that (H1) and (H2) are satisfied. 
Presuming that
  \eqref{eq:semilinear} admits a sufficiently smooth solution $u$ under
  the conditions \eqref{eq:a1} and \eqref{eq:a2}, 
  then, for every $T \in (0, T_\infty)$ and
the solution $u_h = (u_h^n)_{n=0}^{N_T}$ of
  \eqref{eq:approx-semilinear},
  we have  
\begin{equation*}
\left( \sum_{n=1}^{N_T} \| u_h^n - U^n \|_{L^q}^p \tau \right)^{1/p}
\le C(h^2 + \tau),
\end{equation*}
where $U^n=u(\cdot,n\tau)$. 
 \end{thm}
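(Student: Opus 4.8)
The plan is to recast the error as the solution of a discrete parabolic problem and to control it by the discrete maximal regularity of Theorem \ref{cor:dmr_disc-Lap_initial} (the case $\theta = 1$), using Theorem \ref{thm:infty} to tame the locally Lipschitz nonlinearity. Let $R_h \colon W^{1,q}_0 \to S_h$ be the Ritz projection, $(\nabla R_h v, \nabla v_h)_{L^2} = (\nabla v, \nabla v_h)_{L^2}$ for all $v_h \in S_h$. A direct computation from \eqref{eq:dl} and the identity $(\phi, v_h)_h = (K_h \phi, v_h)_{L^2}$ yields $A_h R_h v = K_h^{-1} P_h \Delta v$ for $v \in W^{2,q} \cap W^{1,q}_0$, which will cancel the elliptic consistency error. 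Setting $e_h^n = u_h^n - R_h U^n$ and subtracting the equation satisfied by $K_h^{-1} P_h$ applied to the continuous problem from \eqref{eq:approx-semilinear}, I obtain
\begin{equation*}
(D_\tau e_h)^n = A_h e_h^{n+1} + K_h^{-1} P_h\bigl(f(u_h^n) - f(U^n)\bigr) + \rho^n, \qquad e_h^0 = (P_h - R_h) u_0,
\end{equation*}
where $\rho^n = K_h^{-1} P_h \Delta U^{n+1} + K_h^{-1} P_h f(U^n) - R_h (D_\tau U)^n$.

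Next I would estimate the data. By the assumed smoothness of $u$, the backward difference satisfies $\|D_\tau U^n - \partial_t u(t_n)\|_{L^q} \le C\tau$, the explicit treatment of $f$ contributes $\|f(U^n) - f(U^{n+1})\|_{L^q} \le C\tau$, and the second-order $L^q$ approximation properties of $R_h$ and $P_h$ supply the $O(h^2)$ spatial contributions. Together these give
\begin{equation*}
\|\rho\|_{l^p_\tau(N_T; X_{h,q})} \le C(h^2 + \tau), \qquad \|e_h^0\|_{(X_{h,q}, D(A_h))_{1-1/p,p}} \le C h^2 .
\end{equation*}
Since $\|R_h U^n - U^n\|_{L^q} \le C h^2$, by the triangle inequality it suffices to bound $e_h$ in the discrete $l^p_\tau(L^q)$ norm.

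The essential difficulty is the nonlinear forcing, because $f$ is only locally Lipschitz. Here Theorem \ref{thm:infty} is decisive: its sub-optimal $L^\infty$ error bound yields a uniform a priori bound $\sup_{0 \le n \le N_T} \|u_h^n\|_{L^\infty} \le M$ with $M$ independent of $h, \tau$ (for $h, \tau$ small), since $u$ is bounded on $[0,T]$. Consequently $f$ is Lipschitz with a fixed constant $L = L(M)$ on $\{|z| \le M\}$, and
\begin{equation*}
\|f(u_h^n) - f(U^n)\|_{L^q} \le L \|u_h^n - U^n\|_{L^q} \le L\bigl(\|e_h^n\|_{L^q} + C h^2\bigr) .
\end{equation*}
The delicate point is that $L$ need not be small, so this term cannot be absorbed directly through the maximal regularity estimate, whose constant does not shrink with the length of the time interval.

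To close, I would exploit the uniform boundedness of the discrete analytic semigroup $\{(I - \tau A_h)^{-j}\}_{j \ge 0}$ (a by-product of the DMR analysis): representing $e_h^m$ by these operators and applying discrete Young's inequality, the \emph{lower-order} part obeys, on any block of consecutive steps of total length $\le \delta$ starting at index $k_0$,
\begin{equation*}
\|e_h\|_{l^p_\tau(\mathrm{block}; L^q)} \le C\delta\, \|r\|_{l^p_\tau(\mathrm{block}; L^q)} + C \|e_h^{k_0}\|_{L^q} ,
\end{equation*}
with $C$ independent of $\delta, h, \tau$ and $r^n = K_h^{-1} P_h(f(u_h^n) - f(U^n)) + \rho^n$. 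Choosing $\delta$ so that $C\delta L \le 1/2$ absorbs the nonlinear contribution on each block; iterating over the finitely many blocks covering $[0,T]$ (their number depending only on $T$ and $\delta$, not on $\tau$) and propagating the endpoint errors through the discrete trace embedding yields $\|e_h\|_{l^p_\tau(N_T; L^q)} \le C(T)(h^2 + \tau)$. A final triangle inequality with $\|R_h U^n - U^n\|_{L^q} \le C h^2$ gives the assertion. The main obstacle throughout is exactly the locally Lipschitz nonlinearity, which dictates this two-stage strategy: one must first secure the uniform $L^\infty$ bound of Theorem \ref{thm:infty} (itself proved by a continuation argument that fixes $L$) and only then close in the optimal $L^q$ norm by the short-interval smoothing estimate rather than by naive absorption through DMR.
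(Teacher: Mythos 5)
Your overall architecture (error equation for a projection of $u$, $O(h^2+\tau)$ consistency, uniform $L^\infty$ bound to fix a Lipschitz constant, absorption on short time blocks followed by iteration over finitely many blocks) parallels the paper's proof, which also absorbs the nonlinearity on an interval $T_1=(2C_0)^{-p}$ and then iterates via an extension/trace argument. However, there is a genuine quantitative gap at the consistency step that invalidates the claimed rate. Your residual contains (after using $\partial_t u(t_{n+1})=\Delta U^{n+1}+f(U^{n+1})$) the mass-lumping mismatch $(K_h^{-1}-I)P_h\,\partial_t u$, equivalently the discrepancy between $K_h^{-1}P_h$ and $R_h$. In the plain $L^q$ norm this term is only $O(h)$: for $v_h\in S_h$ one has $((K_h-I)v_h,w)=((K_h-I)v_h,P_hw)\le Ch^2\|\nabla v_h\|_{L^q}\|\nabla P_h w\|_{L^{q'}}$, and the inverse inequality costs a factor $h^{-1}$ on the test side, so $\|(I-K_h^{-1})v_h\|_{L^q}\le Ch\|\nabla v_h\|_{L^q}$ and no better. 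This is exactly why the paper proves Lemma \ref{lem:Kh} with the smoothing factor $A_h^{-1}$ in front, $\| A_h^{-1}(I-K_h^{-1})v_h\|_{h,q}\le Ch^2\|\nabla v_h\|_{L^q}$, and why both in the linear case (Theorem \ref{thm:error_linear}) and in the semilinear case (Lemma \ref{thm:global_error-truncated}) the entire error analysis is run on the smoothed quantity $(-A_h)^{-\alpha}e_h$ (with $\alpha=1$ for the optimal $L^q$ rate), recovering $\|e_{h,1}\|_{l^p_\tau}=\|A_h(-A_h)^{-1}e_{h,1}\|_{l^p_\tau}$ from the $\|A_h\,\cdot\,\|$-component of the DMR estimate. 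Your claimed bound $\|\rho\|_{l^p_\tau(N_T;X_{h,q})}\le C(h^2+\tau)$ is therefore false as stated; it is only $O(h+\tau)$, which is precisely the degradation documented for the alternative scheme \eqref{eq:disc-heat2} in Appendix \ref{sec:scheme}.

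This flaw is structural for your closing mechanism, not merely cosmetic. Your short-block estimate is elementary (discrete Duhamel plus uniform boundedness of $(I-\tau A_h)^{-j}$ and Young's inequality) and only controls the lower-order quantity $\|e_h\|_{l^p_\tau(L^q)}$ in terms of the forcing in $L^q$; if you repair the residual by smoothing the error equation with $A_h^{-1}$, that same elementary estimate returns only $\|A_h^{-1}e_h\|_{l^p_\tau(L^q)}$, which does not control $\|e_h\|_{l^p_\tau(L^q)}$. To climb back up you genuinely need the maximal regularity component $\|A_h v_{h,1}\|_{l^p_\tau}$ of Theorems \ref{cor:dmr_disc-Lap_finite}--\ref{cor:dmr_disc-Lap_initial}, which is the paper's route; the semigroup bound alone cannot substitute for it at the optimal order. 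Two smaller points: your claim $\|(P_h-R_h)u_0\|_{1-1/p,p}\le Ch^2$ is unjustified --- interpolating $\|(P_h-R_h)u_0\|_{h,q}\le Ch^2$ against $\|A_h(P_h-R_h)u_0\|_{h,q}\le C$ gives only $O(h^{2/p})$ (the paper sidesteps this entirely by comparing with $P_hU^n$, so that $e_h^0=0$; note that at the smoothed level the Ritz choice would in fact be fine, since then both the $X_{h,q}$- and $D(A_h)$-norms of the smoothed initial error are $O(h^2)$). Finally, invoking Theorem \ref{thm:infty} as a black box is not circular given the paper's logic, but be aware that the paper does not prove it prior to Theorem \ref{thm:sl}: both are extracted simultaneously from Lemma \ref{thm:global_error-truncated} for the truncated, globally Lipschitz nonlinearity $\tilde f_M$, followed by the identification $\tilde u=u$, $\tilde u_h=u_h$ for small $h,\tau$; a self-contained argument must include that truncation step.
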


In the proof of Theorem \ref{thm:sl} (Sec. \ref{section:app_semilinear}), the following sub-optimal
error estimate, which is worth stating separately, will be used. 
 
\begin{thm}[$L^\infty$ error estimate for semilinear equation]
 \label{thm:infty}
 Under the same assumptions of Theorem \ref{thm:sl},
 for every $\alpha \in (0, \alpha_{p,q,d})$ and $T \in (0, T_\infty)$, 
the following error estimate holds:
\begin{equation*}
\max_{0 \le n \le N_T} \| {u}_h^n - U^n \|_{L^\infty}
\le C(h^{2\alpha}+\tau),
\end{equation*}
where $\alpha_{p,q,d} = 1 - 1/p - d/(2q)$ and $U^n=u(\cdot,n\tau)$.
\end{thm}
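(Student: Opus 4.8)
\textit{Proof proposal.} The plan is to run a continuation (bootstrap) argument so that the locally Lipschitz nonlinearity may be treated as a globally Lipschitz one, and then to reach the $L^\infty$ norm through the fractional powers of $-A_h$, discrete maximal regularity, and the discrete Sobolev inequality. Fix $T\in(0,T_\infty)$; since the exact solution $u$ is smooth on $\overline{J_T}$, the quantity $M=\sup_{0\le t\le T}\|u(\cdot,t)\|_{L^\infty}$ is finite, and $f$ is Lipschitz with some constant $L$ on the ball $\{|z|\le M+1\}$. I would let $N^\ast$ be the largest index with $\max_{0\le n\le N^\ast}\|u_h^n\|_{L^\infty}\le M+1$ and prove the asserted bound on the range $[0,N^\ast]$; on this range $f(u_h^n)-f(U^n)$ may be controlled by $L\,|u_h^n-U^n|$, and the final step will be to check that the resulting bound, being $o(1)$ as $h,\tau\to0$, forces $N^\ast=N_T$.

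Next I would set up the error equation. Decompose $u_h^n-U^n=\theta_h^n+\rho^n$ with $\rho^n=P_hU^n-U^n$ the $L^2$-projection error (controlled by standard approximation theory, and in fact of higher order in $L^\infty$ than the target rate) and $\theta_h^n=u_h^n-P_hU^n\in S_h$. Observe $\theta_h^0=P_hu_0-P_hU^0=0$. Subtracting the scheme \eqref{eq:approx-semilinear} from the $P_h$-projected exact equation, $\theta_h$ solves
\begin{equation*}
(D_\tau\theta_h)^n = A_h\theta_h^{n+1} + K_h^{-1}P_h\bigl[f(u_h^n)-f(U^n)\bigr] + w^n,
\end{equation*}
where $w^n$ collects the first-order-in-$\tau$ temporal truncation error together with the spatial consistency error of the mass-lumped discrete Laplacian, so that $\|w\|_{l^p_\tau(N_T;X_{h,q})}\le C(h^2+\tau)$ by the smoothness of $u$. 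Since $\theta_h^0=0$, the zero-initial discrete maximal regularity of $A_h$ (Theorem \ref{cor:dmr_disc-Lap_finite}, whose constant is uniform in $h$ and $\tau$) applies to this identity.

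To pass to the $L^\infty$ norm I would use the fractional powers $(-A_h)^s$: by the discrete Sobolev inequality (Lemma \ref{lem:disc-Sobolev}) one has $\|v_h\|_{L^\infty}\le C\|(-A_h)^{s}v_h\|_{L^q}$ for $s>d/(2q)$, while the embedding of the maximal-regularity trace space $(X_{h,q},D(A_h))_{1-1/p,p}$ into $D((-A_h)^s)$ — equivalently, the discrete-Duhamel smoothing estimate $\|(-A_h)^s(I-\tau A_h)^{-k}\|_{X_{h,q}\to X_{h,q}}\le C(k\tau)^{-s}$ combined with discrete Young's inequality, whose kernel $t^{-s}$ lies in $l^{p'}_\tau$ exactly when $sp'<1$ — controls $\max_n\|(-A_h)^s\theta_h^n\|_{L^q}$ by $\|w\|_{l^p_\tau(N_T;X_{h,q})}$ plus the nonlinear contribution only for $s<1-1/p$. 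These two requirements on $s$ are simultaneously satisfiable precisely when $d/(2q)<1-1/p$, i.e. $\alpha_{p,q,d}>0$, which is guaranteed by $p>2q/(2q-d)$. Tracking the spatial consistency order $h^2$ through this fractional embedding — and interpolating it, via the discrete Gagliardo--Nirenberg inequality (Lemma \ref{lem:disc-GN}), against the merely bounded higher-order quantity $\|A_h\theta_h\|_{l^p_\tau(N_T;X_{h,q})}$ — produces the suboptimal rate $h^{2\alpha}$ for any $\alpha<\alpha_{p,q,d}$, the constant degenerating as $\alpha\uparrow\alpha_{p,q,d}$ because the Sobolev threshold $s>d/(2q)$ cannot be attained.

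Finally, the Lipschitz term feeds $C L\,\|u_h^\cdot-U^\cdot\|$ back into the right-hand side, and I would absorb it by a discrete Gronwall inequality, legitimate because the maximal-regularity constant is independent of $\tau$; this yields $\max_{0\le n\le N^\ast}\|u_h^n-U^n\|_{L^\infty}\le C(h^{2\alpha}+\tau)$ with $C$ depending on $L,T,\alpha$ but not on $h,\tau$. For $h,\tau$ small this is below $1/2$, whence $\|u_h^{N^\ast+1}\|_{L^\infty}\le M+1$ would persist, contradicting the maximality of $N^\ast$ unless $N^\ast=N_T$, which closes the bootstrap. The main obstacle is exactly this circularity: linearizing the only locally Lipschitz $f$ requires an a priori $L^\infty$ bound on $u_h^n$, and the entire fractional-power/discrete-maximal-regularity machinery is deployed to manufacture an $L^\infty$ error estimate strong enough to reproduce that bound. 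The secondary difficulty is the borderline nature of the two fractional embeddings, which excludes the endpoint $\alpha=\alpha_{p,q,d}$ and is responsible for the estimate being genuinely suboptimal rather than of order $h^2$.
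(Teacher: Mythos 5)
Your overall architecture---reduce the locally Lipschitz $f$ to a globally Lipschitz one, then combine discrete maximal regularity, fractional powers of $-A_h$, and the discrete Sobolev inequality to reach $L^\infty$---matches the paper's in outline (the paper truncates $f$ to a globally Lipschitz $\tilde{f}$ and identifies $\tilde{u}_h=u_h$ a posteriori, which is equivalent to your continuation on $N^\ast$). However, there is a genuine gap at the heart of your consistency estimate: the claim $\| w \|_{l^p_\tau(N_T;X_{h,q})} \le C(h^2+\tau)$ is false. The spatial part of the residual is $A_h P_h U^{n+1} - K_h^{-1}P_h A U^{n+1} = A_h(P_h - R_h)U^{n+1}$ by \eqref{eq:disc-Lap}, and since $P_h - R_h$ gains exactly $h^2$ while $A_h$ costs $h^{-2}$ (inverse inequality), this term is merely $O(1)$ in $X_{h,q}$---no smallness at all. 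Your proposal is even internally inconsistent on this point: if $\|w\|_{l^p_\tau} \le C(h^2+\tau)$ held, then DMR applied to the $\theta_h$-equation would give $\|A_h\theta_{h}\|_{l^p_\tau}=O(h^2+\tau)$ rather than the ``merely bounded'' quantity you later interpolate against, and the trace theorem plus the embeddings would then deliver the \emph{optimal} rate $h^2$ in $L^\infty$, which is precisely what this theorem does not assert.

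The missing idea is the paper's conjugation of the error equation by $(-A_h)^{-\alpha}$ \emph{before} invoking DMR (Lemma \ref{thm:global_error-truncated}): since $(-A_h)^{-\alpha}$ commutes with $D_\tau$ and $A_h$, the transformed error solves the same scheme with residual $(-A_h)^{-\alpha}r_h^n$, whose worst term $(-A_h)^{1-\alpha}(P_h-R_h)U^{n+1}$ is bounded via $\|(-A_h)^{\gamma}v_h\|_{h,q}\le Ch^{-2\gamma}\|v_h\|_{h,q}$ (from \eqref{eq:interpolation-1}) by $Ch^{-2(1-\alpha)}\cdot h^2 = Ch^{2\alpha}$, cf.\ \eqref{eq:R1h}; the mass-lumping term is handled likewise through \eqref{eq:interpolation-2} and Lemma \ref{lem:Kh}. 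DMR then gives $\|(-A_h)^{-\alpha}e_h\|_{Y_T}\le C(h^{2\alpha}+\tau)$, and Lemma \ref{lem:trace-fractional} (discrete Sobolev with exponent $\beta-\alpha>d/(2q)$, the embedding into $D((-A_h)^\beta)$ for $\beta<1-1/p$, and the trace theorem) converts this into the asserted $L^\infty$ bound---this is exactly where the constraint $\alpha<\alpha_{p,q,d}$ enters, as you correctly anticipated. Your Gagliardo--Nirenberg fallback cannot substitute for this device: it would require $\max_n\|\theta_h^n\|_{h,q}=O(h^2+\tau)$, but the trace theorem feeds off the full $Y_T$ norm of the \emph{untransformed} error, which is only $O(1)$. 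Finally, note that the paper absorbs the Lipschitz feedback not by a discrete Gronwall inequality but by local smallness on intervals of fixed length $T_1=(2C_0)^{-p}$ iterated via the extension lemma (Lemma \ref{lem:extension}) and the trace theorem (Lemma \ref{prop:trace-finite}), exploiting the $T$-independence of the DMR constant; that portion of your outline is repairable, but the consistency step is not without the $(-A_h)^{-\alpha}$ conjugation.
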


\section{Preliminaries}
\label{section:preliminary}

As explained in this section, we collect some preliminary results used for this study.

\subsection{Continuous maximal regularity}
\label{sec:cmr}

{The definition of CMR in Definition \ref{def:mr1}} is the classical one.
The weaker one is introduced in \cite[Definition 4.1]{Wei01}, 
which requires the inequality
\begin{equation}
\| u' \|_{L^p(J_T; X)} + \| Au \|_{L^p(J_T; X)}
\le C \| f \|_{L^p(J_T; X)}
\label{eq:mr-weak}
\end{equation}
instead of \eqref{eq:mr}.
Also, CMR in this sense is characterized by operator-theoretical properties 
(\cite[Theorem 4.2]{Wei01}).
However, two inequalities \eqref{eq:mr} and \eqref{eq:mr-weak} are
equivalent if $0 \in \rho(A)$, where $\rho(A)$ denotes the resolvent set of $A$.
Since the condition $0 \in \rho(A)$ is satisfied in our application,
we ignore the differences between these definitions.

Conditions necessary for CMR to hold have been studied by many researchers (see e.g.~\cite{Dor93,Wei01}).
Among them, we review some sufficient conditions for CMR, 
which will be used for this study.
For the detail, see \cite{Dor93} and references therein.

\begin{lemma}\label{lem:LpMR}
Let $T\in (0,\infty]$, $X$ be a Banach space and 
$A$ be a densely defined and closed operator on $X$.
Assume that $A$ has $L^{p_0}$-CMR on $J_T$ for some $p_0 \in (1, \infty)$.
Then, $A$ has $L^p$-CMR on $J_T$, for any $p \in (1, \infty)$.
\end{lemma}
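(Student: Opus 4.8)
The plan is to use the standard characterization of continuous maximal regularity as the $L^p$-boundedness of the singular integral operator arising from Duhamel's formula, and then to extrapolate from $p_0$ to arbitrary $p$ by vector-valued Calder\'on--Zygmund theory. First I would record that $L^{p_0}$-CMR forces $A$ to generate a bounded analytic semigroup $(e^{tA})_{t\ge 0}$ on $X$ (this is the implication CMR $\Rightarrow$ analyticity already invoked in the introduction, cf.~\cite{Dor93}). In particular one has the analytic-semigroup estimates $\|Ae^{tA}\|_{\mathcal{L}(X)} \le C/t$ and $\|A^2 e^{tA}\|_{\mathcal{L}(X)} \le C/t^2$ for $t>0$. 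The unique solution of \eqref{eq:acp} with $u(0)=0$ is then given by $u(t)=\int_0^t e^{(t-s)A}g(s)\,ds$, and the map $g\mapsto Au$ is the convolution operator $(Rg)(t)=\int_0^t Ae^{(t-s)A}g(s)\,ds$ with operator-valued kernel $k(t)=Ae^{tA}\mathbf{1}_{\{t>0\}}$.

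The key analytic step is verifying that $k$ is a Calder\'on--Zygmund kernel. The estimates above give precisely $\|k(t)\|_{\mathcal{L}(X)}\le C/|t|$ together with $\|k'(t)\|_{\mathcal{L}(X)}=\|A^2 e^{tA}\|_{\mathcal{L}(X)}\le C/|t|^2$, from which the H\"ormander integral condition $\int_{|t|>2|s|}\|k(t-s)-k(t)\|_{\mathcal{L}(X)}\,dt\le C$ follows by the mean value theorem, uniformly in $s$. By hypothesis $R$ is bounded on $L^{p_0}(J_T;X)$. Invoking the vector-valued extrapolation theorem of Benedek--Calder\'on--Panzone for singular integrals whose kernel satisfies the H\"ormander condition, $R$ extends to a bounded operator on $L^p(J_T;X)$ for every $p\in(1,\infty)$: the case $p<p_0$ is the extrapolation itself, and the case $p>p_0$ follows by a duality argument applied to the adjoint kernel, which satisfies the same bounds with $A^*$ in place of $A$. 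This controls $\|Au\|_{L^p(J_T;X)}$.

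Next I would recover the remaining two terms of \eqref{eq:mr}. Since $u'=Au+g$ pointwise, $\|u'\|_{L^p(J_T;X)}\le\|Au\|_{L^p(J_T;X)}+\|g\|_{L^p(J_T;X)}$ is already controlled. For $\|u\|_{L^p(J_T;X)}$ I would bound the Duhamel integral directly: analyticity yields $\sup_{0<t\le T}\|e^{tA}\|_{\mathcal{L}(X)}<\infty$, so Young's convolution inequality gives $\|u\|_{L^p(J_T;X)}\le C\|g\|_{L^p(J_T;X)}$ on a finite interval, while for $T=\infty$ one uses the exponential decay equivalent to $0\in\rho(A)$ noted after the lemma. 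Uniqueness transfers verbatim, since a solution of \eqref{eq:acp} with $g=0$ and $u(0)=0$ vanishes identically by the semigroup representation.

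The main obstacle is the passage through the finite interval $J_T$. For $T=\infty$ the convolution structure is exact and the Calder\'on--Zygmund machinery applies cleanly; for $T<\infty$ I would either extend $g$ by zero to $(0,\infty)$, solve there, and restrict, or observe that the kernel estimates are purely local, so the H\"ormander condition and the extrapolation argument are insensitive to truncation of the time interval. The one point requiring care is checking that this truncation preserves the $L^{p_0}$-boundedness hypothesis, which it does, as zero-extension and restriction are contractions between the relevant $L^{p_0}$ spaces.
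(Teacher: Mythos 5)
Your argument is correct and is essentially the classical proof of $p$-independence of maximal regularity via vector-valued Calder\'on--Zygmund (Benedek--Calder\'on--Panzone) extrapolation; the paper states Lemma \ref{lem:LpMR} without proof, referring to \cite{Dor93}, where precisely this argument is carried out (kernel $Ae^{tA}$, H\"ormander condition from analyticity of the semigroup, extrapolation downward plus duality with the adjoint kernel upward). One caution: for $T<\infty$ your second option, running the Calder\'on--Zygmund machinery locally on $(0,T)$, is the right one, since the zero-extension route as you phrased it would require an $L^{p_0}$ bound on the tail of $Au$ on $(T,\infty)$, which the hypothesis on $J_T$ does not supply.
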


\begin{lemma}\label{lem:mr-interval}
Let $p \in (1,\infty)$, $X$ be a Banach space and let 
$A$ be a densely defined and closed operator on $X$.
Assume that $A$ has $L^p$-CMR on $J_\infty$.
Then, $A$ has $L^p$-CMR on $J_T$, for any $T>0$.
\end{lemma}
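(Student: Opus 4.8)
The plan is to prove this by the standard extension--restriction argument, reducing the assertion on the finite interval $J_T$ to the hypothesis on the half-line $J_\infty=(0,\infty)$. Three things must be verified in order to conclude $L^p$-CMR on $J_T$: existence of a solution, the a priori estimate \eqref{eq:mr}, and uniqueness. The first two are immediate, and only uniqueness requires a genuine argument.

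For existence and the estimate, given $g\in L^p(J_T;X)$ I would introduce its zero extension $\tilde g\in L^p(J_\infty;X)$, namely $\tilde g(t)=g(t)$ for $t\in(0,T)$ and $\tilde g(t)=0$ for $t\ge T$, which satisfies $\|\tilde g\|_{L^p(J_\infty;X)}=\|g\|_{L^p(J_T;X)}$. By the $L^p$-CMR hypothesis on $J_\infty$, there exists a unique $\tilde u\in W^{1,p}(J_\infty;X)\cap L^p(J_\infty;D(A))$ solving \eqref{eq:acp} with datum $\tilde g$ and obeying \eqref{eq:mr} with constant $\Cmr$. Setting $u=\tilde u|_{J_T}$, one checks at once that $u\in W^{1,p}(J_T;X)\cap L^p(J_T;D(A))$, that $u(0)=\tilde u(0)=0$, and that $u'=Au+g$ on $J_T$ because $\tilde g|_{J_T}=g$. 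Since $\|\,v|_{J_T}\|_{L^p(J_T;X)}\le\|v\|_{L^p(J_\infty;X)}$ for every $v$, the estimate \eqref{eq:mr} on $J_T$ follows with the \emph{same} constant $\Cmr$.

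It remains to prove uniqueness on $J_T$. Suppose $w\in W^{1,p}(J_T;X)\cap L^p(J_T;D(A))$ solves the homogeneous problem $w'=Aw$, $w(0)=0$. Since $p>1$, the embedding $W^{1,p}(J_T;X)\hookrightarrow C(\overline{J_T};X)$ furnishes a continuous representative, so $w(t)$ is defined pointwise. Recalling that CMR forces $A$ to generate a bounded analytic semigroup $\{T(t)\}_{t\ge 0}$ (cf.\ the discussion in Sec.~\ref{sec:intro}), I would fix $t\in(0,T]$ and differentiate $s\mapsto T(t-s)w(s)$ on $[0,t]$: using $w(s)\in D(A)$ together with $w'(s)=Aw(s)$, this derivative equals $-T(t-s)Aw(s)+T(t-s)w'(s)=0$ for a.e.\ $s$, so the map is constant; comparing its values at $s=0$ and $s=t$ gives $w(t)=T(t)w(0)=0$. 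Hence $w\equiv 0$, and uniqueness holds.

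The main (and only mild) obstacle is this uniqueness step, since existence and the quantitative estimate are purely formal consequences of restriction; the sole non-formal ingredient is that the homogeneous Cauchy problem admits only the trivial solution, which I obtain from the semigroup generated by $A$. Alternatively, uniqueness could be deduced directly from the hypothesis by extending $w$ to $J_\infty$ through $w(t)=T(t-T)w(T)$ for $t\ge T$ --- this extension lies in the correct solution space by the exponential decay of the semigroup attached to half-line CMR --- and then invoking uniqueness on $J_\infty$; the semigroup argument above is, however, shorter and self-contained.
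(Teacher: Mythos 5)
Your proposal is correct, but note that the paper itself gives no proof of Lemma \ref{lem:mr-interval}: it is stated as a review item with a pointer to the survey of Dore, so there is no in-paper argument to match against. Your extension--restriction scheme is, however, exactly the argument the paper \emph{does} write out for the discrete counterpart, Lemma \ref{lem:dmr_finite}: there too the datum is zero-extended to the infinite interval, the infinite-interval solution is restricted, the estimate \eqref{eq:dmr-var} follows with the same constant, and uniqueness is dismissed as ``readily apparent'' (justified there because the resolvent iteration $(I-\tau A)^{-n}$ propagates the zero initial value). In the continuous setting uniqueness is the one genuinely non-formal point, and you correctly isolate it and resolve it with the standard device $s \mapsto T(t-s)w(s)$, using that half-line CMR forces $A$ to generate a bounded analytic semigroup (as the paper recalls in the introduction, citing Dore).

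One technical point in your uniqueness step deserves a sentence more care, though it is a gloss rather than a gap. Concluding $w(t)=T(t)w(0)$ from an a.e.\ vanishing derivative requires absolute continuity of $s\mapsto T(t-s)w(s)$ on the whole of $[0,t]$, and this degenerates at the right endpoint: the natural Lipschitz bound for the semigroup factor comes from $\|AT(t-\sigma)\|_{\cL(X)} \le C/(t-\sigma)$, which blows up as $\sigma \to t$ (for $w(s')\notin D(A)$ one cannot replace this by $\|T(t-\sigma)Aw(s')\|$). The standard repair: work on $[0,t-\varepsilon]$, where the map is absolutely continuous with a.e.\ derivative $T(t-s)\bigl[w'(s)-Aw(s)\bigr]=0$, deduce $T(\varepsilon)w(t-\varepsilon)=T(t)w(0)=0$, and let $\varepsilon\to 0$ using the continuity of $w$ and strong continuity of the semigroup to get $w(t)=0$. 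Your parenthetical alternative (extending $w$ beyond $T$ by $T(t-T)w(T)$ and invoking half-line uniqueness) also works under the paper's Definition \ref{def:mr1}, since the strong form of CMR on $J_\infty$, with the $\|u\|_{L^p(J_\infty;X)}$ term included in \eqref{eq:mr}, yields $0\in\rho(A)$ and hence the exponential decay you need for the extension to lie in $L^p(J_\infty;D(A))\cap W^{1,p}(J_\infty;X)$; under the weaker definition of Weis mentioned in Sec.~\ref{sec:cmr} that decay would fail, so your choice to make the semigroup argument primary is the right one.
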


The next lemma is the celebrated result of Dore and Venni \cite[Theorem 3.2]{DorV87} (see also \cite[Section III.4]{Ama05}).

\begin{lemma}\label{lem:mr-imaginary_power}
Let $p \in (1,\infty)$, $X$ be a UMD space, and let 
$A$ be a densely defined and closed operator on $X$. Assume that $-A \in \cP(X;K) \cap \bip(X;M,\theta)$ for some $K>0$, $M \ge 1$, and $\theta \in [0, \pi/2)$.
Then, $A$ has $L^p$-CMR on $J_T$, for any $T>0$ and $T=\infty$.
Moreover, the constant $\Cmr > 0$ depends only on $X$, $K$, $M$, $\theta$, and $T$. 
\end{lemma}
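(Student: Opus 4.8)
The plan is to realize $L^p$-maximal regularity as the bounded invertibility of an operator sum on the Bochner space $Y = L^p(J_T;X)$ and then to invoke the sum theorem of Dore and Venni. I would introduce the time-derivative operator $B = d/dt$ with domain $\{u \in W^{1,p}(J_T;X) : u(0)=0\}$ and the pointwise extension $S$ of $-A$, namely $(Su)(t) = -A\,u(t)$ with $D(S) = L^p(J_T;D(A))$. Writing \eqref{eq:acp} as $Bu + Su = g$, the estimate \eqref{eq:mr} is exactly the assertion that $B+S$ is boundedly invertible on $Y$ together with the two-sided bound $\| Bu \|_Y + \| Su \|_Y \le C \| (B+S)u \|_Y$. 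Thus the whole statement reduces to an operator-sum theorem.

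Next I would verify the hypotheses of that theorem. First, $Y$ is a UMD space, since $L^p(\mu;X)$ inherits the UMD property from $X$ for every $p\in(1,\infty)$. Second, the pointwise operator $S$ is positive with the same constant $K$ and has bounded imaginary powers of angle $\theta$ and bound $M$, all inherited verbatim from $-A$, because its functional calculus acts fibrewise in $t$. Third, and this is where UMD is \emph{essential}, the operator $B=\partial_t$ is positive and admits bounded imaginary powers with the sharp angle $\pi/2$: after extension across $t=0$, $B^{is}$ is realized as the vector-valued Fourier multiplier with symbol $(i\xi)^{is}$, whose $L^p(J_T;X)$-boundedness with norm $\lesssim e^{(\pi/2)|s|}$ follows from the operator-valued Mikhlin multiplier theorem on UMD spaces. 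Finally, $B$ and $S$ commute at the level of resolvents, since one acts only in $t$ and the other only in $x$.

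Granting these facts, the angle condition of Dore--Venni reads $\theta_B + \theta_S = \pi/2 + \theta < \pi$, which is precisely the hypothesis $\theta < \pi/2$. The sum theorem then shows that $B+S$, with domain $D(B)\cap D(S)$ and zero initial trace, is closed, injective, boundedly invertible, and satisfies the desired two-sided estimate with a constant depending only on $Y$ (hence on $X$ and $p$), $K$, $M$, $\theta$, and $T$. For finite $T$ the invertibility is reinforced by that of $B$ itself (its inverse being Volterra integration on $J_T$); for $T=\infty$ it follows from the explicit construction of $(B+S)^{-1}$ in the theorem. Taking $u = (B+S)^{-1}g$ yields the unique solution of \eqref{eq:acp} and the bound \eqref{eq:mr}, giving $A$ the claimed $L^p$-CMR. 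The $\| u \|_Y$ term is controlled by $\| (B+S)^{-1} \|$; where only the weaker form \eqref{eq:mr-weak} is available, it is identified with MR as discussed after that display.

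The hard part will be the sum theorem itself, whose proof I would reproduce rather than merely the reduction above. One constructs $(B+S)^{-1}$ explicitly as a Bochner contour integral of the schematic form $\frac{1}{2i}\int_{\Gamma} S^{-z}\,B^{z-1}\,\frac{dz}{\sin\pi z}$ built from the bounded imaginary powers, and must prove its convergence and $L^p$-boundedness. Two difficulties intertwine. The integrand's operator norm grows like $e^{(\theta_B+\theta_S)|\operatorname{Im} z|}$ from the two BIP estimates, so absolute convergence along a vertical contour forces exactly $\theta_B+\theta_S<\pi$; this is why the sharp angle $\pi/2$ for $\partial_t$, and no larger, must be secured. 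The resulting operator is an operator-valued singular integral, and its boundedness on $Y$ is precisely what the UMD (equivalently, $\zeta$-convexity) hypothesis delivers, through the boundedness of the vector-valued Hilbert transform. Establishing the BIP of the time derivative with angle $\pi/2$ on $L^p(J_T;X)$ is thus the key preliminary and the point at which the UMD assumption is genuinely used.
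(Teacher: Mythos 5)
Your proposal is correct and follows essentially the same route as the paper, which establishes this lemma by invoking the operator-sum theorem of Dore and Venni \cite{DorV87} on $L^p(J_T;X)$, with the bounded imaginary powers of the time derivative $\partial_t$ of angle $\pi/2$ supplied by \cite[Lemma III.4.10.5]{Ama05} for $T<\infty$ and \cite[Corollary 8.5.3]{Haa06} for $T=\infty$ --- exactly the reduction, angle count $\pi/2+\theta<\pi$, and UMD-based Hilbert-transform/singular-integral mechanism you reconstruct. Your identification of where UMD genuinely enters (BIP of $\partial_t$ and the non-absolutely-convergent Mellin-type singular integral in the sum theorem) matches the cited proof, so there is nothing to correct.
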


Herein, the sets $\cP(X; K)$ and $\bip(X; M, \theta)$ are defined as
\begin{align*}
\cP(X; K) &= \left\{ A \in \cC(X) \mid 
 \rho(A) \subset (-\infty, 0] \text{ and } \right. \\
 & \mbox{ }\qquad \qquad \qquad \qquad \left. \| (1+\lambda) R(\lambda; A) \|_{\cL(X)} \le K, \
\forall \lambda \ge 0 \right\}, \\
\bip(X; M, \theta) &= \{ A \in \cP(X) \mid 
A^{it} \in \cL(X) \text{ and } 
\| A^{it} \|_{\cL(X)} \le M e^{\theta |t|} ,\ 
\forall t \in \bR  \},
\end{align*}
for $K > 0$, $M \ge 1$, and $\theta \ge 0$,
where $\cC(X)$ is the set of all closed linear operators on $X$ with dense domains, $\cP(X) = \bigcup_{K>0}\cP(X;K)$. 
The imaginary power $A^{it}$ is defined by $H^\infty$-functional calculus 
(see Appendix \ref{section:functional_calculus}).

The dependence of the constant $\Cmr$ on the Banach space $X$ 
derives from the boundedness of
imaginary powers of the time-differential operator on $L^p(J_T; X)$.
See \cite[Lemma III.4.10.5]{Ama05} for $T < \infty$ and 
\cite[Corollary 8.5.3]{Haa06} for $T = \infty$. 
Chasing the constants appearing in the proofs, we can obtain the
following property (see \cite{Kem15}).

\begin{lemma}\label{cor:mr-imaginary_power}
Let $p \in (1,\infty)$, $X$ be a UMD space, $X_0 \subset X$ be a closed subspace, and 
$A$ be a densely defined and closed operator on $X_0$.
Assume that $-A \in \cP(X_0;K) \cap \bip(X_0;M,\theta)$ for some $K>0$, $M \ge 1$, and $\theta \in [0, \pi/2)$. 
Then
$A$ has $L^p$-CMR on $J_T$, for any $T>0$ and $T=\infty$.
Moreover, the constant $\Cmr > 0$ depends only on $X$, $K$, $M$, $\theta$, and $T$, but is independent of $X_0$.
\end{lemma}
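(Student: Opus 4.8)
The plan is to revisit the proof of the Dore--Venni theorem (Lemma \ref{lem:mr-imaginary_power}) as applied to the operator $A$ on the subspace $X_0$, and to track precisely where the underlying Banach space enters the resulting constant. Recall that $L^p$-CMR for \eqref{eq:acp} is equivalent to the closedness and invertibility of the operator sum $B + (-A)$ on the space $\mathbf{X} := L^p(J_T; X_0)$, where $B = d/dt$ is the realization of the time derivative with vanishing initial trace and $-A$ is the pointwise (multiplication) realization of the given operator. The Dore--Venni mechanism provides this once one knows that $B$ and $-A$ are both positive with bounded imaginary powers whose BIP angles add up to less than $\pi$: here $-A \in \bip(X_0; M, \theta)$ with $\theta < \pi/2$ by hypothesis, while $B$ always has bounded imaginary powers on $\mathbf{X}$ with angle $\pi/2$. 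The resulting estimate \eqref{eq:mr} then holds with a constant expressible as a function of $K$, $M$, $\theta$, $T$, and the BIP bound $\|B^{it}\|_{\cL(\mathbf{X})}$ of the time-derivative operator.

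The only place where the subspace $X_0$ can enter nontrivially is through this last quantity. These imaginary powers are governed by the UMD constant of $\mathbf{X} = L^p(J_T; X_0)$; see \cite[Lemma III.4.10.5]{Ama05} for $T < \infty$ and \cite[Corollary 8.5.3]{Haa06} for $T = \infty$. The decisive observation I would exploit is that the UMD constant of a closed subspace $X_0 \subset X$ never exceeds that of $X$: any $X_0$-valued martingale is an $X$-valued martingale, and its martingale transforms, formed with scalar signs $\pm 1$, remain $X_0$-valued, so the $L^p$ transform bound is simply inherited from $X$. Consequently $\|B^{it}\|_{\cL(\mathbf{X})}$ is controlled by a quantity depending only on $p$, $T$, and the UMD constant of the fixed ambient space $X$, uniformly over all closed subspaces $X_0$.

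Combining these two points, I would conclude that when the constant of Lemma \ref{lem:mr-imaginary_power} is expressed through $K$, $M$, $\theta$, $T$, and the BIP bound of $B$, the last ingredient is majorized purely in terms of $X$; hence $\Cmr$ depends only on $X$, $K$, $M$, $\theta$, and $T$, and is independent of the particular subspace $X_0$. The main obstacle is bookkeeping rather than conceptual: one must verify that in the Dore--Venni argument, and in the auxiliary estimates of \cite{Ama05,Haa06}, no further $X_0$-dependent quantity is concealed---in particular that the sectoriality constants of $B$ and $-A$ and the convergence of the contour/Mellin integral defining $(B + (-A))^{-1}$ are all estimated solely through $K$, $M$, $\theta$, $T$, and the UMD constant of $X_0$. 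Once this is checked, the uniformity claim follows from the monotonicity of the UMD constant under passage to closed subspaces.
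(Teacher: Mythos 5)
Your proposal is correct and takes essentially the same approach as the paper: the paper obtains this lemma from the Dore--Venni theorem (Lemma \ref{lem:mr-imaginary_power}) by noting that the only space-dependence of $\Cmr$ enters through the bounded imaginary powers of the time-derivative operator on $L^p(J_T;X_0)$ (\cite[Lemma III.4.10.5]{Ama05} for $T<\infty$, \cite[Corollary 8.5.3]{Haa06} for $T=\infty$), whose bound is governed by the UMD constant of $X_0$, inherited from $X$ exactly by your martingale-transform observation. The only difference is level of detail: the paper defers the constant-chasing to \cite{Kem15}, which is the bookkeeping you correctly identify as the remaining task.
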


In the definition of CMR \eqref{eq:acp}, 
we consider only the zero initial value.
However, in general cases, particularly in the nonlinear cases, the choice of initial values is extremely important.
Therefore, we now consider the following Cauchy problem:
\begin{equation}
\begin{cases}
u'(t) = Au(t) + g(t), & t \in J_T, \\
u(0) = u_0,
\end{cases}
\label{eq:acp2}
\end{equation}
for $u_0 \in X$.

\begin{lemma}
Let $p \in (1,\infty)$, $T \in (0,\infty] $, $X$ be a Banach space and $A$ be a densely defined and closed operator.
Assume that $A$ has $L^p$-CMR on $J_T$.
Then, for each $g \in L^p(J_T; X)$ and for each $u_0 \in (X, D(A))_{1-1/p,p}$, there exists a unique solution $u \in W^{1,p}(J_T; X) \cap L^p(J_T; D(A))$
of \eqref{eq:acp2} satisfying
\begin{multline}
\| u \|_{L^p(J_T; X)} + \| u' \|_{L^p(J_T; X)} + \| Au \|_{L^p(J_T; X)} \\
\le \Cmr \left( \| g \|_{L^p(J_T; X)} + \|u_0 \|_{1-1/p,p} \right),
\label{eq:mr2}
\end{multline}
where $\Cmr > 0$ is independent of $g$ and $u_0$.
\end{lemma}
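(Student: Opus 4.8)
The plan is to reduce the inhomogeneous-initial-value problem \eqref{eq:acp2} to the zero-initial-value case covered by the definition of $L^p$-CMR, by subtracting off the contribution of the semigroup acting on $u_0$. The key observation is that since $A$ has $L^p$-CMR, it generates a bounded analytic semigroup $e^{tA}$ (as recalled in the introduction, following \cite{Dor93}). I would therefore set $w(t) = e^{tA}u_0$, which solves the homogeneous problem $w' = Aw$, $w(0)=u_0$, and then let $v = u - w$, so that $v$ formally solves $v'(t) = Av(t) + g(t)$ with $v(0)=0$. If I can show $w \in W^{1,p}(J_T;X)\cap L^p(J_T;D(A))$ with the bound $\|w\|_{W^{1,p}} + \|Aw\|_{L^p} \le C\|u_0\|_{1-1/p,p}$, then applying the $L^p$-CMR estimate \eqref{eq:mr} to $v$ handles $g$, and combining the two by the triangle inequality yields \eqref{eq:mr2}. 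Uniqueness follows because the difference of two solutions solves the zero-data problem, for which $L^p$-CMR guarantees the unique solution is zero.

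The crux is the estimate on the semigroup term $w(t)=e^{tA}u_0$. Here the real interpolation space $(X,D(A))_{1-1/p,p}$ is precisely the right trace space: its characterization via the semigroup is that $u_0 \in (X,D(A))_{1-1/p,p}$ if and only if $t \mapsto Ae^{tA}u_0$ belongs to $L^p(J_T;X)$ with a weight, and the associated norm is equivalent (up to the lower-order term $\|u_0\|_X$) to $\|u_0\|_{1-1/p,p}$. Concretely, I would invoke the standard trace characterization of real interpolation spaces for generators of analytic semigroups (see \cite{Lun09,Tri95}), which gives
\begin{equation*}
\int_0^\infty \| A e^{tA} u_0 \|_X^p \, dt \le C \| u_0 \|_{1-1/p,p}^p ,
\end{equation*}
and hence $Aw \in L^p(J_\infty;X)$; since $w' = Aw$, the same bound controls $\|w'\|_{L^p}$. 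The term $\|w\|_{L^p(J_T;X)}$ is then controlled by the boundedness of the semigroup together with a Hardy-type inequality, or directly absorbed into the interpolation norm. For finite $T$ one restricts the integrals, which only improves the constants, so the estimate holds uniformly and the constant $\Cmr$ stays independent of $u_0$ and $g$.

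The main obstacle I anticipate is technical rather than conceptual: making precise that $w(t)=e^{tA}u_0$ genuinely lies in $W^{1,p}(J_T;X)\cap L^p(J_T;D(A))$ and defines a \emph{strong} (not merely mild) solution, so that the subtraction $v=u-w$ is legitimate within the solution class of Definition \ref{def:mr1}. This requires knowing that $e^{tA}u_0 \in D(A)$ for $t>0$ (automatic by analyticity) and that the integrability at $t=0$ is exactly supplied by the interpolation hypothesis on $u_0$ --- which is why the trace space $(X,D(A))_{1-1/p,p}$ rather than all of $X$ appears. Once this trace estimate is in hand, everything else is the routine linear superposition and triangle-inequality bookkeeping described above, and no further use of the UMD structure or imaginary powers is needed beyond what already yields $L^p$-CMR.
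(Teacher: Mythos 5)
Your proposal is essentially correct, and in fact the paper offers no proof of this lemma at all: it is stated in Section \ref{sec:cmr} as a known preliminary, being the classical trace-space result for maximal regularity with non-zero initial data (see, e.g., Lunardi or Amann, which the paper cites for the interpolation spaces). Your argument --- write $w(t)=e^{tA}u_0$, reduce $v=u-w$ to the zero-initial-value case covered by Definition \ref{def:mr1}, and control $w$ via the trace characterization of $(X,D(A))_{1-1/p,p}$ --- is exactly the standard proof. The characterization you invoke is correct for $\theta=1-1/p$: the weight $t^{1-\theta}$ in the $L^p_*(dt/t)$ seminorm cancels so that membership in the trace space is equivalent to $\int_0^1\|Ae^{tA}u_0\|_X^p\,dt<\infty$, and the tail $\int_1^\infty$ is handled by $\|Ae^{tA}\|_{\cL(X)}\le C/t$ together with $p>1$, so your integral over $(0,\infty)$ is justified once the semigroup is bounded analytic, which CMR guarantees (cf.\ the remark in the introduction following \cite{Dor93}).

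The one point where your sketch is genuinely vague is the term $\|w\|_{L^p(J_T;X)}$ when $T=\infty$: mere boundedness of $e^{tA}$ does not put $e^{tA}u_0$ in $L^p(J_\infty;X)$, and a Hardy inequality alone will not rescue this. The clean fix is the one the paper itself implicitly uses: its strong form of \eqref{eq:mr} (with $\|u\|_{L^p}$ on the left) is adopted precisely in situations where $0\in\rho(A)$, as noted in Section \ref{sec:cmr}, and then $\|w\|_{L^p(J_\infty;X)}\le\|A^{-1}\|_{\cL(X)}\|Aw\|_{L^p(J_\infty;X)}$, which you have already bounded; alternatively, $0\in\rho(A)$ plus bounded analyticity gives exponential stability and the bound directly. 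For finite $T$ the crude estimate $\|w\|_{L^p(J_T;X)}\le MT^{1/p}\|u_0\|_X$ suffices, with the constant depending on $T$ as in Lemma \ref{lem:mr-imaginary_power}. Your remaining points (that $w$ is a strong solution since $(X,D(A))_{1-1/p,p}\subset\overline{D(A)}$ and $e^{tA}u_0\in D(A)$ for $t>0$ with $w'=Aw\in L^p$ near $t=0$, and that uniqueness follows from uniqueness in the zero-data problem) are all sound.
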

Herein, the norm $\| \cdot \|_{1-1/p,p}$ is the norm of the real interpolation space $(X, D(A))_{1-1/p,p}$.

\subsection{Discrete maximal regularity}
\label{sec:dmr}

As in the CMR case, the weaker definition can be considered, which does not require that $0 \in \rho(A)$.
Indeed, the weaker one is used in \cite{Blu01,Kem15}.
However, for the same reason as that presented in the previous subsection, we do not distinguish these two definitions.

We investigated a sufficient condition for DMR on $J_\infty$, in the UMD case in \cite{Kem15}. More precisely, we proved
the following result. 

\begin{lemma}\label{thm:sufficient}
Let $p \in (1,\infty)$, $\theta \in [0,1]$, $X$ be a UMD space, $X_0 \subset X$ be a closed subspace, and $A$ be a bounded operator on $X_0$.
Assume that $A$ has $L^p$-CMR on $J_\infty$ with the constant $\Cmr$.
Furthermore, we suppose that the following conditions (condition $(\text{NR})_{\delta, \varepsilon}$) are satisfied 
when $\theta \in [0, 1/2)$:
\begin{description}
\item[(NR1)] There exists $\delta \in (0,\pi/2)$ such that $S(A) \subset \bC \setminus \Sigma_{\delta + \pi/2}$.
\item[(NR2)] There exists $\varepsilon > 0$ such that $(1-2\theta)\tau r(A) + \varepsilon \le 2 \sin \delta$.
\end{description}
Then, $A$ has $l^p$-DMR on $J_\infty$.
Moreover, the constant $\Cdmr$ depends only on $p$, $\theta$, $\delta$, $\varepsilon$, $X$, and $\Cmr$, but is independent of $X_0$.
\end{lemma}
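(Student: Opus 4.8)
The plan is to reduce $l^p$-DMR to the boundedness of a single discrete operator-valued Fourier multiplier on $l^p(\mathbb{Z};X_0)$, and then to verify the multiplier hypotheses from the R-sectoriality that the $L^p$-CMR assumption encodes. Since the scheme is causal and $u^0=0$, extending every sequence by zero to negative indices shows that it is enough to estimate the solution operator on the two-sided line; moreover, with the weaker DMR definition used in \cite{Kem15} the term $\| D_\tau u\|$ is controlled by the identity $D_\tau u = Au_\theta + g_\theta$, so the essential task is to bound $g_\theta \mapsto Au_\theta$. Writing $z=e^{i\xi}$ and taking discrete Fourier transforms, \eqref{eq:dcp} becomes $\tfrac{z-1}{\tau}\hat u = m(z)A\hat u + m(z)\hat g$ with $m(z)=(1-\theta)+\theta z$, so this map is the Fourier multiplier with symbol
\[
M(z)=AR(\lambda(z);A)=\lambda(z)R(\lambda(z);A)-I,\qquad \lambda(z)=\frac{z-1}{\tau\,m(z)}.
\]
By the discrete operator-valued multiplier theorem on UMD spaces \cite{Blu01}, it then suffices to prove that $\{M(z)\}_{z\in\mathbb{T}\setminus\{1\}}$ and the associated Marcinkiewicz derivative family $\{(z^2-1)M'(z)\}$ are R-bounded, the resulting $l^p$-operator norm being controlled by these R-bounds.

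Second, I would convert the CMR hypothesis into R-sectoriality. By Weis's characterization \cite{Wei01}, CMR on $J_\infty$ is equivalent to R-boundedness of $\{\lambda R(\lambda;A)\}$ on the imaginary axis, and hence, after holomorphic extension, on a sector $\{|\arg\lambda|\le \pi/2+\eta\}$ about the positive real axis for some $\eta>0$; by Lemma \ref{cor:mr-imaginary_power} the corresponding R-bound depends only on $X$, $p$, and $\Cmr$, and is independent of $X_0$. Since $M(z)=\lambda(z)R(\lambda(z);A)-I$, a short computation using $\lambda'(z)=1/(\tau m(z)^2)$ gives
\[
(z^2-1)M'(z)=\frac{z+1}{m(z)}\big[\lambda(z)R(\lambda(z);A)-\big(\lambda(z)R(\lambda(z);A)\big)^2\big],
\]
in which the scalar prefactor $\tfrac{z+1}{m(z)}$ is bounded on $\mathbb{T}$ uniformly in $\tau$. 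Thus both families reduce to R-boundedness of $\{\lambda R(\lambda;A)\}$ along the curve $\Gamma=\{\lambda(z):z\in\mathbb{T}\setminus\{1\}\}$ (products of R-bounded families being R-bounded), and it is enough to show that $\Gamma$ lies in the R-sectoriality region, \emph{uniformly in} $\tau$.

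Third comes the geometry of $\Gamma$, which splits along $\theta=1/2$. A direct computation gives $\mathrm{Re}\,\lambda(z)=\frac{(\cos\xi-1)(1-2\theta)}{\tau\,|m(z)|^2}$. For $\theta\in[1/2,1]$ this is nonnegative, so $\Gamma$ lies in the closed right half-plane and hence in the R-sector (the boundary case $\theta=1/2$ gives $\Gamma\subset i\mathbb{R}$, handled directly by the imaginary-axis R-bound); no restriction on $\tau$ is needed, matching the statement. For $\theta\in[0,1/2)$ the real part is nonpositive, so $\Gamma$ runs into the closed left half-plane and in fact reaches along the negative real axis out to $\lambda(-1)=-2/(\tau(1-2\theta))$. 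Here the \emph{bounded} spectral sector from (NR1)---namely $S(A)\subset\mathbb{C}\setminus\Sigma_{\delta+\pi/2}$ together with $|\mu|\le r(A)$ for $\mu\in S(A)$---must be avoided, and this is exactly where (NR2) enters: the condition $(1-2\theta)\tau r(A)+\varepsilon\le 2\sin\delta$ is engineered so that the portion of $\Gamma$ falling inside the angular spectral cone has modulus exceeding $r(A)$, whence $\Gamma$ stays in the resolvent set with a uniform gap measured by $\varepsilon$.

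The main obstacle is precisely this $\theta<1/2$ case. Because $\Gamma$ is unbounded as $\tau\to0$ and grazes the negative real axis, a mere compactness argument would make the R-bound depend on $r(A)=r(A_h)$ and hence blow up as $h\to0$, destroying the $X_0$-uniformity needed for the finite element application. I would resolve this by splitting $\Gamma$ into a far part $\{|\lambda|\ge 2r(A)\}$, where $\lambda R(\lambda;A)$ is a small perturbation of the identity whose R-bound is controlled uniformly in $\tau$ by a Neumann series (this is cleanest for the self-adjoint realization, $q=2$, and follows from the sectorial decay of the resolvent in general), and a near part confined to a fixed compact subset of the R-sector at distance $\gtrsim\varepsilon$ from $S(A)$, where the $X_0$-independent R-sectoriality bound of the second step applies. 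Assembling the two families, verifying the Marcinkiewicz condition in the same way, and tracking constants yields an R-bound, and hence a value of $\Cdmr$, depending only on $p,\theta,\delta,\varepsilon,X,\Cmr$ and independent of $X_0$ and $\tau$; Blunck's theorem then delivers $l^p$-DMR.
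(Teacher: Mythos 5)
Since the paper does not reproduce a proof of this lemma but quotes it from \cite{Kem15}, your proposal must be measured against that proof, and in outline it matches it closely: the zero-extension to a two-sided problem, the symbol $M(z)=AR(\lambda(z);A)$ with $\lambda(z)=(z-1)/(\tau m(z))$ and $m(z)=(1-\theta)+\theta z$, the appeal to Blunck's discrete operator-valued multiplier theorem on UMD spaces, the extraction of R-sectoriality on a sector $\Sigma_{\pi/2+\eta}$ from $L^p$-CMR via Weis's characterization, the identity $(z^2-1)M'(z)=\frac{z+1}{m(z)}\bigl[\lambda R(\lambda;A)-(\lambda R(\lambda;A))^2\bigr]$ with $\bigl|\frac{z+1}{m(z)}\bigr|\le 2$, the formula $\mathrm{Re}\,\lambda(z)=(1-2\theta)(\cos\xi-1)/(\tau|m(z)|^2)$, and the reading of (NR2) as a quantitative gap between the in-cone portion of the curve $\Gamma$ and the numerical range are all correct and are the right skeleton.

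The genuine gap is in your final step for $\theta\in[0,1/2)$, where the two-case split does not cover the critical arc. The portion of $\Gamma$ lying inside the spectral cone $\bC\setminus\Sigma_{\delta+\pi/2}$ satisfies, by the scaling computation behind (NR2), only $|\lambda|\ge \frac{2\sin\delta}{2\sin\delta-\varepsilon}\,r(A)$, which is below your far-field threshold $2r(A)$ whenever $\varepsilon<\sin\delta$; and this arc sits at $|\arg\lambda|\ge\pi/2+\delta$, decisively outside the sector $\Sigma_{\pi/2+\eta}$ on which the Weis R-bound lives, so it is handled by neither your "far" nor your "near" case (the near part is emphatically \emph{not} a fixed compact subset of the R-sector; for $A=A_h$ it also grows like $r(A_h)\sim h^{-2}$). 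Two further quantitative points: the Neumann series $\lambda R(\lambda;A)=\sum_k(A/\lambda)^k$ needs $|\lambda|>\|A\|$, and in a general Banach space the numerical radius controls the norm only up to $\|A\|\le e\,r(A)$, so $|\lambda|\ge 2r(A)$ does not guarantee convergence; and "distance $\gtrsim\varepsilon$ from $S(A)$" yields, via the numerical-range estimate $\|R(\lambda;A)\|\le 1/\mathrm{dist}(\lambda,S(A))$, only a \emph{uniform norm} bound, which in a non-Hilbert UMD space is strictly weaker than the R-boundedness Blunck's theorem requires. The missing ingredient, which is how \cite{Kem15} actually closes the argument, is to first verify $\mathrm{dist}(\lambda,S(A))\ge c(\delta,\varepsilon,\theta)\,|\lambda|$ along the entire left-half-plane part of $\Gamma$ (combining the angular gap off the cone with the (NR2) modulus gap inside it), and then to \emph{propagate} R-boundedness from the sector across this arc by a chain of finitely many Neumann disks $R(\lambda)=\sum_k(\lambda_0-\lambda)^k R(\lambda_0;A)^{k+1}$, each of radius a fixed fraction of $\mathrm{dist}(\lambda_0,S(A))$; the number of steps and the geometric factors depend only on $p$, $\theta$, $\delta$, $\varepsilon$, $X$, and $\Cmr$, which is precisely where the claimed independence of $X_0$ and $\tau$ comes from. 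Your aside that $q=2$ is "cleanest" is the Hilbert-space degeneration where R-boundedness coincides with uniform boundedness, but since the lemma is stated for arbitrary UMD $X$, this chaining step cannot be omitted.
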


Herein, for $\omega\in (0,\pi)$, the set $\Sigma_\omega$ denotes the sector
\begin{equation}
 \Sigma_\omega=\{z\in\mathbb{C}\backslash\{0\}\mid~ |\arg z|<\omega\}.
 \label{eq:sec}
\end{equation}
The set $S(A) \subset \bC$ is the numerical range of $A$ defined as
\begin{equation}
S(A) = \left\{ 
\langle x^*, Ax \rangle \lmid 
\begin{array}{l}
x \in D(X),\ \|x\| = 1, \\
x^* \in X^*,\ \|x^*\| = 1,\ \langle x^*, x \rangle = 1.
\end{array}
\right\},
\label{eq:num_ran1}
\end{equation}
where $\langle \cdot, \cdot \rangle$ is the duality paring (\cite{FujSS01,Paz83}). We set
\[
r(A) = \max_{z \in S(A)} |z|.
\]

Actually, DMR on finite intervals is obtainable from the infinite-interval case.
The following lemma corresponds to Lemma \ref{lem:mr-interval}.
Although the inequality \eqref{eq:dmr-var} below is slightly different
from \eqref{eq:dmr}, it does not affect error analysis.

\begin{lemma}\label{lem:dmr_finite}
Let $p \in (1,\infty)$, $\theta \in [0,1]$, $X$ be a Banach space, and $A$ be a bounded operator on $X$.
Assume that $A$ has $l^p$-DMR on $J_\infty$ with $\Cdmr = C_0$.
 Then, for every $T>0$ and for every $g \in l^p(N_T-1;X)$,
there exists a unique solution $u \in l^p(N_T;X)$ of the equation
\begin{equation}
\begin{dcases}
\frac{u^{n+1} - u^n}{\tau} = Au^{n+\theta} + g^{n}, & n = 0,1,\dots, N_T -1, \\
u^0 = 0,
\end{dcases}
\label{eq:dcp3}
\end{equation}
and it satisfies
\begin{equation}
\| u_\theta \|_{l^p_\tau(N_T; X)} + \| D_\tau u \|_{l^p_\tau(N_T; X)} + \| Au_\theta \|_{l^p_\tau(N_T; X)}
\le C_0 \| g \|_{l^p_\tau(N_T; X)}.
\label{eq:dmr-var}
\end{equation}
\end{lemma}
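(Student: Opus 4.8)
The plan is to deduce the finite-interval result (Lemma~\ref{lem:dmr_finite}) from the infinite-interval DMR hypothesis by a \emph{zero-extension} argument. Given $g \in l^p(N_T-1;X)$, I would first define an extended sequence $\tilde{g} = (\tilde{g}^n)_{n=0}^\infty \in l^p(\bN;X)$ by setting $\tilde{g}^n = g^n$ for $n = 0,1,\dots,N_T-1$ and $\tilde{g}^n = 0$ for $n \ge N_T$. Since only finitely many terms are nonzero, $\tilde{g} \in l^p(\bN;X)$ with $\|\tilde{g}_\theta\|_{l^p_\tau(\bN;X)} \le C\|g\|_{l^p_\tau(N_T-1;X)}$ (the extension adds at most the single overlap term $g^{N_T-1}$ weighted by $\theta$ into index $N_T-1$, which is already counted, so no loss occurs). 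By the assumed $l^p$-DMR on $J_\infty$ with constant $C_0$, there is a unique $\tilde{u} \in l^p(\bN;X)$ solving the infinite scheme driven by $\tilde{g}$ and satisfying the global DMR estimate.

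The second step is to check that the restriction $u = (\tilde{u}^n)_{n=0}^{N_T}$ solves the finite problem \eqref{eq:dcp3} and inherits the estimate \eqref{eq:dmr-var}. For $n = 0,1,\dots,N_T-1$ the recurrence for $\tilde{u}$ reads $(D_\tau \tilde{u})^n = A\tilde{u}^{n+\theta} + \tilde{g}^n$, and since $\tilde{g}^n = g^n$ on exactly this range, $u$ satisfies \eqref{eq:dcp3}. The finite-sum norms are dominated by the infinite ones, so the estimate \eqref{eq:dmr-var} follows directly from the $J_\infty$ estimate with the same constant $C_0$. The only subtlety here is that the right-hand side of \eqref{eq:dmr-var} uses $\|g\|_{l^p_\tau(N_T;X)}$ rather than $\|g_\theta\|$; because $A$ is bounded and the scheme is semi-implicit in the stated form, one verifies that controlling $\|\tilde g_\theta\|$ by $\|g\|$ on the finite range (using that $\tilde g$ vanishes past $N_T-1$) closes the chain, which is why the lemma's conclusion is phrased with the un-averaged norm $\|g\|$ and is noted to be ``slightly different'' from \eqref{eq:dmr}.

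For \emph{uniqueness} of the finite solution I would argue directly: if $u$ and $v$ both solve \eqref{eq:dcp3} with the same data, their difference $w$ satisfies the homogeneous recurrence $(D_\tau w)^n = A w^{n+\theta}$ with $w^0 = 0$. When $\theta = 1$ this gives $(I - \tau A)w^{n+1} = w^n$; since $A$ is bounded and $\tau$ may be taken small, $I - \tau A$ is invertible, forcing $w^{n+1} = 0$ inductively. For general $\theta$ the relation $w^{n+1} - w^n = \tau A((1-\theta)w^n + \theta w^{n+1})$ rearranges to $(I - \theta\tau A)w^{n+1} = (I + (1-\theta)\tau A)w^n$, and invertibility of $I - \theta\tau A$ for small $\tau$ again propagates $w \equiv 0$ from $w^0 = 0$. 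Thus the finite solution is unique regardless of any $\mathrm{(NR)}$ condition, since boundedness of $A$ alone suffices for the step-by-step solvability.

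The main obstacle, such as it is, is bookkeeping rather than analysis: one must be careful that the zero-extension does not silently change the averaged data norm $\|\tilde g_\theta\|_{l^p_\tau(\bN;X)}$ at the seam $n = N_T-1$, and that the restriction map genuinely commutes with the scheme on the stated index range. Once the extension is set up so that $\tilde g$ agrees with $g$ precisely where the finite recurrence is tested and vanishes thereafter, both the existence-with-estimate and the norm comparison are immediate consequences of the $J_\infty$ hypothesis, and no new operator-theoretic input (no UMD structure, no numerical-range condition) is needed beyond what Lemma~\ref{thm:sufficient} already provides for the infinite interval.
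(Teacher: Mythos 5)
Your proposal is essentially the paper's own proof: zero-extend $g$ to $\tilde g \in l^p(\bN;X)$, apply the $J_\infty$-DMR hypothesis to the infinite problem driven by $\tilde g$, and restrict to the first $N_T$ steps, the finite-sum norms being dominated by the infinite ones; since the forcing in \eqref{eq:dcp3} enters unaveraged and $\|\tilde g\|_{l^p_\tau(\bN;X)} = \|g\|_{l^p_\tau(N_T;X)}$, the estimate \eqref{eq:dmr-var} follows with the same constant $C_0$, exactly as in the paper (your ``overlap term at the seam'' worry is vacuous for the same reason). One small caveat on your uniqueness paragraph: you may not ``take $\tau$ small'' --- $\tau$ is fixed by the scheme and the lemma must hold for every $\tau$ for which the hypothesis does --- so invertibility of $I - \theta\tau A$ cannot be obtained from a smallness assumption; it is instead a consequence of the assumed unique solvability on $J_\infty$ at that same $\tau$ (a nontrivial kernel element of $I-\theta\tau A$ placed at a single step would produce a second solution), which is presumably why the paper dismisses uniqueness as ``readily apparent'' rather than arguing via a perturbation bound.
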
 

\begin{proof}
Fix $T>0$, $\tau>0$, and $g \in l^p(N_T-1; X)$ arbitrarily.
Define $\tilde{g} \in l^p(\bN; X)$ as
\begin{equation*}
\tilde{g}^n = \begin{cases}
g^n, & n = 0,1, \dots, N_T-1, \\
0, & n \ge N_T,
\end{cases}
\end{equation*}
and consider the Cauchy problem
\begin{equation*}
\begin{dcases}
\frac{\tilde{u}^{n+1} - \tilde{u}^n}{\tau} = A\tilde{u}^{n+\theta} + \tilde{g}^{n}, & n = 0,1,\dots, \\
\tilde{u}^0 = 0.
\end{dcases}
\end{equation*}
Since $A$ has $l^p$-DMR on $J_\infty$, we can find the corresponding solution $\tilde{u} = (\tilde{u}^n)_n \in X^\bN$ satisfying
\begin{equation*}
\| \tilde{u}_\theta \|_{l^p_\tau(\bN; X)} + \| D_\tau \tilde{u} \|_{l^p_\tau(\bN; X)} + \| A\tilde{u}_\theta \|_{l^p_\tau(\bN; X)}
\le \Cdmr \| \tilde{g} \|_{l^p_\tau(\bN; X)}.
\end{equation*}
Then, $u := (\tilde{u}^n)_{n=0}^{N_T} \in l^p(N_T;X)$ is a solution of \eqref{eq:dcp3}, and fulfills
\begin{equation*}
\| u_\theta \|_{l^p_\tau(N_T; X)} + \| D_\tau u \|_{l^p_\tau(N_T; X)} + \| Au_\theta \|_{l^p_\tau(N_T; X)}
\le \Cdmr \| \tilde{g} \|_{l^p_\tau(\bN; X)},
\end{equation*}
which implies \eqref{eq:dmr-var}.
The uniqueness of the solution might be readily apparent.
\qed\end{proof}

An a priori estimate with non-zero initial value is obtained only in the
case where $\theta = 1$. 
See \cite{AshS94} for $T<\infty$ and \cite{Kem15} for $T=\infty$.

\begin{lemma}\label{lem:dmr_initial}
Let $p \in (1,\infty)$, $\theta \in [0,1]$, $T \in (0,\infty]$, $X$ be a UMD space, $X_0 \subset X$ be a closed subspace, and $A$ be a bounded operator on $X_0$.
Assume that $A$ has $l^p$-DMR on $J_T$.
Then, for each $g \in l^p(N_T; X_0)$ and for each $u_0 \in (X_0, D(A))_{1-1/p,p}$, there exists a unique solution $u \in l^p(N_T; X_0)$
of the equation
\begin{equation*}
\begin{dcases}
\frac{u^{n+1} - u^n}{\tau} = Au^{n+1} + g^{n+1}, & n = 0,1,\dots, N_T -1, \\
u^0 = u_0,
\end{dcases}
\end{equation*} 
which satisfies
\begin{multline*}
 \| u_1 \|_{l^p_\tau(N_T; X_0)} + \| D_\tau u \|_{l^p_\tau(N_T; X_0)} +
 \| Au_1 \|_{l^p_\tau(N_T; X_0)} \\
\le \Cdmr \left( \| g_1 \|_{l^p_\tau(N_T; X_0)} + \|u_0 \|_{1-1/p,p} \right),
\end{multline*}
where $\Cdmr > 0$ is independent of $g$, $u_0$, and $X_0$.
\end{lemma}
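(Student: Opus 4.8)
The plan is to solve the backward Euler scheme explicitly, to split the solution into an initial-datum part and a forcing part, to estimate the forcing part by the assumed discrete maximal regularity, and to reduce the initial-datum part to a single \emph{discrete trace estimate} for the resolvent $R := (I - \tau A)^{-1}$. Since the numerical range of $A$ lies in a left half-plane (implicit in the discrete maximal regularity hypothesis; cf.\ Lemma \ref{thm:sufficient}), we have $1/\tau \in \rho(A)$, so $R \in \cL(X_0)$ is well defined, the scheme is uniquely solvable, and
\[
u^n = R^n u_0 + \sum_{k=1}^{n} \tau\, R^{\,n-k+1} g^{k}, \qquad n = 0,1,\dots,N_T .
\]
Accordingly I would write $u = v + w$, where $v^n = R^n u_0$ solves the homogeneous equation $D_\tau v^n = A v^{n+1}$ with $v^0 = u_0$, and $w^n = \sum_{k=1}^{n} \tau R^{n-k+1} g^k$ solves the scheme with zero initial value and right-hand side $g$.

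First I would dispose of $w$. Since $w$ has zero initial value, the assumed $l^p$-DMR applies directly when $T = \infty$, and through Lemma \ref{lem:dmr_finite} (after the harmless reindexing $g^{n} \mapsto g^{n+1}$) when $T < \infty$, yielding
\[
\| w_1 \|_{l^p_\tau(N_T; X_0)} + \| D_\tau w \|_{l^p_\tau(N_T; X_0)} + \| A w_1 \|_{l^p_\tau(N_T; X_0)} \le \Cdmr \, \| g_1 \|_{l^p_\tau(N_T; X_0)} ,
\]
with $\Cdmr$ independent of $X_0$, which is exactly the $X_0$-uniform content of those results.

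Next I would treat $v$. From $R - I = \tau A R$ one gets $D_\tau v^n = A v^{n+1} = A R^{n+1} u_0$, so $\| D_\tau v \|_{l^p_\tau}$ and $\| A v_1 \|_{l^p_\tau}$ both equal $\big( \sum_{m=1}^{N_T} \| A R^m u_0 \|^p \tau \big)^{1/p}$, while $\| v_1 \|_{l^p_\tau} = \big( \sum_{m=1}^{N_T} \| R^m u_0 \|^p \tau \big)^{1/p}$. The crux is therefore the \emph{discrete trace estimate}
\[
\Big( \sum_{m \ge 1} \| A R^m u_0 \|^p \tau \Big)^{1/p} \le C \, \| u_0 \|_{(X_0, D(A))_{1-1/p, p}} ,
\]
with $C$ independent of $\tau$ and of $X_0$ (equivalently of $\| A \|$; note that although $A$ is bounded we may not use $\| A \|$, which blows up in the application). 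I would establish it by the K-method: with $\eta = 1 - 1/p$ one has $\| u_0 \|_{(X_0, D(A))_{\eta, p}}^p \sim \int_0^\infty ( K(s, u_0)/s )^p \, ds$, and for every splitting $u_0 = a + b$ the two uniform smoothing bounds $\| R^m \|_{\cL(X_0)} \le M$ and $\| m\tau\, A R^m \|_{\cL(X_0)} \le C$ give $\| A R^m u_0 \| \le C' (m\tau)^{-1} ( \| a \| + m\tau \| b \|_{D(A)} )$, hence $\| A R^m u_0 \| \le C' (m\tau)^{-1} K(m\tau, u_0)$. As $s \mapsto K(s, u_0)/s$ is nonincreasing, the Riemann sum is dominated by the integral, so $\sum_{m \ge 1} ( K(m\tau, u_0)/(m\tau) )^p \tau \le \int_0^\infty ( K(s, u_0)/s )^p \, ds$, which is the claim. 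The term $\| v_1 \|_{l^p_\tau}$ is controlled by $\| u_0 \| \le C \| u_0 \|_{(X_0, D(A))_{1-1/p, p}}$ through power boundedness (for $T = \infty$ using in addition the exponential decay $\| R^m \| \le M e^{-\omega m\tau}$, which follows from maximal regularity on the half-line). Summing the $v$ and $w$ estimates gives the assertion, and uniqueness is immediate from the explicit formula.

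The main obstacle is the pair of uniform smoothing bounds on $R = (I - \tau A)^{-1}$, namely power boundedness and discrete analyticity, with constants independent of $\tau$ and of $X_0$ (hence of $\| A \|$). These are the standard discrete analyticity estimates for the A-stable backward Euler scheme and follow from the sectoriality of $A$ furnished by the maximal regularity framework; this is precisely the technical core carried out in \cite{AshS94} for $T < \infty$ and in \cite{Kem15} for $T = \infty$, which I would invoke or reproduce.
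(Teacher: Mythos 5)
The paper never proves this lemma in-text: it is stated with pointers to \cite{AshS94} for $T<\infty$ and \cite{Kem15} for $T=\infty$, so your proposal must be measured against the standard argument of those references---and it reconstructs that argument correctly. The splitting $u^n = R^n u_0 + \sum_{k=1}^{n}\tau R^{n-k+1}g^k$ with $R=(I-\tau A)^{-1}$, the reduction of the initial-datum part to the discrete trace estimate $\bigl(\sum_{m\ge 1}\|AR^m u_0\|^p\tau\bigr)^{1/p}\le C\|u_0\|_{1-1/p,p}$, and your $K$-functional proof of it are exactly the classical route; your computation is right, since with $\eta=1-1/p$ one has $\bigl(t^{-1+1/p}K(t,u_0)\bigr)^p\,dt/t=\bigl(K(t,u_0)/t\bigr)^p\,dt$ and $t\mapsto K(t,u_0)/t$ is nonincreasing, so the Riemann-sum domination is valid. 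It is also worth noting that your trace estimate is the mirror image of the paper's own Lemma \ref{prop:trace-infinite}, which travels the opposite direction (from the discrete $Y$-norm to the interpolation norm) by the same $K$-functional-plus-Hardy mechanics, so your argument fits the paper's toolkit naturally.

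Three points need repair or emphasis. First, invertibility of $I-\tau A$ does not follow from a numerical-range condition: no such condition is assumed for $\theta=1$ (the (NR) hypotheses of Lemma \ref{thm:sufficient} are imposed only for $\theta\in[0,1/2)$). Instead, unique solvability of $(I-\tau A)x=y$ is already contained in the unique-solvability clause of the DMR hypothesis itself, so your formula stands with the correct justification. Second, for $T<\infty$ you should not route the forcing part through Lemma \ref{lem:dmr_finite}, which presumes DMR on $J_\infty$, whereas the present hypothesis is DMR on $J_T$; since the $\theta=1$ scheme already carries $g^{n+1}$ on the right-hand side, the zero-initial estimate for $w$ is literally the assumed inequality \eqref{eq:dmr}, with no reindexing and no auxiliary lemma. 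Third, the step you defer---power boundedness $\|R^m\|_{\cL(X_0)}\le M$ and discrete analyticity $\|m\tau AR^m\|_{\cL(X_0)}\le C$ uniformly in $\tau$ and $X_0$, plus, for $T=\infty$, the decay making $\sum_{m\ge1}\|R^m u_0\|^p\tau$ finite---is where essentially all the work lives; these are necessary conditions of the strong form of DMR (the form with the $\|u_\theta\|$ term, which the paper adopts since $0\in\rho(A)$ in its application) rather than consequences of ``sectoriality'' as such, and extracting them from the bare hypothesis, or importing them from \cite{AshS94,Kem15}, is precisely what the paper itself outsources. Since you flag that dependence explicitly and every step you do carry out is correct, the proposal is sound and, modulo the two bookkeeping fixes, a faithful self-contained version of the proof the paper cites.
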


\subsection{Operator-theoretical properties of $A_h$}
\label{sec:ah}

A semigroup $T(t)$ on a Lebesgue space $X = L^q(\Omega,\mu)$ ($q \in [1,\infty]$) is said to be positivity-preserving if  
\begin{equation*}
u \ge 0 \text{ $\mu$-a.e.\ in } \Omega \implies T(t) u \ge 0 \text{ $\mu$-a.e.\ in } \Omega 
\end{equation*}
for each $t>0$ and $u \in X$.
In the proofs of the following two lemmas, the discrete maximum
principle (Remark \ref{rem:dmp}) plays a crucially important role.

\begin{lemma}[\mb{\cite[Theorem 15.5]{Tho06}}]
\label{prop:disc-Laplacian1}
Let $q \in [1, \infty]$.
Assume that the family of triangulations $\{\cT_h\}$ satisfies the acuteness condition (H2).
Then, the semigroup $e^{t{A_h}}$ generated by $A_h$ is positivity-preserving in $X_{h,q}$.
\end{lemma}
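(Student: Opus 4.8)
The plan is to exploit the finite-dimensionality of $S_h$ and reduce the assertion to a classical characterization of entrywise-nonnegative matrix exponentials. First I would represent $A_h$ in the nodal basis $\{\phi_j\}_{j=1}^{N_h}$ of $S_h$. Writing $u_h = \sum_j \xi_j \phi_j$ and using the definition \eqref{eq:dl} together with the identity $(u_h,v_h)_h = (M_h u_h, M_h v_h) = \sum_{j} u_h(P_j)v_h(P_j)|\Lambda_j|$ — valid because the $\chi_j$ have pairwise disjoint supports — one sees that the lumped mass matrix $D = \operatorname{diag}(|\Lambda_j|)$ is diagonal with strictly positive entries and that $A_h$ is represented by $B = -D^{-1}S$, where $S_{ij} = (\nabla\phi_i,\nabla\phi_j)$ is the stiffness matrix. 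The semigroup $e^{tA_h}$ then acts on the vector of nodal values as the matrix exponential $e^{tB}$.

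The decisive structural input is the acuteness condition (H2), which gives $S_{ij} = (\nabla\phi_i,\nabla\phi_j)\le 0$ for all $i\ne j$ and hence $B_{ij} = -|\Lambda_i|^{-1}S_{ij}\ge 0$ for $i\ne j$; that is, $B$ is a Metzler matrix, with all off-diagonal entries nonnegative. I would then invoke the elementary fact that a real square matrix $B$ satisfies $e^{tB}\ge 0$ (entrywise) for every $t\ge 0$ precisely when its off-diagonal entries are nonnegative. The sufficiency direction, which is all we need, follows from the shift trick: choosing $c\ge\max_i|B_{ii}|$ makes $B + cI\ge 0$ entrywise, whence $(B+cI)^k\ge 0$ for every $k\ge 0$ and $e^{tB} = e^{-ct}\sum_{k\ge 0}\tfrac{t^k}{k!}(B+cI)^k\ge 0$. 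This is precisely the algebraic content of the discrete maximum principle of Remark \ref{rem:dmp}; equivalently one may argue through the resolvents $(I-\tau A_h)^{-1}$, whose matrices $(D+\tau S)^{-1}D$ are nonnegative M-matrix inverses, and pass to $e^{tA_h}=\lim_n (I-\tfrac{t}{n}A_h)^{-n}$.

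It remains to translate entrywise nonnegativity of $e^{tB}$ into positivity-preservation in $X_{h,q}$. For a $P_1$ function $u_h\in S_h$ one has $u_h\ge 0$ in $\Omega \iff u_h(P_j)\ge 0$ for all interior nodes $P_j \iff M_h u_h = \sum_j u_h(P_j)\chi_j\ge 0$ a.e., so positivity in $X_{h,q}$ (namely $M_h u_h\ge 0$) is exactly nonnegativity of the nodal-value vector. Since $e^{tB}$ maps nonnegative vectors to nonnegative vectors, $e^{tA_h}$ preserves positivity for each $t>0$, uniformly in $q\in[1,\infty]$ because the characterization does not see $q$.

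The only genuine subtlety, and the place I would be most careful, is the identification of the correct algebraic objects: one must verify that the \emph{mass-lumped} inner product $(\cdot,\cdot)_h$ really produces the diagonal matrix $D$. This is exactly where mass-lumping is essential — the consistent mass matrix is not diagonal, so $B$ loses its Metzler structure and positivity may fail, which is precisely why the standard discrete Laplacian is not positivity-preserving (cf.\ \cite{ThoW08}). Once this bookkeeping is fixed, and noting that (H2) already supplies the off-diagonal sign for the interior-node submatrix indexing $S_h$, the Metzler argument delivers the conclusion.
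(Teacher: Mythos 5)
Your proof is correct; the paper gives no argument of its own for this lemma, deferring entirely to \cite[Theorem 15.5]{Tho06}, and your reconstruction --- diagonality of the lumped mass matrix $D$, acuteness (H2) forcing the representing matrix $B=-D^{-1}S$ to have nonnegative off-diagonal entries, the shift trick $e^{tB}=e^{-ct}e^{t(B+cI)}\ge 0$ for $c\ge\max_i|B_{ii}|$, and the identification of positivity in $X_{h,q}$ with nonnegativity of the nodal-value vector --- is essentially the standard argument behind the cited result. In particular, you correctly handle the two points where this could go wrong: restricting (H2) to the interior-node submatrix that represents operators on $S_h$, and noting that for $P_1$ functions vanishing on $\partial\Omega$ both $u_h\ge 0$ in $\Omega$ and $M_hu_h\ge 0$ a.e.\ are equivalent to nonnegativity of the nodal values, so the matrix statement transfers to $X_{h,q}$ for every $q\in[1,\infty]$ at once.
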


\begin{lemma}[\mb{\cite[Theorem 4.1]{CroT01}}]
\label{prop:disc-Laplacian2}
Let $q \in [1, \infty]$.
Assume that the family of triangulations $\{\cT_h\}$ satisfies the acuteness condition (H2).
Then, $A_h$ generates an analytic and contraction semigroup on $X_{h,q}$.
Moreover, if $q \in (1, \infty)$, then $A_h$ satisfies the condition
 (NR1) with the angle $\theta_q$ defined as \eqref{eq:tq}. 
\end{lemma}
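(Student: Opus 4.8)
The plan is to reduce both assertions to a description of the numerical range $S(A_h)$ in $X_{h,q}$. Since $\dim S_h<\infty$, the operator $A_h$ is bounded and $e^{tA_h}$ is automatically entire, so analyticity (with $h$-independent bounds, once the sector below is fixed) needs no separate argument; the real content is the contraction property for $q\in[1,\infty]$ and the sharp sector condition (NR1) for $q\in(1,\infty)$. Because $\bC\setminus\Sigma_{\theta_q+\pi/2}$ is exactly the closed sector of half-angle $\pi/2-\theta_q$ about the negative real axis (see \eqref{eq:sec}), the inclusion $S(A_h)\subset\bC\setminus\Sigma_{\theta_q+\pi/2}$ in particular places $S(A_h)$ in $\{\operatorname{Re}z\le0\}$, and dissipativity of the normalized duality pairing then gives $\|e^{tA_h}\|\le1$ at once. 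Thus for $q\in(1,\infty)$ both conclusions follow from the single sector inclusion.

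To evaluate $S(A_h)$ I would use that $M_h\colon X_{h,q}\to\overline{S}_h\subset L^q$ is, by the very definition of $\|\cdot\|_{h,q}$, an isometric isomorphism, and that $L^q$ is smooth for $q\in(1,\infty)$, so each unit vector has a unique norming functional. For $x=\sum_j u_j\phi_j\in S_h$ with $\|x\|_{h,q}=1$, the image $w=M_hx=\sum_j u_j\chi_j$ is a step function with $\|w\|_{L^q}=1$, whose duality map is the explicit step function $w^\ast=|w|^{q-2}w=\sum_j|u_j|^{q-2}u_j\chi_j$. Pulling $w^\ast$ back through $M_h$ yields the unique norming functional $x^\ast$ with $\langle x^\ast,x\rangle=1$, so $S(A_h)$ is swept out by $\langle x^\ast,A_hx\rangle$ as $x$ ranges over the unit sphere. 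Testing the defining relation \eqref{eq:dl} against $\psi=\sum_j|u_j|^{q-2}u_j\phi_j\in S_h$ and taking a conjugate, a direct computation gives
\begin{equation*}
\langle x^\ast,A_hx\rangle=-\sum_{j,k}a_{jk}\,\bar u_j\,|u_k|^{q-2}u_k,\qquad a_{jk}=\int_\Omega\nabla\phi_j\cdot\nabla\phi_k\,dx,
\end{equation*}
where $j,k$ range over the interior nodes. Writing $c_{jk}=-a_{jk}$ for $j\ne k$ and $b_j=\sum_k a_{jk}$, the partition of unity $\sum_l\phi_l\equiv1$ gives $\sum_l a_{jl}=0$ over all nodes $l$, so (H2) forces $c_{jk}\ge0$ and makes $b_j$ equal to minus the sum of the boundary entries $a_{jk}$, hence $b_j\ge0$. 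A discrete summation by parts then produces
\begin{equation*}
-\langle x^\ast,A_hx\rangle=\sum_j b_j|u_j|^q+\tfrac12\sum_{j\ne k}c_{jk}\,(\bar u_j-\bar u_k)\bigl(|u_j|^{q-2}u_j-|u_k|^{q-2}u_k\bigr).
\end{equation*}

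Since $b_j\ge0$ and $c_{jk}\ge0$, and since $\overline{\Sigma_{\pi/2-\theta_q}}$ is a convex cone containing the positive real axis, it suffices to show that each scalar term
\begin{equation*}
T(a,b)=(\bar a-\bar b)\bigl(|a|^{q-2}a-|b|^{q-2}b\bigr),\qquad a,b\in\bC,
\end{equation*}
lies in $\overline{\Sigma_{\pi/2-\theta_q}}$, that is, $\operatorname{Re}T(a,b)\ge0$ and $|\operatorname{Im}T(a,b)|\le\tan(\pi/2-\theta_q)\,\operatorname{Re}T(a,b)$. This is the classical scalar inequality underlying $L^q$-accretivity: $T$ is invariant under the simultaneous rotation $a,b\mapsto e^{i\phi}a,e^{i\phi}b$ and homogeneous of degree $q$ in modulus, so one normalizes $b=1$ (handling $b=0$ directly) and is left with a two-variable real extremal problem whose sharp constant equals $\tan(\pi/2-\theta_q)$ for $\theta_q$ as in \eqref{eq:tq}. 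Establishing this scalar inequality is the single genuinely nontrivial step. Summing the nonnegative combination then gives $-\langle x^\ast,A_hx\rangle\in\overline{\Sigma_{\pi/2-\theta_q}}$, i.e.\ $\langle x^\ast,A_hx\rangle\in\bC\setminus\Sigma_{\theta_q+\pi/2}$, which is (NR1) with $\delta=\theta_q$.

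For the endpoints $q=1,\infty$, where the sector degenerates to the full left half-plane and the duality map ceases to be single valued, I would argue by positivity instead. Lemma \ref{prop:disc-Laplacian1} gives that $e^{tA_h}$ is positivity preserving, while the discrete maximum principle of Remark \ref{rem:dmp} yields $\|e^{tA_h}\|_{\cL(X_{h,\infty})}\le1$; duality of $A_h$ in the mesh inner product then transfers this to a contraction on $X_{h,1}$. The case $q=2$ is immediate, since $A_h$ is self-adjoint and negative in $X_{h,2}$, whence $S(A_h)\subset(-\infty,0]$ and any $\delta\in(0,\pi/2)$ serves in (NR1). I expect the scalar sector inequality for $T(a,b)$ to be the main obstacle; the surrounding reduction — via the $M_h$-isometry, the explicit $L^q$ duality map, and the summation by parts made possible by acuteness (H2) — is otherwise routine.
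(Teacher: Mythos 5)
The paper gives no proof of this lemma at all: it is imported verbatim from Crouzeix--Thom\'ee \cite[Theorem 4.1]{CroT01}, so the relevant comparison is with that reference, and your proposal is in substance a reconstruction of its argument. The skeleton is right: compute the numerical range through the isometry $M_h$ and the explicit duality map of the step functions $\sum_j u_j\chi_j$ (smoothness of $L^q$, inherited by the subspace $\overline{S}_h$, makes the norming functional unique, and the pulled-back functional is exactly the nodal power vector $v_h^*$ of \eqref{eq:v*}); use (H2) together with the partition of unity $\sum_l\phi_l\equiv1$ to get $c_{jk}\ge0$ and $b_j\ge0$; and sum by parts to reduce to the two-point form $T(a,b)$. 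Your identity for $-\langle x^*,A_hx\rangle$ is correct (your pairing conjugates $u_j$ rather than the power term, but this is harmless since the set in question is conjugation-symmetric). The step you defer --- $\operatorname{Re}T\ge0$ and $|\operatorname{Im}T|\le\tan(\pi/2-\theta_q)\operatorname{Re}T$ with $\tan(\pi/2-\theta_q)=|q-2|/\bigl(2\sqrt{q-1}\bigr)$ --- is precisely the key scalar lemma of \cite{CroT01}, the discrete counterpart of the classical sector inequality for the $L^q$ Dirichlet form, so identifying it as the one genuinely nontrivial ingredient is accurate. Note this requires reading $\theta_q$ as $\arccos|1-2/q|$, as in Lemma \ref{prop:NR-suff_cond}; the display \eqref{eq:tq} with $|1-q/2|$ is evidently a typo (undefined for $q>4$), and your geometry silently uses the correct version.

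Two steps are glossed, both at the endpoints. First, ``analyticity needs no separate argument'' is only true for fixed $h$: the content of the lemma, and of \cite{CroT01}, is a sector resolvent bound \emph{uniform} in $h$. Your numerical-range sector degenerates to the full left half-plane as $q\to1,\infty$, so for $q\in\{1,\infty\}$ your argument yields uniform contraction but not uniform analyticity; in Crouzeix--Thom\'ee the maximum-norm resolvent estimate is a genuinely separate and harder argument, not a corollary of positivity. Within the present paper this gap happens to be harmless, since downstream only contraction is used at $q=\infty$ (Remark \ref{rem:bound}, via Hille--Yosida) and uniform analyticity only for $q\in(1,\mu)$, where your sector inclusion does cover it --- but as a proof of the lemma as stated in \cite{CroT01} it is incomplete. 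Second, Remark \ref{rem:dmp} is an elliptic statement and does not by itself give $\|e^{tA_h}\|_{\cL(X_{h,\infty})}\le1$; the clean route is through the generator: in nodal coordinates $A_h=-M^{-1}S$ with $M$ the lumped mass matrix, and (H2) plus the partition of unity give nonnegative off-diagonal entries and nonpositive row sums, i.e.\ a sub-Markovian generator, whence positivity and the sup-norm contraction, with the $q=1$ case following by self-adjointness of $A_h$ in $(\cdot,\cdot)_h$ and duality, as you indicate.
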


We introduce several mesh-depending operators on $S_h$.
The $L^2$ projection $P_h$ is defined as \eqref{eq:Ph}.
Let $R_h$ be the Ritz projection of $W^{1,1}\to S_h$ defined as 
\begin{equation*}
(\nabla R_h u, \nabla v_h)_{L^2} = (\nabla u, \nabla v_h)_{L^2}, \quad \forall v_h \in S_h
\end{equation*}
for $u \in W^{1,1}$.
These operators have the following well-known properties. 
See \cite{DouDW75,CroT87,BreS08} for the proofs.

\begin{lemma}\label{lem:projections}
Assume that $\{\cT_h\}_h$ satisfies (H1).
Then, there exists $C>0$ depending only on $\Omega$ and $q$ such that
\begin{align}
\| P_h v \|_{L^q} &\le C \| v \|_{L^q}, \quad \forall v \in L^q, 
\quad \forall q \in [1,\infty], \\
\| P_h v \|_{W^{1,q}} &\le C \| v \|_{W^{1,q}}, \quad \forall v \in W^{1,q}, 
\quad \forall q \in [1,\infty], \label{eq:Sobolev-stability_L2-projection} \\
\| R_h v \|_{W^{1,q}} &\le C \| v \|_{W^{1,q}}, \quad \forall v \in W^{1,q}, 
\quad \forall q \in (1,\infty], \\
\| v - P_h v \|_{L^q} &\le C h^2 \| v \|_{W^{2,q}}, \quad \forall v \in W^{2,q}, 
\quad \forall q \in (d/2,\infty], \\
\| v - R_h v \|_{L^q} &\le C h^2 \| v \|_{W^{2,q}}, \quad \forall v \in D(A_q), 
\quad \forall q \in (\mu', \infty), 
\end{align}
where $\mu'$ is the H\"{o}lder conjugate of $\mu$. 
When $q \ne 2$, (H1) is not required for all inequalities above except for \eqref{eq:Sobolev-stability_L2-projection}.
\end{lemma}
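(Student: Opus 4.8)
The statement collects several classical finite element stability and approximation estimates, and the plan is to reduce all of them to two genuinely hard ingredients --- the $L^\infty$-stability of $P_h$ and the $W^{1,q}$-stability of $R_h$ --- from which the remaining assertions follow by interpolation, duality, inverse estimates, and an Aubin--Nitsche argument.

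First I would establish the $L^q$-stability of the $L^2$-projection. Since $P_h$ is self-adjoint and contractive on $L^2$, the Riesz--Thorin interpolation theorem reduces the claim to stability in $L^\infty$ (the $L^1$ bound then following by duality, and all intermediate $q \in [1,\infty]$ by interpolation). The $L^\infty$-bound is the central difficulty: the standard route is a weighted-norm argument, introducing a smooth weight concentrated near a fixed point and exploiting the structure of the mass matrix, whose inverse decays exponentially away from the diagonal. Testing the projection identity against carefully weighted functions and applying an inverse-type estimate yields the uniform pointwise bound $\| P_h v \|_{L^\infty} \le C \| v \|_{L^\infty}$; this is where the real work lies and where the geometric mesh hypotheses enter, following Douglas--Dupont--Wahlbin and Crouzeix--Thom\'ee. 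Notably only shape regularity (Assumption \ref{assum:reg}) plus a mild local condition is needed here, which explains why (H1) is not required for the $L^q$-bounds when $q \ne 2$.

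Next, the $O(h^2)$ approximation estimate for $P_h$ follows by comparison with the nodal interpolant $I_h$: writing $v - P_h v = (v - I_h v) - P_h(v - I_h v)$ and invoking $L^q$-stability reduces everything to the interpolation error, which is $O(h^2)\| v \|_{W^{2,q}}$ provided $W^{2,q} \hookrightarrow C^0$, i.e. $q > d/2$ --- exactly the stated range. The $W^{1,q}$-stability of $P_h$ then comes from $\| \nabla P_h v \|_{L^q} \le \| \nabla(P_h v - R_h v) \|_{L^q} + \| \nabla R_h v \|_{L^q}$, controlling the first term by the global inverse inequality (the one place where (H1) is genuinely needed, consistent with the remark in \eqref{eq:Sobolev-stability_L2-projection}) times the first-order $L^q$-errors of $P_h$ and $R_h$, and the second term by Ritz $W^{1,q}$-stability.

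The $W^{1,q}$-stability of the Ritz projection is the second hard ingredient; I would again use weighted Sobolev estimates for the discrete Green function in the spirit of Rannacher--Scott and Brenner--Scott to obtain a uniform $W^{1,q}$ bound. Finally, the $L^q$-error estimate for $R_h$ follows by duality: I estimate $\| v - R_h v \|_{L^q}$ by testing against the solution $w$ of the adjoint problem $-\Delta w = \psi$ with $\psi \in L^{q'}$, using Galerkin orthogonality to insert $R_h w$ and the $O(h^2)$ $W^{1,q}$-approximation of $w$; the elliptic regularity $\| w \|_{W^{2,q'}} \le C \| \psi \|_{L^{q'}}$ supplied by Assumption \ref{assum:regularity} requires $q' < \mu$, i.e. $q > \mu'$, which is precisely the stated restriction $q \in (\mu', \infty)$. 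The two weighted-norm stability results are the genuine obstacles; once they are in hand, the approximation estimates and the $W^{1,q}$-stability of $P_h$ are routine consequences of interpolation theory and inverse inequalities.
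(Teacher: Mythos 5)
The first thing to say is that the paper contains no proof of this lemma at all: it is presented as a package of classical results, with the proofs delegated to the references \cite{DouDW75,CroT87,BreS08}. So your attempt can only be measured against the standard literature arguments, and for four of the five inequalities your sketch reconstructs them faithfully: the reduction of $L^q$-stability of $P_h$ to the $L^\infty$ endpoint via self-adjointness, duality and Riesz--Thorin, with the $L^\infty$ bound obtained from the exponential decay of the inverse mass matrix (this is exactly Douglas--Dupont--Wahlbin and Crouzeix--Thom\'ee); the $O(h^2)$ bound for $P_h$ by comparison with an interpolant, with the restriction $q>d/2$ correctly traced to $W^{2,q}\hookrightarrow C^0$ (though for $d=3$ and $q$ near $\mu'<3/2$, as arises in the dual problem, nodal interpolation is undefined and you should say Scott--Zhang; this is cosmetic); the weighted Green-function route to the $W^{1,q}$-stability of $R_h$; and the Aubin--Nitsche duality for the $L^q$-error of $R_h$, with the range $q\in(\mu',\infty)$ correctly matched to the dual regularity requirement $q'<\mu$ from Assumption \ref{assum:regularity}.

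The genuine gap is your argument for \eqref{eq:Sobolev-stability_L2-projection}. You bound $\|\nabla(P_hv-R_hv)\|_{L^q}$ by $Ch^{-1}\bigl(\|v-P_hv\|_{L^q}+\|v-R_hv\|_{L^q}\bigr)$ and need both errors to be $O(h)\|v\|_{W^{1,q}}$. For $R_h$ this first-order $L^q$-error bound is itself an Aubin--Nitsche statement requiring $W^{2,q'}$-regularity of the adjoint problem, hence $q'<\mu$, i.e.\ $q>\mu'$; since $\mu'>1$ on a polyhedral domain, your route cannot reach the claimed range $q\in[1,\infty]$ --- it loses $q\in[1,\mu']$ outright, and at $q=1$ even the $W^{1,q}$-stability of $R_h$ you invoke is unavailable (the lemma asserts it only for $q\in(1,\infty]$). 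There is also a circularity risk: in this paper, first-order $L^q$-error bounds for the Ritz projection are downstream of precisely the projection stabilities being proved. The standard repair --- and the one in the cited reference \cite{CroT87} --- is to compare $P_hv$ with a quasi-interpolant $\Pi_hv$ (Cl\'ement or Scott--Zhang) instead of $R_hv$: since $\Pi_hv\in S_h$ gives $P_hv-\Pi_hv=P_h(v-\Pi_hv)$, the chain $\|\nabla P_hv\|_{L^q}\le Ch^{-1}\|P_h(v-\Pi_hv)\|_{L^q}+\|\nabla\Pi_hv\|_{L^q}\le Ch^{-1}\|v-\Pi_hv\|_{L^q}+C\|v\|_{W^{1,q}}\le C\|v\|_{W^{1,q}}$ uses only the inverse inequality (this is where (H1) genuinely enters), the already-established $L^q$-stability of $P_h$, and the first-order approximation and $W^{1,q}$-stability of $\Pi_h$, all of which hold for every $q\in[1,\infty]$ with no elliptic regularity. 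One further caveat worth recording: since $S_h$ carries the homogeneous Dirichlet condition, \eqref{eq:Sobolev-stability_L2-projection} for $q>1$ must be read for $v$ with vanishing trace (take $v\equiv1$: $\|\nabla P_h 1\|_{L^q}\sim h^{(1-q)/q}$ blows up), which is how the paper uses it; the quasi-interpolant in the repaired argument should then be chosen trace-preserving.
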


Mass-lumping operator $M_h$ and $K_h$ have the following properties.
For the proof, see \cite{FujSS01}.

\begin{lemma}\label{lem:lumping}
Let $q \in [1,\infty]$. Then, there exists $C>0$ depending only on $q$ and $\Omega$ such that
\begin{equation*}
C^{-1} \| v_h \|_{L^q} \le \| M_h v_h \|_{L^q} \le C \| v_h \|_{L^q}, 
\quad v_h \in S_h, \quad q \in [1,\infty].
\end{equation*}
Moreover, if $\{ \cT_h \}_h$ satisfies (H1) when $q \ne 2$,
then there exists $C>0$ depending only on $q$ and $\Omega$ such that
\begin{equation*}
C^{-1} \| v_h \|_{L^q} \le \| K_h v_h \|_{L^q} \le C \| v_h \|_{L^q}, 
\quad v_h \in S_h, \quad q \in [1,\infty]. 
\end{equation*}
\end{lemma}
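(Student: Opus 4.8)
The plan is to treat the two stated equivalences separately: first establish the one for $M_h$ for all $q\in[1,\infty]$ using only the elementwise structure, and then deduce the one for $K_h=M_h^* M_h$ from it, with the inverse assumption (H1) entering at exactly one point and only when $q\ne2$.

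For the $M_h$ inequalities I would exploit that $M_h v_h=\sum_{j=1}^{N_h}v_h(P_j)\chi_j$ is piecewise constant on the barycentric domains, so that for $q<\infty$
\begin{equation*}
\|M_h v_h\|_{L^q}^q=\sum_{j=1}^{N_h}|v_h(P_j)|^q|\Lambda_j|.
\end{equation*}
On each element $K$ the restriction of $v_h$ is affine, and pulling back to the reference simplex shows that $\int_K|v_h|^q\,dx$ is comparable, with constants depending only on $q$ and $d$, to $|K|\sum_{P_i\in K}|v_h(P_i)|^q$. The construction of the barycentric domains gives the exact identity $|\Lambda_j\cap K|=|K|/(d+1)=\int_K\phi_j\,dx$, hence $|\Lambda_j|=\int_\Omega\phi_j\,dx$; summing the local equivalences over $K$ and reorganizing the resulting double sum then yields $\|M_h v_h\|_{L^q}\simeq\|v_h\|_{L^q}$. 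The case $q=\infty$ is immediate since both sides equal $\max_j|v_h(P_j)|$. Nothing beyond the triangulation being simplicial is needed here, so (H1) plays no role.

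For $K_h$ the lower bound $C^{-1}\|v_h\|_{L^q}\le\|K_h v_h\|_{L^q}$ is equivalent to the $L^q$-boundedness of $K_h^{-1}$, which I would prove directly and without (H1). Using the defining relation $(K_h u_h,v_h)_{L^2}=(u_h,v_h)_h$, a short computation shows that the nodal values of $K_h^{-1}w_h$ are the weighted averages $(K_h^{-1}w_h)(P_i)=\big(\int_\Omega\phi_i\,dx\big)^{-1}\int_\Omega w_h\phi_i\,dx$. Since $\phi_i/\int\phi_i$ is a probability density, Jensen's inequality gives $|(K_h^{-1}w_h)(P_i)|^q\le\big(\int\phi_i\big)^{-1}\int|w_h|^q\phi_i\,dx$, and summing against the weights $|\Lambda_i|=\int\phi_i$ together with $\sum_i\phi_i\le1$ yields $\|K_h^{-1}w_h\|_{h,q}\le\|w_h\|_{L^q}$; the $M_h$ equivalence converts the lumped norm back to $\|\cdot\|_{L^q}$. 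Taking $w_h=K_h v_h$ gives the lower bound for every $q\in[1,\infty]$.

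The upper bound $\|K_h v_h\|_{L^q}\le C\|v_h\|_{L^q}$ is the main obstacle and the only place where (H1) is used for $q\ne2$. I would factor $K_h=M_h^* M_h$ and reduce to the claim $\|M_h^* g\|_{L^q}\le C\|g\|_{L^q}$ for $g\in\overline{S}_h$; combined with the already-established $\|M_h v_h\|_{L^q}\le C\|v_h\|_{L^q}$ this closes the estimate. To bound $M_h^* g\in S_h$ I would argue by duality: for any $u_h\in S_h$, writing $(u_h,\varphi)_{L^2}=(u_h,P_h\varphi)_{L^2}$ (since $u_h=P_h u_h$ and $P_h$ is $L^2$-self-adjoint) and using the $L^{q'}$-stability of $P_h$ from Lemma \ref{lem:projections} (which requires (H1) precisely when $q'\ne2$, i.e.\ $q\ne2$) shows $\|u_h\|_{L^q}\le C\sup\{|(u_h,w_h)_{L^2}| : w_h\in S_h,\ \|w_h\|_{L^{q'}}\le1\}$. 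Applying this with $u_h=M_h^* g$, invoking the definition $(M_h^* g,w_h)_{L^2}=(g,M_h w_h)_{L^2}$, Hölder's inequality, and the $M_h$ bound $\|M_h w_h\|_{L^{q'}}\le C\|w_h\|_{L^{q'}}$ then gives $\|M_h^* g\|_{L^q}\le C\|g\|_{L^q}$. The crux is exactly this reduction of the $L^q$ norm of a finite element function to a supremum over discrete test functions: it hinges on the $L^{q'}$-stability of the $L^2$-projection and thereby explains why (H1) is needed for $q\ne2$. For $q=2$ one may instead invoke the elementary spectral equivalence of the lumped and consistent mass forms, so that (H1) is not required.
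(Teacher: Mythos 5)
Your proof is correct, but there is a wrinkle in the comparison: the paper gives no internal proof of Lemma \ref{lem:lumping} at all --- it simply cites \cite{FujSS01} --- so your argument is judged as a self-contained substitute, and as such it holds up. The $M_h$ part is the standard elementwise computation: the exact identities $|\Lambda_j \cap K| = |K|/(d+1) = \int_K \phi_j\,dx$ together with the affine pullback to the reference simplex give the equivalence with constants depending only on $q$ and $d$, and you are right that no mesh hypothesis (not even shape regularity) is needed there. For $K_h$ your two observations are exactly the right mechanisms: the formula $(K_h^{-1}w_h)(P_i) = \bigl(\int_\Omega \phi_i\,dx\bigr)^{-1}\int_\Omega w_h\phi_i\,dx$ exhibits $K_h^{-1}$ as a nodal averaging operator, so the lower bound follows from Jensen's inequality and $\sum_i \phi_i \le 1$ with no mesh condition, while the upper bound reduces by duality ($u_h = P_h u_h$ and self-adjointness of $P_h$) to the $L^{q'}$-stability of the $L^2$-projection, i.e.\ to Lemma \ref{lem:projections} --- which is precisely where (H1) enters for $q \ne 2$, consistent with the hypothesis structure of the lemma; your proof even sharpens the statement slightly, since it shows $\|v_h\|_{L^q} \le C\|K_h v_h\|_{L^q}$ holds for all $q \in [1,\infty]$ without (H1), and the $q=2$ case is dispatched by the spectral equivalence of the lumped and consistent mass forms as you say. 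Two trivial loose ends, neither of which affects the argument: the Jensen step should be stated separately for $q=\infty$, where $|(K_h^{-1}w_h)(P_i)| \le \|w_h\|_{L^\infty}$ is immediate; and since all spaces in the paper are complex-valued, the duality pairing $\|u\|_{L^q} = \sup_{\|\varphi\|_{L^{q'}}\le 1}|(u,\varphi)_{L^2}|$ and the identity $(u_h,\varphi)_{L^2} = (u_h, P_h\varphi)_{L^2}$ should be read in the sesquilinear sense, which changes nothing.
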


We use the standard discrete Laplacian $L_h$ defined as
\[
 (L_h u_h, v_h) = - (\nabla u_h, \nabla v_h), \quad \forall v_h \in S_h,
\]
for $u_h \in S_h$.
We designate $L_h$ the discrete Laplacian \emph{without} mass-lumping.
From the Poincar\'e inequality, $L_h$ is injective. 
Consequently, it is invertible due to $\dim S_h < \infty$.
Then, by the definitions given above, it is apparent that
\begin{equation}
L_h = K_h A_h, \quad R_h = L_h^{-1} P_h A. \label{eq:disc-Lap}
\end{equation}
From these relations, the following estimate is obtained.
\begin{lemma}\label{lem:inv_disc-Lap}
Assume that $\{ \cT_h \}_h$ satisfies (H1) when $q \ne 2$.
Then, for $q \in (1, \mu)$, there exists $C>0$ satisfying
\begin{equation*}
\| v_h \|_{h,q} \le C \| A_h v_h \|_{h,q}, \quad \forall v_h \in S_h,
\end{equation*}
where $C$ depends only on $\Omega$ and $q$.
\end{lemma}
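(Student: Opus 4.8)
The plan is to reduce the discrete a priori bound to the continuous elliptic regularity of Assumption~\ref{assum:regularity} by recognizing the given $v_h$ as the Ritz projection of the exact solution of an associated Dirichlet problem, and then to control that Ritz projection using only its $W^{1,q}$-stability. The point of working with $W^{1,q}$-stability rather than an $L^q$ bound is that the former is available for the whole range $q\in(1,\infty)$, whereas the $L^q$-approximation property of $R_h$ in Lemma~\ref{lem:projections} would impose the stronger restriction $q>\mu'$ and could fail for $q$ near $1$. Since $\|\cdot\|_{h,q}$ is equivalent to $\|\cdot\|_{L^q}$ on $S_h$ by Lemma~\ref{lem:lumping}, it suffices to establish $\|v_h\|_{L^q}\le C\|A_h v_h\|_{L^q}$.

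First I would set $g_h=A_h v_h$ and introduce $f_h=K_h g_h$, which equals $L_h v_h$ by the identity $L_h=K_h A_h$ from \eqref{eq:disc-Lap}; note $f_h\in S_h\subset L^q$. Unfolding the inner product $(\cdot,\cdot)_h$ in the defining relation \eqref{eq:dl}, using $K_h=M_h^*M_h$, gives $(\nabla v_h,\nabla\chi_h)_{L^2}=(-f_h,\chi_h)_{L^2}$ for all $\chi_h\in S_h$. Next, let $v=A_q^{-1}f_h\in D(A_q)=W^{2,q}\cap W^{1,q}_0$ solve $\Delta v=f_h$ with homogeneous Dirichlet data; its weak formulation reads $(\nabla v,\nabla\chi_h)_{L^2}=(-f_h,\chi_h)_{L^2}$. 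Comparing the two relations yields $(\nabla v_h,\nabla\chi_h)_{L^2}=(\nabla v,\nabla\chi_h)_{L^2}$ for all $\chi_h\in S_h$, i.e.\ $v_h=R_h v$. Assumption~\ref{assum:regularity} then supplies, for $q\in(1,\mu)$, the estimate $\|v\|_{W^{2,q}}\le C\|\Delta v\|_{L^q}=C\|f_h\|_{L^q}$.

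It then remains to chain the estimates. Bounding $R_h v$ trivially by its full $W^{1,q}$ norm and invoking the $W^{1,q}$-stability of the Ritz projection (Lemma~\ref{lem:projections}, valid for $q\in(1,\infty]$, and needing no further hypothesis when $q\neq2$) gives
\[
\|v_h\|_{L^q}=\|R_h v\|_{L^q}\le\|R_h v\|_{W^{1,q}}\le C\|v\|_{W^{1,q}}\le C\|v\|_{W^{2,q}}\le C\|f_h\|_{L^q}.
\]
Finally $\|f_h\|_{L^q}=\|K_h g_h\|_{L^q}\le C\|g_h\|_{L^q}$ by the $K_h$-equivalence in Lemma~\ref{lem:lumping} — which is precisely where (H1) is used when $q\neq2$, matching the stated hypothesis — and converting $\|v_h\|_{L^q}$ and $\|g_h\|_{L^q}=\|A_h v_h\|_{L^q}$ back into the mesh norms via Lemma~\ref{lem:lumping} yields $\|v_h\|_{h,q}\le C\|A_h v_h\|_{h,q}$. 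The only step I would scrutinize is the replacement of the $L^q$-control of $R_h v$ by the $W^{1,q}$-stability argument: this is what keeps the constant $h$-independent and, more importantly, what allows the full range $q\in(1,\mu)$ rather than $q>\mu'$; the remaining manipulations are routine bookkeeping with the operators $M_h,K_h,L_h,R_h$.
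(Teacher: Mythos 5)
Your proof is correct and takes essentially the same route as the paper's: both recognize $v_h=R_h v$ with $v=A_q^{-1}L_h v_h$ (the paper obtains this via the identity $R_h=L_h^{-1}P_hA$ from \eqref{eq:disc-Lap}, you by comparing the two variational formulations, which is the same fact) and then chain the $W^{1,q}$-stability of the Ritz projection with the elliptic regularity of Assumption~\ref{assum:regularity} and the norm equivalences of Lemma~\ref{lem:lumping}. Your explicit remarks on where (H1) enters and on why the $W^{1,q}$-stability of $R_h$ is used instead of its $L^q$-error bound are accurate but only make explicit what the paper's proof does implicitly.
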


\begin{proof}
By \eqref{eq:disc-Lap} and Lemma \ref{lem:lumping}, it suffices to show that
\begin{equation*}
\| v_h \|_{L^q} \le C \| L_h v_h \|_{L^q}
\end{equation*}
for all $v_h \in S_h$.
Fix $v_h \in S_h$ arbitrarily and set $f_h = L_h v_h$ and $v = A^{-1}f_h \in D(A)$.
Then, noting that $P_h f_h = f_h$ and from \eqref{eq:disc-Lap}, one obtains
\begin{equation*}
 v_h  =  L_h^{-1} P_h f_h = L_h^{-1} P_h Av = R_h v.
\end{equation*}
Therefore, we have
\begin{equation*}
\| v_h \|_{L^q} \le \| R_h v \|_{W^{1,q}} \le C \| v \|_{W^{1,q}} 
\le C \| v \|_{W^{2,q}} \le C \| Av \|_{L^q} = C \| L_h v_h \|_{L^q}
\end{equation*}
by Lemma \ref{lem:projections} and \eqref{eq:regularity}.
\qed\end{proof}

Furthermore, the following estimate holds. See \cite[Lemma 4.6]{Sai07} for the proof.
\begin{lemma}\label{lem:Kh}
Assume that $\{ \cT_h \}_h$ satisfies (H1) when $q \ne 2$.
Let $q \in (\mu', \mu)$.
Then, there exists $C>0$ depending only on $q$ and $\Omega$ such that
\begin{equation*}
\| A_h^{-1}(I - K_h^{-1}) v_h \|_{h,q} \le C h^2 \| \nabla v_h \|_{L^q},
\quad v_h \in S_h.
\end{equation*}
\end{lemma}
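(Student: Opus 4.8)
The plan is to reduce the assertion, by a purely algebraic manipulation, to an $L^q$-estimate for the standard discrete Laplacian $L_h$ applied to the \emph{mass-lumping error}, and then to extract the factor $h^2$ by a duality argument. Since $L_h = K_h A_h$ by \eqref{eq:disc-Lap}, we have $L_h^{-1} = A_h^{-1}K_h^{-1}$, and therefore
\[
A_h^{-1}(I - K_h^{-1}) = A_h^{-1}K_h^{-1}(K_h - I) = L_h^{-1}(K_h - I).
\]
Setting $w_h = L_h^{-1}(K_h - I)v_h \in S_h$ and invoking the norm equivalence $\|\cdot\|_{h,q}\le C\|\cdot\|_{L^q}$ on $S_h$ (Lemma \ref{lem:lumping}), it suffices to prove $\|w_h\|_{L^q}\le Ch^2\|\nabla v_h\|_{L^q}$. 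Testing the defining relation of $L_h$ against an arbitrary $\phi_h\in S_h$ and using $(K_h v_h,\phi_h) = (M_h v_h, M_h\phi_h) = (v_h,\phi_h)_h$, one finds that $w_h$ is characterized by
\[
(\nabla w_h, \nabla \phi_h) = (v_h,\phi_h) - (v_h,\phi_h)_h, \qquad \forall \phi_h\in S_h,
\]
so that its data is precisely the quadrature error of the lumping rule.

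Next I would estimate $\|w_h\|_{L^q}$ by duality. For $\psi\in L^{q'}$ with $\|\psi\|_{L^{q'}}=1$, let $z\in W^{2,q'}\cap W^{1,q'}_0$ solve $-\Delta z = \psi$; since $q\in(\mu',\mu)$ forces $q'\in(1,\mu)$, the shape assumption \eqref{eq:regularity} yields $\|z\|_{W^{2,q'}}\le C\|\psi\|_{L^{q'}}$. Integrating by parts (both $w_h$ and $z$ vanish on $\partial\Omega$) and using the Galerkin orthogonality of the Ritz projection, i.e.\ $(\nabla(z - R_h z),\nabla w_h) = 0$, I obtain
\[
(w_h,\psi) = (\nabla w_h,\nabla z) = (\nabla w_h, \nabla R_h z) = (v_h, R_h z) - (v_h, R_h z)_h,
\]
which is again the lumping quadrature error, now evaluated on the pair $(v_h, R_h z)$.

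It then remains to bound this quadrature error. The crucial point is the local estimate for the vertex (lumping) rule applied to the product of two $P_1$ functions: on each $K\in\cT_h$ one has $|(u_h,\phi_h)_{L^2(K)} - (u_h,\phi_h)_{h,K}|\le Ch_K^2\int_K|\nabla u_h|\,|\nabla\phi_h|\,dx$, because $u_h\phi_h$ is quadratic while the rule is exact on affine functions, so the error is controlled by the (piecewise constant) product $\nabla u_h\cdot\nabla\phi_h$. Summing over $K$, bounding $h_K\le h$, and applying H\"older with exponents $q,q'$ gives $|(v_h,R_h z) - (v_h,R_h z)_h|\le Ch^2\|\nabla v_h\|_{L^q}\|\nabla R_h z\|_{L^{q'}}$. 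Combining the Ritz stability $\|\nabla R_h z\|_{L^{q'}}\le C\|z\|_{W^{1,q'}}$ (Lemma \ref{lem:projections}) with the elliptic estimate above yields $|(w_h,\psi)|\le Ch^2\|\nabla v_h\|_{L^q}$; taking the supremum over $\psi$ in the unit ball of $L^{q'}$ and returning through the norm equivalence finishes the proof.

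The main obstacle is the $O(h^2)$ quadrature-error estimate: the naive bound for the difference between $(\cdot,\cdot)_h$ and $(\cdot,\cdot)_{L^2}$ is only $O(h)$, and the gain of a second power of $h$ rests essentially on the exactness of the vertex rule on affine functions (so that only the constant gradient product survives). Everything else is bookkeeping, provided one arranges the duality so that the error appears symmetrically across the H\"older pair $(q,q')$. The lower bound $q>\mu'$ is used precisely to guarantee $W^{2,q'}$-regularity of the dual problem, while the condition (H1) enters, in the case $q\ne2$, only through the finite element stability estimates of Lemmas \ref{lem:lumping} and \ref{lem:projections}.
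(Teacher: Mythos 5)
Your proposal is correct: the reduction $A_h^{-1}(I-K_h^{-1})=L_h^{-1}(K_h-I)$ via \eqref{eq:disc-Lap}, the duality argument with the $W^{2,q'}$-regular dual problem (which is exactly where $q>\mu'$ enters, since then $q'\in(1,\mu)$ and \eqref{eq:regularity} applies), the Galerkin orthogonality step, and the $O(h_K^2)$ vertex-rule quadrature bound are all sound --- the last one because $u_h\overline{\phi_h}$ is quadratic on each $K$ with piecewise-constant second derivatives built from $\nabla u_h$ and $\nabla\phi_h$, so exactness of the lumping rule on affine functions gives the second power of $h_K$ by a scaling/Bramble--Hilbert argument (the only implicit detail being that the barycentric-domain lumping coincides elementwise with the vertex rule, since $|\Lambda_j\cap K|=|K|/(d+1)$). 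The paper itself gives no proof, deferring to \cite[Lemma 4.6]{Sai07}, and your argument is essentially the standard proof found there (duality plus the lumping quadrature error), so it matches the intended route.
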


\section{Proofs of Theorems \ref{cor:mr_disc-Laplacian},
 \ref{cor:dmr_disc-Laplacian}, \ref{cor:dmr_disc-Lap_finite} and \ref{cor:dmr_disc-Lap_initial}}
\label{section:proof1}

The aim of this section is to establish CMR and DMR for $A_h$.
We first consider the continuous case via the method of imaginary powers
of operators. 
Then, we obtain DMR for $A_h$ by our previous result (Lemma
\ref{thm:sufficient}). We also present a useful criterion to check the
condition (NR)$_{\delta,\varepsilon}$.

In view of Lemma \ref{lem:mr-imaginary_power}, it suffices to show that 
\begin{equation}
-A_h \in \cP(X_{h,q};K) \cap \bip(X_{h,q};M,\theta)
\end{equation}
for some $K>0$, $M \ge 1$, and $\theta \in [0, \pi/2)$, uniformly with respect to $h$.
We first show that $-A_h \in \cP(X_{h,q};K)$.
\begin{lemma}\label{lem:positivity_disc-Lap}
Let $q \in (1, \mu)$.
Assume that the family $\{ \cT_h \}_h$ satisfies (H1) and (H2) when $q \ne 2$.
Then, there exists $K_q > 0$ satisfying
\begin{equation*}
-A_h \in \cP(X_{h,q}; K_q), 
\end{equation*}
where $K_q$ is independent of $h>0$.
\end{lemma}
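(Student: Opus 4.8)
The goal is to show $-A_h \in \cP(X_{h,q};K_q)$ with $K_q$ independent of $h$, i.e.\ to establish that $\rho(A_h) \supset (-\infty,0]$ together with the resolvent bound $\|(1+\lambda)R(\lambda;-A_h)\|_{\cL(X_{h,q})} \le K_q$ for all $\lambda \ge 0$, uniformly in $h$. The plan is to leverage the operator-theoretic facts about $A_h$ already collected in Section~\ref{sec:ah}, most crucially Lemma~\ref{prop:disc-Laplacian2} (which tells us $A_h$ generates an analytic contraction semigroup on $X_{h,q}$ and, for $q\ne 2$, satisfies the numerical-range sector condition (NR1) with angle $\theta_q$), and Lemma~\ref{lem:inv_disc-Lap} (uniform invertibility $\|v_h\|_{h,q}\le C\|A_hv_h\|_{h,q}$). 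Since membership in $\cP$ packages a sectoriality statement together with a quantitative resolvent estimate, the natural strategy is to prove sectoriality first and then improve it near the origin using invertibility.

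First I would handle $\lambda>0$. Because $A_h$ generates a contraction semigroup on $X_{h,q}$ (Lemma~\ref{prop:disc-Laplacian2}), $-A_h$ is accretive, so $(0,\infty)\subset\rho(-A_h)$ with the standard bound $\|R(\lambda;-A_h)\|_{\cL(X_{h,q})}=\|(\lambda+A_h)^{-1}\|_{\cL(X_{h,q})}\le \lambda^{-1}$ for $\lambda>0$; this bound is independent of $h$ because the contraction property in Lemma~\ref{prop:disc-Laplacian2} is. Equivalently, for $q\ne 2$ the numerical range $S(A_h)$ lies in the closed sector of half-angle $\pi-\theta_q$ about the negative real axis (this is the content of (NR1)), and a standard numerical-range argument gives $\|(\lambda+A_h)^{-1}\|\le (\dist(\lambda, S(A_h)))^{-1}$, which for $\lambda>0$ and for $q=2$ (self-adjointness, Remark~\ref{rem:sa}) yields the same $h$-uniform control. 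The remaining point is to dominate the factor $(1+\lambda)$: for $\lambda\ge 1$ the resolvent bound $\lambda^{-1}$ already absorbs $(1+\lambda)\le 2\lambda$, giving $\|(1+\lambda)R(\lambda;-A_h)\|\le 2$.

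The case $\lambda$ near $0$ (in particular $\lambda=0$) is where the invertibility lemma enters, and I expect this to be the main obstacle, since accretivity alone degenerates as $\lambda\to 0^+$. For $\lambda=0$, the bound $\|(1+\lambda)R(\lambda;-A_h)\|=\|A_h^{-1}\|_{\cL(X_{h,q})}$ is exactly the uniform estimate furnished by Lemma~\ref{lem:inv_disc-Lap} (recalling $\|\cdot\|_{h,q}$ is equivalent to $\|\cdot\|_{L^q}$ on $S_h$ uniformly in $h$ by Lemma~\ref{lem:lumping}), so $\|A_h^{-1}\|\le C$ with $C=C(\Omega,q)$ independent of $h$. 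To interpolate between this and the large-$\lambda$ regime, I would write, for $\lambda\ge 0$, a resolvent identity expressing $(\lambda+A_h)^{-1}$ in terms of $A_h^{-1}$, namely $(\lambda+A_h)^{-1}=A_h^{-1}(I+\lambda A_h^{-1})^{-1}$, or argue directly on the bounded interval $\lambda\in[0,1]$: here $(1+\lambda)\le 2$, and a Neumann/perturbation argument around $A_h^{-1}$ together with the accretivity bound controls $\|(\lambda+A_h)^{-1}\|$ uniformly. Combining the $\lambda\in[0,1]$ and $\lambda\ge 1$ estimates produces a single constant $K_q$ depending only on $\Omega$ and $q$ (through the contraction constant, the angle $\theta_q$, and the invertibility constant of Lemma~\ref{lem:inv_disc-Lap}), and crucially not on $h$. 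The delicate bookkeeping is ensuring every constant invoked—the semigroup contraction constant, the norm-equivalence constants in Lemmas~\ref{lem:lumping}, and the invertibility constant—has already been certified $h$-uniform by the cited lemmas, so that the final $K_q$ inherits $h$-independence.
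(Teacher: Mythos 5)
Your proposal is correct and rests on exactly the two ingredients the paper's proof uses --- the contraction-semigroup resolvent bound from Lemma~\ref{prop:disc-Laplacian2} and the $h$-uniform invertibility of Lemma~\ref{lem:inv_disc-Lap} --- the paper merely glues them with the single identity $R(\lambda;A_h)=A_h^{-1}\left[\lambda R(\lambda;A_h)-I\right]$, valid for all $\lambda>0$, rather than your case split $\lambda\in[0,1]$ versus $\lambda\ge 1$ with a Neumann series near the origin (which by itself only covers $\lambda<1/\|A_h^{-1}\|$, but your appeal to the accretivity bound closes that range). One sign slip to fix: in the paper's Amann-type convention the operator to estimate is $(\lambda+(-A_h))^{-1}=(\lambda-A_h)^{-1}=R(\lambda;A_h)$, not $(\lambda+A_h)^{-1}$, which fails to exist precisely when $\lambda$ equals an eigenvalue of $-A_h$.
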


\begin{proof}
Let $T_h(t)$ be the semigroup $e^{tA_h}$ generated by $A_h$ in $X_{h,q}$.
Then, by Lemma \ref{prop:disc-Laplacian2}, $T_h(t)$ is an analytic and contraction semigroup.
Since $T_h(t)$ is contraction semigroup, we have
\begin{equation*}
\| R(\lambda; A_h) \|_{\cL(X_{h,q})} \le \frac{1}{\lambda} , \quad \forall \lambda > 0 
\end{equation*}
for each $h>0$.
In addition, by virtue of Lemma \ref{lem:inv_disc-Lap} and analyticity of $T_h(t)$,
we have
\begin{equation*}
\| R(\lambda; A_h) f_h \|_{h,q} 
= \| A_h^{-1} [ \lambda R(\lambda; A_h) - I ] f_h \|_{h,q}
\le C \| f_h \|_{h,q}, \quad \forall f_h \in S_h
\end{equation*}
for all $\lambda > 0$ and $h>0$, where $C>0$ is independent of $h$.
Therefore, we obtain $-A_h \in \cP(X_{h,q};K_q)$ with $K_q=C+1$ 
since $R(\cdot; A_{h,q}) \in C([0,\infty); \cL(X_{h,q}))$.
\qed\end{proof}

To show $-A_h \in \bip(X_{h,q};M,\theta)$, we use Duong's result, which is based on $H^\infty$-functional calculus.
The imaginary power is understood as the special case of the function of operators.
Let $X$ be a Banach space, $D \subset \bC$ be a domain and $\mathcal{O}(D)$ be the space of holomorphic functions on $\bC$.
We set
\begin{equation}
H^\infty(D) = \mathcal{O}(D) \cap L^\infty(D; \bC).
\label{eq:H-infty}
\end{equation}
Then, for $A \in \cP(X)$ and for $m \in H^\infty(\Sigma_\theta)$ with suitable $\theta$,
$m(A)$ can be defined as a linear operator on $X$.
When we take $m(z) = z^{it}$, the imaginary power $A^{it}$ is defined in this sense.
The definition and details of the properties of $m(A)$ have been presented in the literature \cite{CowDMY96} and in the Appendix \ref{section:functional_calculus}.
We refer to \cite{Duo90} for the proof of the following lemma (see also \cite{CoiW76}).

\begin{lemma}[\mb{\cite[Theorem 2]{Duo90}}]
\label{thm:Duong}
Let $(\Omega, \mu)$ be a $\sigma$-finite measure space and let $A$ be a linear operator on $X = L^q(\Omega, \mu)$ for $q \in (1, \infty)$.
Assume that $A \in \cP(X)$ and that $-A$ generates a contraction semigroup $T(t)$ on $X$.
Moreover, we suppose that $T(t)$ is positivity-preserving on $X$.
Then, for each $\theta \in (\pi/2, \pi)$, there exists $M > 0$ satisfying
\begin{equation*}
\| m(A) \|_{\cL(X)} \le M \| m \|_{L^\infty(\Sigma_\theta)}
\end{equation*}
for all $m \in H^\infty(\Sigma_\theta)$.
Furthermore, $M$ depends only on $q$ and $\theta$, but is independent of $A$ and measure space $(\Omega, \mu)$.
\end{lemma}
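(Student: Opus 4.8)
The plan is to prove that $A$ admits a bounded $H^\infty(\Sigma_\theta)$ functional calculus for every $\theta\in(\pi/2,\pi)$, with a constant depending only on $q$ and $\theta$, by following the transference strategy of Coifman and Weiss \cite{CoiW76}. I would use the three structural hypotheses in precisely the following roles: $A\in\cP(X)$ guarantees that $-A$ is sectorial, so the Dunford/Mellin integrals defining $m(A)$ converge and $m(A)$ is a priori bounded; the contractivity $\|T(t)\|_{\cL(X)}\le 1$ furnishes uniform bounds that are independent of the particular operator; and the positivity-preserving property is what ultimately transfers estimates from the Euclidean Fourier-multiplier setting to $X=L^q(\Omega,\mu)$. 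The target functional calculus is the one recalled in \cite{CowDMY96} and Appendix \ref{section:functional_calculus}, so $m(A)$ is unambiguously defined.

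First I would reduce the $H^\infty$ estimate to a multiplier estimate on the line. Passing to the logarithmic variable $\lambda=e^x$, a function $m\in H^\infty(\Sigma_\theta)$ becomes a bounded holomorphic function of $x$ on the strip $|\operatorname{Im}x|<\theta$, and Mellin inversion represents $m(A)$ as a superposition $\int_{\bR}\Phi_m(s)\,A^{is}\,ds$ of imaginary powers, where $\Phi_m$ is (up to normalization) the Fourier transform in $x$ of $m(e^x)$. Because $m$ extends holomorphically to a strip of half-width $\theta>\pi/2$ and is bounded there by $\|m\|_{L^\infty(\Sigma_\theta)}$, the induced symbol has the decay and smoothness of a Mikhlin--Hörmander multiplier, with multiplier norm controlled by $\|m\|_{L^\infty(\Sigma_\theta)}$. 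This recasts the problem as: bound the operator assembled from the $A^{is}$ against the $L^q(\bR)$-multiplier norm of the corresponding symbol.

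Second I would apply transference. Since $T(t)$ is a positivity-preserving contraction semigroup on $L^q(\Omega,\mu)$, Akcoglu's dilation theorem provides a $C_0$-group $U(\cdot)$ of positive invertible isometries on an $L^q$ space over a larger measure space such that $T(t)$ is recovered by compression, $T(t)=P\,U(t)\,\iota$; positivity is exactly the hypothesis that makes such a positive dilation available, and a general contraction semigroup need not admit one. Under the dilation the imaginary powers $A^{is}$ are compressions of Fourier-multiplier operators for the group $U$, so the Coifman--Weiss transference principle \cite{CoiW76} bounds $\int_{\bR}\Phi_m(s)\,A^{is}\,ds$ by the norm of the convolution operator carrying the same symbol on $L^q(\bR)$, which is precisely the Mikhlin norm estimated above. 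Tracking constants along this chain yields $\|m(A)\|_{\cL(X)}\le M\,\|m\|_{L^\infty(\Sigma_\theta)}$, and since the transference constant depends only on $q$ while the multiplier bound depends only on $q$ and $\theta$, the resulting $M$ depends on $q,\theta$ alone, uniformly in $A$ and in $(\Omega,\mu)$.

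The hard part will be the second step: producing the positive dilation for a semigroup (rather than a group) and justifying transference while keeping the constant independent of $A$ and of the underlying measure space. An alternative route that avoids an explicit dilation is to control the maximal operators entering the transferred estimate directly through the Hopf--Dunford--Schwartz maximal ergodic inequality for positive contractions, which again uses positivity in an essential way and produces $q$-dependent, $A$-independent bounds. On either route the delicate bookkeeping is to verify that no constant secretly depends on the operator or the space; this uniformity is exactly what will be needed afterwards, when the lemma is applied to the discrete Laplacian $A_h$ on $X_{h,q}$ to obtain bounds independent of $h$.
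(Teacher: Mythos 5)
You should first be aware that the paper contains no proof of this lemma: it is imported verbatim from Duong \cite[Theorem 2]{Duo90}, with a pointer to \cite{CoiW76}, so the only meaningful comparison is with the cited argument. Your overall architecture matches that argument: positivity is used to dilate the contraction semigroup to a group of positive invertible isometries on a larger $L^q$ space (Akcoglu--Sucheston handles a single positive contraction; the continuous-time semigroup version you need is Fendler's theorem, and you correctly flag this as the delicate step), Coifman--Weiss transference converts the operator bound into a convolution bound on $L^q(\bR)$, and a Mikhlin-type estimate controls the multiplier norm by $\| m \|_{L^\infty(\Sigma_\theta)}$ --- the point being that $\theta > \pi/2$ allows Cauchy estimates on discs around the imaginary axis that stay inside $\Sigma_\theta$, which is exactly where the $(q,\theta)$-only dependence of $M$ comes from. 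Your remark that the uniformity in $A$ and $(\Omega,\mu)$ is what the paper later needs for $A_h$ is also on target.

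However, your middle step has a genuine gap: the decomposition $m(A) = \int_\bR \Phi_m(s)\, A^{is}\, ds$ followed by the claim that the imaginary powers $A^{is}$ ``are compressions of Fourier-multiplier operators for the group $U$'' is circular and, as stated, false. First, bounds on $A^{is}$ are part of the conclusion, not the hypotheses --- in this paper the lemma is invoked (Lemma \ref{cor:Duong}) precisely to \emph{obtain} $\bip$ estimates for $-A_h$ --- so they cannot be fed into the representation. Moreover, compression does not commute with the functional calculus: from $T(t) = P\,U(t)\,\iota$ for $t \ge 0$ one cannot conclude $A^{is} = P\,(-B)^{is}\,\iota$ for the generator $B$ of the dilated group; the dilation identity survives only under integrals of the semigroup against kernels supported on the half-line. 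Second, for a general $m \in H^\infty(\Sigma_\theta)$ the function $m(e^x)$ is merely bounded on the strip, so $\Phi_m$ is a distribution without integrable decay and the Mellin superposition does not converge absolutely. The standard repair --- and what \cite{Duo90} and later treatments actually do --- is to prove the estimate first for $\psi$ in the decaying class $\Psi(\Sigma_\theta)$ of Appendix \ref{section:functional_calculus}, representing $\psi(A) = \int_0^\infty \eta(t)\, T(t)\, dt$ with $\eta$ an inverse Laplace transform of $\psi$ (a kernel supported on $[0,\infty)$, hence transferable), bounding the transferred convolution operator by the Mikhlin norm of the boundary symbol $\xi \mapsto \psi(i\xi)$ via the Cauchy estimates above, and then extending to all of $H^\infty(\Sigma_\theta)$ by McIntosh's convergence lemma while tracking that every constant depends only on $q$ and $\theta$. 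The same substitution is needed even on your fallback route through the Hopf--Dunford--Schwartz maximal inequality; with it, your outline becomes the cited proof, but without it the key step does not go through.
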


\begin{lemma}\label{cor:Duong}
Let $X$ and $A$ be as in Lemma \ref{thm:Duong}.
Then, for each $\theta \in (\pi/2, \pi)$, there exists $M>0$ such that $A \in \bip(X;M, \theta)$.
\end{lemma}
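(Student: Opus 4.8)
The plan is to deduce Lemma~\ref{cor:Duong} directly from the $H^\infty$-functional calculus estimate in Lemma~\ref{thm:Duong} by specializing the multiplier $m$ to the pure imaginary power function. Concretely, for a fixed $t \in \bR$ and a fixed angle $\theta \in (\pi/2, \pi)$, I would set
\begin{equation*}
m_t(z) = z^{it}, \qquad z \in \Sigma_\theta,
\end{equation*}
where the branch is chosen so that $z^{it} = e^{it \log z}$ with $\log z = \log|z| + i\arg z$ and $|\arg z| < \theta$. Then $m_t \in \mathcal{O}(\Sigma_\theta)$, and by the definition of $A^{it}$ via $H^\infty$-functional calculus (Appendix~\ref{section:functional_calculus}) we have $m_t(A) = A^{it}$. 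Applying Lemma~\ref{thm:Duong} to $m_t$ gives
\begin{equation*}
\| A^{it} \|_{\cL(X)} \le M \| m_t \|_{L^\infty(\Sigma_\theta)},
\end{equation*}
with $M$ depending only on $q$ and $\theta$.

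The core computation is the sup-norm of $m_t$ on the sector. For $z \in \Sigma_\theta$ write $z = r e^{i\varphi}$ with $r > 0$ and $|\varphi| < \theta$; then
\begin{equation*}
|z^{it}| = |e^{it(\log r + i\varphi)}| = |e^{it\log r}| \, e^{-t\varphi} = e^{-t\varphi}.
\end{equation*}
Taking the supremum over $|\varphi| < \theta$ yields $\| m_t \|_{L^\infty(\Sigma_\theta)} = e^{\theta |t|}$. Combining this with the estimate above gives
\begin{equation*}
\| A^{it} \|_{\cL(X)} \le M e^{\theta |t|}, \qquad \forall t \in \bR,
\end{equation*}
which is precisely the defining inequality for membership in $\bip(X; M, \theta)$. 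Since the hypotheses already include $A \in \cP(X)$, all requirements of the definition of $\bip(X; M, \theta)$ are met, and the independence of $M$ from $A$ is inherited verbatim from Lemma~\ref{thm:Duong}.

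I do not expect a serious obstacle here, since this is essentially an application of a black-box functional-calculus bound. The one point requiring care is the consistency of the branch of $z^{it}$ used in defining $A^{it}$ with the branch used to compute $\| m_t \|_{L^\infty(\Sigma_\theta)}$; both must use the same holomorphic logarithm on $\Sigma_\theta$, which is single-valued there because $\theta < \pi$. A second minor technical point is that $m_t$ is bounded but need not decay at $0$ or $\infty$, so one should note that Lemma~\ref{thm:Duong} is stated for all of $H^\infty(\Sigma_\theta)$ (not merely for a dense subclass with decay), which is exactly why it applies to $m_t$ without an approximation argument. Granting that, the conclusion $A \in \bip(X; M, \theta)$ for every $\theta \in (\pi/2, \pi)$ follows immediately.
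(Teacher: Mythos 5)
Your proposal is correct and is essentially the same proof as the paper's: both apply Lemma~\ref{thm:Duong} as a black box to the multiplier $m(z) = z^{it}$ on $\Sigma_\theta$, compute $|z^{it}| = e^{-t\vartheta}$ for $z = |z|e^{i\vartheta}$ to get $\| m \|_{L^\infty(\Sigma_\theta)} \le e^{|t|\theta}$, and conclude $\| A^{it} \|_{\cL(X)} \le M e^{\theta|t|}$, i.e.\ $A \in \bip(X; M, \theta)$. Your two cautionary remarks (branch consistency on $\Sigma_\theta$, and the fact that $m$ is bounded but non-decaying so that Lemma~\ref{thm:Duong} must hold on all of $H^\infty(\Sigma_\theta)$, not just a decaying subclass) are valid observations that the paper handles implicitly via the regularized definition $m(A) = \psi_0(A)^{-1}(\psi_0 m)(A)$ in Appendix~\ref{section:functional_calculus}.
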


\begin{proof}
Let $m(z) = z^{it}$ for $z \in \Sigma_\theta$ and $t \in \bR$.
Here, $z^{it}$ is defined as
\begin{equation*}
z^{it} = e^{it(\log|z| + i \arg z)}, \quad \arg z \in (-\pi, \pi)
\end{equation*}
for $z \in \Sigma_\pi$.
Then, setting $z = |z| e^{i \vartheta}$ ($\vartheta \in (-\theta, \theta)$), one can readily obtain $| z^{it} | = e^{-t \vartheta}$.
Therefore, we have
\begin{equation*}
\| m \|_{L^\infty(\Sigma_\theta)} \le e^{|t|\theta},
\end{equation*}
which yields $m \in H^\infty(\Sigma_\theta)$ and $A \in \bip(M,\theta)$ for some $M>0$ by Lemma \ref{thm:Duong}.
\qed\end{proof}

Now, we are ready to show the following lemma.

\begin{lemma}[Imaginary powers of discrete Laplacian]
\label{thm:bip_disc-Laplacian}
Let $q \in (1, \mu)$.
Assume that (H1) and (H2) are satisfied when $q \ne 2$.
Then there exist $M_q > 0$ and $\theta_q \in (0, \pi/2)$ satisfying
\begin{equation*}
-A_h \in \bip(X_{h,q}; M_q, \theta_q), 
\end{equation*}
where $M_q$ and $\theta_q$ are independent of $h>0$.
\end{lemma}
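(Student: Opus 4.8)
The plan is to verify the hypotheses of Lemma \ref{cor:Duong} (equivalently Lemma \ref{thm:Duong}) for the operator $-A_h$ acting on $X = X_{h,q} = (S_h, \|\cdot\|_{h,q})$, and then to deduce the uniform bound. The crucial point is that Lemma \ref{thm:Duong} produces a constant $M$ depending \emph{only} on $q$ and $\theta$ and explicitly \emph{independent of the operator $A$ and the measure space}. This uniformity in the measure space is exactly what I would exploit to obtain $h$-independence.

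First I would realize $X_{h,q}$ as a genuine Lebesgue space so that Duong's theorem applies. The natural device is the lumping isometry: by the definition of $\|v_h\|_{h,q} = \|M_h v_h\|_{L^q}$, the map $M_h \colon S_h \to \overline{S}_h \subset L^q(\Omega)$ is an isometry from $(S_h, \|\cdot\|_{h,q})$ onto its image, and $\overline{S}_h = \operatorname{span}\{\chi_j\}$ consists of functions constant on the barycentric domains $\Lambda_j$. Hence $\overline{S}_h$ is isometrically isomorphic to $L^q$ of the \emph{discrete measure space} obtained by collapsing each $\Lambda_j$ to an atom of mass $|\Lambda_j|$; concretely, $\overline{S}_h \cong \ell^q(\{1,\dots,N_h\}, (|\Lambda_j|)_j)$. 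Transporting $-A_h$ through $M_h$ gives an operator $\tilde A_h = M_h(-A_h)M_h^{-1}$ on this weighted $\ell^q$ space, and because $M_h$ is an isometry, all operator norms and functional-calculus bounds for $-A_h$ on $X_{h,q}$ coincide with those of $\tilde A_h$ on the Lebesgue space.

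Next I would check the three standing hypotheses of Lemma \ref{thm:Duong} for $-A_h$ (equivalently $\tilde A_h$). That $-A_h \in \cP(X_{h,q})$ is Lemma \ref{lem:positivity_disc-Lap}. That $A_h$ generates a contraction semigroup on $X_{h,q}$ is part of Lemma \ref{prop:disc-Laplacian2}, and that this semigroup $e^{tA_h}$ is positivity-preserving in $X_{h,q}$ is Lemma \ref{prop:disc-Laplacian1} (here the acuteness condition (H2) enters, via the discrete maximum principle). The positivity-preserving property transports intact under $M_h$, since $M_h v_h \ge 0$ if and only if $v_h(P_j) \ge 0$ for all $j$, i.e. the cone structure is preserved. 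Therefore Lemma \ref{thm:Duong} applies to $\tilde A_h$ on the Lebesgue space, and by Lemma \ref{cor:Duong} we get, for each $\theta \in (\pi/2, \pi)$, a bound $\|(-A_h)^{it}\|_{\cL(X_{h,q})} \le M e^{\theta|t|}$. Renaming the angle to land in $[0,\pi/2)$ as required by $\bip$: I would note that $A_h$ already satisfies (NR1) with the sharp angle $\theta_q = \arccos|1-q/2|$ by Lemma \ref{prop:disc-Laplacian2}, so $-A_h$ is sectorial of angle $\theta_q < \pi/2$, and standard sectoriality-plus-bounded-imaginary-powers theory lets me replace the crude exponent $\theta \in (\pi/2,\pi)$ from Duong's estimate with any angle exceeding the spectral angle $\theta_q$; taking the angle to be $\theta_q$ itself (or anything in $(\theta_q, \pi/2)$) yields $-A_h \in \bip(X_{h,q}; M_q, \theta_q)$.

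The main obstacle—and the reason this argument works—is securing the \emph{uniformity in $h$} of the constant $M_q$. This does not follow from generic functional calculus, where bounds on imaginary powers typically degrade as the spectrum spreads out (and the spectrum of $A_h$ does spread as $h \to 0$). The decisive input is the measure-space-independence clause in Lemma \ref{thm:Duong}: since each $\tilde A_h$ lives on a Lebesgue space (a weighted $\ell^q$) and satisfies the contraction and positivity hypotheses with the \emph{same} $q$, Duong's constant $M$ is literally the same number for every $h$. Thus $M_q$ depends only on $q$ and the chosen angle, giving the claimed $h$-independence with no further estimates. I would therefore present the proof as: (i) identify $X_{h,q}$ with a Lebesgue space via $M_h$; (ii) invoke Lemmas \ref{lem:positivity_disc-Lap}, \ref{prop:disc-Laplacian1}, \ref{prop:disc-Laplacian2} to verify Duong's hypotheses; (iii) apply Lemma \ref{cor:Duong} and use sectoriality to sharpen the angle to $\theta_q$; (iv) read off $h$-independence from the measure-space-independence of Duong's constant.
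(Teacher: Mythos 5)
Your first half coincides with the paper's proof: the paper likewise verifies Duong's hypotheses via Lemmas \ref{lem:positivity_disc-Lap}, \ref{prop:disc-Laplacian1} and \ref{prop:disc-Laplacian2}, applies Lemma \ref{cor:Duong}, and extracts $h$-independence precisely from the measure-space-independence clause in Lemma \ref{thm:Duong}; your explicit realization of $X_{h,q}$ as a weighted $\ell^q$ space via the lumping isometry $M_h$ is a correct and welcome explication of why that clause is applicable. Up to the conclusion $-A_h \in \bip(X_{h,q}; M, \theta)$ for each $\theta \in (\pi/2,\pi)$ with $M$ independent of $h$, your argument is sound and is essentially the paper's.

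The gap is in your final step, the reduction of the angle below $\pi/2$. You assert that since $-A_h$ is sectorial of angle $<\pi/2$ (via (NR1) and Lemma \ref{prop:disc-Laplacian2}), ``standard sectoriality-plus-bounded-imaginary-powers theory'' lets you replace the exponent $\theta \in (\pi/2,\pi)$ by any angle exceeding the spectral angle. No such general theorem exists: by Pr\"uss--Sohr the power angle always dominates the spectral angle, but the converse fails --- there are sectorial operators (even with bounded $H^\infty$-calculus, by examples of Kalton and Haase) whose power angle, and indeed whose $H^\infty$-angle, strictly exceeds the spectral angle. Lowering the angle requires genuinely more structure: in Hilbert space, McIntosh's quadratic-estimate theorem; in $L^q$, R-sectoriality of small angle together with the Kalton--Weis theorem --- and uniform-in-$h$ R-sectoriality of $A_h$ is nowhere established in this paper, so that route would be a substantial new undertaking, not a citation. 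The paper closes the gap differently and more cheaply: for $q=2$ the operator $-A_h$ is self-adjoint and positive definite, so spectral calculus gives $\|(-A_h)^{it}\|_{\cL(X_{h,2})} \le 1$, i.e.\ $-A_h \in \bip(X_{h,2};1,0)$ with angle \emph{zero}; then for $q \ne 2$ one interpolates by Riesz--Thorin between the exponent $2$ and a suitably chosen exponent $r$, obtaining
\begin{equation*}
\| (-A_h)^{it} \|_{\cL(X_{h,q})} \le M^{\theta_{q,r}} e^{\theta \theta_{q,r} |t|},
\qquad \theta_{q,r} = \frac{q^{-1}-2^{-1}}{r^{-1}-2^{-1}} \in (0,1),
\end{equation*}
and since $\theta$ may be taken anywhere in $(\pi/2, \pi/(2\theta_{q,r}))$, the effective angle $\theta\theta_{q,r}$ lands strictly below $\pi/2$. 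This interpolation against the self-adjoint case is the missing idea in your proposal; without it (or an R-boundedness argument you have not supplied), your concluding step does not go through.
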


\begin{proof}
We begin by proving that $-A_h \in \bip(X_{h,q}; M, \theta)$ for each $\theta \in (\pi/2, \pi)$ and for suitable $M>0$ independent of $h$.
Let $T_h(t)$ be the semigroup $e^{tA_h}$ generated by $A_h$ in $X_{h,q}$.
Then, by Lemma \ref{prop:disc-Laplacian1} and \ref{lem:positivity_disc-Lap}, we can apply Lemma \ref{cor:Duong}. Therefore,
for each $\theta \in (\pi/2, \pi)$, there exists $M>0$ satisfying
\begin{equation}
-A_h \in \bip(X_{h,q}; M, \theta) .
\label{eq:imag-power_disc-Lap}
\end{equation}

Now, we show our assertion.
We first assume that $q = 2$.
In this case, $X_{h,2}$ is a Hilbert space and $-A_h$ is self-adjoint
and positive definite without conditions on the triangulation by Poinc\'are inequality.
Consequently, by Theorem \ref{thm:functional_Hilbert},
we have
\begin{equation*}
\| (-A_h)^{it} \|_{\cL(X_{2,h})} \le \int_0^\infty dE_{-A_h}(\lambda) = 1
\end{equation*}
for all $t \in \bR$, which implies $-A_h \in \bip(X_{2,h};1,0)$.
Here, $E_{-A_h}$ is the spectral decomposition of $-A_h$.
Then we presume that $q \ne 2$. Set 
\begin{equation*}
\theta_{q,r} = \frac{q^{-1} - 2^{-1}}{r^{-1} - 2^{-1}}
\end{equation*}
for $r \ne 2$. Since $q \ne 2$, we can choose $r \in (1, \infty)$ satisfying $\theta_{q,r} \in (0,1)$. 
Then, by the Riesz-Thorin theorem, we obtain
\begin{equation*}
\| (-A_h)^{it} \|_{X_{h,q}}
\le \| (-A_h)^{it} \|_{X_{h,2}}^{1 - \theta_{q,r}} 
    \| (-A_h)^{it} \|_{X_{h,r}}^{\theta_{q,r}} 
\le M^{\theta_{q,r}} e^{\theta \theta_{q,r} |t|}
\end{equation*}
for any $t \in \bR$ and $\theta \in (\pi/2, \pi)$, 
where $M>0$ is as in \eqref{eq:imag-power_disc-Lap}.
Since $\theta_{q,r} \in (0,1)$, we can take $\theta$ as
\begin{equation*}
\frac{\pi}{2} < \theta < \frac{\pi}{2 \theta_{q,r}},
\end{equation*}
which implies 
\begin{equation*}
-A_h \in \bip(X_{h,q}; M^{\theta_{q,r}}, \theta \theta_{q,r}) 
\end{equation*}
with $\theta \theta_{q,r} < \pi/2$. 
This is the desired assertion.
\qed\end{proof}

Owing to Lemma \ref{thm:sufficient} and Theorem
\ref{cor:mr_disc-Laplacian}, we are able to obtain DMR for $A_h$.
To apply Lemma \ref{thm:sufficient}, it is necessary to verify that the
condition (NR)$_{\delta, \varepsilon}$ is satisfied.
From Lemma \ref{prop:disc-Laplacian2}, the condition (NR1) is always satisfied.
Therefore, what is left is to check the condition (NR2).
We begin with the following lemma,
which is a generalization of \cite[Lemma 2]{Fuj73}.
No condition on the triangulation is required.

\begin{lemma}\label{lem:inv_ineq}
Let $r \in [1,\infty)$. Then, we have
\begin{equation}
\| \nabla v_h \|_{L^r} \le \frac{d+1}{\kappa_h} \| v_h \|_{h,r} ,\quad \forall v_h \in S_h . \label{eq:inv_ineq}
\end{equation}
\end{lemma}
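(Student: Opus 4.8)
The plan is to prove the inequality element by element and then sum over $\cT_h$. Since every $v_h \in S_h$ is affine on each simplex $K \in \cT_h$, its gradient $\nabla v_h$ is constant on $K$; hence it suffices to bound $|\nabla v_h|$ on a single $K$ by the nodal values of $v_h$ at the vertices of $K$, and to compare the result with the mass-lumped norm restricted to $K$.

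First I would fix $K \in \cT_h$ with vertices $P_0, \dots, P_d$ and write $v_h = \sum_{i=0}^d v_h(P_i) \lambda_i$ on $K$, where $\lambda_i$ is the barycentric coordinate, i.e. the affine function equal to $1$ at $P_i$ and to $0$ on the face opposite $P_i$. The key geometric fact is that $|\nabla \lambda_i| = 1/h_i^K$, where $h_i^K$ is the length of the perpendicular from $P_i$ to that opposite face; consequently $|\nabla \lambda_i| \le 1/\kappa_K \le 1/\kappa_h$. This yields $|\nabla v_h| \le \frac{1}{\kappa_h} \sum_{i=0}^d |v_h(P_i)|$ on $K$, and by the discrete power-mean inequality $\bigl(\sum_{i=0}^d a_i\bigr)^r \le (d+1)^{r-1} \sum_{i=0}^d a_i^r$ (valid for $r \ge 1$ with $d+1$ terms) I obtain
\begin{equation*}
\| \nabla v_h \|_{L^r(K)}^r = |\nabla v_h|^r\, |K| \le \frac{(d+1)^{r-1}}{\kappa_h^r}\, |K| \sum_{i=0}^d |v_h(P_i)|^r.
\end{equation*}

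Next I would evaluate the right-hand norm. Because the barycentric subdivision splits each simplex into $d+1$ cells of equal volume $|K|/(d+1)$, that is $|\Lambda_i \cap K| = |K|/(d+1)$, and $M_h v_h$ is piecewise constant equal to $v_h(P_i)$ on $\Lambda_i \cap K$, one has
\begin{equation*}
\| M_h v_h \|_{L^r(K)}^r = \sum_{i=0}^d |v_h(P_i)|^r\, |\Lambda_i \cap K| = \frac{|K|}{d+1} \sum_{i=0}^d |v_h(P_i)|^r,
\end{equation*}
where boundary vertices contribute nothing since $v_h$ vanishes there. Combining the two displays gives $\| \nabla v_h \|_{L^r(K)}^r \le \frac{(d+1)^r}{\kappa_h^r} \| M_h v_h \|_{L^r(K)}^r$; summing over all $K \in \cT_h$ and taking the $r$-th root yields $\| \nabla v_h \|_{L^r} \le \frac{d+1}{\kappa_h} \| M_h v_h \|_{L^r} = \frac{d+1}{\kappa_h} \| v_h \|_{h,r}$, which is \eqref{eq:inv_ineq}.

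The two norm computations are routine; the content of the argument lies in the pair of geometric identities, namely that $|\nabla \lambda_i|$ equals the reciprocal of the perpendicular distance $h_i^K$ (so that $\kappa_K$, and hence $\kappa_h$, enters with the right power) and that the barycentric cells of $K$ all have volume $|K|/(d+1)$. These are exactly the facts underlying the two-dimensional case of \cite[Lemma 2]{Fuj73}; the only genuinely new point is carrying them through for general $d$ and general $r \in [1,\infty)$, which requires no condition on the triangulation.
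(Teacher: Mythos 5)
Your proof is correct and takes essentially the same route as the paper's: an elementwise estimate via barycentric coordinates with $|\nabla \lambda_i| = 1/\kappa_i$, the equal-volume identity $\| M_h v_h \|_{L^r(K)}^r = \frac{|K|}{d+1}\sum_{i=0}^d |v_h(P_i)|^r$ (where your explicit remark that boundary vertices contribute nothing is a point the paper leaves implicit), and a convexity bound producing the factor $d+1$, summed over the elements. The only cosmetic difference is that you bound $\kappa_i \ge \kappa_h$ first and then apply the power-mean inequality $\bigl(\sum_i a_i\bigr)^r \le (d+1)^{r-1}\sum_i a_i^r$, whereas the paper keeps the $\kappa_i$ inside and applies H\"older's inequality with exponents $r$ and $r'$; both computations give the identical constant $(d+1)^{1/r+1/r'} = d+1$.
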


\begin{proof} 
Fix $K \in \cT_h$ arbitrarily. Then it suffices to show that 
\begin{equation}
\| \nabla v_h \|_{L^r(K)} \le \frac{d+1}{\kappa_h} \| v_h \|_{h,r,K} ,\quad \forall v_h \in S_h , \label{eq:inv_ineq_K}
\end{equation}
where $\| v_h \|_{h,r,K} = \| M_h v_h \|_{L^r(K)}$.
Let $Q_j$ $(j = 0,\dots, d)$ be the vertex of $K$, $\lambda_j$ be the corresponding barycentric coordinate in $K$, and $\kappa_j$ be the length of the perpendicular from $P_j$ in $K$.
Then it is well-known that $|\nabla\lambda_j| = 1/\kappa_j$.
Take $v_h \in S_h$ arbitrarily and set $v_j = v_h(Q_j)$.
Since $v_h|_K = \sum_{j=0}^{d} v_j \lambda_j$, we have
\begin{align}
\| \nabla v_h \|_{L^r(K)} 
& \le \sum_{j=0}^d | v_j | \| \nabla \lambda_j \|_{L^r(K)}  = \sum_{j=0}^d \frac{| u_j |}{\kappa_j} |K|^{1/r} \nonumber \\
& \le \left( \sum_{j=0}^d \frac{1}{\kappa_j^{r'}} \right) ^{1/r'}
 \left( \sum_{j=0}^d |u_j|^r \right) ^{1/r} |K|^{1/r}  \nonumber \\
 & \le \frac{(d+1)^{1/r'}}{\kappa_h} \left( |K| \sum_{j=0}^d |u_j|^r \right) ^{1/r},  \label{eq:inv_ineq1}
\end{align}
where $r'$ is the H\"older conjugate of $r$.
Moreover, it is readily apparent that
\begin{equation*}
\| v_h \|_{h,r,K} = \left( \frac{1}{d+1} |K| \sum_{j=0}^d |v_j|^r \right) ^{1/r}.
\end{equation*}
This, together with \eqref{eq:inv_ineq1}, implies that
\begin{equation*}
\| \nabla v_h \|_{L^r(K)} \le \frac{(d+1)^{1/r' + 1/r}}{\kappa_h} \| v_h \|_{h,r,K}
= \frac{d+1}{\kappa_h} \| v_h \|_{h,r,K}.
\end{equation*}
Thereby we complete the proof.
\qed\end{proof}

Now, we describe a sufficient condition for (NR2) to hold.
\begin{lemma}[A sufficient condition for $\mathrm{(NR)}_{\delta, \varepsilon}$]
\label{prop:NR-suff_cond}
Assume $\theta \in [0,1/2)$ and $q \in (1, \infty)$, and
Let $\theta_q = \arccos|1-2/q|$.
If we choose $\varepsilon$ and $\tau$ sufficiently small so that $A_h$ satisfies
 \eqref{eq:sufficient-NR}, 
for every $h$, then the condition $\mathrm{(NR)}_{\theta_q, \varepsilon}$ is fulfilled.
\end{lemma}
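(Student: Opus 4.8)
The plan is to reduce the verification of $\mathrm{(NR)}_{\theta_q,\varepsilon}$ to a single inverse-type estimate on the numerical radius $r(A_h)$. The condition $\mathrm{(NR)}_{\theta_q,\varepsilon}$ comprises (NR1) and (NR2); Lemma~\ref{prop:disc-Laplacian2} already furnishes (NR1) with the angle $\theta_q$, so it remains only to establish (NR2), namely
\begin{equation*}
(1-2\theta)\tau\, r(A_h) + \varepsilon \le 2\sin\theta_q .
\end{equation*}
Since every element of the numerical range satisfies $|\langle x^*, A_h x\rangle| \le \|x^*\|\,\|A_h x\| \le \|A_h\|_{\cL(X_{h,q})}$, I first bound $r(A_h) \le \|A_h\|_{\cL(X_{h,q})}$ and then estimate the operator norm.

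The key step is a duality identity for $\|A_h v_h\|_{h,q}$. Because $\|\cdot\|_{h,q} = \|M_h\cdot\|_{L^q}$ and $M_h A_h v_h \in \overline{S}_h = \operatorname{span}\{\chi_j\}$, where the $\chi_j$ are supported on the mutually disjoint barycentric domains $\Lambda_j$, the function $M_h A_h v_h$ is piecewise constant; hence the $L^{q'}$ element realizing its $L^q$ norm by duality is again piecewise constant, i.e.\ lies in $\overline{S}_h$ and equals $M_h\phi_h$ for some $\phi_h\in S_h$ with $\|\phi_h\|_{h,q'}=1$. Using the defining relation \eqref{eq:dl}, this yields
\begin{equation*}
\|A_h v_h\|_{h,q}
= \sup_{\|\phi_h\|_{h,q'}=1} |(A_h v_h,\phi_h)_h|
= \sup_{\|\phi_h\|_{h,q'}=1} |(\nabla v_h,\nabla\phi_h)_{L^2}| .
\end{equation*}

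I then apply Hölder's inequality, $|(\nabla v_h,\nabla\phi_h)_{L^2}| \le \|\nabla v_h\|_{L^q}\|\nabla\phi_h\|_{L^{q'}}$, followed by the inverse inequality of Lemma~\ref{lem:inv_ineq} to each factor (with $r=q$ and $r=q'$, both in $[1,\infty)$ since $q\in(1,\infty)$), to obtain
\begin{equation*}
\|A_h v_h\|_{h,q}
\le \frac{(d+1)^2}{\kappa_h^2}\,\|v_h\|_{h,q},
\qquad\text{hence}\qquad
r(A_h) \le \|A_h\|_{\cL(X_{h,q})} \le \frac{(d+1)^2}{\kappa_h^2} .
\end{equation*}
Substituting into (NR2) and invoking the hypothesis \eqref{eq:sufficient-NR}, which reads $(1-2\theta)(d+1)^2\,\tau/\kappa_h^2 \le 2\sin\theta_q - \varepsilon$, gives $(1-2\theta)\tau\, r(A_h) + \varepsilon \le 2\sin\theta_q$, that is, (NR2) with $\delta=\theta_q$, uniformly in $h$. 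Together with (NR1) this establishes $\mathrm{(NR)}_{\theta_q,\varepsilon}$.

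The main obstacle I expect is the duality identity: one must recognize that, because $\overline{S}_h$ is spanned by the disjointly supported characteristic functions $\chi_j$, the $L^q$--$L^{q'}$ duality acts diagonally and the optimal dual element can be kept inside $\overline{S}_h$. This is precisely what permits passage from the norm $\|A_h v_h\|_{h,q}$ to the $(\cdot,\cdot)_h$ pairing and thence, via \eqref{eq:dl}, to the Dirichlet form $-(\nabla v_h,\nabla\phi_h)_{L^2}$; once this is in hand, Hölder's inequality and the already-proved inverse inequality finish the argument routinely.
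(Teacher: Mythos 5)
Your proposal is correct and follows essentially the same route as the paper: (NR1) is taken from Lemma \ref{prop:disc-Laplacian2}, and (NR2) from the identical chain $(A_h v_h,\phi_h)_h=-(\nabla v_h,\nabla\phi_h)_{L^2}$ via \eqref{eq:dl}, H\"older, and Lemma \ref{lem:inv_ineq} applied with $r=q$ and $r=q'$, producing the same constant $(d+1)^2/\kappa_h^2$ and hence (NR2) from \eqref{eq:sufficient-NR}. The only cosmetic difference is that you pass through $r(A_h)\le\|A_h\|_{\cL(X_{h,q})}$ and the full duality supremum over $\phi_h$ (justified by your correct observation that the diagonal structure of $\overline{S}_h$ keeps the optimal dual element in $\overline{S}_h$), whereas the paper tests the numerical range directly against the explicit norming functional $v_h^*$ defined nodally by $v_h^*(P)=|v_h(P)|^{q-2}v_h(P)$ --- which is precisely the optimizer your duality argument produces.
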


\begin{proof}
The numerical range of $A_h$ is expressed as
\begin{equation}
S(A_h) = \{ (A_h v_h, v_h^*)_h \mid v_h\in S_h,\ \| v_h \|_{h,q} = 1 \},
\end{equation}
where $v_h^* \in S_h$ is defined as  
\begin{equation}
 \label{eq:v*}
 v_h^*(P) = |v_h(P)|^{q-2}v_h(P)\qquad \text{for every node $P$ of $\cT_h$} 
\end{equation}
for $v_h \in S_h$.
Therefore, by Lemma \ref{lem:inv_ineq}, we have
\begin{equation*}
| (A_h v_h, v_h^*)_h | \le \| \nabla v_h \|_{L^q} \| \nabla v_h^* \|_{h,q'}
\le \frac{(d+1)^2}{\kappa_h^2} \| v_h \|_{h,q}^q
\end{equation*}
for all $v_h \in S_h$. 
Hence we can deduce (NR2) form the assumption \eqref{eq:sufficient-NR}.
\qed\end{proof}

\medskip

At this stage, we can state the following proofs. 

\begin{proof}[Proof of Theorem \ref{cor:mr_disc-Laplacian}]
 It is a consequence of Lemmas \ref{lem:mr-interval},
 \ref{cor:mr-imaginary_power}, and \ref{thm:bip_disc-Laplacian}. 
\qed\end{proof}

\begin{proof}[Proof of Theorem \ref{cor:dmr_disc-Laplacian}]
 It is a consequence of Theorem \ref{cor:mr_disc-Laplacian} and
 Lemmas \ref{thm:sufficient} and \ref{prop:NR-suff_cond}.
\qed\end{proof}

\begin{proof}[Proof of Theorem \ref{cor:dmr_disc-Lap_finite}]
 It is a consequence of Theorem \ref{cor:dmr_disc-Laplacian} and
 Lemma \ref{lem:dmr_finite}.
\qed\end{proof}

\begin{proof}[Proof of Theorem \ref{cor:dmr_disc-Lap_initial}]
It is a consequence of Theorem \ref{cor:dmr_disc-Laplacian} and Lemma \ref{lem:dmr_initial}.
 \qed\end{proof}

\section{Proof of Theorem \ref{thm:error_linear}}
\label{sec:lin}

This section is devoted to error analysis of 
the solution $u_h = (u_h^n) \in l^p(N_T;S_h)$ of
\eqref{eq:disc-heat}. 
We begin by presenting some lemmas.

\begin{lemma}\label{lem:Sobolev}
Let $X$ be a Banach space, $T>0$, $p \in (1, \infty)$ and $\tau \in (0,1)$. 
Set $t_n = n \tau$ for $n = 0, 1, \dots, N_T$.
Then, there exists $C_\mathrm{S} > 0$ satisfying
\begin{equation}
\left( \sum_{n=0}^{N_T-1}\| v(t_n) \|_X^p \tau \right)^{1/p} 
+\left( \sum_{n=1}^{N_T}\| v(t_n) \|_X^p \tau \right)^{1/p} 
\le C_\mathrm{S} \| v \|_{W^{1,p}(J_T; X)}
\label{eq:Sobolev}
\end{equation}
for all $v \in W^{1,p}(J_T; X)$, where $C_\mathrm{S}$ depends only on $p$, but is independent of $T$, $\tau$, and $X$.
\end{lemma}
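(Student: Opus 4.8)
The plan is to prove the discrete-time sampling inequality \eqref{eq:Sobolev} by reducing the time sum to an integral estimate, exploiting the Sobolev embedding $W^{1,p}(0,T;X) \hookrightarrow C(\overline{J_T};X)$ and a fundamental-theorem-of-calculus argument on each subinterval. The key point is to control the \emph{pointwise} value $\|v(t_n)\|_X$ by a local average plus a contribution from the derivative, so that summing over $n$ and using $\tau \le 1$ recovers the full $W^{1,p}$ norm with a constant independent of $\tau$, $T$, and $X$.

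First I would fix $n$ and estimate $\|v(t_n)\|_X$. The natural device is to write, for $t$ in a subinterval adjacent to $t_n$,
\begin{equation*}
v(t_n) = v(t) + \int_t^{t_n} v'(s)\,ds,
\end{equation*}
so that $\|v(t_n)\|_X \le \|v(t)\|_X + \int_{I_n} \|v'(s)\|_X\,ds$, where $I_n$ is a subinterval of length $\tau$ containing $t_n$ (say $[t_{n-1},t_n]$ or $[t_n,t_{n+1}]$ depending on whether we are bounding the first or second sum, and handling the endpoints $t_0$ and $t_{N_T}$ with care). Averaging this over $t \in I_n$ and applying the triangle and Jensen/Hölder inequalities in $L^p(I_n;X)$ gives
\begin{equation*}
\|v(t_n)\|_X \le \frac{1}{\tau}\int_{I_n}\|v(t)\|_X\,dt + \int_{I_n}\|v'(s)\|_X\,ds
\le \tau^{-1/p}\|v\|_{L^p(I_n;X)} + \tau^{1/p'}\|v'\|_{L^p(I_n;X)},
\end{equation*}
using $|I_n| = \tau$ and $1/p + 1/p' = 1$. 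Multiplying by $\tau^{1/p}$ and raising to the $p$-th power, the $\tau$ factors combine so that $\|v(t_n)\|_X^p\,\tau \lesssim \|v\|_{L^p(I_n;X)}^p + \tau^{p}\|v'\|_{L^p(I_n;X)}^p$, and since $\tau \in (0,1)$ the factor $\tau^p$ is harmless.

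Next I would sum over $n$. Because the subintervals $I_n$ are (up to endpoints) disjoint and cover $J_T$, the sums of $\|v\|_{L^p(I_n;X)}^p$ and $\|v'\|_{L^p(I_n;X)}^p$ telescope into the global norms $\|v\|_{L^p(J_T;X)}^p$ and $\|v'\|_{L^p(J_T;X)}^p$, each up to a bounded overlap factor independent of $\tau$ and $T$. This yields
\begin{equation*}
\sum_n \|v(t_n)\|_X^p\,\tau \le C\big(\|v\|_{L^p(J_T;X)}^p + \|v'\|_{L^p(J_T;X)}^p\big) = C\|v\|_{W^{1,p}(J_T;X)}^p,
\end{equation*}
and taking $p$-th roots gives the bound for one of the two sums; the other sum is handled identically with the adjacent subinterval. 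Adding the two estimates produces \eqref{eq:Sobolev} with a constant $C_\mathrm{S}$ depending only on $p$.

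The main obstacle I anticipate is the careful bookkeeping at the endpoints and the uniformity of the constant. Specifically, one must verify that the two sums (one omitting the final node $t_{N_T}$, the other omitting the initial node $t_0$) can each be covered by disjoint subintervals, so that no $\tau$-dependent or $T$-dependent blow-up of the constant occurs; the subinterval assignments must be chosen so that every $I_n$ lies inside $\overline{J_T}$. A secondary technical point is justifying the pointwise representation of $v(t_n)$, which requires that the $W^{1,p}$ function $v$ has a continuous representative — this is the standard Sobolev embedding in one time variable and holds for vector-valued functions into any Banach space $X$, which is precisely why the constant is independent of $X$.
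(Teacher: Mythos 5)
Your proof is correct and follows essentially the same route as the paper: the paper localizes to the subintervals $J_n=(t_n,t_{n+1})$ and applies the Sobolev embedding $W^{1,p}(0,1;X)\hookrightarrow L^\infty(0,1;X)$ after a change of variables, which yields exactly your local bound $\|v(t_n)\|_X\le C\bigl(\tau^{-1/p}\|v\|_{L^p(I_n;X)}+\tau^{1/p'}\|v'\|_{L^p(I_n;X)}\bigr)$, and then sums over the disjoint length-$\tau$ intervals just as you do. Your fundamental-theorem-plus-averaging argument merely inlines the standard proof of that embedding, with the incidental benefit that the independence of the constant from $X$ (which the paper verifies by pointing to the proof of Theorem 8.8 in Brezis) is manifest.
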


\begin{proof}
By the Sobolev embedding $W^{1,p}(0,1; X) \hookrightarrow L^\infty(0,1; X)$, there exists $C_1 > 0$ such that
\begin{equation*}
\| v \|_{L^\infty(0,1; X)} \le C_1 \| v \|_{W^{1,p}(0,1; X)}
\end{equation*}
for $v \in W^{1,p}(0,1; X)$. 
One can check that $C_1$ is independent of $X$. 
See the proof of \cite[Theorem 8.8]{Bre11}.
Then, setting $J_n = (t_n, t_{n+1})$ and considering the change of variables, we have
\begin{equation*}
\| v(t_n) \|_X \le \| v \|_{L^\infty(J_n; X)} 
\le C_1 (1 + \tau) \tau^{-1/p} \| v \|_{W^{1,p}(J_n; X)}
\end{equation*}
for each $n \in \bN$.
Therefore, we have \eqref{eq:Sobolev} with $C_\mathrm{S} = 2C_1$.
\qed\end{proof}

The next lemma is shown readily by Taylor's theorem. Therefore, we skip the proof.
\begin{lemma}\label{lem:fdm}
Let $X$ be a Banach space, $T>0$, $p \in (1, \infty)$, $\theta \in [0,1]$ and $\tau \in (0,1)$. 
Set $t_n = n \tau$ for $n = 0,1,\dots,N_T$ and 
\begin{equation*}
r^n = \frac{v(t_{n+1}) - v(t_n)}{\tau} - \left[ (1- \theta) \frac{dv}{dt}(t_n) + \theta \frac{dv}{dt}(t_{n+1}) \right]
\end{equation*}
for $v \in W^{j_\theta+1, p}(J_T; X)$, where $j_\theta$ is defined as \eqref{eq:jt}.
Then, there exists $C>0$ such that
\begin{equation*}
\left( \sum_{n=0}^{N_T-1} \| r^n \|_X^p \tau \right)^{1/p} 
\le C \tau^{j_\theta} \| v \|_{W^{j_\theta+1, p}(J_T; X)},
\end{equation*}
where $C$ is independent of $\tau$ and $X$.
\end{lemma}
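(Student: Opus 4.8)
The plan is to recognize $r^n$ as the local error of the weighted endpoint quadrature rule $(1-\theta)w(t_n)+\theta w(t_{n+1})$ for the averaged integral $\tau^{-1}\int_{t_n}^{t_{n+1}}w$, applied to $w=v'$, and then to extract the correct power of $\tau$ from a Peano-type integral-remainder representation of this error. First I would rewrite $r^n$ using Taylor's theorem with integral remainder, which is legitimate in the Bochner sense because $v\in W^{j_\theta+1,p}(J_T;X)\hookrightarrow C^{j_\theta}(\overline{J_T};X)$ so that the point values $v(t_n),v'(t_n)$ are well defined. Combining $v(t_{n+1})=v(t_n)+\tau v'(t_n)+\int_{t_n}^{t_{n+1}}(t_{n+1}-s)v''(s)\,ds$ with $v'(t_{n+1})=v'(t_n)+\int_{t_n}^{t_{n+1}}v''(s)\,ds$ and cancelling yields
\[
r^n=\int_{t_n}^{t_{n+1}}k_n(s)\,v''(s)\,ds,\qquad k_n(s)=\frac{t_{n+1}-s}{\tau}-\theta,
\]
whose kernel is uniformly bounded, $|k_n(s)|\le\max\{\theta,1-\theta\}\le1$ on $[t_n,t_{n+1}]$.

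For the generic case $\theta\ne1/2$ ($j_\theta=1$) this representation already closes the argument. I would estimate $\|r^n\|_X\le\int_{t_n}^{t_{n+1}}\|v''(s)\|_X\,ds$, apply H\"older on the subinterval to get $\|r^n\|_X\le\tau^{1/p'}\big(\int_{t_n}^{t_{n+1}}\|v''\|_X^p\,ds\big)^{1/p}$, and then raise to the $p$-th power and sum, using the arithmetic identity $p/p'+1=p$ to obtain $\big(\sum_n\|r^n\|_X^p\tau\big)^{1/p}\le\tau\,\|v''\|_{L^p(J_T;X)}\le\tau\,\|v\|_{W^{2,p}(J_T;X)}$. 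Every inequality here is a Bochner-norm inequality with a numerical constant, so the resulting constant is manifestly independent of $\tau$ and of $X$, which is precisely the uniformity claimed.

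The only genuinely nontrivial point is the Crank--Nicolson case $\theta=1/2$ ($j_\theta=2$), where a second power of $\tau$ must be gained. The key observation is that for $\theta=1/2$ the kernel has \emph{vanishing mean}, $\int_{t_n}^{t_{n+1}}k_n(s)\,ds=0$. I would exploit this by writing $v''(s)=v''(t_n)+\int_{t_n}^s v'''(\sigma)\,d\sigma$: the constant term is annihilated by the zero mean, and after Fubini one reaches the second-order representation $r^n=\int_{t_n}^{t_{n+1}}K_n(\sigma)\,v'''(\sigma)\,d\sigma$ with $K_n(\sigma)=\int_\sigma^{t_{n+1}}k_n(s)\,ds$. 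A one-line computation gives $|K_n(\sigma)|\le\tau/8$ on $[t_n,t_{n+1}]$. Repeating the H\"older-and-sum step with $v'''$ in place of $v''$ then supplies the extra factor $\tau$, yielding $\big(\sum_n\|r^n\|_X^p\tau\big)^{1/p}\le C\tau^2\|v\|_{W^{3,p}(J_T;X)}$. Both cases are thus covered uniformly by $\tau^{j_\theta}$. I do not expect a real obstacle: the argument is entirely elementary, and the one subtle point is simply noticing that the mean-zero property of the $\theta=1/2$ kernel is exactly what upgrades the consistency order from one to two, with the remainder being careful bookkeeping of the powers of $\tau$ through the identity $p/p'+1=p$.
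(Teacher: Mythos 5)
Your proof is correct and takes precisely the route the paper intends: the paper omits the proof with the remark that the lemma ``is shown readily by Taylor's theorem,'' and your Peano-kernel argument --- the representation $r^n=\int_{t_n}^{t_{n+1}}k_n(s)v''(s)\,ds$ with $|k_n|\le 1$, H\"older on each subinterval with the exponent identity $p/p'+1=p$, and for $\theta=1/2$ the mean-zero property of $k_n$ upgrading to the kernel $K_n$ with $|K_n|\le\tau/8$ --- is exactly the standard instantiation of that hint, with constants manifestly independent of $\tau$ and $X$. All steps (Bochner-valued Taylor expansion via $W^{j_\theta+1,p}\hookrightarrow C^{j_\theta}$, Fubini, and summation over the disjoint subintervals $J_n\subset J_T$) are justified, so nothing is missing.
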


\medskip

Now we can state the following proof. 

\begin{proof}[Proof of Theorem \ref{thm:error_linear}]
We set $e_h^n = u_h^n - P_h U^n $ so that
\begin{equation*}
u_h^n - U^n = e_h^n + (P_h - I)U^n.
\end{equation*}
Then, by Lemmas \ref{lem:projections} and \ref{lem:Sobolev}, we have
\begin{align}
&\sum_{n=0}^{N_T-1} \| (P_h - I)U^{n + \theta} \|_{L^q}^p \tau \\
& \le C h^{2p} \left[ (1-\theta)^p \sum_{n=0}^{N_T-1} \| U^n
 \|_{W^{2,q}}^p \tau + \theta^p \sum_{n=1}^{N_T} \| U^n \|_{W^{2,q}}^p
 \tau \right] \nonumber \\
& \le C h^{2p} \| u \|_{W^{1,p}(J_T; W^{2,q})}^p .
\label{eq:error1}
\end{align}

It remains 
to derive an estimation for $e_h^n$.
Set $V^n = \partial_t u(\cdot, t_n)$ and 
\begin{equation*}
r_h^{n, \theta} 
= (K_h^{-1} P_h A - A_h P_h)U^{n+\theta} 
+ P_h \left( \frac{u(t_{n+1}) - u(t_n)}{\tau} \right)
- K_h^{-1}P_h V^{n+\theta} .
\end{equation*}
Then, by a simple computation, we have
\begin{equation*}
\begin{cases}
(D_\tau e_h)^n = A_h e_h^{n+\theta} + r_h^{n, \theta}, & n =0,1,\dots,N_T-1, \\
e_h^0 = 0.
\end{cases}
\end{equation*}
Therefore,
\begin{equation*}
\begin{cases}
(D_\tau (A_h^{-1}e_h) )^n = A_h (A_h^{-1} e_h^{n+\theta}) + A_h^{-1} r_h^{n, \theta} , & n =0,1,\dots,N_T-1, \\
A_h^{-1} e_h^0 = 0 .
\end{cases}
\end{equation*}
Consequently, according to Theorem \ref{cor:dmr_disc-Lap_finite}, we obtain
\begin{equation}
 \sum_{n=0}^{N_T-1} \| e_h^{n + \theta} \|_{L^q}^p \tau 
 = \sum_{n=0}^{N_T-1} \| A_h (A_h^{-1} e_h^{n + \theta} ) \|_{L^q}^p
 \tau 
 \le C \sum_{n=0}^{N_T-1} \| A_h^{-1} r_h^{n , \theta}  \|_{L^q}^p \tau.
\label{eq:error2}
\end{equation}
We divide $r_h^{n, \theta}$ into two parts as
\begin{equation*}
r_h^{n, \theta} = r_{1,h}^{n, \theta} + r_{2,h}^{n, \theta},
\end{equation*}
where
\[
  r_{1,h}^{n, \theta} 
 = (K_h^{-1} P_h A - A_h P_h)U^{n+\theta} , ~~
r_{2,h}^{n, \theta} 
 = P_h \left( \frac{u(t_{n+1}) - u(t_n)}{\tau} \right)
- K_h^{-1}P_h V^{n+\theta}.
\]
We first estimate $r_{1,h}^{n,\theta}$.
Noting the relation \eqref{eq:disc-Lap}, we have
\begin{equation*}
A_h^{-1} r_{1,h}^{n,\theta} = (R_h - P_h)U^{n+\theta},
\end{equation*}
so that
\begin{align}
\left( \sum_{n=0}^{N_T-1} \| A_h^{-1} r_{1,h}^{n,\theta} \|_{L^q}^p \tau \right)^{1/p}
& \le Ch^2 \left( \sum_{n=0}^{N_T-1} \| U^{n+\theta} \|_{W^{2,q}}^p \tau \right)^{1/p} \\
& \le C h^2\|u\|_{W^{1,p}(J_T;W^{2,q})}\label{eq:error3}
\end{align}
by Lemma \ref{lem:projections} and Lemma \ref{lem:Sobolev}.
Also, $A_h^{-1} r_{h,2}^{n,\theta}$ is expressed as
\begin{equation*}
A_h^{-1} r_{2,h}^{n,\theta} = 
A_h^{-1} P_h \left[ \frac{u(t_{n+1}) - u(t_n)}{\tau} - V^{n+\theta} \right]
+ A_h^{-1}(I - K_h^{-1})P_h V^{n+\theta}.
\end{equation*}
According to Lemmas \ref{lem:projections}, \ref{lem:inv_disc-Lap},
  \ref{lem:Kh}, \ref{lem:Sobolev}, and \ref{lem:fdm}, we have
\begin{align}
&\left( \sum_{n=0}^{N_T-1} \| A_h^{-1} r_{2,h}^{n,\theta} \|_{L^q}^p \tau \right)^{1/p} \\
& \le C \tau^{j_\theta} \| u \|_{W^{j_\theta+1, p}(J_T; L^q)}
   +  C h^2 \left( \sum_{n=0}^{N_T-1} \| \nabla P_h V^{n + \theta}
 \|_{L^q}^p \tau \right)^{1/p} \nonumber \\
& \le C \tau^{j_\theta}\| u \|_{W^{j_\theta+1, p}(J_T; L^q)} + C h^2 \left( \sum_{n=0}^{N_T-1} \| V^{n}
 \|_{W^{1,q}}^p \tau \right)^{1/p} \nonumber \\
 & \le C \tau^{j_\theta}\| u \|_{W^{j_\theta+1, p}(J_T; L^q)} +
 Ch^2\| \partial_tu \|_{W^{1,p}(J_T; W^{1,q})}. \label{eq:error4}
\end{align}

Combining \eqref{eq:error1}, \eqref{eq:error2}, \eqref{eq:error3}, and \eqref{eq:error4}, we obtain the error estimate \eqref{eq:error_estimate}.
\qed\end{proof}

\section{Proofs of Theorems \ref{thm:sl} and \ref{thm:infty}}
\label{section:app_semilinear}

This section is devoted to analysis of semilinear problems 
\eqref{eq:semilinear} and \eqref{eq:approx-semilinear}. We first prove
several auxiliary lemmas. 
  
\subsection{Embedding and trace theorems}\label{subsection:embedding}
For $q \in (1, \infty)$, we recall that $A_q$ denotes the realization of
the Dirichlet Laplacian defined as \eqref{eq:delq}. 
Let $D(A_q)$ be a Banach space equipped with the norm $\| A_q \cdot \|_{L^q}$.
This is a norm if $q \in (1, \mu)$ by the regularity assumption \eqref{eq:regularity}.
We also set $D(A_{h,q}) = (S_h, \| A_h \cdot \|_{h,q})$, which is a Banach space for $q \in (1, \mu)$ by Lemma \ref{lem:inv_disc-Lap}.

For $N \in \bN \cup \{\infty\}$ and $v_h \in S_h^{N+1}$, we set
\begin{equation}
\label{eq:YT1}
\| v_h \|_{Y^{p,q}_{h,\tau,N}}
= \| v_{h,1} \|_{l^p_\tau(N;X_{h,q})} + \| A_h v_{h,1} \|_{l^p_\tau(N;X_{h,q})} + \| D_\tau v_h \|_{l^p_\tau(N;X_{h,q})}
\end{equation}
and $Y^{p,q}_{h,\tau,N} = \left(S_h^{N+1}, \| \cdot \|_{Y^{p,q}_{h,\tau,N}} \right)$.
For abbreviation, we write $Y^{p,q}_{h,\tau} = Y^{p,q}_{h,\tau,\infty}$ and
\begin{equation}
\label{eq:YT2}
\| v_h \|_{Y_T} = \| v_h \|_{Y^{p,q}_{h,\tau,N_T}}
\end{equation}
for $T>0$, where $N_T$ is defined as \eqref{eq:NT}. 

Then, we have the following embedding result.
\begin{lemma}\label{prop:embedding}
Let $q \in (\mu_d, \mu)$ and $p > 2q/(2q-d)$.
Assume that the family $\{ \cT_h \}_h$ satisfies (H1) and (H2) when $q \ne 2$.
Then, the embedding
\begin{equation*}
(X_{h,q}, D(A_{h,q}))_{1-1/p, p} \inj L^\infty
\end{equation*}
holds uniformly for $h>0$.
\end{lemma}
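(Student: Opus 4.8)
The plan is to factor the embedding through a fractional power domain of $-A_h$, thereby converting the problem into a uniform smoothing (ultracontractivity) estimate for the semigroup $e^{sA_h}$. The starting observation is that the hypothesis $p > 2q/(2q-d)$ is exactly equivalent to $\alpha_{p,q,d} = 1 - 1/p - d/(2q) > 0$; hence I may fix once and for all an exponent $\beta$ with $d/(2q) < \beta < 1 - 1/p$.

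First I would reduce the interpolation space to the fractional domain $D((-A_h)^\beta)$. Since $A_h$ is uniformly sectorial in $X_{h,q}$ (its contraction and analyticity constants are independent of $h$ by Lemma \ref{prop:disc-Laplacian2}, and $0 \in \rho(A_h)$ uniformly by Lemma \ref{lem:inv_disc-Lap}), the general interpolation-theoretic chain
\[
(X_{h,q}, D(A_{h,q}))_{1-1/p,p} \hookrightarrow (X_{h,q}, D(A_{h,q}))_{\beta,1} \hookrightarrow D((-A_h)^\beta)
\]
holds with embedding constants depending only on $p$, $\beta$ and those uniform sectoriality constants. The first inclusion is the monotonicity of real interpolation in the first index (any second index, since $\beta < 1-1/p$); the second is the classical embedding of $(\,\cdot\,)_{\beta,1}$ into the fractional power domain of a sectorial operator. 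This yields $\|(-A_h)^\beta v_h\|_{h,q} \le C \|v_h\|_{(X_{h,q}, D(A_{h,q}))_{1-1/p,p}}$ uniformly in $h$.

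Next I would establish the uniform fractional Sobolev embedding $D((-A_h)^\beta) \hookrightarrow L^\infty$ for $\beta > d/(2q)$. Writing $v_h = (-A_h)^{-\beta} (-A_h)^\beta v_h$ and invoking the Balakrishnan representation
\[
(-A_h)^{-\beta} = \frac{1}{\Gamma(\beta)} \int_0^\infty s^{\beta-1} e^{sA_h} \, ds,
\]
it suffices to bound $\| (-A_h)^{-\beta} \|_{X_{h,q} \to L^\infty}$ by $\frac{1}{\Gamma(\beta)} \int_0^\infty s^{\beta-1} \| e^{sA_h} \|_{X_{h,q} \to L^\infty} \, ds$. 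Into this I would insert the uniform short-time smoothing bound $\| e^{sA_h} \|_{X_{h,q} \to L^\infty} \le C s^{-d/(2q)}$ for $0 < s \le 1$ together with the uniform exponential decay $\| e^{sA_h} \|_{X_{h,q} \to L^\infty} \le C e^{-\omega s}$ for $s \ge 1$ (the decay coming from uniform invertibility and analyticity). Splitting the integral at $s = 1$, the part on $(0,1]$ converges precisely because $\beta - 1 - d/(2q) > -1$, i.e.\ $\beta > d/(2q)$, and the tail converges by exponential decay. Combining the two steps with the norm equivalence $\| \cdot \|_{h,q} \simeq \| \cdot \|_{L^q}$ (Lemma \ref{lem:lumping}) gives the assertion.

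The main obstacle is the uniform short-time smoothing bound $\| e^{sA_h} \|_{X_{h,q} \to L^\infty} \le C s^{-d/(2q)}$, which carries all the geometric information. I expect to derive it from the discrete Gagliardo--Nirenberg inequality (Lemma \ref{lem:disc-GN}) by a Nash/Moser-type argument: a discrete Nash inequality first produces $X_{h,1}$--$X_{h,2}$ smoothing at rate $s^{-d/4}$, duality and the semigroup property upgrade this to $X_{h,1}$--$L^\infty$ smoothing at rate $s^{-d/2}$, and Riesz--Thorin interpolation between this bound and the trivial $L^\infty$--$L^\infty$ contractivity produces the stated $L^q$--$L^\infty$ rate $s^{-d/(2q)}$. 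The positivity-preserving property (Lemma \ref{prop:disc-Laplacian1}) and the contractivity on every $X_{h,r}$ (Lemma \ref{prop:disc-Laplacian2}) are what make this iteration run, and the delicate point is verifying that every constant entering it is genuinely independent of $h$.
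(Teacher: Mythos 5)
Your argument is correct in outline, but it takes a genuinely different route from the paper's. The paper never introduces fractional powers for this lemma: it passes directly through the chain $(X_{h,q}, D(A_{h,q}))_{1-1/p,p} \inj (X_{h,q}, D(A_{h,q}))_{d/(2q),1} \inj L^\infty$, where the first inclusion is the monotonicity of real interpolation with $\varepsilon = 1-1/p-d/(2q)>0$ (the same place where $p>2q/(2q-d)$ enters as in your choice of $\beta$), and the second inclusion is read off from the discrete Gagliardo--Nirenberg inequality (Lemma \ref{lem:disc-GN}) by the classical J-method argument for Besov-type embeddings, working at the borderline exponent $d/(2q)$ with second index $1$. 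Your factorization through $D((-A_h)^\beta)$ with $d/(2q)<\beta<1-1/p$ is in effect a composition of two results the paper proves \emph{later} for other purposes: your interpolation chain is Lemma \ref{lem:fractional-embedding}, and your fractional Sobolev embedding is Lemma \ref{lem:disc-Sobolev}, which the paper establishes via the resolvent representation \eqref{eq:def_fractional} together with the bound $\| R(t;A_h) f_h \|_{L^\infty} \le C(1+t)^{-1+d/(2q)} \| f_h \|_{h,q}$ rather than your Balakrishnan/semigroup integral. The paper's route buys economy (no semigroup decay estimate, no fractional powers); yours buys reusable smoothing machinery, at the cost of needing the uniform exponential decay of $e^{sA_h}$ for the tail of the integral, which you correctly flagged and which does hold uniformly (e.g.\ interpolate the uniform $X_{h,2}$-decay coming from the Poincar\'e inequality against contractivity on $X_{h,1}$ and $X_{h,\infty}$).

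One concrete soft spot: your proposed derivation of the key bound $\| e^{sA_h} \|_{X_{h,q} \to L^\infty} \le C s^{-d/(2q)}$ by a Nash/Moser iteration is not supported by anything available here. Lemma \ref{lem:disc-GN} is \emph{not} a Nash inequality, and a discrete Nash inequality uniform in $h$ would require its own proof, which neither you nor the paper supplies. Fortunately the iteration is unnecessary: apply Lemma \ref{lem:disc-GN} directly to $e^{sA_h} f_h$ and use the uniform analyticity bound $\| A_h e^{sA_h} \|_{\cL(X_{h,q})} \le C/s$ (Lemma \ref{prop:disc-Laplacian2}; the resolvent estimates of \cite{CroT01} are mesh-independent) to obtain
\begin{equation}
\| e^{sA_h} f_h \|_{L^\infty}
\le C \| A_h e^{sA_h} f_h \|_{h,q}^{\frac{d}{2q}} \| e^{sA_h} f_h \|_{h,q}^{1-\frac{d}{2q}}
\le C s^{-\frac{d}{2q}} \| f_h \|_{h,q}
\end{equation}
in one line. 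With that replacement, and Lemma \ref{lem:lumping} for the norm equivalence as you indicated, your proof closes.
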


To show Lemma \ref{prop:embedding}, we prove the discrete Gagliardo-Nirenberg type inequality. 
The following result is the generalization of 
\cite[Lemma 3.3]{Han02}, and that the proof is almost identical. 
However, for the reader's convenience, we provide the proof.

\begin{lemma}[Discrete Gagliardo--Nirenberg type inequality]\label{lem:disc-GN}
Let $q \in (\mu_d,\mu)$.
Assume that the family $\{ \cT_h \}_h$ satisfies (H1) and (H2).
Then, we have
\begin{equation}
\| v_h \|_{L^\infty} \le C \| A_h v_h \|_{h,q}^{\frac{d}{2q}} \| v_h \|_{h,q}^{1- \frac{d}{2q}},
\quad \forall v_h \in S_h.
\label{eq:disc-GN}
\end{equation}
\end{lemma}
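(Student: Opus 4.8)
The plan is to reduce \eqref{eq:disc-GN} to the \emph{continuous} Gagliardo--Nirenberg inequality
\[
\|w\|_{L^\infty} \le C\|A_q w\|_{L^q}^{d/(2q)}\,\|w\|_{L^q}^{1-d/(2q)},\qquad w\in D(A_q),
\]
which itself follows by interpolation from the embedding $W^{2,q}\inj L^\infty$ (valid since $q>\mu_d\ge d/2$) together with the regularity bound $\|w\|_{W^{2,q}}\le C\|A_q w\|_{L^q}$ furnished by \eqref{eq:regularity}. The bridge to the discrete setting is to represent an arbitrary $v_h\in S_h$ as a Ritz projection: using \eqref{eq:disc-Lap}, set $w=A_q^{-1}L_h v_h\in D(A_q)$; since $L_h v_h\in S_h$ we have $P_hL_h v_h=L_h v_h$, whence $R_h w=L_h^{-1}P_h A_q w=L_h^{-1}L_h v_h=v_h$.

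With this representation the two factors on the right of the continuous inequality become precisely the discrete quantities sought. Indeed $\|A_q w\|_{L^q}=\|L_h v_h\|_{L^q}=\|K_hA_h v_h\|_{L^q}$ is comparable to $\|A_h v_h\|_{h,q}$ by Lemma \ref{lem:lumping}; and combining the Ritz estimate $\|(I-R_h)w\|_{L^q}\le Ch^2\|w\|_{W^{2,q}}\le Ch^2\|A_q w\|_{L^q}$ (Lemma \ref{lem:projections} and \eqref{eq:regularity}) with the second-order inverse inequality $\|A_h v_h\|_{h,q}\le Ch^{-2}\|v_h\|_{h,q}$ yields $\|w\|_{L^q}\le\|v_h\|_{L^q}+Ch^2\|L_h v_h\|_{L^q}\le C\|v_h\|_{h,q}$. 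That inverse inequality is obtained exactly as in Lemma \ref{prop:NR-suff_cond}: writing $\|A_h v_h\|_{h,q}$ via lumped duality as $\sup_{\|\phi_h\|_{h,q'}\le1}|(\nabla v_h,\nabla\phi_h)_{L^2}|$ and applying Lemma \ref{lem:inv_ineq} to both gradients gives $\|A_h v_h\|_{h,q}\le (d+1)^2\kappa_h^{-2}\|v_h\|_{h,q}$, and (H1) with the regularity of $\{\cT_h\}_h$ ensures $\kappa_h\ge ch$. Applying the continuous inequality to $w$ therefore already produces the target right-hand side, $\|w\|_{L^\infty}\le C\|A_h v_h\|_{h,q}^{d/(2q)}\|v_h\|_{h,q}^{1-d/(2q)}$.

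It remains to control $\|v_h\|_{L^\infty}\le\|w\|_{L^\infty}+\|R_h w-w\|_{L^\infty}$ by absorbing the projection error into the same bound, and this is the crux. Splitting through the nodal interpolant and applying the $L^q\to L^\infty$ inverse inequality to $R_h w-I_h w\in S_h$ together with the interpolation error estimate, both \emph{without} logarithmic factors since $q>d/2$ makes $2-d/q>0$, gives
\[
\|R_h w-w\|_{L^\infty}\le Ch^{2-d/q}\|w\|_{W^{2,q}}\le Ch^{2-d/q}\|A_h v_h\|_{h,q}.
\]
The decisive observation is the elementary identity $(2-d/q)/(1-d/(2q))=2$: it shows that $h^{2-d/q}\|A_h v_h\|_{h,q}$ is dominated by $\|A_h v_h\|_{h,q}^{d/(2q)}\|v_h\|_{h,q}^{1-d/(2q)}$ exactly when $h^2\|A_h v_h\|_{h,q}\le C\|v_h\|_{h,q}$, which is the second-order inverse inequality already established. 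Hence the error term is bounded by the right-hand side of \eqref{eq:disc-GN}, and the proof is complete. I expect the main obstacle to be this final step, namely securing a logarithm-free $L^\infty$ Ritz error estimate and matching its power of $h$ to the inverse inequality through the above exponent cancellation; the remainder is bookkeeping with the norm equivalences of Lemmas \ref{lem:lumping} and \ref{lem:inv_disc-Lap}.
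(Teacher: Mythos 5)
Your proof is correct, and its skeleton is the same as the paper's: both arguments reduce \eqref{eq:disc-GN} to the continuous Gagliardo--Nirenberg inequality applied to $w = A_q^{-1}L_h v_h$ (the paper phrases this as proving \eqref{eq:GN0} for $f_h = L_h v_h$, using $v_h = R_h w$ exactly as you do), and both run on the identical trio of estimates: the $O(h^2)$ Ritz error in $L^q$ (Lemma \ref{lem:projections} with \eqref{eq:regularity}), the second-order inverse inequality $\| L_h v_h \|_{L^q} \le C h^{-2} \| v_h \|_{L^q}$ from Lemma \ref{lem:inv_ineq} and (H1), and the $L^q \to L^\infty$ inverse inequality with factor $h^{-d/q}$, tied together by the cancellation $h^{-d/q}\cdot (h^2)^{d/(2q)} = 1$. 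Where you genuinely diverge is the handling of the $L^\infty$ discrepancy between $v_h$ and $w$: the paper splits $L_h^{-1} f_h = P_h A_q^{-1} f_h + P_h (L_h^{-1}P_h - A_q^{-1}) f_h$, exploits the $L^\infty$- and $L^q$-stability of the $L^2$-projection $P_h$, and bounds the second term by taking a geometric mean of the two available $L^q$ bounds $Ch^2 \| f_h \|_{L^q}$ and $C \| L_h^{-1} f_h \|_{L^q}$ before applying the $h^{-d/q}$ inverse inequality; you instead write $v_h = w + (R_h w - w)$, estimate the Ritz error in $L^\infty$ through the nodal interpolant to get $C h^{2-d/q} \| w \|_{W^{2,q}}$, and absorb it via the identity $2 - d/q = 2\bigl(1 - d/(2q)\bigr)$ --- which, unwound, is exactly the paper's geometric-mean step in different clothing. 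Your route costs one extra (standard, and indeed log-free since $q > \mu_d \ge d/2$) ingredient the paper never invokes, namely the $L^\infty$ and $L^q$ interpolation error estimates for the nodal interpolant, whereas the paper's route costs the $L^\infty$-stability of $P_h$; these are an even trade. One further small difference: your lumped-duality identity $\| A_h v_h \|_{h,q} = \sup_{\| \phi_h \|_{h,q'} \le 1} |(\nabla v_h, \nabla \phi_h)_{L^2}|$ is legitimate because $M_h$ is a bijection onto the piecewise constants on the barycentric partition, on which the $L^q$--$L^{q'}$ duality is exact, but the paper obtains the same inverse inequality more directly from $L_h = K_h A_h$ and the norm equivalences of Lemma \ref{lem:lumping}. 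Both versions are complete; yours makes the decisive exponent cancellation explicit, which is a pedagogical gain, while the paper's is marginally leaner in auxiliary estimates.
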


\begin{proof}
It suffices to show that 
\begin{equation}
\| L_h^{-1} f_h \|_{L^\infty} \le C 
\| f_h \|_{L^q}^{\frac{d}{2q}} \| L_h^{-1} f_h \|_{L^q}^{1-\frac{d}{2q}},
\label{eq:GN0}
\end{equation}
for every $f_h \in S_h$.
We decompose the left-hand side as
\begin{equation}
\| L_h^{-1} f_h \|_{L^\infty}
\le \| ( L_h^{-1} - P_h A_q^{-1} f_h )f_h \|_{L^\infty}
+ \| P_h A_q^{-1} f_h f_h \|_{L^\infty}
=: a + b.
\label{eq:GN1}
\end{equation}
From the usual Gagliardo-Nirenberg inequality \cite[Theorem 5.9]{AdaF03} and the regularity assumption \eqref{eq:regularity}, we have
\begin{align}
b \le C \|A_q^{-1} f_h \|_{L^\infty} 
\le& C \| f_h\|_{L^q}^{\frac{d}{2q}} \| A_q^{-1} f_h
 \|_{L^q}^{1-\frac{d}{2q}} \nonumber \\
\le& C \| f_h\|_{L^q}^{\frac{d}{2q}} \left(
\| L_h^{-1} f_h \|_{L^q}^{1-\frac{d}{2q}}
+ 
\| (A_q^{-1} - L_h^{-1}P_h) f_h \|_{L^q}^{1-\frac{d}{2q}}
\right).
\label{eq:GN2}
\end{align}
Setting $u = A_q^{-1}f_h \in D(A_q)$, we have
\begin{equation}
\| (A_q^{-1} - L_h^{-1}P_h) f_h \|_{L^q}
= \| u - R_h u \|_{L^q}
\le Ch^2 \|u\|_{W^{2,q}}
\le Ch^2 \| f_h \|_{L^q},
\label{eq:GN3}
\end{equation}
by Lemma \ref{lem:projections} and \eqref{eq:regularity}.
Since Lemma \ref{lem:inv_ineq} and the inverse assumption imply
\begin{equation*}
\| L_h v_h \|_{L^q} \le C h^{-2} \| v_h \|_{L^q}, 
\quad \forall v_h \in S_h,
\end{equation*}
we obtain
\begin{equation}
\| (A_q^{-1} - L_h^{-1}P_h) f_h \|_{L^q} \le 
C \| L_h^{-1} f_h \|_{L^q}.
\label{eq:GN4}
\end{equation}
From \eqref{eq:GN2} and \eqref{eq:GN4}, we have
\begin{equation}
b \le C \| f_h \|_{L^q}^{\frac{d}{2q}} \| L_h^{-1} f_h \|_{L^q}^{1-\frac{d}{2q}}.
\label{eq:GN5}
\end{equation}
 We estimate $a$. The inverse assumption (H1) is well known to imply (see \cite[theorem 3.2.6]{Cia78}) the inverse inequality
\begin{equation*}
\| v_h \|_{L^\infty} \le C h^{-d/r} \| v_h \|_{L^q},
\quad \forall v_h \in S_h,
\end{equation*}
 where $C>0$ is independent of $h$.
This, together with \eqref{eq:GN3} and \eqref{eq:GN4}, implies
\begin{align*}
a 
= \| P_h (L_h^{-1}P_h - A_q^{-1}) f_h \|_{L^\infty} & \le C h^{-d/q} \| (L_h^{-1}P_h - A_q^{-1}) f_h \|_{L^q} \\
&\le C \| f_h \|_{L^q}^{\frac{d}{2q}} \| L_h^{-1} f_h \|_{L^q}^{1-\frac{d}{2q}}.
\end{align*}
Therefore, we can complete the proof.
\qed\end{proof}

\begin{proof}[Proof of Lemma \ref{prop:embedding}]
From the general theory of interpolation spaces, it is readily apparent that the embedding
\begin{equation*}
(X_{h,q}, D(A_{h,q}))_{1-1/p, p} \inj (X_{h,q}, D(A_{h,q}))_{1-1/p-\varepsilon, 1}
\end{equation*}
for $\varepsilon \in (0,1-1/p)$, uniformly with respect to $h$. 
Take $\varepsilon = 1-1/p - d/(2q)$ so that $1-1/p-\varepsilon = d/(2q)$.
Then, the assumptions $q > d/2$ and $p > 2q/(2q-d)$
imply $\varepsilon \in (0,1-1/p)$.
Therefore, we can obtain from Lemma \ref{lem:disc-GN} that the embedding
\begin{equation*}
(X_{h,q}, D(A_{h,q}))_{d/(2q), 1} \inj L^\infty,
\end{equation*}
holds uniformly with respect to $h$, by the same argument of the embedding theorem for the Besov spaces (see \cite[Theorem 7.34]{AdaF03}).
\qed\end{proof}

We next show the trace theorem for $Y^{p,q}_{h,\tau}$.
The following result is the discrete version of the characterization of the real interpolation space via the analytic semigroup \cite[Lemma 6.2]{Lun09}.
\begin{lemma}\label{prop:trace-infinite}
Let $q \in (1, \mu)$ and $p \in (1,\infty)$.
Assume that the family $\{ \cT_h \}_h$ satisfies (H1) and (H2) when $q \ne 2$.
Then
there exists $C>0$ depending only on $p$ such that
\begin{equation*}
\sup_{n \ge 1} \| v_h^n \|_{1-1/p,p} \le C \| v_h \|_{Y^{p,q}_{h,\tau}}.
\end{equation*}
for every $v_h \in Y^{p,q}_{h,\tau}$.
\end{lemma}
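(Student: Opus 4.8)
The plan is to read the interpolation norm $\|v_h^n\|_{1-1/p,p}$ as a \emph{trace} norm and to realize each $v_h^m$ as the value at $t=0$ of an explicitly constructed extension built from the tail of the sequence. Recall the trace characterization of the real interpolation space for a Banach couple: for the couple $(X_{h,q},D(A_{h,q}))$ the space $(X_{h,q},D(A_{h,q}))_{1-1/p,p}$ coincides with the space of traces $w(0)$ of functions $w\in W^{1,p}(J_\infty;X_{h,q})\cap L^p(J_\infty;D(A_{h,q}))$, and
\begin{equation*}
\|w(0)\|_{1-1/p,p}\le C\left(\|w'\|_{L^p(J_\infty;X_{h,q})}+\|A_hw\|_{L^p(J_\infty;X_{h,q})}\right).
\end{equation*}
In its purely interpolation-theoretic (Lions--Peetre) form this constant depends only on $p$; the only place where $h$ could enter is through the continuous embedding $D(A_{h,q})\inj X_{h,q}$, which holds with an $h$-independent constant by Lemma \ref{lem:inv_disc-Lap}. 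Thus it suffices, for each fixed $m\ge 1$, to produce $w_m\in W^{1,p}(J_\infty;X_{h,q})\cap L^p(J_\infty;D(A_{h,q}))$ with $w_m(0)=v_h^m$ and $\|w_m'\|_{L^p(J_\infty;X_{h,q})}+\|A_hw_m\|_{L^p(J_\infty;X_{h,q})}\le C\|v_h\|_{Y^{p,q}_{h,\tau}}$.

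First I would take $w_m$ to be the continuous, piecewise affine interpolant of the tail data $(v_h^{m+j})_{j\ge 0}$ on the grid of width $\tau$: on $[j\tau,(j+1)\tau]$ let $w_m$ be the affine function with endpoint values $v_h^{m+j}$ and $v_h^{m+j+1}$, so that $w_m(0)=v_h^m$ and $w_m(t)\in S_h=D(A_{h,q})$ for every $t$. Its derivative is piecewise constant, giving
\begin{equation*}
\|w_m'\|_{L^p(J_\infty;X_{h,q})}^p=\sum_{j\ge 0}\|(D_\tau v_h)^{m+j}\|_{h,q}^p\,\tau\le\|D_\tau v_h\|_{l^p_\tau(\bN;X_{h,q})}^p.
\end{equation*}
Since on each subinterval $A_hw_m(t)$ is a convex combination of $A_hv_h^{m+j}$ and $A_hv_h^{m+j+1}$, the elementary bound $\|A_hw_m(t)\|_{h,q}\le\|A_hv_h^{m+j}\|_{h,q}+\|A_hv_h^{m+j+1}\|_{h,q}$ together with summation yields $\|A_hw_m\|_{L^p(J_\infty;X_{h,q})}\le C\|A_hv_{h,1}\|_{l^p_\tau(\bN;X_{h,q})}$; here it is crucial that $m\ge 1$, so that every index $m+j$ is $\ge 1$ and is therefore accounted for by $\|A_hv_{h,1}\|_{l^p_\tau}$. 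Feeding these two bounds into the trace inequality gives $\|v_h^m\|_{1-1/p,p}\le C\|v_h\|_{Y^{p,q}_{h,\tau}}$ with $C$ independent of $m$, $h$, and $\tau$; taking the supremum over $m\ge 1$ finishes the argument.

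The step I expect to be the genuine obstacle is not the construction but the \emph{uniformity} of the trace constant in $h$ and $\tau$. The safest route is the Lions--Peetre trace theorem, whose constant depends only on $p$ and the interpolation exponent and is insensitive to the underlying couple, the sole $h$-dependence entering through the uniform equivalence of the $D(A_{h,q})$-norm with the graph norm (Lemma \ref{lem:inv_disc-Lap}). Alternatively one may work from Lunardi's semigroup characterization $\|x\|_{1-1/p,p}^p\approx\|x\|_{h,q}^p+\int_0^\infty\|A_he^{tA_h}x\|_{h,q}^p\,dt$, whose equivalence constants are controlled by the uniform analyticity and contractivity of $e^{tA_h}$ (Lemma \ref{prop:disc-Laplacian2}); that route, which is the precise discrete counterpart of \cite[Lemma 6.2]{Lun09}, would instead require bounding the semigroup integral applied to $v_h^m$ directly and is the more delicate of the two.
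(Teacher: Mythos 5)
Your proof is correct, but it takes a genuinely different route from the paper. The paper works entirely at the discrete level: after reducing to $n=1$ by translation, it writes $v_h^1 = -\sum_{j=1}^{n}(v_h^{j+1}-v_h^j)+v_h^{n+1}$, uses this two-term decomposition to bound the $K$-functional as $K(t,v_h^1)\le \sum_{j=1}^{n}\|v_h^{j+1}-v_h^j\|_{h,q}+t\|A_hv_h^{n+1}\|_{h,q}$, splits the integral $\int_0^\infty |t^{-1+1/p}K(t,v_h^1)|^p\,dt/t$ over the intervals $(t_n,t_{n+1})$, and closes with the discrete Hardy inequality, so that $C=C(p)$ comes out explicitly via the Hardy constant $p/(p-1)$. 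You instead build a continuous-time object---the piecewise affine interpolant of the tail $(v_h^{m+j})_{j\ge 0}$---and invoke the Lions--Peetre trace theorem as a black box; your verification that $\|w_m'\|_{L^p(J_\infty;X_{h,q})}\le\|D_\tau v_h\|_{l^p_\tau(\bN;X_{h,q})}$ and $\|A_hw_m\|_{L^p(J_\infty;X_{h,q})}\le C\|A_hv_{h,1}\|_{l^p_\tau(\bN;X_{h,q})}$ (using $m\ge1$) is accurate, and your construction is precisely the continuous counterpart of the paper's identity, since $u(0)=u(t)-\int_0^t u'(s)\,ds$ discretizes to the paper's summation formula. The two arguments are morally equivalent---the proof of the trace theorem is exactly the $K$-functional-plus-Hardy computation the paper performs discretely---so your version buys modularity and brevity at the cost of outsourcing the key estimate, while the paper's is self-contained and makes the $h$-, $\tau$-, and couple-independence of $C$ transparent. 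One refinement: to get $C$ depending \emph{only} on $p$, as the lemma asserts, you should cite the seminorm form of the trace estimate, $\|w(0)\|_{1-1/p,p}\le C_p\left(\|w'\|_{L^p(J_\infty;X_{h,q})}+\|A_hw\|_{L^p(J_\infty;X_{h,q})}\right)$, which indeed has a couple-independent constant (with $\theta=1-1/p$ the weights cancel and only Hardy enters); if you instead use a formulation requiring the full $W^{1,p}\cap L^p(D(A_{h,q}))$ norm, the constant additionally absorbs the uniform embedding constant of Lemma \ref{lem:inv_disc-Lap} (depending on $\Omega$ and $q$, though still not on $h$ or $\tau$), which is harmless for every later application but slightly weaker than the stated dependence.
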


\begin{proof}
Fix $v_h \in Y^{p,q}_{h,\tau}$ arbitrarily.
It suffices to show that
\begin{equation}
\| v_h^1 \|_{1-1/p,p} \le C \| v_h \|_{Y^{p,q}_{h,\tau}}
\label{eq:trace1}
\end{equation}
by translation. 
Since
\begin{equation*}
v_h^1 = - \sum_{j=1}^{n} (v_h^{j+1} - v_h^j) + v_h^{n+1}
\end{equation*} 
for $n \ge 1$, we have
\begin{equation}
K(t, v_h^1) \le \sum_{j=1}^{n} \| v_h^{j+1} - v_h^j \|_{h,q} + t \| A_h v_h^{n+1} \|_{h,q}
\label{eq:trace2}
\end{equation}
for $t > 0$.
Here, the function
\begin{multline}
K(t,w_h) = \inf \{ \|a_h\|_{h,q} + t \|A_h b_h\|_{h,q}
\mid w_h = a_h + b_h,\ a_h, b_h \in X_{h,q}. \} , \\
\quad t > 0 , \ w_h \in X_{h,q}
\end{multline}
is the $K$-function with respect to the interpolation pair $(X_{h,q}, D(A_{h,q}))$ (see \cite{Lun09} and \cite{Tri95}).
Then, \eqref{eq:trace2} implies that
\begin{align}
\| v_h^1 \|_{1-1/p, p}^p 
&= \int_0^\infty \left|t^{-1+1/p} K(t, v_h^1) \right|^p \frac{dt}{t}\nonumber  \\
&\le \int_0^\tau | t^{-1}K(t,v_h^1)|^p dt \nonumber \\
&+ 2^p \sum_{n=1}^{\infty} \left[ 
\int_{n\tau}^{(n+1)\tau} \left( \frac{1}{t} 
\sum_{j=1}^{n} \| v_h^{j+1} - v_h^j \|_{h,q} \right)^p
dt + 
\tau \| A_h v_h^{n+1} \|_{h,q}^p \right]\nonumber  \\
&\le 2^p \| A_h v_h \|_{l^p_\tau(\bN; X_{h,q})}^p
  + 2^p \sum_{n=1}^{\infty} I_n.
\label{eq:trace3}
\end{align}
In the last step, we used the property $K(t, v_h^1) \le t \| A_h v_h^1 \|_{h,q}$ and we defined $I_n$ as 
\begin{equation*}
I_n = \int_{t_n}^{t_{n+1}} \left( \frac{1}{t} 
\sum_{j=1}^{n} \| v_h^{j+1} - v_h^j \|_{h,q} \right)^p
dt
\end{equation*}
for $n \ge 1$.
The term $I_n$ is bounded as
\begin{equation*}
I_n \le \int_{t_n}^{t_{n+1}} \left( \frac{1}{n\tau} 
\sum_{j=1}^{n} \| v_h^{j+1} - v_h^j \|_{h,q} \right)^p
dt
= \tau \left( \frac{1}{n} \sum_{j=1}^{n} \| (D_\tau v_h)^j \|_{h,q} \right)^p.
\end{equation*}
Therefore, we can obtain
\begin{equation}
\sum_{n=1}^{\infty} I_n 
\le \left( \frac{p}{p-1} \right)^p 
\sum_{n=1}^{\infty} \| (D_\tau v_h)^j \|_{h,q}^p \tau
\label{eq:trace4}
\end{equation}
by the Hardy inequality \cite{HarLP52}, and inequalities
\eqref{eq:trace3} and \eqref{eq:trace4} imply \eqref{eq:trace1}, with a constant $C$ depending only on $p$.
\qed\end{proof}

For $Y^{p,q}_{h,\tau,N}$, we have the following trace theorem.
\begin{lemma}\label{prop:trace-finite}
Let $N \in \bN$, $q \in (1, \mu)$ and $p \in (1,\infty)$.
Assume that the family $\{ \cT_h \}_h$ satisfies (H1) and (H2) when $q \ne 2$.
Then, there exists $C>0$ independent of $N$, $h$, and $\tau$ such that
\begin{equation}
\max_{0 \le n \le N} \| v_h^n \|_{1-1/p,p} 
\le C \left( \| v_h \|_{Y^{p,q}_{h,\tau,N}} + \| v_h^0 \|_{1-1/p,p} \right)
\end{equation}
for every $v_h \in Y^{p,q}_{h,\tau,N}$.
\end{lemma}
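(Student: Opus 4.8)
The plan is to reduce the finite-interval statement to the infinite-interval trace estimate of Lemma~\ref{prop:trace-infinite} together with one new, separate estimate for the terminal node $v_h^N$. The two genuinely discrete/infinite tools I will lean on are Lemma~\ref{prop:trace-infinite} and the discrete maximal regularity with non-zero initial data, Lemma~\ref{lem:dmr_initial}, both of which apply to $A_h$ since $A_h$ has $l^p$-DMR on $J_\infty$ by Theorem~\ref{cor:dmr_disc-Laplacian}.

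First I would remove the initial datum. Let $w_h=(w_h^n)_{n\ge0}$ solve the homogeneous backward Euler scheme $D_\tau w_h = A_h w_{h,1}$ on $\bN$ with $w_h^0=v_h^0$. Lemma~\ref{lem:dmr_initial} with $\theta=1$ and zero forcing gives $\|w_h\|_{Y^{p,q}_{h,\tau}}\le C\|v_h^0\|_{1-1/p,p}$, and Lemma~\ref{prop:trace-infinite} then yields $\sup_{n\ge1}\|w_h^n\|_{1-1/p,p}\le C\|v_h^0\|_{1-1/p,p}$. Replacing $v_h$ by $z_h:=v_h-w_h$ on $\{0,\dots,N\}$, we have $z_h^0=0$ and $\|z_h\|_{Y^{p,q}_{h,\tau,N}}\le\|v_h\|_{Y^{p,q}_{h,\tau,N}}+C\|v_h^0\|_{1-1/p,p}$, so it suffices to bound $\max_{1\le n\le N}\|z_h^n\|_{1-1/p,p}$ by $C\|z_h\|_{Y^{p,q}_{h,\tau,N}}$. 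Thus from now on I may assume $v_h^0=0$.

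The core new estimate is the terminal bound $\|v_h^N\|_{1-1/p,p}\le C\|v_h\|_{Y^{p,q}_{h,\tau,N}}$, which I would prove as a time-reversed mirror of the proof of Lemma~\ref{prop:trace-infinite}. For $t\in(k\tau,(k+1)\tau]$ with $0\le k\le N-1$ (the difference sum below being empty when $k=0$), writing $v_h^N=v_h^{N-k}+\sum_{j=N-k}^{N-1}(v_h^{j+1}-v_h^j)$ and taking $b_h=v_h^{N-k}$ in the $K$-functional gives $K(t,v_h^N)\le\tau\sum_{j=N-k}^{N-1}\|(D_\tau v_h)^j\|_{h,q}+t\|A_h v_h^{N-k}\|_{h,q}$. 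Inserting this into $\|v_h^N\|_{1-1/p,p}^p=\int_0^\infty|t^{-1+1/p}K(t,v_h^N)|^p\,\frac{dt}{t}$ and summing over these intervals, the difference terms are controlled by $\|D_\tau v_h\|_{l^p_\tau(N;X_{h,q})}$ through the Hardy inequality \cite{HarLP52}, exactly as in Lemma~\ref{prop:trace-infinite}, while the $A_h$ terms give $\|A_h v_h\|_{l^p_\tau(N;X_{h,q})}$. For the remaining range $t>N\tau$ I would use $K(t,v_h^N)\le\|v_h^N\|_{h,q}$ together with $\|v_h^N\|_{h,q}=\|\tau\sum_{j=0}^{N-1}(D_\tau v_h)^j\|_{h,q}\le(N\tau)^{1/p'}\|D_\tau v_h\|_{l^p_\tau(N;X_{h,q})}$, where $v_h^0=0$ is used; the factor $(N\tau)^{1-p}$ produced by $\int_{N\tau}^\infty t^{-p}\,dt$ then cancels exactly against $(N\tau)^{p/p'}$, so this contribution is $\le C\|D_\tau v_h\|_{l^p_\tau(N;X_{h,q})}$ with $C$ independent of $N$, $h$, and $\tau$. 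This cancellation is the one point genuinely specific to the finite interval, and keeping the constant uniform there is the main obstacle.

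Finally, to treat the interior nodes $1\le n\le N-1$ all at once, I would extend $v_h$ to $\bN$ by the homogeneous backward Euler evolution of $v_h^N$, namely $\tilde v_h^n=(I-\tau A_h)^{-(n-N)}v_h^N$ for $n>N$ and $\tilde v_h^n=v_h^n$ otherwise. By Lemma~\ref{lem:dmr_initial} the tail contribution to $\|\tilde v_h\|_{Y^{p,q}_{h,\tau}}$ is bounded by $C\|v_h^N\|_{1-1/p,p}$, which by the terminal estimate is $\le C\|v_h\|_{Y^{p,q}_{h,\tau,N}}$; hence $\|\tilde v_h\|_{Y^{p,q}_{h,\tau}}\le C\|v_h\|_{Y^{p,q}_{h,\tau,N}}$. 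Applying Lemma~\ref{prop:trace-infinite} to $\tilde v_h$ gives $\max_{1\le n\le N}\|v_h^n\|_{1-1/p,p}\le\sup_{n\ge1}\|\tilde v_h^n\|_{1-1/p,p}\le C\|v_h\|_{Y^{p,q}_{h,\tau,N}}$. Combining this with the node $n=0$, which after the reduction of the second paragraph is absorbed into the explicit $\|v_h^0\|_{1-1/p,p}$ term, completes the proof.
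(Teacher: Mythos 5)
Your proposal is correct, but it takes a genuinely different route from the paper's. The paper proceeds via the extension Lemma~\ref{lem:extension}: it reads the forcing $g^n=(D_\tau v_h)^n-A_hv_h^{n+1}$ off from $v_h$ for $n<N$, sets $g^n=0$ for $n\ge N$, solves the backward Euler problem on all of $\bN$ with initial value $v_h^0$, identifies the solution with $v_h$ up to $n=N$ by uniqueness, and bounds the entire extension in one application of discrete maximal regularity with initial data by $C\left(\|v_h\|_{Y^{p,q}_{h,\tau,N}}+\|v_h^0\|_{1-1/p,p}\right)$; then Lemma~\ref{prop:trace-infinite} finishes. Note that beyond $n=N$ the paper's extension is exactly your homogeneous tail $(I-\tau A_h)^{-(n-N)}v_h^N$, so the two constructions coincide as sequences --- the difference is entirely in how the tail's $Y$-norm is estimated. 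You estimate it by Lemma~\ref{lem:dmr_initial} with initial datum $v_h^N$, which forces you to first prove the terminal bound $\|v_h^N\|_{1-1/p,p}\le C\|v_h\|_{Y^{p,q}_{h,\tau,N}}$; you do this by a time-reversed rerun of the proof of Lemma~\ref{prop:trace-infinite}, and your bookkeeping there is sound: the Hardy step goes through after the substitution reversing the index, the $A_h$ terms sum to $\|A_hv_{h,1}\|_{l^p_\tau(N;X_{h,q})}$, and on $t>N\tau$ the cancellation of $(N\tau)^{1-p}$ from $\int_{N\tau}^\infty t^{-p}\,dt$ against $(N\tau)^{p/p'}$ from $\|v_h^N\|_{h,q}\le(N\tau)^{1/p'}\|D_\tau v_h\|_{l^p_\tau(N;X_{h,q})}$ is precisely what keeps $C$ independent of $N$ and $\tau$ (and it requires $v_h^0=0$, which your first reduction legitimately supplies via Lemma~\ref{lem:dmr_initial} and Lemma~\ref{prop:trace-infinite} applied to the homogeneous evolution of $v_h^0$). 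What each approach buys: the paper's inhomogeneous-extension trick is substantially shorter, since one DMR application absorbs both the initial datum and the tail and no endpoint estimate is ever needed; yours is longer but produces the terminal trace inequality as an explicit, self-contained by-product (a discrete analogue of the right-endpoint trace estimate for maximal-regularity spaces) and avoids the extension lemma altogether. Both proofs rest on the same two pillars --- $l^p$-DMR with non-zero initial value on $J_\infty$ and Lemma~\ref{prop:trace-infinite} --- and the hypotheses you invoke ($q\in(1,\mu)$, (H1)--(H2) when $q\ne2$, and $\theta=1$ so that no stability condition \eqref{eq:sufficient-NR} is needed) are exactly those of the statement.
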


To prove this result, 
we need to extend each element of $Y^{p,q}_{h,\tau,N}$
to that of $Y^{p,q}_{h,\tau,\infty}$.
First, we obtain the following extension lemma,
which corresponds to \cite[Lemma 7.2]{Ama05}.
\begin{lemma}\label{lem:extension}
Let $X$ be a Banach space and $A$ be a linear operator which has discrete maximal regularity and which satisfies $0 \in \rho(A)$.
Let $N \in \bN \cup \{\infty\}$ and set
\begin{equation*}
\| v \|_{p,N} = \| v_1 \|_{l^p_\tau(N;X)} + \| Av_1 \|_{l^p_\tau(N;X)} + \| D_\tau v \|_{l^p_\tau(N;X)}
\end{equation*}
for $v \in X^{N+1}$ and $Y^p_N = \{ v \in X^{N+1} \mid v^0 \in (X, D(A))_{1-1/p,p} ,\ \| v \|_{p,N} < \infty \}$.
Then, for $M \in \bN$ with $M < N$, there exists a map 
$\ext_M \colon Y^p_M \to Y^p_N$ satisfying
\begin{equation*}
(\ext_M v)^n = v^n, \quad n = 0, \dots M,
\end{equation*}
and
\begin{equation*}
\| \ext_M v \|_{p,N} \le C \left( \| v \|_{p,M} + \| v^0 \|_{1-1/p, p} \right),
\end{equation*}
where $C$ is independent of $\tau$ and $M$.
\end{lemma}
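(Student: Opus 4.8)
The plan is to realize the extension not by propagating the terminal value $v^M$ forward, but by \emph{re-solving} a single backward Euler problem on the whole index set $\{0,1,\dots,N\}$, keeping the same initial datum $v^0$ and using the forcing that produced $v$, extended by zero past step $M$. Concretely, given $v \in Y^p_M$, I would first recover the driving sequence by setting
\begin{equation*}
g^{n+1} = (D_\tau v)^n - A v^{n+1}, \quad n = 0,1,\dots,M-1,
\end{equation*}
so that $v$ solves $\tfrac{v^{n+1}-v^n}{\tau} = Av^{n+1} + g^{n+1}$ with initial value $v^0$, and so that
\begin{equation*}
\| g_1 \|_{l^p_\tau(M;X)} \le \| D_\tau v \|_{l^p_\tau(M;X)} + \| A v_1 \|_{l^p_\tau(M;X)} \le \| v \|_{p,M}.
\end{equation*}
I would then extend $g$ by zero, putting $\tilde g^{n+1} = g^{n+1}$ for $n < M$ and $\tilde g^{n+1} = 0$ for $M \le n < N$, so that $\| \tilde g_1 \|_{l^p_\tau(N;X)} = \| g_1 \|_{l^p_\tau(M;X)}$.

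Next, since $0 \in \rho(A)$ and $A$ has discrete maximal regularity, Lemma \ref{lem:dmr_initial} (the $\theta = 1$ case, applied with the datum $v^0 \in (X,D(A))_{1-1/p,p}$) supplies a unique solution $\tilde v \in Y^p_N$ of
\begin{equation*}
\frac{\tilde v^{n+1} - \tilde v^n}{\tau} = A\tilde v^{n+1} + \tilde g^{n+1}, \quad n = 0,1,\dots,N-1, \qquad \tilde v^0 = v^0,
\end{equation*}
satisfying
\begin{equation*}
\| \tilde v \|_{p,N} \le \Cdmr \left( \| \tilde g_1 \|_{l^p_\tau(N;X)} + \| v^0 \|_{1-1/p,p} \right) \le \Cdmr \left( \| v \|_{p,M} + \| v^0 \|_{1-1/p,p} \right).
\end{equation*}
I would set $\ext_M v = \tilde v$; the displayed bound is exactly the required estimate with $C = \Cdmr$ independent of $\tau$ and $M$. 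To confirm that $\tilde v$ reproduces $v$ on $\{0,\dots,M\}$, I would use that $I - \tau A$ is invertible (the scheme is uniquely solvable because $A$ has DMR), so the recursion reads $\tilde v^{n+1} = (I-\tau A)^{-1}(\tilde v^n + \tau \tilde g^{n+1})$; since $\tilde v^0 = v^0$ and $\tilde g^{n+1} = g^{n+1}$ for $n < M$, an induction gives $\tilde v^n = v^n$ for $0 \le n \le M$.

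The point I would emphasize is that this construction deliberately \emph{avoids} extending $v$ by the homogeneous discrete semigroup started from $v^M$. That more obvious route would force one to bound $\| v^M \|_{1-1/p,p}$ in terms of $\| v \|_{p,M} + \| v^0 \|_{1-1/p,p}$, i.e.\ to invoke a finite-interval trace estimate — precisely Lemma \ref{prop:trace-finite}, which is itself proved \emph{using} the present extension lemma, creating a circularity. Re-solving from $v^0$ with the zero-extended forcing keeps $v^0$ as the only initial datum entering the argument, so Lemma \ref{lem:dmr_initial} closes everything directly. Consequently there is no genuine obstacle once the extension is set up this way; the only care needed is the index bookkeeping for $g_1$ and $\tilde g_1$ and the short uniqueness induction on $\{0,\dots,M\}$.
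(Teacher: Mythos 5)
Your proposal is correct and takes essentially the same route as the paper's own proof: both recover the forcing $g$ from $v$ via the scheme, extend it by zero, re-solve the backward Euler problem on $\{0,\dots,N\}$ with initial datum $v^0$ using discrete maximal regularity with nonzero initial value (Lemma \ref{lem:dmr_initial}), and verify agreement on $\{0,\dots,M\}$ through the uniqueness induction $w^n = (I-\tau A)^{-n}w^0 = 0$. Your closing remark about avoiding circularity with the trace estimate (Lemma \ref{prop:trace-finite}) accurately explains why the construction must start from $v^0$ rather than propagate $v^M$, which is exactly the design of the paper's argument.
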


\begin{proof}
For $v \in Y^p_M$, we define $g \in Y^p_N$ as
\begin{equation*}
g^n = \begin{cases}
(D_\tau v)^n - A v^{n+1} , & n=0,\dots, N-1, \\
0 , & \text{otherwise.}
\end{cases}
\end{equation*}
Let $V$ be the solution of 
\begin{equation*}
\begin{cases}
(D_\tau V)^n = V^{n+1} + g^n, & n \in \bN, \\
V^0 = v^0,
\end{cases}
\end{equation*}
which is uniquely solvable by discrete maximal regularity of $A$.
Then, if we set $\ext_M v = V$, it satisfies the desired properties.
Indeed, since $w^n = v^n - V^n$ satisfies
\begin{equation*}
\begin{cases}
(D_\tau w)^n = Aw^n, & n = 0, \dots, M-1, \\
w^0 = 0,
\end{cases}
\end{equation*}
we can obtain $w^n = (I - \tau A)^{-n} w^0 = 0$ for $n=0,\dots, M$.
Moreover, by discrete maximal regularity, we have
\begin{equation*}
\| V \|_{p,N} 
\le C \left( \| g \|_{l^p_\tau(N;X)} + \| V^0 \|_{1-1/p, p} \right) 
\le C \left( \| v \|_{p,M} + \| v^0 \|_{1-1/p, p} \right).
\end{equation*}
\qed\end{proof}

\begin{proof}[Proof of Lemma \ref{prop:trace-finite}.]
Let $v_h \in Y^{p,q}_{h,\tau,N}$. 
Then, by Lemmas \ref{prop:trace-infinite} and \ref{lem:extension},
we have
\begin{align*}
\max_{0 \le n \le N} \| v_h^n \|_{1-1/p,p} 
&\le \sup_{n \in \bN} \| ( \ext_N v_h )^n \|_{1-1/p,p} \\
&\le C \left( \| \ext_N v_h \|_{Y^{p,q}_{h,\tau}} + \| v_h^0 \|_{1-1/p,p} \right)  \\
&\le C \left( \| v_h \|_{Y^{p,q}_{h,\tau,N}} + \| v_h^0 \|_{1-1/p,p} \right) 
\end{align*}
\qed\end{proof}

\subsection{Fractional powers}
\label{sec:fp}

We will use the fractional power $(-A_h)^z$ for $z \in (0,1)$ and $z \in
(-1,0)$; see \cite{Paz83}. The negative powers are defined as
\begin{equation}
(-A_h)^{-z}v_h 
= \frac{\sin(\pi z)}{\pi}
\int_0^\infty t^{-z} R(t; A_h) v_h dt
\label{eq:def_fractional}
\end{equation}
for $z \in (0,1)$. Since $-A_h$ is an operator of positive type, it is
well-defined. One can check that $(-A_h)^{-z}$ is invertible.
Consequently, the positive power $(-A_h)^z$ defined by the inverse operator of $(-A_h)^{-z}$ for $z \in (0,1)$.
Fractional powers satisfy the following interpolation properties:
\begin{align}
\| (-A_h)^z v_h \|_{h,q} 
&\le C \| v_h \|_{h,q}^{1-z} \| A_h v_h \|_{h,q}^z, 
\label{eq:interpolation-1} \\
\| (-A_h)^{-z} v_h \|_{h,q} 
&\le C \| v_h \|_{h,q}^{1-z} \| A_h^{-1} v_h \|_{h,q}^z,
\label{eq:interpolation-2}
\end{align}
for each $z \in (0,1)$ and $v_h \in S_h$, uniformly for $h$.
Consequently, we have
\begin{equation}
\| (-A_h)^{-z} v_h \|_{h,q} \le C \| v_h \|_{h,q},
\quad \forall v_h \in S_h
\label{eq:stability_fractional}
\end{equation}
uniformly for $h$, because of Lemma \ref{lem:inv_disc-Lap}.
Below we set $(-A_h)^0=I$ and $(-A_h)^1=-A_h$. 

\begin{lemma}[Discrete Sobolev inequality]
\label{lem:disc-Sobolev}
Assume that the family $\{ \cT_h \}_h$ satisfies (H1) and (H2).
For every $q > d/2$ and $\alpha \in (d/(2q), 1)$,
there exists $C>0$ independent of $h$, which fulfills the inequality
\begin{equation*}
\| v_h \|_{L^\infty} \le C \| (-A_h)^\alpha v_h \|_{L^q},
\end{equation*}
for all $v_h \in S_h$.
\end{lemma}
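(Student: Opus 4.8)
The plan is to reduce the estimate to an $h$-uniform mapping property of the negative fractional power, namely
\begin{equation*}
\|(-A_h)^{-\alpha} w_h\|_{L^\infty} \le C \|w_h\|_{h,q}, \qquad \forall w_h \in S_h,
\end{equation*}
with $C$ independent of $h$. Granting this, I would set $w_h = (-A_h)^\alpha v_h$, so that $(-A_h)^{-\alpha} w_h = v_h$, and then invoke the equivalence of $\|\cdot\|_{h,q}$ and $\|\cdot\|_{L^q}$ on $S_h$ (Lemma \ref{lem:lumping}) to replace $\|w_h\|_{h,q}$ by $C\|(-A_h)^\alpha v_h\|_{L^q}$, which is exactly the asserted inequality. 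To prove the displayed bound I would start from the integral representation \eqref{eq:def_fractional}, so that
\begin{equation*}
\|(-A_h)^{-\alpha} w_h\|_{L^\infty} \le \frac{\sin(\pi\alpha)}{\pi}\int_0^\infty t^{-\alpha}\|R(t;A_h) w_h\|_{L^\infty}\,dt .
\end{equation*}

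The heart of the argument is an $h$-uniform pointwise-in-$t$ estimate for $\|R(t;A_h) w_h\|_{L^\infty}$. Writing $\beta = d/(2q)$, I would apply the discrete Gagliardo--Nirenberg inequality (Lemma \ref{lem:disc-GN}) to $R(t;A_h)w_h$, giving
\begin{equation*}
\|R(t;A_h) w_h\|_{L^\infty} \le C \|A_h R(t;A_h) w_h\|_{h,q}^{\beta}\,\|R(t;A_h) w_h\|_{h,q}^{1-\beta}.
\end{equation*}
For the second factor I would use that $-A_h \in \cP(X_{h,q};K_q)$ with $K_q$ independent of $h$ (Lemma \ref{lem:positivity_disc-Lap}), which yields the uniform resolvent bound $\|R(t;A_h) w_h\|_{h,q} \le C(1+t)^{-1}\|w_h\|_{h,q}$. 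For the first factor I would exploit the algebraic identity $A_h R(t;A_h) = -I + t R(t;A_h)$, whence $\|A_h R(t;A_h) w_h\|_{h,q} \le \|w_h\|_{h,q} + t\,\|R(t;A_h) w_h\|_{h,q} \le C\|w_h\|_{h,q}$. Combining the two gives $\|R(t;A_h) w_h\|_{L^\infty} \le C(1+t)^{-(1-\beta)}\|w_h\|_{h,q}$, with all constants uniform in $h$ because each cited estimate is.

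Substituting this into the integral leaves $\int_0^\infty t^{-\alpha}(1+t)^{-(1-\beta)}\,dt$, which converges near $t=0$ precisely because $\alpha<1$ and near $t=\infty$ precisely because $\alpha + (1-\beta) > 1$, i.e.\ $\alpha > \beta = d/(2q)$. Thus the hypotheses $\alpha \in (d/(2q),1)$ and $q>d/2$ (ensuring $d/(2q)<1$, so the range is nonempty) are exactly what makes the representation integrable. I expect the main obstacle to be extracting the sharp decay exponent $-(1-\beta)$: the resolvent supplies genuine $(1+t)^{-1}$ decay only in the $h,q$-norm, whereas the factor $A_h R(t;A_h)$ is merely bounded (it cannot decay uniformly, since $A_h$ has norm of order $h^{-2}$), so the decay must come entirely through the Gagliardo--Nirenberg interpolation of the two factors. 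Keeping every constant independent of $h$ throughout — which is guaranteed by the $h$-uniformity built into Lemmas \ref{lem:disc-GN}, \ref{lem:positivity_disc-Lap} and \ref{lem:lumping} — is the other point requiring care, but it is already ensured by those statements.
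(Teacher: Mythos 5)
Your proposal is correct and takes essentially the same route as the paper's proof: reduce to the $h$-uniform bound $\|(-A_h)^{-\alpha}w_h\|_{L^\infty}\le C\|w_h\|_{h,q}$, derive the resolvent estimate $\|R(t;A_h)w_h\|_{L^\infty}\le C(1+t)^{-1+d/(2q)}\|w_h\|_{h,q}$ from Lemmas \ref{lem:disc-GN} and \ref{lem:positivity_disc-Lap}, and integrate against $t^{-\alpha}$, with convergence exactly under $\alpha\in(d/(2q),1)$. The only difference is one of detail: the paper states the resolvent estimate in a single sentence, whereas you correctly spell out the interpolation mechanism behind it, splitting via the identity $A_hR(t;A_h)=-I+tR(t;A_h)$ into a merely bounded factor and a factor with genuine $(1+t)^{-1}$ decay.
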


\begin{proof}
It suffices to show that 
\begin{equation}
\| (-A_h)^{-\alpha} f_h \|_{L^\infty} 
\le C \| f_h \|_{h,q},
\quad \forall f_h \in S_h.
\label{eq:disc-Sobolev-1}
\end{equation}
By the definition \eqref{eq:def_fractional}, it is necessary to estimate $\| R(t; A_h) f_h \|_{L^\infty}$.
Lemmas \ref{lem:disc-GN} and \ref{lem:positivity_disc-Lap} imply
\begin{equation*}
\| R(t; A_h) f_h \|_{L^\infty}
\le C (1+t)^{-1+\frac{d}{2q}} \| f_h \|_{h,q}.
\end{equation*}
Consequently,
\begin{align}
\| (-A_h)^{-\alpha} f_h \|_{L^\infty} 
& \le \frac{\sin(\pi \alpha)}{\pi}
\int_0^\infty t^{-\alpha} \| R(t; A_h) f_h \|_{L^\infty} dt \nonumber \\
& \le C \int_0^\infty t^{-\alpha} (1+t)^{ -1 + \frac{d}{2q}} dt \| f_h \|_{h,q}.
\label{eq:disc-Sobolev-2}
\end{align} 
Since $\alpha \in (d/(2q), 1)$, the integral in the right-hand-side of \eqref{eq:disc-Sobolev-2} is finite.
Therefore, we can obtain the estimate \eqref{eq:disc-Sobolev-1}.
\qed\end{proof}

\begin{lemma}\label{lem:fractional-embedding}
Assume that the family $\{ \cT_h \}_h$ satisfies (H1) and (H2) when $q \ne 2$.
For every $\beta \in (0, 1-1/p)$, there exists $C>0$ independent of $h$, which satisfies
\begin{equation*}
\| (-A_h)^\beta v_h \|_{h,q} \le C \| v_h \|_{1-\frac{1}{p},p},
\end{equation*}
for all $v_h \in S_h$.
Here, the norm $\| \cdot \|_{1-\frac{1}{p}, p}$ is that of $(X_{h,q}, D(A_h))_{1-\frac{1}{p}, p}$.
\end{lemma}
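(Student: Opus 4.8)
The plan is to factor the inequality through the real interpolation space with the smaller first index. Writing $\theta = 1-1/p$ and using the $K$-functional of the couple $(X_{h,q}, D(A_{h,q}))$ already introduced before Lemma \ref{prop:trace-infinite}, I would establish the two $h$-independent estimates
\[
\| v_h \|_{\beta, 1} \le C \| v_h \|_{\theta, p} , \qquad
\| (-A_h)^\beta v_h \|_{h,q} \le C \| v_h \|_{\beta, 1} ,
\]
where $\| \cdot \|_{\beta,1}$ denotes the norm of $(X_{h,q}, D(A_{h,q}))_{\beta,1}$, and then simply compose them. Since $S_h$ is finite-dimensional, every $v_h$ lies in $D(A_h)$, so all the norms involved are finite and no density issue arises.

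For the first estimate I would rely only on the general $K$-method. The definition $\| v_h \|_{\beta,1} = \int_0^\infty t^{-\beta} K(t,v_h)\, dt/t$ is split at $t=1$. On $(0,1)$ one uses $K(t,v_h) \le t^{\theta}\| v_h \|_{\theta,\infty}$, so that the integral is controlled by $\| v_h \|_{\theta,\infty}\int_0^1 t^{\theta-\beta}\,dt/t$, which converges precisely because $\beta < \theta = 1-1/p$. On $(1,\infty)$ one uses that $K(t,v_h)$ is nondecreasing and bounded by $\| v_h \|_{h,q}$, together with $\| v_h \|_{h,q} \le C K(1,v_h) \le C\| v_h \|_{\theta,\infty}$; the last inequality holds because $D(A_{h,q})$ embeds into $X_{h,q}$ with a constant that is uniform in $h$ by Lemma \ref{lem:inv_disc-Lap}. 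Combined with $\| v_h \|_{\theta,\infty} \le \| v_h \|_{\theta,p}$, this yields the first estimate with a constant depending only on $\beta$, $p$ and the constant of Lemma \ref{lem:inv_disc-Lap}.

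For the second estimate I would start from the Balakrishnan representation consistent with \eqref{eq:def_fractional}, namely
\[
(-A_h)^\beta v_h = \frac{\sin(\pi\beta)}{\pi}\int_0^\infty s^{\beta-1}\,(-A_h) R(s;A_h) v_h \, ds ,
\]
valid since $-A_h$ is of positive type. Decomposing $v_h = a_h + b_h$ and using $(-A_h)R(s;A_h) = I - sR(s;A_h)$ with the resolvent bound from $-A_h \in \cP(X_{h,q};K_q)$ (Lemma \ref{lem:positivity_disc-Lap}), I would estimate $\| (-A_h)R(s;A_h) v_h \|_{h,q} \le C(\| a_h \|_{h,q} + s^{-1}\| A_h b_h \|_{h,q})$, and after taking the infimum over decompositions obtain $\| (-A_h)R(s;A_h) v_h \|_{h,q} \le C\, K(1/s, v_h)$. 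Substituting this bound into the integral and changing variables $t = 1/s$ converts $\int_0^\infty s^{\beta-1} K(1/s,v_h)\,ds$ into $\int_0^\infty t^{-\beta} K(t,v_h)\,dt/t = \| v_h \|_{\beta,1}$, giving the second estimate with a constant depending only on $\beta$ and $K_q$.

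The only genuine point is uniformity in $h$, and the whole design is to reduce it to two facts that are already uniform: the positive-type constant $K_q$ (Lemma \ref{lem:positivity_disc-Lap}, via Lemma \ref{prop:disc-Laplacian2}) and the domain estimate of Lemma \ref{lem:inv_disc-Lap}. The main care I expect is bookkeeping: justifying convergence of the Balakrishnan integral at both endpoints (which follows from $0<\beta<1$) and passing the infimum inside the $K$-functional estimate. These are routine once the constants are tracked, so the argument should go through cleanly with the final constant independent of $h$.
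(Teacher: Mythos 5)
Your proof is correct and takes essentially the same route as the paper: the paper also factors through $(X_{h,q}, D(A_h))_{\beta,1}$, citing the embeddings $(X_{h,q},D(A_h))_{1-\frac{1}{p},p} \inj (X_{h,q},D(A_h))_{\beta,1}$ and $(X_{h,q},D(A_h))_{\beta,1} \inj D((-A_h)^\beta)$ (Lunardi, Proposition 4.7) and asserting $h$-uniformity by ``chasing the constants.'' You merely unpack those two citations---the first by the $K$-functional split at $t=1$ with Lemma \ref{lem:inv_disc-Lap} supplying uniformity, the second by the Balakrishnan formula with the uniform constant $K_q$ of Lemma \ref{lem:positivity_disc-Lap}---which makes the $h$-independence explicit; the only cosmetic slip is that $\| v_h \|_{\theta,\infty} \le \| v_h \|_{\theta,p}$ holds only up to a constant $c(\theta,p)$ (independent of $h$), which is harmless.
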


\begin{proof}
By the general embedding theorem for positive operators \cite[Proposition 4.7]{Lun09}, we have
\begin{equation}
(X_{h,q}, D(A_h))_{\beta, 1} \inj D((-A_h)^\beta).
\label{eq:general-embedding}
\end{equation}
Moreover, $\beta < 1-1/p$ implies
\begin{equation*}
(X_{h,q}, D(A_h))_{1-\frac{1}{p}, p} \inj (X_{h,q}, D(A_h))_{\beta, 1}.
\end{equation*}
Chasing the constants in these proofs, one can show that both embedding properties are uniform for $h$.
Therefore, we can establish the desired estimate.
\qed\end{proof}

\begin{lemma}\label{lem:trace-fractional}
Assume that the family $\{ \cT_h \}_h$ satisfies (H1) and (H2).
For every $\alpha \in (0, \alpha_{p,q,d})$, there exists $C>0$ independent of $h$, which satisfies
\begin{equation*}
\max_{0 \le n \le N} \| v_h \|_{L^\infty}
\le C \left( \| (-A_h)^{-\alpha} v_h \|_{Y^{p,q}_{h,\tau,N}} 
+ \| (-A_h)^{-\alpha} v_h^0 \|_{1-\frac{1}{p},p} \right),
\end{equation*}
for all $N \in \bN$ and $v_h \in S_h^{N+1}$.
\end{lemma}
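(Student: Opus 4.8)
The plan is to chain together the discrete Sobolev inequality (Lemma \ref{lem:disc-Sobolev}), the fractional-power embedding into the real interpolation space (Lemma \ref{lem:fractional-embedding}), and the discrete trace theorem (Lemma \ref{prop:trace-finite}), with the fractional power $(-A_h)^{-\alpha}$ serving to convert $L^\infty$-control of $v_h^n$ into control of the sequence $(-A_h)^{-\alpha}v_h$ in $Y^{p,q}_{h,\tau,N}$. The essential content is the choice of an intermediate exponent that simultaneously satisfies the range conditions of the two inequalities.

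First I would set $w_h = (-A_h)^{-\alpha}v_h$, so that $w_h^0 = (-A_h)^{-\alpha}v_h^0$, and aim to prove the pointwise-in-$n$ estimate
\[
\|v_h^n\|_{L^\infty} \le C\,\|w_h^n\|_{1-\frac{1}{p},p}, \qquad 0\le n\le N,
\]
uniformly in $h$. To this end, choose $\gamma$ with $d/(2q) < \gamma < 1-1/p-\alpha$; this interval is nonempty precisely because $\alpha < \alpha_{p,q,d} = 1-1/p-d/(2q)$. With this $\gamma$, set $\beta = \gamma + \alpha$, so that $\beta \in (0,\,1-1/p)$ and also $\gamma<1$. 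Applying the discrete Sobolev inequality with exponent $\gamma$ gives $\|v_h^n\|_{L^\infty} \le C\|(-A_h)^\gamma v_h^n\|_{L^q}$. Writing $(-A_h)^\gamma v_h^n = (-A_h)^\beta (-A_h)^{-\alpha}v_h^n = (-A_h)^\beta w_h^n$, passing to the equivalent norm $\|\cdot\|_{h,q}$ via Lemma \ref{lem:lumping}, and invoking Lemma \ref{lem:fractional-embedding} with this $\beta$ then yields the desired pointwise bound.

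Taking the maximum over $n$ reduces the claim to estimating $\max_{0\le n\le N}\|w_h^n\|_{1-1/p,p}$, which is exactly what the discrete trace theorem (Lemma \ref{prop:trace-finite}) controls when applied to $w_h$:
\[
\max_{0\le n\le N}\|w_h^n\|_{1-\frac{1}{p},p} \le C\left(\|w_h\|_{Y^{p,q}_{h,\tau,N}} + \|w_h^0\|_{1-\frac{1}{p},p}\right).
\]
Since $w_h = (-A_h)^{-\alpha}v_h$ and $w_h^0 = (-A_h)^{-\alpha}v_h^0$, substituting back gives precisely the asserted inequality.

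The only delicate point will be the exponent arithmetic: one must verify that the admissible range $d/(2q)<\gamma<1-1/p-\alpha$ for the intermediate exponent is nonempty, which is where the hypothesis $\alpha<\alpha_{p,q,d}$ is used in full force, and that the composite $\beta=\gamma+\alpha$ lands in $(0,\,1-1/p)$ so that Lemma \ref{lem:fractional-embedding} applies. The uniformity in $h$ is inherited directly from the $h$-uniformity already established in Lemmas \ref{lem:disc-Sobolev}, \ref{lem:fractional-embedding}, and \ref{prop:trace-finite}, together with the $h$-independent norm equivalence of Lemma \ref{lem:lumping}; no genuinely new estimate is required.
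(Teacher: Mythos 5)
Your proposal is correct and follows essentially the same route as the paper's proof: the paper likewise chooses an intermediate exponent $\beta \in (d/(2q)+\alpha,\, 1-1/p)$ (your $\beta = \gamma+\alpha$ is the identical choice, reparametrized) and chains Lemma \ref{lem:disc-Sobolev} with exponent $\beta-\alpha$, Lemma \ref{lem:fractional-embedding} with exponent $\beta$, and Lemma \ref{prop:trace-finite} applied to $(-A_h)^{-\alpha}v_h$. Your explicit appeal to Lemma \ref{lem:lumping} for the equivalence of $\|\cdot\|_{L^q}$ and $\|\cdot\|_{h,q}$ is a detail the paper leaves implicit, but otherwise the arguments coincide.
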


\begin{proof}
Since $\alpha + d/(2q) < 1 - 1/p$, we can find $\beta \in (0,1)$ that satisfies
\begin{equation*}
\frac{d}{2q} + \alpha < \beta < 1 + \alpha
\quad \text{and} \quad
0 < \beta < 1 - \frac{1}{p}.
\end{equation*}
$\beta - \alpha \in (d/(2q), 1)$.
Then, owing to Lemmas \ref{lem:disc-Sobolev}, \ref{lem:fractional-embedding}, and \ref{prop:trace-finite}, we have
\begin{align*}
\| v_h^n \|_{L^\infty} 
&\le C \| (-A_h)^{\beta - \alpha} v_h^n \|_{h,q}
\le C \| (-A_h)^{-\alpha} v_h^n \|_{1-\frac{1}{p},p} \\
&\le C \left( \| (-A_h)^{-\alpha} v_h \|_{Y^{p,q}_{h,\tau,N}} 
+ \| (-A_h)^{-\alpha} v_h^0 \|_{1-\frac{1}{p},p} \right),
\end{align*}
for $v_h = (v_h^n)_n \in S_h^{N+1}$ and $n \in \bN$.
\qed\end{proof}

\subsection{Completion of the proofs of Theorems \ref{thm:sl} and \ref{thm:infty}}
\label{sec:pf}

Let $u$ and $u_h = (u_h^n)_{n=0}^{N_T}$ be solutions of
  \eqref{eq:semilinear} and \eqref{eq:approx-semilinear}, respectively. Set
  $U^n=u(n\tau)$. We consider the error $e_h = (e_h^n)_{n=0}^{N_T}\in S_h^{N_T+1}$
  defined as 
\[
 {e}_h^n = {u}_h^n - P_h {U}^n\quad (n=0,1,\ldots,N_T).
\]
We first state the sub-optimal error estimate for a globally Lipschitz
nonlinearity 
$f$. If $f$ is a globally Lipschitz continuous function, then \eqref{eq:semilinear} admits a unique time-global solution and the solution of
 \eqref{eq:approx-semilinear} is bounded from above uniformly in $h$ and $\tau$
 (see Remark \ref{rem:bound}). 
Recall that $\|\cdot\|_{Y_T}$ is defined as \eqref{eq:YT1} and
\eqref{eq:YT2}. 

\begin{lemma}
\label{thm:global_error-truncated}
In addition to hypotheses of Theorem \ref{thm:sl}, we assume that $f$ is a globally Lipschitz continuous function.
Then, for every $\alpha \in [0,1]$ and $T \in (0, \infty)$, 
\begin{equation}
\label{eq:frac11}
\| (-A_h)^{-\alpha}{e}_h \|_{Y_{T}}
\le C(h^{2\alpha} + \tau).
\end{equation}
\end{lemma}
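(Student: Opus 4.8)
The plan is to set up an error equation for $e_h=(e_h^n)$, to separate a consistency defect from a nonlinear perturbation, and then to combine the discrete maximal regularity of $A_h$ with a discrete Gronwall argument. First I would subtract $P_hU^n$ from the scheme \eqref{eq:approx-semilinear}. Using $u'=\Delta u+f(u)$, a direct computation gives the identity
\[ (D_\tau e_h)^n = A_h e_h^{n+1} + r_h^n + g_h^n,\qquad e_h^0=0, \]
with the consistency term $r_h^n=A_hP_hU^{n+1}+K_h^{-1}P_hf(P_hU^n)-P_h(D_\tau U)^n$ and the nonlinear defect $g_h^n=K_h^{-1}P_h\bigl(f(u_h^n)-f(P_hU^n)\bigr)$. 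By Lemmas \ref{lem:lumping} and \ref{lem:projections} and the global Lipschitz continuity of $f$, the defect obeys $\|g_h^n\|_{h,q}\le CL\|e_h^n\|_{L^q}$, so the nonlinearity is controlled by the error itself.

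Next I would apply $(-A_h)^{-\alpha}$, which commutes with $A_h$, so that $w_h:=(-A_h)^{-\alpha}e_h$ solves the backward Euler problem ($\theta=1$) with zero initial datum and forcing $(-A_h)^{-\alpha}(r_h+g_h)$. Invoking the discrete maximal regularity estimate \eqref{eq:dmr-var_heat} of Theorem \ref{cor:dmr_disc-Lap_finite} on every subinterval $[0,t_m]$ gives, with $\|\cdot\|_{Y_{t_m}}$ denoting the norm \eqref{eq:YT1} over $\{0,\dots,m\}$,
\[ \|(-A_h)^{-\alpha}e_h\|_{Y_{t_m}} \le C\|(-A_h)^{-\alpha}r_h\|_{l^p_\tau(m;X_{h,q})} + C\|(-A_h)^{-\alpha}g_h\|_{l^p_\tau(m;X_{h,q})}. \]
For the consistency part I would mimic the truncation analysis in the proof of Theorem \ref{thm:error_linear}: the spatial contribution reduces through \eqref{eq:disc-Lap} to $A_h(P_h-R_h)U^{n+1}$ (up to the $K_h^{-1}$ correction handled by Lemma \ref{lem:Kh}), and the negative fractional power supplies the smoothing factor $h^{2\alpha}$ via the interpolation inequalities \eqref{eq:interpolation-1}--\eqref{eq:interpolation-2} together with Lemmas \ref{lem:projections} and \ref{lem:inv_disc-Lap}, while the backward Euler temporal error contributes $O(\tau)$. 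This yields $\|(-A_h)^{-\alpha}r_h\|_{l^p_\tau}\le C(h^{2\alpha}+\tau)$.

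The heart of the proof is the nonlinear term. By \eqref{eq:stability_fractional} and the Lipschitz bound one has $\|(-A_h)^{-\alpha}g_h^n\|_{h,q}\le C\|e_h^n\|_{L^q}$, so I must dominate the \emph{unweighted, pointwise-in-time} error $\|e_h^n\|_{L^q}$ by a causal quantity. The decisive tool is the discrete trace theorem: writing $\|e_h^n\|_{h,q}=\|(-A_h)^{\alpha}w_h^n\|_{h,q}$ and combining the fractional embedding of Lemma \ref{lem:fractional-embedding} with Lemma \ref{prop:trace-finite} (recall $e_h^0=0$), I obtain $\|e_h^n\|_{h,q}\le C\|w_h^n\|_{1-1/p,p}\le C\|(-A_h)^{-\alpha}e_h\|_{Y_{t_n}}$. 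Crucially this bounds a pointwise value by the \emph{cumulative} $Y$-norm up to $t_n$ without losing any power of $\tau$. Substituting into the nonlinear term and raising to the $p$-th power, with $\Phi_m:=\|(-A_h)^{-\alpha}e_h\|_{Y_{t_m}}$, leads to the discrete Gronwall inequality
\[ \Phi_m^p \le C(h^{2\alpha}+\tau)^p + C\sum_{n=0}^{m-1}\Phi_n^p\,\tau, \]
and the discrete Gronwall lemma then gives $\Phi_{N_T}\le Ce^{CT/p}(h^{2\alpha}+\tau)$, which is precisely \eqref{eq:frac11}.

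I expect the main obstacle to be exactly this closure of the nonlinear estimate, namely producing a $\tau$-uniform pointwise-in-time bound on $\|e_h^n\|_{L^q}$ from the cumulative norm, since this is what both matches the exponent $h^{2\alpha}$ on the right and keeps the Gronwall constant independent of $\tau$. The trace/fractional-embedding step is transparent while $\alpha<1-1/p$, where Lemma \ref{lem:fractional-embedding} applies; for $\alpha$ in the remaining range up to $1$ I would instead estimate the unweighted defect $\|e_h^n\|_{L^q}$ by the bound already obtained for an exponent $\alpha'<1-1/p$ and absorb the nonlinear contribution into the data, so that the Gronwall argument still closes. A minor secondary point is the verification that the constant in \eqref{eq:dmr-var_heat} is independent of the subinterval length $t_m$, which is guaranteed by Lemma \ref{lem:dmr_finite}.
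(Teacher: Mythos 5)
Your setup coincides with the paper's: the same error equation for $e_h^n=u_h^n-P_hU^n$, the same separation into a consistency residual and a nonlinear defect, and essentially the same estimate $\|(-A_h)^{-\alpha}r_h\|_{l^p_\tau}\le C(h^{2\alpha}+\tau)$ via the Ritz-projection term $A_h(P_h-R_h)U^{n+1}$, Lemma \ref{lem:Kh}, and the interpolation inequalities \eqref{eq:interpolation-1}--\eqref{eq:interpolation-2} (this is the paper's decomposition into $R_{1,h}$, $R_{2,h}$, $R_{3,h}$). Where you genuinely diverge is the closure of the nonlinear term. The paper splits $e_h=e_{1,h}+e_{2,h}$, absorbs the nonlinear part on a short interval of length $T_1=(2C_0)^{-p}$ using the smallness factor $S^{1/p}$ together with the $L^\infty$ trace estimate of Lemma \ref{lem:trace-fractional}, and then continues interval by interval in Step 2, which requires DMR with nonzero initial data (Theorem \ref{cor:dmr_disc-Lap_initial}), the extension Lemma \ref{lem:extension} and the trace Lemma \ref{prop:trace-finite}. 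Your single cumulative Gronwall on $\Phi_m=\|(-A_h)^{-\alpha}e_h\|_{Y_{t_m}}$, with the causal pointwise bound $\|e_h^n\|_{h,q}\le C\|(-A_h)^{-\alpha}e_h^n\|_{1-1/p,p}\le C\Phi_n$ obtained from Lemma \ref{prop:trace-finite} plus Lemma \ref{lem:fractional-embedding} with $\beta=\alpha$, is a legitimate and in fact tidier alternative: it dispenses with the continuation machinery entirely, and because it needs only $L^q$ (not $L^\infty$) control it works for $\alpha\in[0,1-1/p)$, slightly beyond the range $\alpha<\alpha_{p,q,d}=1-1/p-d/(2q)$ in which the paper's $L^\infty$-trace step formally applies.

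The genuine gap is your treatment of $\alpha\in[1-1/p,1]$. If you freeze the nonlinear defect as data using the already-proved case $\alpha'<1-1/p$, you obtain $\|(-A_h)^{-\alpha}g_h\|_{l^p_\tau}\le C\|e_h\|_{l^p_\tau(N_T;X_{h,q})}\le C(h^{2\alpha'}+\tau)$, and discrete maximal regularity then yields only $\|(-A_h)^{-\alpha}e_h\|_{Y_T}\le C(h^{2\alpha}+\tau)+C(h^{2\alpha'}+\tau)=O(h^{2\alpha'}+\tau)$, strictly weaker than the asserted $O(h^{2\alpha}+\tau)$ whenever $\alpha>\alpha'$ --- in particular at $\alpha=1$, which is exactly the case invoked in the proof of Theorem \ref{thm:sl} to get the optimal rate \eqref{eq:e100}. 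Negative fractional powers gain nothing on the superposition operator, since $\|(-A_h)^{-\alpha}(F_h(v_h)-F_h(w_h))\|_{h,q}$ is still only bounded by $C_\mathrm{Lip}\|v_h-w_h\|_{h,q}$; so once the nonlinearity is converted into data it irrevocably carries the suboptimal rate into the conclusion. The paper's mechanism is structurally different on this point: the nonlinear contribution is never frozen, but reabsorbed at the exponent $\alpha$ itself, via $\max_{n}\|e_{1,h}^n\|_{L^\infty}\le C\|(-A_h)^{-\alpha}e_{1,h}\|_{Y_S}$ and the factor $S^{1/p}$, so the inhomogeneity stays at $O(h^{2\alpha}+\tau)$ for every $\alpha$. (Whether Lemma \ref{lem:trace-fractional} actually licenses that inequality for $\alpha\ge\alpha_{p,q,d}$ is a point the paper passes over silently, but in any case your patch, as formulated, provably delivers only the weaker rate.) Any repair of your argument must likewise keep the nonlinear term on the left-hand side of the estimate for large $\alpha$ rather than bound it a priori by a smaller exponent.
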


\begin{proof}
The proof is divided into two steps. 

\noindent \textbf{Step 1. } We prove that there exists
 $T_1=T_1(u_0,T)\in (0,T)$ satisfying
\begin{equation}
\| (-A_h)^{-\alpha} {e}_{h} \|_{Y_{T_1}} \le C ( h^{2\alpha} + \tau ).
\label{eq:frac12}
\end{equation}
The error ${e}_h$ satisfies
\begin{equation*}
\begin{cases}
(D_\tau {e}_h)^n = A_h {e}_h^{n+1} + {r}_h^n, & n = 0,1,\dots, \\
{e}_h^0 = 0,
\end{cases}
\end{equation*}
where ${r}_h^n = {F}_h({u}_h^n) - P_h (D_\tau {U})^n + A_h P_h {U}^{n+1}$
and ${F}_h = K_h^{-1} \circ P_h \circ {f}$.
We decompose ${r}_h^n$ into two parts:
\begin{equation*}
{r}_h^n = {r}_{1,h}^n + {r}_{2,h}^n,
\quad {r}_{1,h}^n = {F}({u}_h^n) - {F}(P_h {U}^n),
\quad {r}_{2,h}^n = {r}_h^n - {r}_{1,h}^n.
\end{equation*}

We perform an estimation for ${r}_{2,h}^n$. 
Let $V^n = \partial_t {u}(\cdot, n\tau)$. Noting that ${V}^{n+1} =
 A{U}^{n+1} + {f}(U^{n+1})$, the residual term ${r}_{2,h}^n$ is can be decomposed as
\begin{align*}
{r}_{2,h}^n &= R_{1,h}^n + R_{2,h}^n + R_{3,h}^n, \\
R_{1,h}^n &= A_h (P_h - R_h){U}^{n+1}, \\
R_{2,h}^n &= (K_h^{-1} - I)P_h {V}^{n+1} + P_h({V}^{n+1} - (D_\tau {U})^n), \\
R_{3,h}^n &= [ {F}_h(P_h {U}^n) - {F}_h({U}^n) ] + [ {F}_h({U}^n) - {F}_h({U}^{n+1}) ].
\end{align*}
From the interpolation property \eqref{eq:interpolation-1} and the inverse inequality,
we have
\begin{equation*}
\| (-A_h)^\gamma v_h \|_{h,q} \le C h^{-2\gamma} \| v_h \|_{h,q},
\end{equation*}
for $\gamma \in (0,1)$.
Therefore, the first term $R_{1,h}^n$ is estimated as
\begin{align}
\| (-A_h)^{-\alpha} R_{1,h}^n \|_{h,q}
&= \| (-A_h)^{1-\alpha} (P_h - R_h) {U}^{n+1} \|_{h,q} \nonumber \\
&\le C h^{-2(1-\alpha)} \cdot h^2 \| {U}^{n+1} \|_{W^{2,q}}  \nonumber\\
& = C h^{2 \alpha} \| {U}^{n+1} \|_{W^{2,q}}.
\label{eq:R1h}
\end{align} 
Similarly, from \eqref{eq:interpolation-2} and Lemmas \ref{lem:projections} and \ref{lem:Kh}, we have
\begin{equation*}
\| (-A_h)^{-\alpha} (K_h^{-1} - I) P_h {V}^{n+1} \|_{h,q}
\le Ch^{2 \alpha} \| {V}^{n+1} \|_{W^{1,q}}.
\end{equation*}
Combining this inequality with Lemma \ref{lem:fdm}, we have
\begin{equation}
\left( \sum_{n=0}^{N-1} \| (-A_h)^{-\alpha} R_{2,h}^n \|_{h,q}^p \tau \right)^{1/p}
\le C(h^{2\alpha} + \tau).
\label{eq:R2h}
\end{equation} 
Since ${f}$ is globally Lipschitz continuous,
we have by \eqref{eq:stability_fractional}
\begin{multline}
 \left( \sum_{n=0}^{N-1} \| (-A_h)^{-\alpha} R_{3,h}^n \|_{h,q}^p \tau
 \right)^{1/p}\\
 \le CL \left[ 
\left( \sum_{n=0}^{N-1} \| (P_h - I){U}^n \|_{h,q}^p \tau \right)^{1/p}
+ \left( \sum_{n=0}^{N-1} \| {U}^{n+1} - {U}^n \|_{h,q}^p \tau \right)^{1/p} \right] \\
 \le CL (h^2 + \tau),
\label{eq:R3h} 
\end{multline}
where $L$ is the Lipschitz constant of ${f}$. 
The equations \eqref{eq:R1h}, \eqref{eq:R2h}, and \eqref{eq:R3h} yield
\begin{equation}
\left( \sum_{n=0}^{N-1} \| (-A_h)^{-\alpha} {r}_{2,h}^n \|_{h,q}^p \tau \right)^{1/p}
\le C(h^{2\alpha} + \tau).
\label{eq:r2h}
\end{equation}

Now, we are ready to show \eqref{eq:frac12}.  
We designate some constants appearing in this proof.
Since $A_h$ has discrete maximal regularity on $J_\infty$ in $X_{h,q}$ uniformly for $h$,
there exists $C_\mathrm{DMR} > 0$ depending only on $p$, $q$, $\Omega$ satisfying
\begin{equation}
\| v_h \|_{Y_S} 
\le C_\mathrm{DMR} \left( 
\| g_{h,1} \|_{l^p_\tau(N_S;X_{h,q})} + \| x_h \|_{1-1/p, p} \right) ,
\label{eq:dmr-Ah-rev}
\end{equation}
for every $g_h = (g_h^n)_n \in l^p(N_S; X_{h,q})$ and $x_h \in S_h$, 
where $v_h = (v_h^n)_n$ is the solution of
\begin{equation*}
\begin{cases}
(D_\tau v_h)^n = A_h v_h^{n+1} + g_h^{n+1}, & n = 0, \dots, N_S - 1, \\
v_h^0 = x_h.
\end{cases}
\end{equation*}
In view of \eqref{eq:stability_fractional} and the Lipschitz continuity
 of $f$, we have 
\begin{equation*}
C_\mathrm{Lip}  =
\sup \left\{ \frac{\| (-A_h)^{-\alpha} ({F}_h(v_h) - {F}_h(w_h) ) \|_{h,q}}{\| v_h - w_h \|_{h,q}}
\lmid  \begin{array}{l}
h>0, \ v_h, w_h \in S_h, \\ 
v_h\ne w_h
\end{array}
 \right\}<\infty,
\end{equation*}
which is the Lipschitz constant of $(-A_h)^{-\alpha}\circ {F}_h$.
Finally, we set 
\begin{equation}
C_0 = C_\mathrm{DMR} C_\mathrm{Lip} |\Omega|^{1/q}, 
\end{equation}
where $|\Omega|$ denotes the $d$-dimensional Lebesgue measure.

Let ${e}_{j,h}$ ($j=1,2$) be the solution of
\begin{equation}
\begin{cases}
(D_\tau {e}_{j,h})^n = A_h {e}_{j,h}^{n+1} + {r}_{j,h}^n, & n = 0, \dots, N_T - 1, \\
{e}_{j,h}^0 = 0.
\end{cases} \label{eq:e_j,h-rev}
\end{equation}
It is apparent that ${e}_h = {e}_{1,h} + {e}_{2,h}$.
Moreover, for every $S < T_\infty$, one can obtain
\begin{equation}
 \| (-A_h)^{-\alpha} {e}_{2,h} \|_{Y_S} 
\le C(h^{2\alpha} + \tau) 
\label{eq:e_2,h-rev}
\end{equation}
by \eqref{eq:stability_fractional} and \eqref{eq:r2h}.

Next, it is necessary to derive an estimation for ${e}_{1,h}$.
Take $S < T$ arbitrarily.
Since ${e}_{1,h}$ is the solution of \eqref{eq:e_j,h-rev},
discrete maximal regularity \eqref{eq:dmr-Ah-rev} and
Lemma \ref{lem:trace-fractional} yield
\begin{align*}
&\| (-A_h)^{-\alpha} {e}_{1,h} \|_{Y_S}  \\
& \le C_\mathrm{DMR} 
\left( \sum_{n=0}^{N_S - 1} \| {F}_h({u}_h^n) - {F}_h(P_h {U}^n) \|_{h,q}^p \tau \right)^{1/p} \\
& \le C_\mathrm{DMR} \left[
  C_\mathrm{Lip} \left( \sum_{n=0}^{N_S - 1} \| {e}_{1,h}^n \|_{h,q}^p \tau \right)^{1/p} 
+ C_\mathrm{Lip} \left( \sum_{n=0}^{N_S - 1} \| {e}_{2,h}^n \|_{h,q}^p \tau \right)^{1/p} \right] \\
& \le \Cdmr C_\mathrm{Lip} |\Omega|^{1/q} S^{1/p} \max_{0\le n \le N_S-1} \| {e}_{1,h}^n \|_{L^\infty} + C(h^{2\alpha} + \tau) \\
& \le C_0 S^{1/p} \| (-A_h)^{-\alpha} {e}_{1,h} \|_{Y_S} + C(h^{2\alpha} + \tau).
\end{align*}
Consequently, taking $S = (2C_0)^{-p}$, we obtain
\begin{equation}
\| (-A_h)^{-\alpha} {e}_{h,1} \|_{Y_{T_1}} \le C (  h^{2\alpha} + \tau )
\label{eq:local_error-rev0}
\end{equation}
with $T_1 = (2C_0)^{-p}$. This, together with \eqref{eq:e_2,h-rev}, implies \eqref{eq:frac12}.

\noindent \textbf{Step 2. }
We prove \eqref{eq:frac11} for any $T\in (0,\infty)$. 
We denote the constants appearing in Lemma \ref{prop:trace-finite} by $C_\mathrm{tr}$, and set
\begin{equation*}
C_1 = C_0 C_\mathrm{tr}, \quad C_2 = C_\mathrm{DMR} C_\mathrm{tr}.
\end{equation*}
Then we show that 
\begin{equation}
\| (-A_h)^{-\alpha} {e}_{1,h}^{\cdot + N_S} \|_{Y_\sigma} 
\le C \left( \| (-A_h)^{-\alpha} {e}_{1,h} \|_{Y_S} +  h^{2\alpha} + \tau \right)
\label{eq:extension-rev}
\end{equation}
for all $S < T$ and $\sigma \le \min\{T_1 ,\ T-S \}$.
Take $S < T$ and $\sigma \le \min\{T_1 ,\ T-S \}$ arbitrarily, and set ${w}_{j,h}^n = {e}_{j,h}^{n+N_S}$ ($j=1,2$).
Then, ${w}_{1,h}$ satisfies
\begin{equation*}
\begin{cases}
(D_\tau {w}_{1,h})^n = A_h {w}_{1,h}^{n+1} + {F}_h({u}_h^{n+N_S}) - {F}_h(P_h {U}^{n+N_S}), & n = 0, \dots, N_{T-S}, \\
{w}_{1,h}^0 = {e}_{1,h}^{N_S}.
\end{cases}
\end{equation*}
Therefore, discrete maximal regularity \eqref{eq:dmr-Ah-rev}, Lemmas \ref{prop:trace-finite}, \ref{lem:trace-fractional}, and \eqref{eq:local_error-rev0} yield
\begin{align*}
&\| (-A_h)^{-\alpha} w_{1,h} \|_{Y_\sigma} \\
& \le C_\mathrm{DMR} \Bigg[ 
\left( \sum_{n=0}^{N_\sigma - 1} \left\| (-A_h)^{-\alpha}\left[{F}_h({u}_h^{n+N_S}) - {F}_h(P_h {U}^{n+N_S})\right] \right\|_{h,q}^p \tau \right)^{1/p} \\
& \hspace{45ex} 
+ \| (-A_h)^{-\alpha} {e}_{1,h}^{N_S} \|_{1-1/p, p} \Bigg] \\
& \le C_\mathrm{DMR} \Bigg[
  C_\mathrm{Lip} \left( \sum_{n=0}^{N_\sigma - 1} \| {w}_{1,h}^n \|_{h,q}^p \tau \right)^{1/p} 
+ C_\mathrm{Lip} \left( \sum_{n=0}^{N_\sigma - 1} \| {w}_{2,h}^n \|_{h,q}^p \tau \right)^{1/p} \\
& \hspace{45ex} 
+ C_\mathrm{tr} \| (-A_h)^{-\alpha} {e}_{1,h} \|_{Y_S} \Bigg] \\
& \le C_0 \sigma^{1/p} 
      \left( \| (-A_h)^{-\alpha} {w}_{1,h} \|_{Y_\sigma} + \| (-A_h)^{-\alpha} {e}_{1,h}^{N_S} \|_{1-1/p,p} \right)
    + C(h^{2\alpha} + \tau) \\
    & \hspace{45ex} 
    + C_\mathrm{DMR} C_\mathrm{tr} \| (-A_h)^{-\alpha} e_{1,h} \|_{Y_S} \\
& \le \frac{1}{2} \| (-A_h)^{-\alpha} {w}_{1,h} \|_{Y_\sigma} 
    + \left(\frac{C_\mathrm{tr}}{2} + C_2 \right) \| (-A_h)^{-\alpha} {e}_{1,h} \|_{Y_S}
    +C(h^{2\alpha} + \tau),
\end{align*}
since $\sigma \le T_1 = (2C_0)^{-p}$.
Therefore, we obtain \eqref{eq:extension-rev}.

Noting that $N_{S+\sigma} \le N_S + N_\sigma$, one obtains
\begin{equation*}
\| v_h \|_{Y_{S+\sigma}} \le \| v_h \|_{Y_S} + \| v_h^{\cdot + N_S} \|_{Y_\sigma}
\end{equation*}
for $v_h \in l^p(N_S + N_\sigma;S_h)$ and $S, \sigma >0$.
Therefore, we can inductively establish \eqref{eq:frac11} 
from \eqref{eq:local_error-rev0} and \eqref{eq:extension-rev}.
Now we can complete the proof owing to Lemma \ref{lem:trace-fractional}.
\qed\end{proof}

\medskip

Finally, we state the following proof.

 \begin{proof}[Proof of Theorems \ref{thm:sl} and \ref{thm:infty}]
Observe that
\[
 \|u^n_h-U^n\|_{L^q}\le \|e_h^n\|_{L^q}+\|P_h U^n-U^n\|_{L^q}\le\|e_h^n\|_{L^q}+Ch^2\|U^n\|_{W^{2,q}}
\]  
by Lemma \ref{lem:projections}.
Therefore, it suffices to prove
\[
\left( \sum_{n=1}^{N_T} \| {e}_h^n \|_{L^q}^p \tau \right)^{1/p}
\le C(h^2 + \tau)\quad\mbox{and}\quad 
\max_{0 \le n \le N_T} \| {e}_h^n \|_{L^\infty}
\le C(h^{2\alpha} + \tau)
\]
for $\alpha\in (0,\alpha_{p,q,d})$. 
To this end, let
\begin{equation*}
M= \| u \|_{L^\infty(\Omega \times (0, T))} 
   + \sup_{h > 0} \| P_h u \|_{L^\infty(\Omega \times (0, T))}
\end{equation*}
for the solution $u$ of \eqref{eq:semilinear} and $T \in (0,T_\infty)$.
It is apparent that $M$ is finite since the $L^2$-projection $P_h$ is
 stable in the $L^\infty$-norm (Lemma \ref{lem:projections}). We
 introduce 
\begin{equation*}
\tilde{f}(z) = \tilde{f}_M(z) = 
\begin{cases}
f(z), & |z| \le M, \\
f\left( M \frac{z}{|z|} \right), & |z| > M.
\end{cases}
\end{equation*}
Then, $\tilde{f}$ is a globally Lipschitz continuous function. 
We consider the problems \eqref{eq:semilinear} and
 \eqref{eq:approx-semilinear} with replacement of $f$ by $\tilde{f}$, 
and denote the corresponding solutions by $\tilde{u}$ and $\tilde{u}_h$,
 respectively. Moreover, we consider the error $\tilde{e}_h =
 (\tilde{e}_h^n)_{n=0}^{N_T}\in S_h^{N_T+1}$, where $\tilde{e}_h^n =
 \tilde{u}_h^n - P_h \tilde{u}(n\tau)$. 
 
In view of Lemma \ref{thm:global_error-truncated},
the following error estimate holds:
\begin{equation*}
\| (-A_h)^{-\alpha}\tilde{e}_h \|_{Y_{T}}
\le C(h^{2\alpha} + \tau)
\end{equation*}
for any $\alpha\in [0,1]$. By setting $\alpha=1$, we obtain
\begin{equation}
\label{eq:e100}
\left( \sum_{n=1}^{N_T} \| \tilde{e}_h^n \|_{L^q}^p \tau \right)^{1/p}
\le C(h^2 + \tau).
\end{equation}
Applying Lemma \ref{lem:trace-fractional}, we can deduce
\begin{equation}
\label{eq:e101}
\max_{0 \le n \le N_T} \| \tilde{e}_h^n \|_{L^\infty}
\le C(h^{2\alpha} + \tau),
\end{equation}
for $\alpha\in (0,\alpha_{p,q,d})$. 
 
At this stage, we have $\tilde{u} = u$ by the unique solvability of \eqref{eq:semilinear}. 
Indeed, $\| u \|_{L^\infty(\Omega \times (0, T))}
 \le M$ implies $\tilde{f}(u(x,t)) = f(u(x,t))$ for every $(x,t) \in
 \Omega \times (0,T)$. 
Moreover, according to \eqref{eq:e101}, we estimate as 
\begin{align*}
\max_{0 \le n \le N_T} \| \tilde{u}_h^n \|_{L^\infty} 
&\le \max_{0 \le n \le N_T} \| \tilde{e}_h^n \|_{L^\infty} 
  + \max_{0 \le n \le N_T} \| P_h U^n \|_{L^\infty} \\
&\le C(h^{2\alpha} + \tau) + \sup_{h > 0} \| P_h u
 \|_{L^\infty(\Omega 
\times (0, T))}
\end{align*}
for $\alpha \in (0, \alpha_{p,q,d})$.
Therefore, there exist $h_0 > 0$ and $\tau_0 > 0$ such that
\begin{equation*}
\max_{0 \le n \le N_T} \| \tilde{u}_h^n \|_{L^\infty} \le M,
\qquad \forall h \le h_0, \quad \forall \tau \le \tau_0,
\end{equation*}
which implies that $\tilde{f}(\tilde{u}_h^n) = f(\tilde{u}_h)$.
Again, the unique solvability of \eqref{eq:approx-semilinear} yields
 $\tilde{u}_h = u_h$ for $h \le h_0$ and $\tau \le \tau_0$.
 Hence we can replace $\tilde{e}_h^n$ by $e_h^n$ in \eqref{eq:e100} and
 \eqref{eq:e101}, which completes the proof of Theorems \ref{thm:sl} and
 \ref{thm:infty}. 
\qed\end{proof}

\begin{remark}
\label{rem:bound}
Based on the same assumptions of Lemma \ref{thm:global_error-truncated},
the solution $u_h = (u_h^n)_n$ of \eqref{eq:approx-semilinear} admits
\begin{equation*}
\| u_h^n \|_{L^\infty} 
\le C \| u_0 \|_{L^\infty} e^{TL}. 
\end{equation*}
We briefly show this inequality. 
Let $T_{h, \tau} = (I - \tau A_h)^{-1}$ and $F_h = K_h^{-1} \circ P_h \circ f$.
Then
the first equation of \eqref{eq:approx-semilinear} is equivalent to
\begin{equation*}
u_h^n = T_{h, \tau}^n P_h u_0 
+ \tau \sum_{n=0}^{n-1} T_{h, \tau}^{n-j} F_h(u_h^j) 
\end{equation*}
for $n \in \bN$.
It follows from Lemma \ref{prop:disc-Laplacian2} and the Hille-Yosida theorem that
\[
 \| \lambda^n R(\lambda; A_h)^n \|_{\cL(X_{h,\infty})} \le 1
\]
for all $n \in \bN$ and $\lambda > 0$.
Particularly, we have
\begin{equation*}
\| T_{h, \tau}^n \|_{\cL(X_{h,\infty})} \le 1 \label{eq:disc-semigroup}
\end{equation*}
for all $n \in \bN$.
Moreover, one can find $L > 0$, independent of $h$, such that
\begin{equation*}
\| F_h(v_h) - F_h(w_h) \|_{L^\infty} \le L \| v_h - w_h \|_{L^\infty},
\quad \forall h>0
\end{equation*}
for $v_h, w_h \in S_h$ by the globally Lipschitz continuity of $f$ 
and Lemmas \ref{lem:projections} and \ref{lem:lumping}.
Then, we obtain
\begin{equation*}
\| u_h^n \|_{L^\infty} 
\le C \| u_0 \|_{L^\infty} + \tau \sum_{j=0}^{n-1} L \| u_h^j \|_{L^\infty}.
\end{equation*}
Therefore, the well-known discrete Gronwall lemma \cite[Lemma 2.3]{Qua14} implies
\begin{equation*}
\| u_h^n \|_{L^\infty} 
\le C \| u_0 \|_{L^\infty} e^{n \tau L}
\le C \| u_0 \|_{L^\infty} e^{T L}
\end{equation*}
for $n\in\mathbb{N}$. 
\end{remark}

\appendix
\section{$H^\infty$-functional calculus}
\label{section:functional_calculus}

In this appendix, we review the notion of $H^\infty$-functional calculus.
We present only the definition and the theorem used for this study.
For relevant details, one can refer to \cite{CowDMY96} and references therein.
Throughout this section, $X$ denotes a Banach space and $\Sigma_\omega$
is the sector defined as \eqref{eq:sec}.

\begin{definition}
For $\omega \in (0, \pi)$, a linear operator $A$ is \emph{of type
 ${\omega}$} if and only if
\begin{enumerate}
\item $A$ is closed and densely defined,
\item $\sigma(A) \subset \overline{\Sigma}_\omega$,
\item for each $\theta \in (\omega, \pi]$, there exists $C_\theta > 0$
      satisfying $\| R(z; A) \|_{\cL(X)} \le {C_\theta}/{|z|}$ 
for all $z \in \bC \setminus \Sigma_\theta$ with $z \ne 0$.
\end{enumerate}
\end{definition}

Every positive type operator is of type $\omega$ for some $\omega \in (0,\pi/2)$.
Now, we define the functions of operators of type $\omega$.
For $\theta \in (0, \pi)$, we set
\begin{equation*}
\Psi(\Sigma_\theta) = \bigcup_{C \ge 0,\ s > 0}
\left\{ f \in H^\infty(\Sigma_\theta) \lmid
|f(z)| \le C \frac{|z|^s}{1 + |z|^{2s}}, \quad \forall z \in \Sigma_\theta \right\},
\end{equation*}
where $H^\infty(\Sigma_\theta)$ is defined as \eqref{eq:H-infty}.
Let $\Gamma_\vartheta = \{ - t e^{-i \vartheta} \mid - \infty < t < 0 \} 
\cup \{ t e^{i \vartheta} \mid 0 \le t < \infty \}$ be a contour
for $\vartheta \in (0, \pi)$, which is oriented so that the imaginary parts increase along $\Gamma_\vartheta$.

\begin{definition}
Let $A \in \cP(X;K)$ for some $K \ge 1$.
Assume that $A$ is of type $\omega$ and let $\omega < \vartheta < \theta$.
Then, we define the function of operator $A$ as
\begin{equation*}
\psi(A) = \frac{1}{2 \pi i} \int_{\Gamma_\vartheta} (A - zI)^{-1} \psi(z) dz
\end{equation*}
for $\psi \in \Psi(\Sigma_\theta)$.
We also define $m(A)$ for $m \in H^\infty(\Sigma_\theta)$ as
\begin{equation*}
m(A) = \psi_0(A)^{-1}(\psi_0 m)(A),
\end{equation*}
where $\psi_0(z) = z/(1+z)^2$.
\end{definition}

In the case in which $X$ is a Hilbert space and the operator $A$ is positive type and self-adjoint,
we can define $m(A)$ for $m \in L^\infty(\bR^+)$ by the spectral decomposition.
It is natural to wonder whether these two definitions coincide. The answer is as follows.
See for example \cite[Theorem 4.6.7 in Chapter III]{Ama95} for the proof.

\begin{lemma}\label{thm:functional_Hilbert}
Let $X$ be a Hilbert space and $A \in \cP(X)$.
Assume that $A$ is self-adjoint and let $E_A(\lambda)$ be its spectral
 decomposition.
 Then, we have
\begin{equation*}
m(A) = \int_0^\infty m(\lambda) dE_A(\lambda)
\end{equation*}
for $m \in H^\infty(\Sigma_\theta)$.
\end{lemma}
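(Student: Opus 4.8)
The plan is to verify the identity first on the regularizing class $\Psi(\Sigma_\theta)$, for which the contour-integral definition $\psi(A)=\frac{1}{2\pi i}\int_{\Gamma_\vartheta}(A-zI)^{-1}\psi(z)\,dz$ converges absolutely, and then to bootstrap to an arbitrary $m\in H^\infty(\Sigma_\theta)$ through the regularization $m(A)=\psi_0(A)^{-1}(\psi_0 m)(A)$ built into the definition. Since $A\in\cP(X)$ is self-adjoint and the bound $\|(1+\lambda)R(\lambda;A)\|\le K$ at $\lambda=0$ gives $\|A^{-1}\|\le K$, we have $0\in\rho(A)$, so the spectral measure $E_A$ is supported in some interval $[c,\infty)$ with $c>0$; this positivity of the spectrum is convenient in the estimates below.

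First I would treat $\psi\in\Psi(\Sigma_\theta)$. Writing the resolvent via the spectral theorem as $(A-zI)^{-1}=\int_0^\infty(\lambda-z)^{-1}\,dE_A(\lambda)$ and substituting into the contour-integral definition, the decay $|\psi(z)|\le C|z|^s/(1+|z|^{2s})$ together with $\|(A-zI)^{-1}\|\le C_\theta/|z|$ off the sector makes the iterated integral absolutely convergent. Fubini's theorem then pulls the spectral integral outside, yielding
\begin{equation*}
\psi(A)=\int_0^\infty\left(\frac{1}{2\pi i}\int_{\Gamma_\vartheta}\frac{\psi(z)}{\lambda-z}\,dz\right)dE_A(\lambda).
\end{equation*}
For fixed $\lambda\in(0,\infty)\subset\Sigma_\vartheta$ the inner integrand is holomorphic on $\Sigma_\theta$ except for the simple pole at $z=\lambda$, and the decay of $\psi$ lets me close $\Gamma_\vartheta$ with a vanishing arc at infinity; the residue theorem (with the stated orientation of $\Gamma_\vartheta$) evaluates the inner integral to $\psi(\lambda)$. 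Hence $\psi(A)=\int_0^\infty\psi(\lambda)\,dE_A(\lambda)$, which is the claim for regularizing $\psi$.

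To reach general $m$, I would note that $\psi_0(z)=z/(1+z)^2\in\Psi(\Sigma_\theta)$ (with exponent $s=1$) and that $\psi_0 m\in\Psi(\Sigma_\theta)$ whenever $m\in H^\infty(\Sigma_\theta)$, so the previous step applies to both and gives $\psi_0(A)=\int_0^\infty\psi_0(\lambda)\,dE_A(\lambda)$ and $(\psi_0 m)(A)=\int_0^\infty\psi_0(\lambda)m(\lambda)\,dE_A(\lambda)$. Because $\psi_0(\lambda)>0$ on the spectrum, $\psi_0(A)$ is injective with (generally unbounded) inverse $\int_0^\infty\psi_0(\lambda)^{-1}\,dE_A(\lambda)$, and the range of $(\psi_0 m)(A)$ lies in its domain since $\psi_0^{-1}\cdot(\psi_0 m)=m$ is bounded. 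Multiplicativity of the spectral calculus then gives
\begin{equation*}
m(A)=\psi_0(A)^{-1}(\psi_0 m)(A)=\int_0^\infty\psi_0(\lambda)^{-1}\psi_0(\lambda)m(\lambda)\,dE_A(\lambda)=\int_0^\infty m(\lambda)\,dE_A(\lambda),
\end{equation*}
as desired.

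I expect the main obstacle to be the rigorous justification of the Fubini interchange and the contour evaluation: one must confirm that the estimates on $\psi$ and on the resolvent force absolute convergence of the iterated spectral/operator-valued integral, and that the arc contribution vanishes when $\Gamma_\vartheta$ is deformed to collect the residue at $z=\lambda$, with the correct sign dictated by the orientation of $\Gamma_\vartheta$ and by the convention $(A-zI)^{-1}=-R(z;A)$. The regularization step is then essentially bookkeeping, the only subtlety being that $\psi_0(A)^{-1}$ is unbounded, so one checks that the composition is formed on the correct domain before invoking multiplicativity.
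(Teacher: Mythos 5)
Your proof is correct, but note that the paper contains no proof of this lemma to compare against: it is stated with the pointer ``see \cite[Theorem 4.6.7 in Chapter III]{Ama95}'', so your argument supplies what the paper delegates to a reference. Your two-step scheme is the canonical compatibility proof and is, in substance, the standard one: first verify the identity on the regularizing class $\Psi(\Sigma_\theta)$, then extend by the $\psi_0$-regularization and multiplicativity of the Borel functional calculus. The steps you flag as needing care do go through. For the Fubini step, the bound $|\lambda - z| \ge \max(\lambda, |z|)\sin\vartheta$ for $\lambda > 0$ and $z \in \Gamma_\vartheta$, combined with $|\psi(z)| \le C|z|^s/(1+|z|^{2s})$, gives a majorant that is integrable over $\Gamma_\vartheta$ uniformly in $\lambda \in \sigma(A) \subset [c,\infty)$, while the complex measure $\langle E_A(\cdot)x, y\rangle$ has total variation at most $\|x\|\,\|y\|$ by Cauchy--Schwarz, so the interchange is legitimate. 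For the residue evaluation, the orientation of $\Gamma_\vartheta$ (increasing imaginary part) makes the contour closed through the sector \emph{negatively} oriented, and the residue of $\psi(z)/(\lambda - z)$ at $z = \lambda$ is $-\psi(\lambda)$; the two signs cancel and the inner integral is indeed $\psi(\lambda)$, with the large arc vanishing like $R^{-s}$. Your observation that $0 \in \rho(A)$ (from the resolvent bound at $\lambda = 0$ in the definition of $\cP(X)$) is exactly what ensures $E_A$ is supported in $[c,\infty)$ with $c > 0$, so that $m$, defined only on $\Sigma_\theta$, is $E_A$-essentially bounded and $\psi_0^{-1}$ causes no difficulty near the origin. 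Finally, the domain bookkeeping in the regularization step is as you describe: for $x \in X$ one computes $\int_0^\infty |\psi_0(\lambda)^{-1}\psi_0(\lambda) m(\lambda)|^2\, d\|E_A(\lambda)(\cdot)x\|^2 = \int_0^\infty |m(\lambda)|^2\, d\|E_A(\lambda)x\|^2 \le \|m\|_{L^\infty}^2\|x\|^2 < \infty$, so the range of $(\psi_0 m)(A)$ lies in $D\bigl(\psi_0(A)^{-1}\bigr)$ and multiplicativity of the spectral calculus yields $m(A) = \int_0^\infty m(\lambda)\, dE_A(\lambda)$ as an everywhere-defined bounded operator.
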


\section{Remark on the scheme (17)} 
\label{sec:scheme}

An alternate of the scheme \eqref{eq:disc-heat} is given as 
\begin{equation}
 (D_\tau u_h )^n = A_h u_h^{n+\theta} + P_h G^{n+\theta},
\label{eq:disc-heat2}
\end{equation}
or, equivalently,
\begin{equation*}
((D_\tau u_h)^n, v_h)_h = -(\nabla u_h^{n+\theta}, \nabla v_h)_{L^2} 
+ (P_h G^{n+\theta}, v_h)_h, \quad \forall v_h \in S_h.
\end{equation*}
If taking \eqref{eq:disc-heat2} 
instead of the first equation of \eqref{eq:disc-heat}, 
we can only obtain the following error estimate: 
\begin{equation}
\left( \sum_{n=0}^{N_T-1} \| u_h^{n+\theta} - U^{n+\theta} \|_{L^q}^p \tau \right)^{1/p} \le C( h + \tau^{j_\theta}),
\label{eq:error_estimate2}
\end{equation}
since Lemma \ref{lem:Kh} is not available.
This shortcoming is confirmed by numerical examples as
follows.

Let us consider the following two-dimensional heat equation in $\Omega = (0,1)^2$:
\begin{equation}
\begin{cases}
\dfrac{\partial u}{\partial t}(x,y,t) = \Delta u(x,y,t) + g(x,y,t),
 & (x,y) \in \Omega, ~ 0<t\le T, \\
u(x,y,t) = 0, & (x,y) \in \partial\Omega, ~ 0<t\le T, \\
u(x,y,0) = x^{5/2} (1-x)^{5/2} y (1-y)
& (x,y) \in \Omega,
\end{cases}
\label{eq:example}
\end{equation}
where $T>0$ and
\begin{equation}
g(x,y,t) = x^{1/2} (1-x)^{1/2} e^t \left[ 
x^2(1-x)^2 y(1-y) - \frac{5}{4}(3-4x)(1-4x)y(1-y) + 2
 \right].
\end{equation}
The exact solution is $u(x,y,t) = x^{5/2} (1-x)^{5/2} y (1-y) e^{t}$.
We approximate the equation \eqref{eq:example} by the schemes \eqref{eq:disc-heat} and \eqref{eq:disc-heat2} with meshes such as Figure \ref{fig:meshes}, 
which satisfies the conditions (H1) and (H2).

\begin{figure}
\centering
\includegraphics[width = 0.3\textwidth]{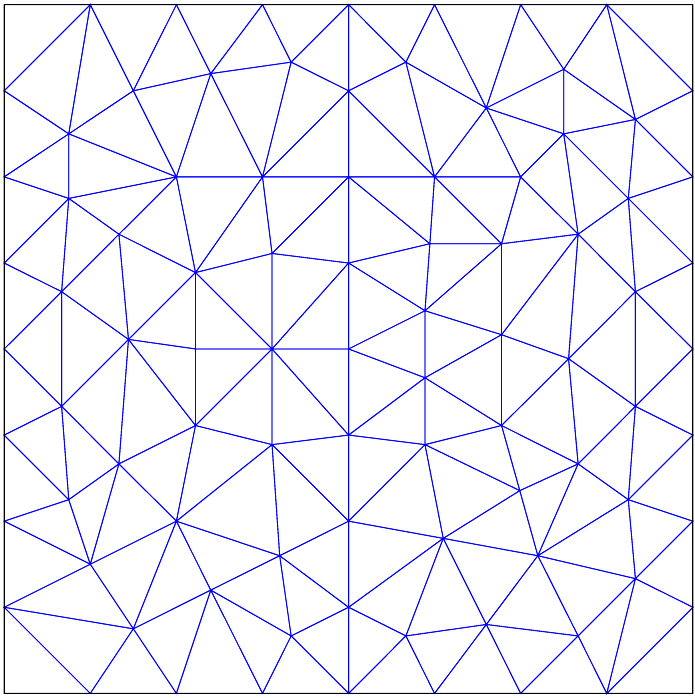}
\hspace{0.02\textwidth}
\includegraphics[width = 0.3\textwidth]{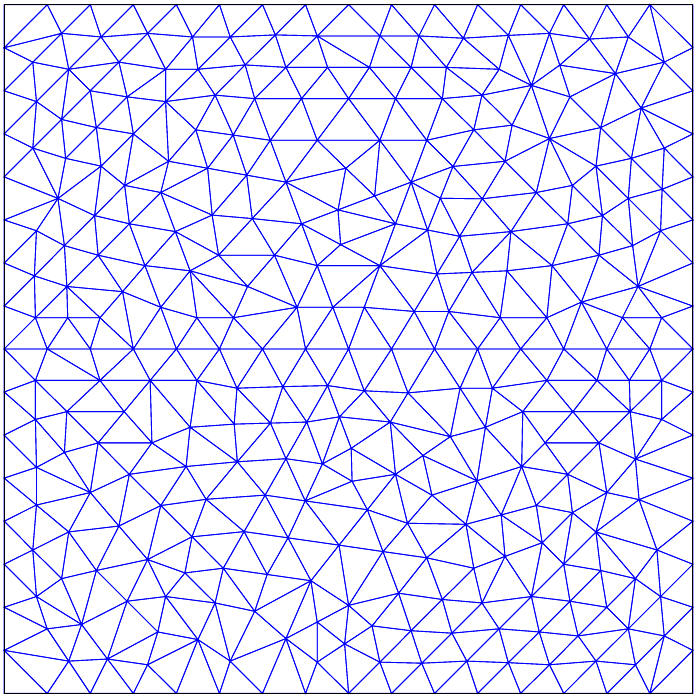}
\hspace{0.02\textwidth}
\includegraphics[width = 0.3\textwidth]{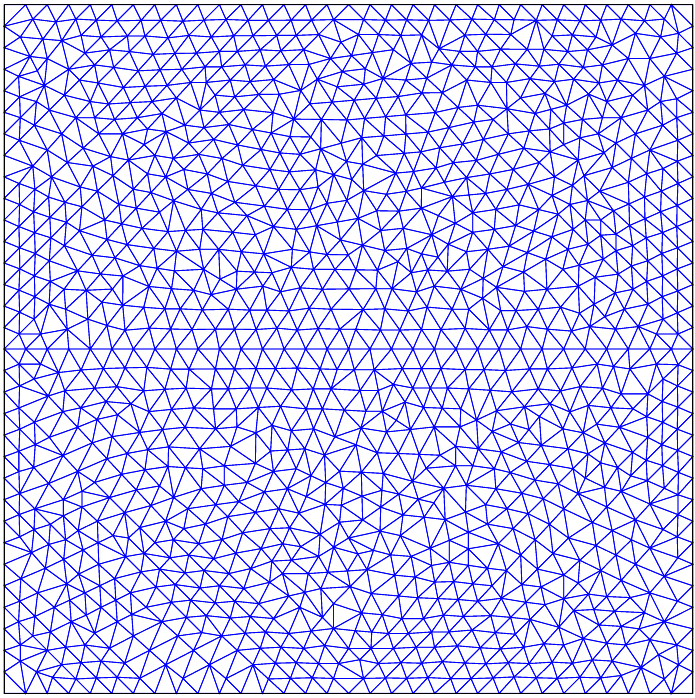}
\caption{Meshes.}
\label{fig:meshes}
\end{figure}

We consider the case for $\theta = 0$, $1/2$ and $1$.
When $\theta = 1/2$ and $\theta = 1$, we take $\tau$ as $\tau = h$ or $\tau = h^2$.
In the case for $\theta = 0$, $\tau$ should be chosen to satisfy the condition \eqref{eq:sufficient-NR}.
We take $\varepsilon = \sin\theta_q$ and 
\begin{equation}
\tau = \frac{\sin \theta_q}{(1 -2 \theta)(d+1)^2} \kappa_h^2 ,
\end{equation}
so that $\tau$ satisfies $\tau = O(h^2)$ by the inverse assumption.
We set the parameters as follows:
\begin{itemize}
\item $(p,q) = (4,2)$,
\item $T = 0.1$ ($\theta = 0$) or $T = 0.5$ ($\theta = 1/2, 1$).
\end{itemize}

Behavior of the errors is shown in Figure~\ref{fig:results}.
In these figures, cases 1--5 mean the following cases:
\begin{equation}
\begin{array}{ll}
\text{\textbf{case 1}: $\theta = 0$ ($\tau = O(h^2)$),} & \\[1ex]
\text{\textbf{case 2}: $\theta = 1/2$, $\tau = h$,} &
\text{\textbf{case 3}: $\theta = 1/2$, $\tau = h^2$,} \\[1ex]
\text{\textbf{case 4}: $\theta = 1$, $\tau = h$,} &
\text{\textbf{case 5}: $\theta = 1$, $\tau = h^2$.}
\end{array}
\end{equation}
Let us consider the order of the error.
In case 4 with the scheme \eqref{eq:disc-heat}, for example, from Theorem \ref{thm:error_linear} and $\tau = h$, we have
\begin{equation}
(\text{The error}) \le C(h^2 + \tau) \le Ch
\end{equation}
if $h$ is sufficiently small.
We summarize these theoretical orders and results in Table~\ref{tab:order}.
When we use the scheme \eqref{eq:disc-heat}, the orders correspond to the theoretical bounds.
In the case for the scheme \eqref{eq:disc-heat2}, all orders are expected to be $O(h)$.
However, except for case 4, the orders are apparently $O(h^2)$.
It is of course no problem since the error estimate \eqref{eq:error_estimate2} is just an upper bound.
In case 4, it also seems that the order is $O(h^2)$.
However, when we compute \eqref{eq:disc-heat2} in case 4 for smaller $h$, 
the error decreases more slowly. It seems to approach $O(h^\alpha)$ for some $\alpha \in [1,2)$: Figure~\ref{fig:results_case4}.
We leave more rigorous error estimates for the scheme \eqref{eq:disc-heat2} as a subject for future work.

\begin{figure}
\centering
\begin{minipage}{.49\textwidth}
\centering\subcaptionbox{Scheme \eqref{eq:disc-heat} \label{fig:result1}}{
\includegraphics[width=0.95\textwidth]{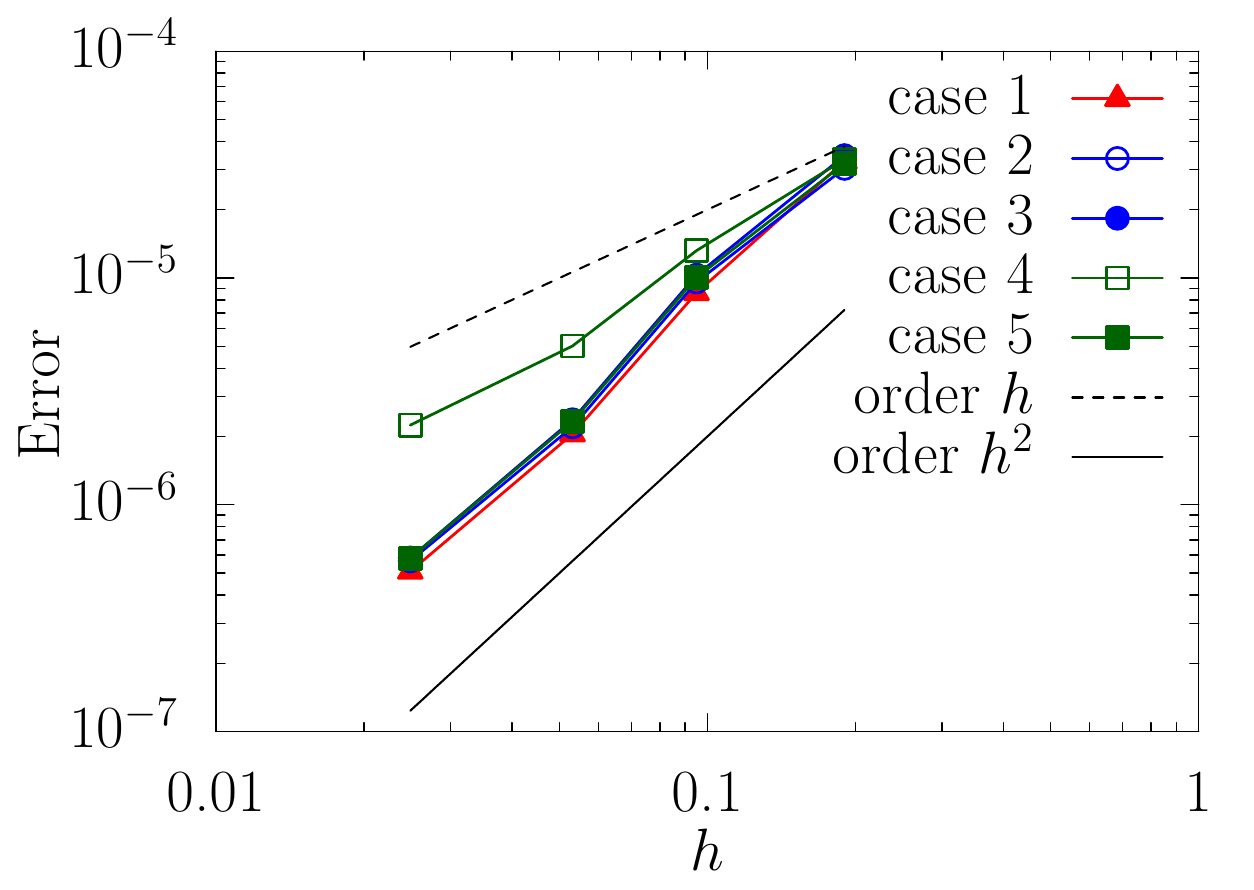}
}
\end{minipage}
\begin{minipage}{.49\textwidth}
\centering\subcaptionbox{Scheme \eqref{eq:disc-heat2} \label{fig:result2}}{
\includegraphics[width=0.95\textwidth]{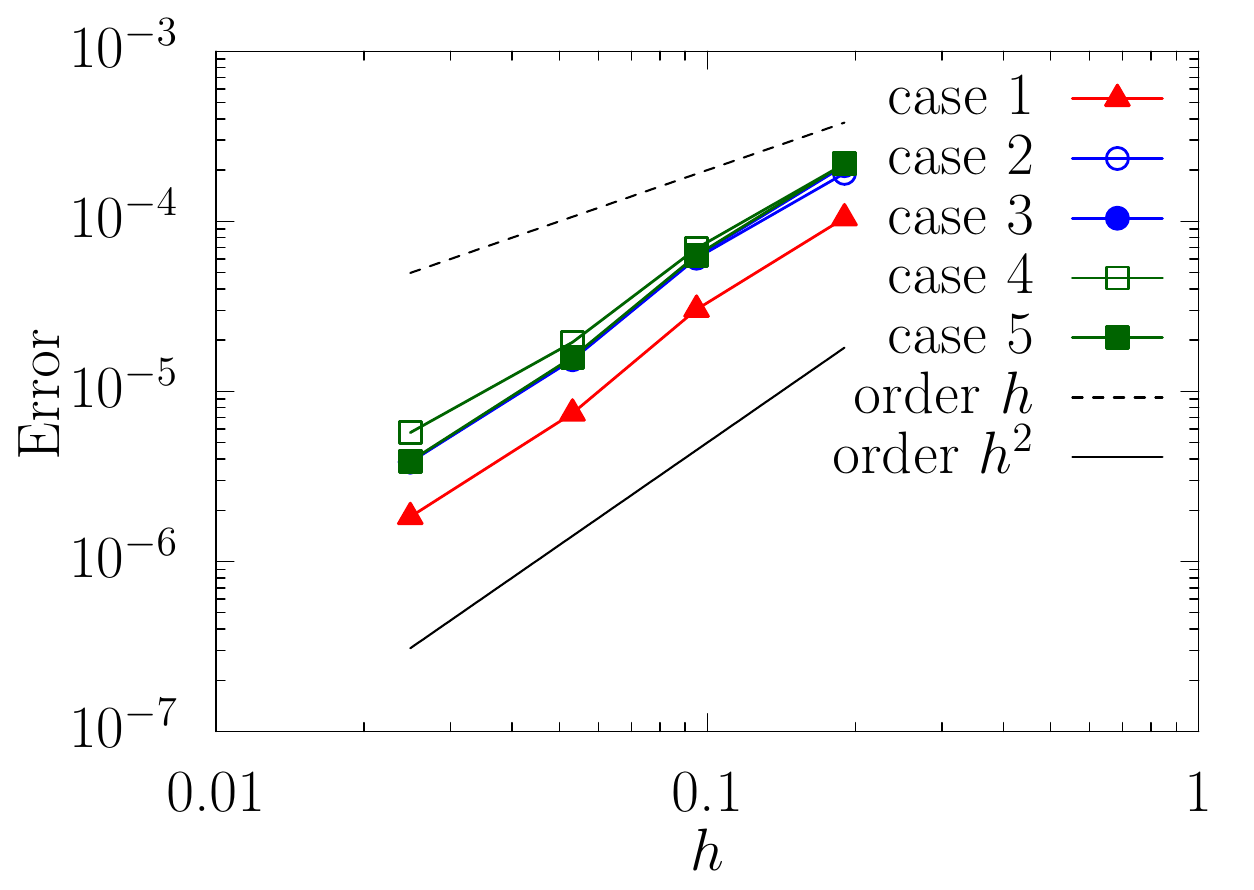}
}
\end{minipage}
\caption{Behavior of $L^4$-$L^2$-errors.}
\label{fig:results}
\end{figure}

\begin{table}
\centering
\begin{minipage}{.49\textwidth}
\centering
\subcaptionbox{Scheme \eqref{eq:disc-heat} \label{tab:order1}}{
\begin{tabular}{|c|c|c|c|}\hline
\multicolumn{2}{|c|}{conditions} & bounds & results \\ \hline
$\theta = 0$ & $\tau \propto h^2$  & $O(h^2)$     & $O(h^2)$ \\ \hline
\multirow{2}{*}{\vspace*{-1.5ex}$\theta = 1/2$} & $\tau = h$   & $O(h^2)$ & $O(h^2)$ \\ \cline{2-4}
                                & $\tau = h^2$ & $O(h^2)$ & $O(h^2)$ \\ \hline
\multirow{2}{*}{\vspace*{-1.5ex}$\theta = 1$}   & $\tau = h$   & $O(h)$   & $O(h)$ \\ \cline{2-4}
                                & $\tau = h^2$ & $O(h^2)$ & $O(h^2)$ \\ \hline
\end{tabular}
}
\end{minipage}
\begin{minipage}{.49\textwidth}
\centering
\subcaptionbox{Scheme \eqref{eq:disc-heat2} \label{tab:order2}}{
\begin{tabular}{|c|c|c|c|} \hline
\multicolumn{2}{|c|}{conditions} & bounds & results \\ \hline
$\theta = 0$ & $\tau \propto h^2$  & $O(h)$     & $O(h^2)$ \\ \hline
\multirow{2}{*}{\vspace*{-1.5ex}$\theta = 1/2$} & $\tau = h$   & $O(h)$ & $O(h^2)$ \\ \cline{2-4}
                                & $\tau = h^2$ & $O(h)$ & $O(h^2)$ \\ \hline
\multirow{2}{*}{\vspace*{-1.5ex}$\theta = 1$}   & $\tau = h$   & $O(h)$   & ? \\ \cline{2-4}
                                & $\tau = h^2$ & $O(h)$ & $O(h^2)$ \\ \hline
\end{tabular}
}
\end{minipage}
\caption{The convergence rates: theoretical bounds and results.}
\label{tab:order}
\end{table}

\begin{figure}[t]
\centering
\includegraphics[width = 0.4655\textwidth]{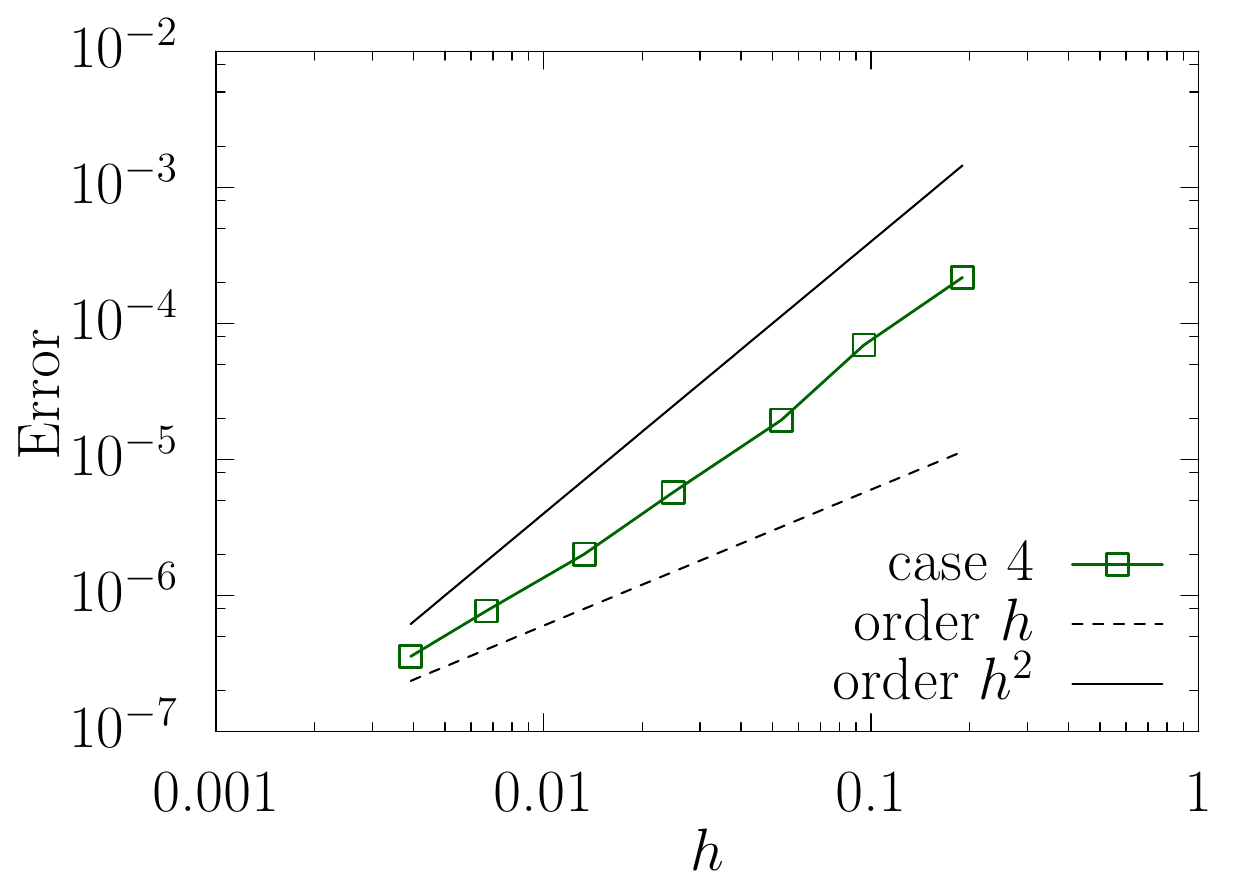}
\caption{Behavior of $L^4$-$L^2$-errors in case 4 with scheme
(17) 
for smaller $h$.}
\label{fig:results_case4}
\end{figure}

\begin{acknowledgements}
The first author was supported by the Program for Leading Graduate Schools, MEXT,
Japan and JSPS KAKENHI (15J07471).
The second author was supported by JST, CREST, and JSPS KAKENHI (15H03635, 15K13454). 
\end{acknowledgements}

\bibliographystyle{plain}

\begin{thebibliography}{10}

\bibitem{AdaF03}
R.~A. Adams and J.~F. Fournier.
\newblock {\em Sobolev spaces}.
\newblock Elsevier/Academic Press, Amsterdam, second edition, 2003.

\bibitem{Ama95}
H.~Amann.
\newblock {\em Linear and quasilinear parabolic problems. {V}ol.\ {I}:\
  {A}bstract linear theory}.
\newblock Birkh\"auser Boston, Inc., Boston, MA, 1995.

\bibitem{Ama05}
H.~Amann.
\newblock Quasilinear parabolic problems via maximal regularity.
\newblock {\em Adv.\ Differential Equations}, 10(10):1081--1110, 2005.

\bibitem{AshS94}
A.~Ashyralyev and P.~E. Sobolevski{\u\i}.
\newblock {\em Well-posedness of parabolic difference equations.}
\newblock Birkh\"auser Verlag, Basel, 1994.

\bibitem{BerK88}
M.~Berger and R.~V. Kohn.
\newblock A rescaling algorithm for the numerical calculation of blowing-up
  solutions.
\newblock {\em Comm. Pure Appl. Math.}, 41(6):841--863, 1988.

\bibitem{Blu01}
S.~Blunck.
\newblock Maximal regularity of discrete and continuous time evolution
  equations.
\newblock {\em Studia Math.}, 146(2):157--176, 2001.

\bibitem{BreS08}
S.~C. Brenner and L.~R. Scott.
\newblock {\em The mathematical theory of finite element methods}.
\newblock Springer, New York, third edition, 2008.

\bibitem{Bre11}
H.~Brezis.
\newblock {\em Functional analysis, {S}obolev spaces and partial differential
  equations}.
\newblock Springer, New York, 2011.

\bibitem{Cia78}
P.~G. Ciarlet.
\newblock {\em The finite element method for elliptic problems}.
\newblock North-Holland Publishing Co., Amsterdam-New York-Oxford, 1978.

\bibitem{CoiW76}
R.~R. Coifman and G.~Weiss.
\newblock {\em Transference methods in analysis}.
\newblock American Mathematical Society, Providence, R.I., 1976.

\bibitem{CowDMY96}
M.~Cowling, I.~Doust, A.~McIntosh, and A.~Yagi.
\newblock Banach space operators with a bounded {$H^\infty$} functional
  calculus.
\newblock {\em J. Austral.\ Math.\ Soc.\ Ser.\ A}, 60(1):51--89, 1996.

\bibitem{CroT87}
M.~Crouzeix and V.~Thom{\'e}e.
\newblock The stability in {$L_p$} and {$W^1_p$} of the {$L_2$}-projection onto
  finite element function spaces.
\newblock {\em Math.\ Comp.}, 48(178):521--532, 1987.

\bibitem{CroT01}
M.~Crouzeix and V.~Thom{\'e}e.
\newblock Resolvent estimates in {$l_p$} for discrete {L}aplacians on irregular
  meshes and maximum-norm stability of parabolic finite difference schemes.
\newblock {\em Comput.\ Methods Appl.\ Math.}, 1(1):3--17, 2001.

\bibitem{DenHP03}
R.~Denk, M.~Hieber, and J.~Pr{\"u}ss.
\newblock {$\mathcal{R}$}-boundedness, {F}ourier multipliers and problems of
  elliptic and parabolic type.
\newblock {\em Mem. Amer. Math. Soc.}, 166(788):viii+114, 2003.

\bibitem{Dor93}
G.~Dore.
\newblock {$L^p$} regularity for abstract differential equations.
\newblock In {\em Functional analysis and related topics, 1991 ({K}yoto)},
  volume 1540 of {\em Lecture Notes in Math.}, pages 25--38. Springer-Verlag,
  Berlin, 1993.

\bibitem{DorV87}
G.~Dore and A.~Venni.
\newblock On the closedness of the sum of two closed operators.
\newblock {\em Math.\ Z.}, 196(2):189--201, 1987.

\bibitem{Duo90}
X.~T. Duong.
\newblock {$H_\infty$} functional calculus of second order elliptic partial
  differential operators on {$L^p$} spaces.
\newblock In {\em Miniconference on {O}perators in {A}nalysis ({S}ydney,
  1989)}, volume~24 of {\em Proc.\ Centre Math.\ Anal.\ Austral.\ Nat.\ Univ.},
  pages 91--102. Austral.\ Nat.\ Univ., Canberra, 1990.

\bibitem{EriJL98}
K.~Eriksson, C.~Johnson, and S.~Larsson.
\newblock Adaptive finite element methods for parabolic problems. {VI}.
  {A}nalytic semigroups.
\newblock {\em SIAM J. Numer. Anal.}, 35(4):1315--1325 (electronic), 1998.

\bibitem{Fuj73}
H.~Fujii.
\newblock Some remarks on finite element analysis of time-dependent field
  problems.
\newblock In {\em Theory and practice in finite element structural analysis:
  proceedings of the 1973 Tokyo Seminar on Finite Element Analysis}, pages
  91--106. University of Tokyo Press, Tokyo, 1973.

\bibitem{FujSS01}
H.~Fujita, N.~Saito, and T.~Suzuki.
\newblock {\em Operator theory and numerical methods}.
\newblock North-Holland Publishing Co., Amsterdam, 2001.

\bibitem{GavM05}
I.~P. Gavrilyuk and V.~L. Makarov.
\newblock Exponentially convergent algorithms for the operator exponential with
  applications to inhomogeneous problems in {B}anach spaces.
\newblock {\em SIAM J. Numer. Anal.}, 43(5):2144--2171, 2005.

\bibitem{Gei06}
M.~Geissert.
\newblock Discrete maximal {$L_p$} regularity for finite element operators.
\newblock {\em SIAM J. Numer. Anal.}, 44(2):677--698 (electronic), 2006.

\bibitem{Gei07}
M.~Geissert.
\newblock Applications of discrete maximal {$L_p$} regularity for finite
  element operators.
\newblock {\em Numer. Math.}, 108(1):121--149, 2007.

\bibitem{Gri85}
P.~Grisvard.
\newblock {\em Elliptic problems in nonsmooth domains}.
\newblock Pitman (Advanced Publishing Program), Boston, MA, 1985.

\bibitem{Haa06}
M.~Haase.
\newblock {\em The functional calculus for sectorial operators}.
\newblock Birkh\"auser Verlag, Basel, 2006.

\bibitem{Han02}
A.~Hansbo.
\newblock Strong stability and non-smooth data error estimates for
  discretizations of linear parabolic problems.
\newblock {\em BIT}, 42(2):351--379, 2002.

\bibitem{HarLP52}
G.~H. Hardy, J.~E. Littlewood, and G.~P{\'o}lya.
\newblock {\em Inequalities}.
\newblock Cambridge University Press, Cambridge, second edition, 1952.

\bibitem{DouDW75}
T.~Dupont J.~Douglas~Jr. and L.~Wahlbin.
\newblock The stability in {$L^{q}$} of the {$L^{2}$}-projection into finite
  element function spaces.
\newblock {\em Numer.\ Math.}, 23:193--197, 1975.

\bibitem{Kem15}
T.~Kemmochi.
\newblock Discrete maximal regularity for abstract {C}auchy problems.
\newblock {\em UTMS Preprint Series 2015-6}, 2015.

\bibitem{KnaA03}
P.~Knabner and L.~Angermann.
\newblock {\em Numerical methods for elliptic and parabolic partial
  differential equations}.
\newblock Springer-Verlag, New York, 2003.

\bibitem{KavLL15}
B.~Kov\'acs, B.~Li, and C.~Lubich.
\newblock A-stable time discretizations preserve maximal parabolic regularity.
\newblock {\em arXiv:1511.07823}, 2015.

\bibitem{KunW04}
P.~C. Kunstmann and L.~Weis.
\newblock Maximal {$L_p$}-regularity for parabolic equations, {F}ourier
  multiplier theorems and {$H^\infty$}-functional calculus.
\newblock In {\em Functional analytic methods for evolution equations}, volume
  1855 of {\em Lecture Notes in Math.}, pages 65--311. Springer, Berlin, 2004.

\bibitem{LeyV15}
D.~Leykekhman and B.~Vexler.
\newblock Discrete maximal parabolic regularity for galerkin finite element
  methods.
\newblock {\em arXiv:1505.04808}, 2015.

\bibitem{Lun09}
A.~Lunardi.
\newblock {\em Interpolation theory}.
\newblock Edizioni della Normale, Pisa, second edition, 2009.

\bibitem{Paz83}
A.~Pazy.
\newblock {\em Semigroups of linear operators and applications to partial
  differential equations}.
\newblock Springer-Verlag, New York, 1983.

\bibitem{Qua14}
A.~Quarteroni.
\newblock {\em Numerical models for differential problems}.
\newblock Springer, Milan, second edition, 2014.

\bibitem{Sai07}
N.~Saito.
\newblock Conservative upwind finite-element method for a simplified
  {K}eller-{S}egel system modelling chemotaxis.
\newblock {\em IMA J. Numer.\ Anal.}, 27(2):332--365, 2007.

\bibitem{Sai12}
N.~Saito.
\newblock Error analysis of a conservative finite-element approximation for the
  {K}eller-{S}egel system of chemotaxis.
\newblock {\em Commun. Pure Appl. Anal.}, 11(1):339--364, 2012.

\bibitem{SchTW98}
A.~H. Schatz, V.~Thom{\'e}e, and L.~B. Wahlbin.
\newblock Stability, analyticity, and almost best approximation in maximum norm
  for parabolic finite element equations.
\newblock {\em Comm. Pure Appl. Math.}, 51(11-12):1349--1385, 1998.

\bibitem{Shi12}
S.~Shimizu.
\newblock Maximal regularity and its application to free boundary problems for
  the {N}avier-{S}tokes equations [translation of mr2656036].
\newblock {\em Sugaku Expositions}, 25(1):105--130, 2012.

\bibitem{Tho06}
V.~Thom{\'e}e.
\newblock {\em Galerkin finite element methods for parabolic problems}.
\newblock Springer Verlag, Berlin, second edition, 2006.

\bibitem{ThoW00}
V.~Thom{\'e}e and L.~B. Wahlbin.
\newblock Stability and analyticity in maximum-norm for simplicial {L}agrange
  finite element semidiscretizations of parabolic equations with {D}irichlet
  boundary conditions.
\newblock {\em Numer. Math.}, 87(2):373--389, 2000.

\bibitem{ThoW08}
V.~Thom{\'e}e and L.~B. Wahlbin.
\newblock On the existence of maximum principles in parabolic finite element
  equations.
\newblock {\em Math. Comp.}, 77(261):11--19 (electronic), 2008.

\bibitem{Tri95}
H.~Triebel.
\newblock {\em Interpolation theory, function spaces, differential operators}.
\newblock North-Holland Publishing Co., Amsterdam-New York, second edition,
  1995.

\bibitem{Wei01}
L.~Weis.
\newblock Operator-valued {F}ourier multiplier theorems and maximal
  {$L_p$}-regularity.
\newblock {\em Math.\ Ann.}, 319(4):735--758, 2001.

\bibitem{ZhoS16}
G.~Zhou and N.~Saito.
\newblock Finite volume methods for a keller-segel system: discrete energy,
  error estimates and numerical blow-up analysis.
\newblock {\em Numer. Math.}
\newblock to appear.

\end{thebibliography}

\end{document}